\theoremstyle{plain}
\theoremstyle{plain}
\newtheorem{theorem}{Theorem}[section]
\newtheorem{proposition}[theorem]{Proposition}
\newtheorem{lemma}[theorem]{Lemma}
\newtheorem{corollary}[theorem]{Corollary}
\theoremstyle{definition}
\newtheorem{definition}[theorem]{Definition}
\newtheorem{remark}[theorem]{Remark}
\def\s{\mathsf}
\def\d{\delta}
\let\s\mathsf
\let\Cal\mathcal
\def\Frac#1#2{\frac{\textstyle #1}{\textstyle #2}}
\newcommand{\nc}{\newcommand}
\nc{\dmo}{\DeclareMathOperator}
\nc{\Q}{\mathbb{Q}}
\nc{\R}{\mathbb{R}}
\nc{\Z}{\mathbb{Z}}
\nc{\N}{\mathbb{N}}
\nc{\C}{\mathbb{C}}
\nc{\cS}{\mathcal{S}}
\nc{\iso}{\cong}
\dmo{\Mod}{Mod}
\dmo{\Diff}{Diff}
\dmo{\Homeo}{Homeo}
\dmo{\dist}{dist}
\dmo\BDiff{BDiff}
\dmo\SO{SO}
\dmo\slide{sl}
\dmo\im{im}
\dmo\id{id}
\dmo\Fix{Fix}
\dmo\Out{Out}
\dmo{\T}{\mathcal{T}}
\dmo{\Te}{\mathcal{T}^{\epsilon}}
\dmo{\M}{\mathcal{M}}
\dmo{\Me}{\mathcal{M}^{\epsilon}}
\def\GAF{{\overline G_f}}
\def\pfg{\partial_fG}
\def\act{{\curvearrowright}}
\def\La{{\bf\Lambda}}
\def\Gf{{\overline G}_f}
\def\ch{{\mathcal H}}
\def\ve{\varepsilon}
\def\psf{\partial^f}
\def\ga{\gamma}
\def\psf{\partial^f}
\def\plf{\partial_f}
\def\psm{\partial^{\c M}}
\def\pfg{\partial_fG}
\def\pmg{\partial_{\M}G}
\def\Gm{{\overline G}_{\M}}
\def\Gam{G\sqcup\pmg}
\def\Gb{{\overline G}_B}
\let\c\mathcal
\renewcommand{\epsilon}{\varepsilon}
\nc{\coloneq}{\mathrel{\mathop:}\mkern-1.2mu=}
\nc{\margin}[1]{\marginpar{\scriptsize #1}}
\nc{\para}[1]{\bigskip\noindent\textbf{#1}}
\def\bx{$\hfill\square$}
\def\foral{\forall\hspace*{0.5mm}}
\def\exist{\exists\hspace*{0.7mm}}
\title [Martin and Floyd boundaries]{Martin boundary covers Floyd boundary}
\author[I. Gekhtman, V. Gerasimov, L. Potyagailo, W. Yang]{Ilya Gekhtman, Victor Gerasimov, Leonid Potyagailo, and Wenyuan Yang}
\begin{document}

 \begin{abstract}
 For finitely supported random walks on finitely generated groups $G$ we prove that the identity map on $G$ extends to a continuous {equivariant} surjection from the Martin boundary to the Floyd boundary, with preimages of conical points being singletons. This yields new results for relatively hyperbolic groups. Our key estimate relates the Green and Floyd metrics, generalizing results of Ancona for random walks on hyperbolic groups and of Karlsson for quasigeodesics.
 {We then apply}  these {techniques} to obtain some results concerning the harmonic measure on the {limit sets of geometrically finite isometry groups of} Gromov hyperbolic spaces.   .
 \end{abstract}
 \maketitle
 \section{Introduction}
It is a common thread in geometric group theory to relate asymptotic properties of random walks on a group to the dynamics of its action on some geometric boundary.   The Green metric $d_{\mathcal{G}}(.,.)$ associated to a random walk $\mu$ on the group $ G$  is roughly defined to be {minus} the logarithm of the probability that a random path starting at the first point ever reaches the second \cite{BB}. Its horofunction boundary {$\partial_{\mathcal M}G$} is called the  Martin boundary of $( G, \mu)$ {(see Section 2 for more details)}.

    The geometric boundary we consider is the Floyd boundary. The Floyd metric $\delta^{f}_{o}(.,.)$ at a basepoint $o\in G$ is obtained by rescaling the word metric by a suitable scalar function $f:\Bbb R^+\to \Bbb R^+$. The function $f$ is called  {\it Floyd function} whose definition makes the corresponding Cauchy completion {$\GAF$}   of the Cayley graph to be compact. {The set $\partial_fG=\Gf\setminus G$ is called {the} {\it Floyd boundary}} (see Section 3 for more details).

{One of the main results of the paper  which confirms the above mentioned comparison principle  is  the following inequality which relates the probabilistic metric $d_{\mathcal{G}}$ with the geometric metric $\delta^f_o$.

\begin{theorem}[Theorem \ref{Karlsson}] \label{Ancona}
Let $G$ be a finitely generated group {and $f$ a Floyd function on $G$.} Let $\mu$ be a probability measure on $G$ whose support generates $ G$.  Let $d_{\mathcal{G}}$ be the Green metric associated to $\mu$.

{Assume {that} one of the following {conditions holds}:}

\begin{itemize} \item[a)]{The support of $\mu$ is finite}{; or}

  \item[b)]{The measure $\mu$ has superexponential moment and $x^{2+\alpha}f(x)\to 0$  {$(x\to\infty)$} for some $\alpha>0$. }\end{itemize}

Then there exists a decreasing function $A:\mathbb{R}^{+}\to \mathbb{R}^{+}$ such that {$\foral x,y,z\in G$ one has:}
\begin{equation}\label{relanc} d_{\mathcal{G}}(x,y)+d_{\mathcal{G}}(y,z)\leq d_{\mathcal{G}}(x,z)+A(\delta^{f}_{y}(x,z)).\end{equation}
\end{theorem}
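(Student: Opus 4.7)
The plan is to reformulate (\ref{relanc}) as a multiplicative Ancona-type inequality on the Green function. Recalling that $G(a,b)=\sum_n\mu^{*n}(a^{-1}b)$ and $d_{\mathcal G}(a,b)=-\log\bigl(G(a,b)/G(e,e)\bigr)$, the estimate to be proved is equivalent to
\[
G(x,z)\;\leq\; \frac{e^{A(\delta^f_y(x,z))}}{G(e,e)}\;G(x,y)\,G(y,z).
\]
In the word-hyperbolic setting, and when $y$ is close to a word-geodesic joining $x$ to $z$, this is Ancona's classical inequality; the new point is to trade hyperbolic geometry for a quantitative dependence on the Floyd product $\delta^f_y(x,z)$, which plays the role of ``how much $y$ lies between $x$ and $z$'' in an arbitrary finitely generated group.

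I would argue by a path decomposition with a free word-metric radius $R>0$ around $y$, to be optimized at the end in terms of $\delta^f_y(x,z)$. Split $G(x,z)$ into the contributions of random paths from $x$ to $z$ that meet the word-ball $B_R(y)$ and of those that avoid it. By the strong Markov property at the first entry to and last exit from $B_R(y)$, the first contribution is dominated by $\bigl(\sum_{u\in B_R(y)} G(x,u)\bigr)\bigl(\sum_{v\in B_R(y)} G(v,z)\bigr)$, and a Harnack-type comparison of the Green function on $B_R(y)$ reduces this to $C_1(R)\,G(x,y)G(y,z)$, where $C_1(R)$ grows at worst exponentially in $R$. Paths avoiding $B_R(y)$ are then controlled by the Floyd metric: each of their edges is at word-distance $\ge R$ from $y$ and so contributes at most $f(R)$ to the Floyd length measured from basepoint $y$, forcing the word length of such a path to be at least $N(R):=\delta^f_y(x,z)/f(R)$. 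Under either hypothesis (a) or (b) one establishes a uniform tail estimate $\sum_{n\ge N}\mu^{*n}(x,z)\le C_2 e^{-c_2 N}$ for long paths, using the finite support in case (a) and a large-deviations argument from the superexponential moment in case (b).

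Combining the two bounds gives $G(x,z)\le C_1(R)\,G(x,y)G(y,z)+C_2 e^{-c_2 N(R)}$. A standard lower bound of the form $G(x,y)G(y,z)\ge e^{-C_3(d(x,y)+d(y,z))}$ (from irreducibility and the support condition in case (a), or from the moment condition in case (b)) lets us absorb the additive term once $R$ is chosen large. Taking $R=R(\delta^f_y(x,z))\to\infty$ slowly as $\delta^f_y(x,z)\to\infty$ forces $N(R)\to\infty$ while keeping $C_1(R)$ under control, and setting $A(t):=\log C_1(R(t))+o(1)$ produces the required decreasing function. The main obstacle I foresee is in case (b): the tail estimate must be produced uniformly in the endpoints, and the optimization of $R$ must balance the Harnack-type growth of $C_1(R)$ against the long-path decay at rate $N(R)=\delta^f_y(x,z)/f(R)$. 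This is precisely where the specific Floyd-decay hypothesis $x^{2+\alpha}f(x)\to 0$ with $\alpha>0$ should enter, essentially so that $N(R)$ grows fast enough to beat the second-moment contribution from the Gaussian term in heat-kernel-type tail bounds for $\mu^{*n}$.
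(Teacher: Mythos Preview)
Your overall skeleton---reformulate multiplicatively, split paths by whether they hit a word-ball $B_R(y)$, use Harnack on the hitting part and a Floyd-length lower bound on the avoiding part---matches the paper's. But there is a genuine gap in the absorption step, and it is exactly the step the paper has to work hardest to get around.

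You end up with $G(x,z)\le C_1(R)\,G(x,y)G(y,z)+C_2e^{-c_2 N(R)}$ and want to absorb the second term using $G(x,y)G(y,z)\ge e^{-C_3(d(x,y)+d(y,z))}$. But $R$ (hence $N(R)=\delta^f_y(x,z)/f(R)$) is chosen as a function of $\delta^f_y(x,z)$ alone, while $d(x,y)+d(y,z)$ is unbounded over triples with $\delta^f_y(x,z)$ bounded below. So $e^{-c_2N(R)}$ is a fixed number and $G(x,y)G(y,z)$ can be arbitrarily small; the additive term simply cannot be absorbed. Letting $R$ depend on $d(x,y)+d(y,z)$ instead makes $C_1(R)$ depend on it too, and you lose the form $A(\delta^f_y(x,z))$. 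The same failure appears if you phrase things probabilistically: Proposition~\ref{Probabilistic Lemma} gives $P_{x,z}(\text{length}\ge M)\le \phi^{M-D\,d(x,z)}$, and the penalty $D\,d(x,z)$ again swamps any $N(R)$ chosen from $\delta$ alone.

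The paper's fix is a multi-scale decomposition. Instead of a single ball it uses nested sets $E_r(x)=N_r(o)\cap N^f_{e(r)}(x)$ (word-ball intersected with a Floyd-neighbourhood of the endpoint) at scales $r=\tau^iR$. Any path not meeting $N_R(o)$ lies in some event $Q_{\tau^iR}$: it meets both $E_{\tau^{i+1}R}(x)$ and $E_{\tau^{i+1}R}(y)$ but misses $E_{\tau^iR}(x)$ or $E_{\tau^iR}(y)$. The point is that the relevant subpath now runs between points $u,v$ with $d(u,v)\le 2\tau^{i+1}R$, so the probabilistic penalty is $\phi^{-2D\tau^{i+1}R}$---controlled by the scale, not by $d(x,z)$. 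The geometric Proposition~\ref{Geometric Proposition} then shows such a subpath has length at least $h(\tau^iR)$ with $h(r)/r\to\infty$, giving $P_{x,z}(Q_{\tau^iR})\le\phi^{h(\tau^iR)-2D\tau^{i+1}R}$, which is summable in $i$. This multi-scale localisation, and the construction of the auxiliary function $e(r)$ making $h(r)/r\to\infty$, is the missing idea in your outline.
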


{An analog of the inequality (\ref{relanc}) in the context of   word hyperbolic groups {and finitely supported measures} is due to A.~Ancona \cite{Ancona} and claims that there exists a    constant $C$ such that  one has
\medskip}

  {$\hspace*{2cm}  d_{\mathcal{G}}(x,y)+d_{\mathcal{G}}(y,z)\leq d_{\mathcal{G}}(x,z)+C,\hspace*{3cm} {\sf (Ancona)}$
}
\medskip

\noindent {where the points  $x, y, z$ lie in this order on the geodesic $ G$ in the Cayley graph.}
{The extension to measures of infinite support is due to Gouezel \cite{G2}.}
 {There are two essential differences between the inequality (\ref{relanc}) and the Ancona inequality. Unlike the function $A(\cdot)$,  the constant $C$ in {the Ancona inequality} is a uniform constant (depending on the hyperbolicity constant of the group). On the other hand, in the inequality (\ref{relanc}) the distinct triple $\{x,y,z\}$ does not necessarily belong to one geodesic. }

The Ancona inequality reflects the hyperbolic nature of the metric $d_{{\mathcal G}}$
 in a hyperbolic group. It has sparked a fruitful line of research (see \cite{BHM}, \cite{KaiStrip} for more details).

{We obtain the inequality (\ref{relanc}) as a consequence of the following statement (see Section 5 for a more general statement).}

\medskip

\begin{theorem}
  (Theorem \ref{avoidball}).  {\it There exists a   function $R:\Bbb R^+\times \Bbb R^+\to \Bbb \Bbb R^+$
such that for every $\varepsilon >0$ the probability that a random {path} from $x$ to $y$ passes through a ball centered at $z$
of radius $R(\delta^f_z(x,y))$ is greater than $1-\varepsilon$.}  \end{theorem}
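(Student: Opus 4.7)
The plan is to bound the probability that a $\mu$-random walk from $x$ to $y$ avoids the ball $B(z,R)$, and to show this probability can be made at most $\varepsilon$ by choosing $R$ sufficiently large. The starting point is a Karlsson-type geometric estimate: any path $\omega$ in the Cayley graph from $x$ to $y$ that avoids $B(z,R)$ has word-length bounded below by $N := \delta^{f}_{z}(x,y)/f(R)$. Indeed, the Floyd length of $\omega$ with basepoint $z$ equals $\sum_{e\in\omega}f(d(z,e))\leq |\omega|\cdot f(R)$, since $f$ is decreasing and every edge of $\omega$ lies at distance $\geq R$ from $z$, while this Floyd length is bounded below by $\delta^{f}_{z}(x,y)$ by definition of the Floyd metric.

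The total $\mu$-weight of paths from $x$ to $y$ that avoid $B(z,R)$ is therefore bounded above by the heat-kernel tail $\sum_{n\geq N}p_n(x,y)$. Under the finite-support hypothesis (a), or the superexponential moment plus $x^{2+\alpha}f(x)\to 0$ hypothesis (b), this tail decays sufficiently fast in $N$ by standard large-deviation estimates for the random walk. Dividing by the hitting probability $F(x,y)=e^{-d_{\mathcal{G}}(x,y)}$ yields the conditional probability of avoidance, which becomes small as $R\to\infty$, since $N\to\infty$.

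The main technical obstacle is to make the choice of $R$ depend \emph{only} on $\varepsilon$ and $\delta^{f}_{z}(x,y)$, not on the pair $(x,y)$ itself. A naive ratio $\sum_{n\geq N}p_n(x,y)/F(x,y)$ has the rough shape $\rho^{N}e^{d_{\mathcal{G}}(x,y)}$, which would depend on the Green separation of $x$ and $y$. To obtain uniformity, I would carry out a relative version of the geometric step, decomposing any avoiding path at its first exit from and its last entry into a bounded intermediate neighborhood of $z$, so that the ``detour cost'' becomes exponential in $R$ independently of the outer scales. Combined with the moment hypothesis on $\mu$ and the decay of $f$, this should yield an estimate uniform in $(x,y)$, which is exactly the analog, in the present Floyd-geometric setting, of Ancona's classical quasi-multiplicativity of Green functions along geodesics in the word-hyperbolic case.
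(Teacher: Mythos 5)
Your first step (the Karlsson-type length bound for paths avoiding $B(z,R)$) and your diagnosis of the real difficulty (the naive bound $\sum_{n\geq N}p_n(x,y)$ must be compared to $\mathcal{G}(x,y)$, and the ratio is not uniform in $(x,y)$) both match the paper. But the mechanism you propose to restore uniformity --- decomposing an avoiding path at its first exit from and last entry into a bounded intermediate neighborhood of $z$ --- does not work as stated, for two reasons. First, a path avoiding $B(z,R)$ may avoid any prescribed larger ball $B(z,R')$ around $z$ entirely (unless $R'$ is taken comparable to $d(x,z)$, which destroys uniformity), so there is no entry/exit to decompose at and you are back to the non-uniform global estimate. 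Second, even when the path does enter such a ball, the excursion endpoints $u,v$ lie on a sphere about $z$ and need not be Floyd-separated as seen from $z$; hence the excursion between them cannot be forced to be long, and the length guaranteed by your Karlsson bound may sit entirely in the initial or final portions of the path, i.e. between points at unbounded word distance, where the relative estimate $P_{u,v}(\mathrm{length}\geq M)\leq \phi^{M-Dd(u,v)}$ (Proposition \ref{Probabilistic Lemma}) gives nothing uniform.

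The paper resolves exactly this by decomposing not at neighborhoods of $z$ but at the sets $E_r(x)=N_r o\cap N^f_{e(r)}x$ and $E_r(y)=N_r o\cap N^f_{e(r)}y$ (after translating $z$ to the basepoint $o$): the intersection with the word ball $N_r o$ forces the entry/exit points $u,v$ to satisfy $d(u,v)\leq 2\tau r$, while the intersection with the Floyd neighborhoods of $x$ and $y$ forces $\delta^f_o(u,v)\geq \delta-2e(\tau r)$, so the excursion from $u$ to $v$ missing $E_r(x)$ has length at least $h(r)$ with $h(r)/r\to\infty$ (Proposition \ref{Geometric Proposition}); this requires the auxiliary gauge $e(r)$ of Lemma \ref{helpfunction} with $(e(r)-e(\tau r))/(rf(r))\to\infty$, which your sketch has no analogue of. A further ingredient you are missing is the multi-scale argument: since the path starts at $x$ and ends at $y$ it automatically meets $E_{\tau^i R}(x)$ and $E_{\tau^i R}(y)$ for large $i$, so every trajectory either meets $N_R o$ or lies in some $Q_{\tau^i R}(x,y)$, and one sums the bounds of Lemma \ref{longpathsunlikely} over $i$, with $h(\tau^iR)\gg 2D\tau^{i+1}R$ making the series small uniformly in $(x,y)$ once $R\geq R_0(\delta^f_z(x,y))$. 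Finally, the infinite-support case (b) is not ``standard large deviations'': one must split $\mu=\mu_n+\sigma_n$, treat trajectories with a jump of size $\geq r/100$ via the superexponential moment, and use the hypothesis $f(x)\leq x^{-2-c}$ so that the geometric estimate survives jump size $r/100$ (Lemma \ref{helpinfinite}); also the relative estimate needs nonamenability, and the amenable case ($|\partial_f G|\leq 2$) is handled separately. As it stands, your proposal has a genuine gap at its central step.
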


\medskip

{There is another inequality due to A. Karlsson \cite[Lemma 2.1]{Karl} which states the following:}
\medskip

{$\hspace*{3.5cm} d(v, [x, y])\leq K(\delta_{v, f}(x,y))$,\hspace*{2.3cm}{\sf (Karlsson)}}

\medskip

\noindent where $[x,y]$ is a (quasi-)geodesic between the vertices $x$ and $y$ and  $K:\Bbb R^+\to \Bbb N$ is a decreasing function. One can restate
{Karlsson inequality} in the following form:

 \medskip

  {{\sf Karlsson's lemma.} {\it For every $\varepsilon > 0$ there exists $R=R(\varepsilon)$ such that the condition $\delta^f_v(x, y)>\varepsilon$ implies that $d(v, [x, y])\leq R.$ }}

  \medskip

 So if one  replaces the "random path" by "(quasi)-geodesic" the Theorem above becomes Karlsson's lemma and vice versa. The Karlsson inequality in its turn admits many corollaries for relatively hyperbolic groups (see \cite{Ge1}, \cite{Gerasimov}, \cite{GePoGGD}, \cite{GePoCrelle}). It was {one of our initial} motivations to relate the Martin and Floyd compactifications.

 \medskip

  To complete this discussion let us recall the classical Gromov inequality for $\delta$-hyperbolic graphs:

 \medskip

{$\hspace*{2.5cm} d(v , [x, y])-\delta < (x.y)< d(v , [x, y]),\hspace*{2cm} ({\sf Gromov})$}

\medskip

\noindent{where $(x.y)$ is the Gromov product (which is replaced by the Floyd distance in our case)\cite{Gromov}.}

\medskip

 Note that the left-hand side of the Gromov inequality is not true when the Cayley graph is not hyperbolic,  in particular in the case of a relatively hyperbolic groups it is not satisfied for  the horospheres at parabolic points.
{Here an horosphere at a point is the set of all bi-infinite geodesics all based at this point (necessarily not conical). We refer to Section 3 where all standard definitions     are given (e.g. {\it conical points, parabolic points, horospheres} etc).}

We use Theorem \ref{Ancona} {in our next result} to prove that the Martin boundary $\partial_{\mathcal M}  G$ associated to $( G,\mu)$ covers the Floyd boundary $\partial_{f} G$.

\begin{theorem}[Theorem \ref{martintofloyd}]\label{MFmap}
Let $ G$, $\mu$ and $f$ be as in Theorem \ref{Ancona}.
The identity map on $ G$ induces a continuous $ G$-equivariant surjection $\pi: \partial_{\mathcal M}{G}\to  \partial_{f} G$.
Moreover, the preimage of any conical point of {$\partial_{f} G$} is a single point.
\end{theorem}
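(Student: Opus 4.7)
The plan is to use Theorem \ref{Ancona} as the main tool, estimating the Martin kernel $K(\cdot,\xi)$ from both sides via Floyd data and thereby showing that Martin-convergent sequences must be Cauchy in the Floyd metric. First I would define $\pi$ pointwise: for $\xi\in\partial_{\mathcal M}G$, any sequence $x_n\to\xi$ in the Martin topology escapes every finite set, so compactness of the Floyd completion $\Gf$ yields Floyd-convergent subsequences. The core claim is that any two such subsequential Floyd limits coincide; this then defines $\pi(\xi)\in\partial_f G$. Continuity and $G$-equivariance of $\pi$ follow formally from the construction, and surjectivity is obtained by lifting any Floyd-convergent sequence $z_n\to\eta$ to a Martin-convergent subsequence via compactness.

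For the uniqueness of the Floyd limit, suppose two Martin-convergent sequences $a_n,b_n\to\xi$ Floyd-converge to distinct points $\eta\neq\eta'$ and set $\epsilon_0=\delta^f_o(\eta,\eta')/2$, so that $\delta^f_o(a_k,b_n)\geq\epsilon_0$ for all $k,n$ large enough. Apply Theorem \ref{Ancona} to the triple $(a_k,o,b_n)$: using the identity $K(a_k,b_n)=\exp(d_{\mathcal G}(o,b_n)-d_{\mathcal G}(a_k,b_n))$, it yields the upper bound $K(a_k,b_n)\leq e^{A(\epsilon_0)-d_{\mathcal G}(a_k,o)}$, and Martin convergence $K(a_k,b_n)\to K(a_k,\xi)$ as $n\to\infty$ gives $K(a_k,\xi)\leq e^{A(\epsilon_0)-d_{\mathcal G}(a_k,o)}$. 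Dually, Theorem \ref{Ancona} applied to $(o,a_k,b_n)$ yields a lower bound: as $n\to\infty$, $\delta^f_{a_k}(o,b_n)\to\delta^f_{a_k}(o,\eta')>0$ since $o\in G$ and $\eta'\in\partial_fG$ are distinct in the Floyd completion, so a symmetric computation gives $K(a_k,\xi)\geq e^{d_{\mathcal G}(o,a_k)-A(\epsilon'_k)}$ for $\epsilon'_k=\delta^f_{a_k}(o,\eta')/2$. Combining,
\[
d_{\mathcal G}(o,a_k)+d_{\mathcal G}(a_k,o)\leq A(\epsilon_0)+A(\epsilon'_k).
\]
Transience forces the left side to infinity as $|a_k|\to\infty$; provided $\epsilon'_k$ is bounded below (which holds via convergence dynamics for conical $\eta$, and, in the bounded-parabolic case, via the cocompact action of the parabolic stabilizer on $\partial_f G\setminus\{\eta\}$), the right side stays bounded and we obtain the required contradiction.

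For the singleton preimage claim at a conical point $\eta\in\partial_f G$, I would exploit its convergence dynamics: there are $g_n\in G$ with $g_n^{-1}o\to p$ and $g_n^{-1}\eta\to q$ for distinct $p,q\in\partial_f G$. If $\xi_1,\xi_2\in\pi^{-1}(\eta)$, translating associated Martin-convergent sequences by $g_n^{-1}$ places them in bounded regions where Theorem \ref{Ancona} applies uniformly; pushing this through pins $K(w,\xi_1)=K(w,\xi_2)$ at every $w\in G$, hence $\xi_1=\xi_2$. The main obstacle throughout is the quantitative control of the auxiliary Floyd distance $\delta^f_{a_k}(o,\eta')$ along the Martin-convergent sequence: this is automatic in the conical setting but subtle in the parabolic setting, and may require a direct appeal to Theorem \ref{avoidball} (rather than its consequence Theorem \ref{Ancona}) to bypass the apparent decay of $\epsilon'_k$.
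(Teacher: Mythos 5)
Your overall skeleton (define $\pi$ by Floyd limits of Martin-convergent sequences, get equivariance and surjectivity by compactness, use Theorem \ref{Ancona} for the key estimate) is reasonable, but the two central steps have genuine gaps. First, the well-definedness argument does not close. Your upper bound $K(a_k,\xi)\leq e^{A(\epsilon_0)-d_{\mathcal G}(a_k,o)}$ is fine, but the lower bound $K(a_k,\xi)\geq e^{d_{\mathcal G}(o,a_k)-A(\epsilon'_k)}$ with $\epsilon'_k=\tfrac12\delta^f_{a_k}(o,\eta')$ is useless, because the basepoint $a_k$ escapes to infinity and $\delta^f_{a_k}(o,\eta')$ typically tends to $0$: already for a free group with $f(n)=\lambda^n$ and $a_k$ on the geodesic ray to a conical point $\eta\neq\eta'$ one has $\delta^f_{a_k}(o,\eta')\asymp\lambda^{d(a_k,[o,\eta'))}\to 0$, so $A(\epsilon'_k)\to\infty$ and no contradiction results; in particular your claim that conicality makes $\epsilon'_k$ bounded below is false, and the parabolic fallback is both unproved and unavailable, since Theorem \ref{martintofloyd} is stated for arbitrary finitely generated groups whose Floyd boundary points need be neither conical nor bounded parabolic. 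There is also a structural obstruction: both of your displayed inequalities are instances of the true inequality (\ref{relanc}) applied to actual elements of $G$, so by themselves they can never be mutually contradictory; some input beyond Theorem \ref{Ancona} is indispensable. The paper supplies exactly that input: from $\delta^f_o(x_n,y_n)>\epsilon$ and the triangle inequality it deduces, for \emph{every} fixed $x\in G$ (splitting into the cases $\delta^f_o(x,x_n)>\epsilon/2$ or $\delta^f_o(x,y_n)>\epsilon/2$ and using that the two Martin limits are comparable), the one-sided bound $K(x,\alpha)\leq C'\,\mathcal{G}(o,x)$, and then gets the contradiction from harmonicity of the Martin kernel (Assumption 2, Lemma \ref{Martinharmonic}) via Lemma \ref{harmonicpotential}: a positive harmonic function cannot be dominated by $\mathcal{G}(o,\cdot)$. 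Your proposal never invokes harmonicity, and without it (or some substitute) the argument cannot be completed; appealing to Theorem \ref{avoidball} instead of Theorem \ref{Ancona} does not by itself repair this.

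Second, for the singleton preimage of a conical point, the translation-by-$g_n^{-1}$ idea is indeed the right mechanism (it is Proposition \ref{pseudocon} in the paper), but it only yields a uniform two-sided bound $D^{-1}\leq K(x,\xi_1)/K(x,\xi_2)\leq D$ for all $x\in G$, and a bounded ratio between two positive harmonic functions does not imply they are equal or even proportional. The step you describe as ``pushing this through pins $K(w,\xi_1)=K(w,\xi_2)$'' is precisely what is missing. The paper converts the bound into equality only through the minimal Martin boundary: it shows every fiber $\pi^{-1}(\zeta)$ contains a point with minimal kernel (Martin representation theorem together with the support results, Lemma \ref{kerzer}, Corollary \ref{floydsupport}, Corollary \ref{minimalfunctions} -- which again rely on Theorem \ref{relancona}), and then minimality plus the normalization $K(o,\cdot)=1$ forces the two kernels to coincide (Corollary \ref{quasiconicalinjective}). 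You need to add this representation-theoretic ingredient, or an equivalent one, for the conical statement to follow.
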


If every point of $\partial_{f} G$ is conical then the map constructed in Theorem \ref{MFmap} is a homeomorphism, recovering a result of Ancona \cite{Ancona} for hyperbolic groups.

Most of the applications of Theorems \ref{Ancona} and \ref{MFmap} deal with relatively hyperbolic groups.
 If  a group $ G$  is relatively hyperbolic with respect to a finite collection $\mathcal P$ of subgroups  there exists a compactum $T=\partial_{B} G$ ({\it the Bowditch boundary}) on which the action of $G$ is {\it geometrically finite}, i.e. it  is a minimal convergence action and every point of $\partial_{B} G$  is either {\it conical} or {\it bounded parabolic} \cite{Bowditch}.

 {Then  for an exponential Floyd function $f:n\in \Bbb N\to \lambda^n\in \Bbb R\  (\lambda\in (0,1))$, there is
a continuous equivariant surjection \cite{Gerasimov}:}

 $$\phi:\partial_{f} G \to \partial_{B} G.$$
 Moreover, the map $$\psi=\phi\circ \pi:\partial_{\mathcal M}  G\to \partial_{B} G$$
is a continuous $ G$-equivariant surjection with $|\psi^{-1}(q)|=1$ for every conical $q\in \partial_{B} G$ (note, $\partial_{B}X$ contains at most countably many nonconical points).

{In \cite{DGGP} the authors use Theorem 1.1 as a {crucial} ingredient to precisely identify} {the Martin boundary} {$\partial_{\mathcal M}G$}   {when $G$ is} {relatively} {hyperbolic} {with respect to a system of virtually abelian subgroups.  In particular it is shown in \cite{DGGP}} {that if $p\in \partial_{B}G$ is a} parabolic {point} {with stabilizer} {which contains ${\mathcal Z}^{d}$ {as a subgroup of finite index}{ then $\psi^{-1}(p)$ is homeomorphic to $S^{d-1}$.}

A point $z$ on a (quasi-)geodesic $\alpha$ is called {\it $(\epsilon,R)$-transition point}
if  {for any horosphere $P$ one has}  $\alpha \cap B(v, R)\not\subset
N_\epsilon(P)$  where $B(v,R)$ denotes the  ball centered at $v$ of radius $R$, and $N_{\epsilon}(P)$ is an
$\epsilon$-neighborhood of {$P$.}

 Theorem \ref{Ancona} has the following consequence:

\begin{corollary}[Corollary \ref{relhypAnc2}] \label{relhypAnc}
Let $ G$ be hyperbolic relative to a collection of subgroups, and let $\mu$ be a probability measure on $G$ with superexponential moment and support generating $G$.
{If $x,y,z\in  G$ lie on a word geodesic $\alpha$, and $y$ is  an $(\epsilon,R)$-transition point between $x$ and $z$  then
$$d_{\mathcal{G}}(x,y)+d_{\mathcal{G}}(y,z)\leq d_{\mathcal{G}}(x,z)+A$$ where $A$ depends only on $(\epsilon, R)$,  and $\mu$.}
\end{corollary}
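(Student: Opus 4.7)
The plan is to reduce the corollary directly to Theorem \ref{Ancona}. Since $G$ is relatively hyperbolic, I fix an exponential Floyd function $f(n)=\lambda^n$ with $\lambda\in(0,1)$ close enough to $1$ so that the growth condition $x^{2+\alpha}f(x)\to0$ holds (this is the standard choice in the relatively hyperbolic setting, cf.\ \cite{Gerasimov}). Together with the superexponential moment of $\mu$, hypothesis (b) of Theorem \ref{Ancona} is satisfied, so there is a decreasing function $A_0:\R^+\to\R^+$ depending only on $\mu$ (and $f$) with
\[
d_{\mathcal{G}}(x,y)+d_{\mathcal{G}}(y,z)\leq d_{\mathcal{G}}(x,z)+A_0\bigl(\delta^f_y(x,z)\bigr)
\]
for all $x,y,z\in G$. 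It therefore suffices to exhibit a constant $c(\epsilon,R)>0$, depending only on the transition parameters, such that the $(\epsilon,R)$-transition hypothesis forces $\delta^f_y(x,z)\geq c(\epsilon,R)$; setting $A:=A_0(c(\epsilon,R))$ then gives the statement by monotonicity of $A_0$.

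The core of the argument is thus to establish this uniform Floyd lower bound, which I would do by contraposition using the Floyd--Bowditch comparison developed in \cite{Ge1,Gerasimov,GePoGGD,GePoCrelle}. Suppose $\delta^f_y(x,z)$ is arbitrarily small. Then in the Floyd compactification based at $y$ the points $x$ and $z$ are Floyd-close, hence project via the equivariant surjection $\phi:\partial_fG\to\partial_BG$ to neighborhoods of a common point of the Bowditch boundary; in the relatively hyperbolic setting such a forced collapse is only possible near a parabolic point $p\in\partial_BG$. By the quasi-convexity of horospheres, the portion of the word geodesic $\alpha$ between $x$ and $z$ lying near $y$ must then be $\epsilon$-close to a single horosphere at $p$ over a sub-segment whose length grows as $\delta^f_y(x,z)$ shrinks. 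Choosing $c(\epsilon,R)$ small enough, the resulting sub-segment has length $>R$, contradicting the $(\epsilon,R)$-transition property of $y$.

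The main obstacle is precisely this last geometric step: turning Floyd-smallness of $\delta^f_y(x,z)$ into uniform horospherical confinement of $\alpha$ near $y$. Each ingredient — the map $\phi$, Karlsson-type control of the Floyd metric along geodesics, and the transition-point structure of word geodesics in relatively hyperbolic groups — is available in the cited works, but combining them so that the constant $c(\epsilon,R)$ depends only on the transition parameters (and the peripheral structure of $G$), rather than on the particular triple $(x,y,z)$ or the parabolic point involved, is where the technical care is needed. Once this uniform Floyd lower bound is secured, the corollary is immediate from Theorem \ref{Ancona} as described.
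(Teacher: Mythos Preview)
Your approach is correct and essentially identical to the paper's: both reduce the corollary to Theorem~\ref{Ancona} via a uniform lower bound $\delta^f_y(x,z)\geq c(\epsilon,R)$ at $(\epsilon,R)$-transition points. The only difference is that the paper obtains this lower bound by directly citing \cite[Corollary~5.10]{GePoCrelle}, whereas you sketch a contrapositive argument for it; since you yourself note that the ingredients are in the cited works, you could simply invoke that result rather than treating it as an obstacle.
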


For hyperbolic groups, every point on a word geodesic is a transition point and every point of the Bowditch boundary is conical, {so the above inequality implies the Ancona's inequality for hyperbolic groups.}

We note that {Theorem  \ref{Ancona} and  Corollary} \ref{MFmap} are proved for arbitrary finitely generated groups. { Furthermore, unlike  that of Ancona}, our proofs of them on neither potential theory nor hyperbolic geometry.

{We denote below by $\partial^{min}_{\mathcal M} G$ the set of points of the Martin boundary $\partial_{\mathcal M} G$ which correspond to minimal harmonic functions (see Section  6), every point of this subset is called {\it minimal}. In the following result we describe the subset of minimal points of the preimage of the limit set of a {\it fully quasiconvex} subgroup $H$ of $G$ acting cocompactly on the complementary set of its limit set (see Section 7):}

 \begin{theorem}[Proposition \ref{lsp}]
\label{lsp1}
 {Let $\pi:\pmg\to\pfg$ be the map from Theorem \ref{MFmap}. Let $H<G$ be a subgroup acting cocompactly on $X\setminus\Lambda H$.
Then \begin{equation}\label{Preimbound} \pi^{-1}(\Lambda H)\cap \partial^{min}_{\mathcal M} G\subseteq\psm H,\end{equation} where $\psm H$ denotes the set of accumulation points of $H$ in $\Gm.$}
\end{theorem}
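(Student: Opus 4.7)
My plan is to show directly that any minimal Martin point $\xi\in\partial^{min}_{\mathcal M}G$ with $\pi(\xi)=\eta\in\Lambda H$ arises as a Martin-topology limit of a sequence drawn from $H$, which immediately places $\xi$ in $\psm H$. Starting from $g_n\in G$ with $g_n\to\xi$ in the Martin topology, Theorem \ref{MFmap} ensures that $g_n\to\eta$ in the Floyd topology. The cocompactness of $H$ on $X\setminus\Lambda H$ provides a compact fundamental set $K\subseteq X\setminus\Lambda H$ and a factorization $g_n=h_nk_n$ with $h_n\in H$ and $k_n\in K$. Word lengths satisfy $|h_n|\to\infty$: otherwise we could extract a subsequence with $h_n$ constant, forcing $k_n=h_n^{-1}g_n$ to Floyd-converge into $h^{-1}\Lambda H=\Lambda H$, contradicting $k_n\in K$.

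The technical heart is a uniform two-sided comparison of the Martin kernels of $g_n$ and $h_n$. Using the left $G$-invariance of the Floyd metric,
\begin{equation*}
\delta^f_{h_n}(y,g_n)=\delta^f_1(h_n^{-1}y,k_n).
\end{equation*}
Since $h_n^{-1}\in H$ has word length tending to infinity, all its Floyd accumulation points lie in the $H$-invariant limit set $\Lambda H$; a bounded word-distance perturbation by the fixed element $y$ does not alter Floyd limits, so $h_n^{-1}y$ accumulates in $\Lambda H$ as well. On the other hand $k_n\in K$, which is Floyd-disjoint from $\Lambda H$, so by compactness there is a positive Floyd distance $\epsilon>0$ between $K$ and $\Lambda H$. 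This gives $\delta^f_{h_n}(y,g_n)\ge\epsilon/2$ for all sufficiently large $n$ (with the threshold depending on $y$, but $\epsilon$ itself not depending on $y$). Applying Theorem \ref{Ancona} to the triples $(y,h_n,g_n)$ and $(e,h_n,g_n)$, together with the standard submultiplicativity $F(x,z)\ge F(x,y)F(y,z)$ of the Green function, yields
\begin{equation*}
\bigl|\log K(y,g_n)-\log K(y,h_n)\bigr|\le A(\epsilon/2)=:C
\end{equation*}
for all large $n$, with $C$ independent of $y$.

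Passing to a Martin-subsequential limit $h_n\to\xi'\in\partial_{\mathcal M}G$ (which exists because $|h_n|\to\infty$) and letting $n\to\infty$ in the above, one gets $e^{-C}K_\xi\le K_{\xi'}\le e^{C}K_\xi$ as positive harmonic functions. Here minimality of $K_\xi$ enters decisively: a positive harmonic function dominated by a scalar multiple of a minimal harmonic function must itself be a scalar multiple of it, so $K_{\xi'}=\alpha K_\xi$ for some $\alpha>0$; the normalization $K_{\xi'}(e)=K_\xi(e)=1$ forces $\alpha=1$, hence $\xi'=\xi$. Since every Martin-subsequential limit of $(h_n)$ coincides with $\xi$, the whole sequence $(h_n)\subset H$ converges to $\xi$ in the Martin topology, giving $\xi\in\psm H$.

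The main obstacle is securing the Floyd lower bound $\delta^f_{h_n}(y,g_n)\ge\epsilon/2$ uniformly in the auxiliary point $y$: although $y$ ranges over all of $G$, the Floyd separation between $K$ and $\Lambda H$ is a fixed positive number, and one must verify that asymptotically the perturbation $h_n^{-1}\leadsto h_n^{-1}y$ cannot bridge this gap, which rests on the fact that for word-length-escaping sequences a fixed right-translation is Floyd-negligible. Once this is in hand, Theorem \ref{Ancona} supplies the Green-metric comparison and minimality of $K_\xi$ pins down the Martin limit, closing the argument.
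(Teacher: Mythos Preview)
Your argument is correct and reaches the same conclusion, but by a more streamlined route than the paper's. The paper proceeds via projections to the convex hull $\ch$ of $\Lambda H$: given $x_n\to\alpha$ it takes $o_n\in\mathrm{Pr}_{\ch}x_n$, proves an auxiliary Lemma (``geodep'') to obtain a uniform lower bound $\delta^f_{o_n}(o,x_n)\geq\delta$ and an upper bound $d(o_n,[o,x_n])\leq D$, then combines Harnack and the Ancona--Karlsson inequality to get the two-sided estimate $C^{-1}\leq K_{o_n}/K_{x_n}\leq C$; finally it passes from $o_n\in\ch$ to points of $H$ using quasiconvexity. You bypass the convex hull entirely, using cocompactness to write $g_n=h_nk_n$ with $h_n\in H$, $k_n$ in a fixed compact fundamental set $K$, and then obtain the Floyd lower bound $\delta^f_{h_n}(y,g_n)\geq\epsilon/2$ directly from the positive Floyd separation of $K$ and $\Lambda H$ together with the elementary fact that a bounded word-metric perturbation of $h_n^{-1}$ has the same Floyd limits. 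The endgame---Ancona inequality plus submultiplicativity to compare $K(\cdot,g_n)$ with $K(\cdot,h_n)$, then minimality of $K_\xi$ to force $\xi'=\xi$---is the same in both proofs.

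What each approach buys: yours is shorter and avoids the auxiliary lemma on projections. The paper's approach, as its authors note in the remark following the proof, is designed so that the convex-hull projections behave well under the right $G$-action on horospheres; this makes the constant $C$ in the estimate $C^{-1}\leq K_\alpha/K_\beta\leq C$ uniform across the conjugacy class of $H$, which is precisely what is needed for the subsequent corollary on parabolic points (Corollary~\ref{ppoint}). Your fundamental-domain decomposition depends on the chosen $K$, hence on $H$ itself, so extracting the same uniformity would require an extra step.
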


 {As a consequence we obtain that every bounded parabolic subgroup being fully quasiconvex satisfies this Theorem. Furthermore  {it yields} a uniform constant $C$ such that every sequence $(x_n)$ converging to a minimal point in the preimage of a parabolic point $p$ on the Bowditch boundary is situated in a $C$-neighborhood of a sequence $o_n\in H$ (see Corollary \ref{ppoint}}).

\medskip

We use Theorems \ref{Ancona} and \ref{MFmap} to prove some results concerning the harmonic measures on boundaries of {a} group. Consider a group $ G$ acting  by isometries on some proper geodesic Gromov hyperbolic metric space $X$. {We say that the action $G\act X$ is geometrically finite if it is   on $\partial X.$} There are two natural classes of measures on the Gromov boundary $\partial X$ associated with the action. One consists of quasiconformal, or Patterson-Sullivan measures (for lattices in rank 1 symmetric spaces, these coincide with Lebesgue measure).  The other consists of stationary or harmonic measures, which are limits of convolution powers of measures on $ G$. Comparing these two classes {of measures} has been a question of considerable interest {and was our second motivation.} We prove
\begin{theorem}[Theorem \ref{singularityofmeasures}]\label{singularityharQC}
Let $X$ be a proper geodesic Gromov hyperbolic space and $G<Isom(X)$ geometrically finite with at least one parabolic {subgroup.}
 Let $\mu$ be a symmetric probability measure on $ G$ with superexponential moment whose support generates $G$. Let $\nu$ be a {$\mu$-stationary} measure on $\partial X$ and $\kappa$ any $G$ quasiconformal measure on $\partial X$. Then $\nu$ and $\kappa$ are singular.
\end{theorem}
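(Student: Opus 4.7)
The plan is to argue by contradiction via two matching shadow lemmas. Assume $\nu \not\perp \kappa$: then by ergodicity of the $G$-action on $(\partial X,\nu)$ and on $(\partial X,\kappa)$, the measures are mutually absolutely continuous, and this will force a uniform asymptotic identification of the Green metric $d_{\mathcal G}$ with a scalar multiple of the hyperbolic metric $d$ on the orbit $G\cdot o$ that is incompatible with the presence of a parabolic subgroup.

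For the quasiconformal measure $\kappa$, the classical Sullivan--Coornaert shadow lemma gives $\kappa(\mathrm{Shad}_R(o, g\cdot o))\asymp e^{-\delta\, d(o,\, g\cdot o)}$, for $R$ large enough and all $g\in G$, where $\delta$ is the critical exponent of $G$ on $X$. For the stationary measure $\nu$, Theorem \ref{MFmap} identifies it with the $\psi$-pushforward of the hitting measure on $\partial_{\mathcal M}G$, and Theorem \ref{Ancona} forces the Martin kernel $K(g,\cdot)$ to satisfy a local multiplicative splitting $K(g,\xi)\asymp e^{d_{\mathcal G}(o,g)-d_{\mathcal G}(g,\xi)}$ over a definite shadow around $g\cdot o$, yielding the analogous estimate $\nu(\mathrm{Shad}_R(o, g\cdot o))\asymp e^{-d_{\mathcal G}(o,g)}$.

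Assuming $\nu\sim\kappa$, a Lebesgue differentiation argument in the doubling visual quasimetric promotes the comparability of the two shadow estimates to the uniform bound
\[
d_{\mathcal G}(o, g) \;=\; \delta\, d(o, g\cdot o) + O(1) \qquad (g\in G).
\]
Now fix a bounded parabolic subgroup $P$ with fixed point $p\in\partial X$ and a sequence $p_n\in P$ with word norm $\|p_n\|_P\to\infty$. Since $P\cdot o$ lies on a horosphere at $p$, one has $d(o, p_n\cdot o)=2\log\|p_n\|_P + O(1)$, so the displayed identity would force $d_{\mathcal G}(o, p_n) = 2\delta\log\|p_n\|_P + O(1)$. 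On the other hand, Corollary \ref{ppoint} localizes trajectories converging in the Martin closure above $p$ within a $C$-neighborhood of $P$; combined with heat-kernel estimates on the polynomial-growth subgroup $P$, this produces a growth rate for $d_{\mathcal G}(o, p_n)$ governed by the critical exponent $\delta_P$ of $P$ acting on $X$. The classical strict inequality $\delta_P < \delta$, valid for any non-elementary geometrically finite action with at least one parabolic subgroup, then yields the required contradiction.

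The hardest step is the two-sided estimate for the Green function on the parabolic orbit. The upper bound on $G_\mu(o, p_n)$ requires quantitative excursion control showing that paths from $o$ to $p_n$ wandering far from $P$ contribute negligibly, which combines the super-exponential moment hypothesis with the Ancona inequality along transition points (Corollary \ref{relhypAnc}); the matching lower bound reduces to standard local heat-kernel behavior on the virtually nilpotent group $P$. A secondary technical point is the passage from mutual absolute continuity of $\nu$ and $\kappa$ to the uniform asymptotic identity, which requires care with the Radon--Nikodym cocycle of $\nu$ and the conformal cocycle of $\kappa$, and relies on the fact that $\psi^{-1}$ is single-valued on the conical limit set (the second half of Theorem \ref{MFmap}) so that both measures can be transported consistently to the Martin boundary.
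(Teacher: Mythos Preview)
Your outline has the right shape but several of the steps do not go through as stated, and the paper in fact takes a different (and shorter) route to the contradiction.

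\medskip
\textbf{Gaps in your argument.}
First, the harmonic-measure shadow lemma $\nu(\mathrm{Shad}_R(o,g\cdot o))\asymp e^{-d_{\mathcal G}(o,g)}$ is \emph{not} a consequence of Theorem~\ref{Ancona} for all $g\in G$. The inequality (\ref{relanc}) gives a uniform Ancona estimate only when the Floyd distance $\delta^f_g(o,\cdot)$ is bounded below, i.e.\ when $g$ is an $(\epsilon,R)$-transition point (Corollary~\ref{relhypAnc}). For $g=p_n$ deep in a parabolic coset this fails, so you cannot extract the two-sided shadow estimate exactly in the regime where you need it for the contradiction. Second, mutual absolute continuity of $\nu$ and $\kappa$ does not, via Lebesgue differentiation alone, yield the \emph{uniform} identity $d_{\mathcal G}(o,g)=\delta\, d(o,g\cdot o)+O(1)$; at best you get it along a set of radial directions of full measure. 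Upgrading to a bounded Radon--Nikodym derivative is genuinely nontrivial and is the content of Lemma~\ref{singularvsbounded} in the paper, which uses a cocycle argument on $\partial^2 X$ via the Naim-type kernel $\Theta$ and the Green--Gromov product $\rho^{\mathcal G}$. Third, your endgame is problematic on two counts: the restriction of $\mathcal G_\mu$ to $P$ is \emph{not} the Green function of any random walk on $P$, so heat-kernel estimates on $P$ say nothing directly about $d_{\mathcal G}(o,p_n)$ (Corollary~\ref{ppoint} controls where minimal Martin points above $p$ can be approached from, not the size of the Green function); and for a general proper Gromov hyperbolic $X$ the parabolic $P$ need not be virtually nilpotent, nor does $d(o,p_n\cdot o)=2\log\|p_n\|_P+O(1)$ hold without extra curvature hypotheses.

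\medskip
\textbf{How the paper proceeds instead.}
The paper avoids shadow lemmas, $\delta_P<\delta$, and any Green-function asymptotics on $P$. After reducing to $\kappa$ supported on $\Lambda_c G$, Lemma~\ref{singularvsbounded} shows that non-singularity forces $C^{-1}\le d\kappa/d\nu_X\le C$. Comparing the two Radon--Nikodym cocycles (\ref{qc}) and (\ref{harmonic}) then gives, for all $g\in G$, a one-sided bound $d_{\mathcal G}(g,e)\le h\, d_X(go,o)+D$. Since $\mu$ is symmetric with superexponential moment and $G$ is nonamenable, $d_{\mathcal G}$ is quasi-isometric to the word metric (Blach\`ere--Ha\"issinsky--Mathieu), so together with the trivial Lipschitz bound $d_X(go,o)\le K_1\|g\|+K_2$ one concludes that the orbit map $G\to G\cdot o\subset X$ is a quasi-isometric embedding. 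That is impossible when $G$ has a parabolic element, by Swenson. No fine information about the Green metric along $P$ is needed.
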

This generalizes a result of Blachere-Haissinsky-Mathieu \cite[Proposition 5.5]{BHM} who proved an analogue where $ G$ is a word hyperbolic group which acts on $X$ with parabolics.
In particular, since Lebesgue measure is conformal for lattices in rank 1 symmetric spaces the following is an immediate corollary:
\begin{corollary}\label{singularityLebesgue}
Let $ G$ be a nonuniform lattice in a rank 1 symmetric space $X$. Let $\mu$ be a symmetric measure on $ G$ with superexponential moment whose support generates $ G$. Then the $\mu$-stationary measure on $\partial X$ is singular to the Lebesgue measure.
\end{corollary}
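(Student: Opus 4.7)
The plan is to derive this directly from Theorem~\ref{singularityharQC}, so the task reduces to verifying that its three hypotheses are satisfied in this rank 1 lattice setting. The assumptions on $\mu$ (symmetric, superexponential moment, support generating $G$) are inherited verbatim, so only the geometric input needs checking.

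First, I would observe that any rank 1 symmetric space $X$ (real, complex, quaternionic hyperbolic space, or the octonionic plane) is a proper geodesic CAT$(-1)$ space, hence in particular a proper geodesic Gromov hyperbolic space with a well-defined visual boundary $\partial X$. A nonuniform lattice $G<\mathrm{Isom}(X)$ is geometrically finite in the classical sense (Garland--Raghunathan): the convex core of $X/G$ has finite volume and decomposes into a compact piece plus finitely many cusps. Translating to the boundary via Bowditch's equivalence, $G$ acts on $\partial X$ as a geometrically finite convergence group, so $\Lambda G = \partial X$ and every point is either conical or bounded parabolic. Since the lattice is nonuniform, $X/G$ is noncompact, so cusps are present and hence at least one parabolic subgroup exists. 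This gives the first hypothesis.

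Second, I would invoke the classical fact that the Lebesgue (visual) measure class on $\partial X$ is quasiconformal for the full isometry group of $X$ and hence for $G$: for rank 1 symmetric spaces the visual metric at a basepoint is bi-Lipschitz equivalent to a Carnot--Carath\'eodory metric which is conformally (and so quasiconformally) distorted by isometries of $X$, with Radon--Nikodym derivative controlled by a Busemann cocycle. Thus Lebesgue measure on $\partial X$ fits into the class of $G$-quasiconformal measures $\kappa$ appearing in Theorem~\ref{singularityharQC}.

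With both hypotheses verified, Theorem~\ref{singularityharQC} applies to the $\mu$-stationary measure $\nu$ on $\partial X$ and any $G$-quasiconformal $\kappa$, yielding $\nu\perp\kappa$. Taking $\kappa$ to be the Lebesgue measure gives the corollary. There is essentially no obstacle here beyond the verification of the dictionary between ``nonuniform rank 1 lattice'' and ``geometrically finite action with a parabolic subgroup on a Gromov hyperbolic space'' -- all the probabilistic and boundary-theoretic work is absorbed into Theorem~\ref{singularityharQC} itself, whose proof relies on Theorems~\ref{Ancona} and \ref{MFmap}.
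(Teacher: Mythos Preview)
Your proposal is correct and follows the paper's own approach: the paper simply states that ``since Lebesgue measure is conformal for lattices in rank 1 symmetric spaces the following is an immediate corollary'' of Theorem~\ref{singularityharQC}, and you have spelled out exactly the verifications (Gromov hyperbolicity of $X$, geometric finiteness with parabolics for a nonuniform lattice, quasiconformality of Lebesgue measure) that make this immediate.
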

When $X=\mathcal{H}^{2}$ the analogue of Corollary \ref{singularityLebesgue}  has been independently obtained  by \cite{BHM}, by Gadre, Maher, and Tiozzo in \cite{GMT}, and by Deroin , Kleptsyn and Navas in \cite{DKN}.
Finally,  we construct and study the so called harmonic invariant measure for random walks satisfying the {inequality  (\ref{relanc})}.

\begin{theorem} [{Theorem \ref{invmeasfloyd}}] \label{invmeasurefloyd}
Let $G$ and $\mu$ be as in Theorem \ref{Ancona}, and $\hat{\mu}$ be the reflection of $\mu$. Let $\nu$ (resp. $\hat{\nu}$) be the unique $\mu$ stationary (resp. $\hat{\mu}$ stationary) probability measure on $\partial_{f} G$.
Then there exists a $G${-}invariant Radon measure on $\partial_{f} G\times \partial_{f} G\setminus \Delta(\partial_{f}G)$ in the measure class of $\hat{\nu}_{f} \times \nu_{f}$.
\end{theorem}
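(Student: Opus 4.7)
The plan is to realize the desired measure as the pushforward of a $G$-invariant $\sigma$-finite measure on bilateral trajectories. On $\Omega=G^{\mathbb Z}$, let $\mathbb P_g$ denote the law of the bilateral walk with $w_0=g$, forward increments i.i.d.\ $\mu$ and past increments i.i.d.\ $\hat\mu$, and put $\mathbf P=\sum_{g\in G}\mathbb P_g$. This $\sigma$-finite measure is invariant under the left $G$-action $h\cdot(w_n)=(hw_n)$ on $\Omega$. Trajectories $\mathbf P$-almost surely converge at both ends to points $\xi_{\pm}\in\partial_f G$: forward convergence follows from radial convergence of the $\mu$-walk to the Martin boundary $\partial_{\mathcal M} G$ together with the continuity of the map $\pi$ from Theorem \ref{MFmap}, and backward convergence is the same applied to $\hat\mu$. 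Independence of past and future at time $0$ and nondegeneracy of $\mu$ force $\xi_-\neq\xi_+$. Setting $\Phi(\omega)=(\xi_-(\omega),\xi_+(\omega))$, the pushforward $m:=\Phi_*\mathbf P$ lives on $\partial_f G\times\partial_f G\setminus\Delta(\partial_f G)$ and is $G$-invariant, because $\Phi$ is $G$-equivariant and $\mathbf P$ is $G$-invariant.

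To identify the measure class, note that $\Phi_*\mathbb P_g=(g_*\hat\nu)\otimes(g_*\nu)$, and the Radon--Nikodym derivative of $g_*\nu$ with respect to $\nu$ is the standard Martin cocycle $\eta\mapsto K(g^{-1},\eta)$ (and analogously for $\hat\mu$). Hence, at least formally,
\[
\frac{dm}{d(\hat\nu\otimes\nu)}(\xi,\eta)\;=\;\sum_{g\in G}\hat K(g^{-1},\xi)\,K(g^{-1},\eta),
\]
which is strictly positive on $\partial_f G\times\partial_f G\setminus\Delta(\partial_f G)$. Thus, once the density is shown to be locally integrable off the diagonal, $m$ will be Radon and automatically lie in the measure class of $\hat\nu\otimes\nu$.

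The main obstacle, and the place where Theorem \ref{Ancona} enters crucially, is the Radon property. Any compact $C\subset\partial_f G\times\partial_f G\setminus\Delta(\partial_f G)$ is contained in $\{(\xi,\eta):\delta^f_e(\xi,\eta)\geq\varepsilon\}$ for some $\varepsilon>0$. I would split the sum over $g$ according to whether $\delta^f_g(\xi,\eta)\geq\varepsilon'$; for such ``Floyd transition points'' $g$, Theorem \ref{Ancona} yields
\[
\hat K(g^{-1},\xi)\,K(g^{-1},\eta)\;\leq\; e^{A(\varepsilon')}\,\frac{\hat G(e,g)\,G(e,g)}{G(e,e)^2},
\]
and iterating the estimate along a sequence of such transition points on a Floyd quasi-geodesic from $\xi$ to $\eta$ yields a convergent geometric series, since each successive factor along the chain is uniformly less than $1$. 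For the remaining $g$, the ball-avoidance statement of Theorem \ref{avoidball} shows that at least one of the two Martin kernels is uniformly small. Combining these two bounds and applying Fubini controls $\int_C F\,d(\hat\nu\otimes\nu)$ uniformly on compacta, yielding the theorem.
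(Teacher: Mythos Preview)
Your construction has a genuine gap: the pushforward $m=\Phi_*\mathbf P$ is \emph{not} Radon off the diagonal, because the density
\[
F(\xi,\eta)=\sum_{g\in G}\hat K(g,\xi)\,K(g,\eta)
\]
is identically $+\infty$. To see this concretely, take $G$ a free group with simple random walk and $\xi\neq\eta$ with $e$ on the geodesic between them. For $g_n$ the $n$-th vertex on the ray from $e$ toward $\eta$ one has $K(g_n,\eta)=\rho^{-n}$ and $\hat K(g_n,\xi)=\rho^{n}$, so each term equals $1$ and the sum diverges. The underlying reason is that $\Phi$ is invariant under the time shift $U$, while every $U$-orbit has infinite $\mathbf P$-mass; pushing forward an infinite shift-invariant measure through a shift-invariant map cannot produce a locally finite measure without first quotienting by the shift.

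The inequality you attribute to Theorem~\ref{Ancona} is in the wrong direction. When $g$ is a Floyd transition point for $(\xi,\eta)$ (i.e.\ $\delta^f_g(\xi,\eta)\geq\varepsilon'$), the Ancona estimate says $\mathcal G(x,y)\asymp \mathcal G(x,g)\mathcal G(g,y)$ for $x$ near $\xi$, $y$ near $\eta$, whence
\[
\hat K(g,\xi)\,K(g,\eta)\;=\;\lim_{x\to\xi,\,y\to\eta}\frac{\mathcal G(x,g)\mathcal G(g,y)}{\mathcal G(x,e)\mathcal G(e,y)}\;\asymp\;\Theta(\xi,\eta),
\]
a positive constant independent of $g$. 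It does \emph{not} give the decaying bound $\hat K(g,\xi)K(g,\eta)\lesssim \hat{\mathcal G}(e,g)\mathcal G(e,g)$ you claim; in fact the right-hand side of your displayed inequality goes to zero as $|g|\to\infty$ while the left-hand side does not, so the inequality is false for all but finitely many transition points. Consequently there is no geometric series to sum.

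The paper proceeds differently and avoids any sum over $G$. One works on the Martin boundary and sets
\[
dm(\alpha,\beta)=\Theta(\alpha,\beta)\,d\hat\nu(\alpha)\,d\nu(\beta),\qquad \Theta(\alpha,\beta)=\liminf_{x\to\alpha}\lim_{y\to\beta}\frac{\mathcal G(x,y)}{\mathcal G(x,e)\mathcal G(e,y)}.
\]
Invariance is immediate from the cocycle identity $\Theta(g^{-1}\alpha,g^{-1}\beta)=\Theta(\alpha,\beta)/\bigl(\hat K(g,\alpha)K(g,\beta)\bigr)$ together with $dg\nu/d\nu=K(g,\cdot)$. The Radon property follows because for disjoint closed $A,B\subset\partial_fG$ the Floyd distance $\delta^f_e$ is bounded below on $A\times B$, so a \emph{single} application of Theorem~\ref{relancona} bounds $\Theta$ uniformly on $\hat\pi^{-1}A\times\pi^{-1}B$. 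Pushing $m$ forward by $\hat\pi\times\pi$ then gives the desired measure $m_f$ on $\partial_fG\times\partial_fG\setminus\Delta$.
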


{If $X$ is a Riemannian manifold of negative curvature bounded away from $0$, or more generally a proper $CAT(-1)$ space}, {then the following is true.}
\begin{theorem}[Theorem \ref{finiteharmonicinvariant}]\label{tangentbundle}
Let $\mu$ be a finitely supported generating measure on a geometrically finite $ G<Isom(X)$, and $\hat{\mu}$ its reflection. Let $\nu$ (resp. $\hat{\nu}$) be the $\mu$ stationary  (resp. $\hat{\mu}$ stationary) probabiliy measure on $\partial X$. There is a $ G${-}invariant measure $\tilde{L}$ on the unit tangent bundle $T^{1}X=\partial^{2}X \times \mathbb{R}$ in the measure class of $\nu\times \nu \times Leb$ which projects to a finite measure $L$ on $T^{1}X/ G$.
\end{theorem}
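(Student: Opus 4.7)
The plan is to construct $\tilde L$ by pushing forward the $G$-invariant measure provided by Theorem \ref{invmeasurefloyd} from $\partial_f G \times \partial_f G \setminus \Delta$ to $\partial^2 X := \partial X \times \partial X \setminus \Delta$, and then coupling it with Lebesgue measure on $\mathbb{R}$ via the Hopf parametrization $T^1 X \cong \partial^2 X \times \mathbb{R}$.

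Since $G < \mathrm{Isom}(X)$ is geometrically finite, the limit set $\Lambda G \subset \partial X$ is $G$-equivariantly homeomorphic to the Bowditch boundary $\partial_B G$, so the composition $\phi : \partial_f G \to \partial_B G \hookrightarrow \partial X$ is a continuous $G$-equivariant surjection onto $\Lambda G$. By uniqueness of the stationary measure, $\phi_{*}\nu_f = \nu$ and $\phi_{*}\hat\nu_f = \hat\nu$. Pushing the measure of Theorem \ref{invmeasurefloyd} forward by $(\phi, \phi)$ yields a $G$-invariant Radon measure $\lambda$ on $\Lambda G \times \Lambda G \setminus \Delta$ in the class of $\hat\nu \times \nu$. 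Setting $\tilde L := \lambda \otimes dt$ on $\partial^2 X \times \mathbb{R}$ (extended by zero off $\Lambda G \times \Lambda G$) produces a measure in the class of $\hat\nu \times \nu \times \mathrm{Leb}$, and $G$-invariance is automatic because $G$ acts on the Hopf parametrization by $g \cdot (\xi, \eta, t) = (g\xi, g\eta, t + c(g, \eta))$ for the Busemann cocycle $c$, which preserves $dt$.

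The main obstacle is proving that the projected measure $L$ on $T^1 X / G$ is finite. I would decompose $X$ into a cocompact thick part and finitely many disjoint cusp neighborhoods $C_1, \dots, C_k$ corresponding to the conjugacy classes of maximal parabolic subgroups. Finiteness of $L$ over the image of the thick part is automatic since $\tilde L$ is Radon and the thick part projects to a relatively compact subset of $T^1 X / G$. In each cusp $C_i$ with parabolic stabilizer $P_i$, Hopf coordinates express the $L$-mass of $T^1 C_i / P_i$ as an integral, over $\partial^2 X / P_i$, of the time a geodesic from $\xi$ to $\eta$ spends in $C_i$, which grows linearly in the excursion depth. The essential estimate is that the $\hat\nu \times \nu$-mass of pairs whose connecting geodesic reaches depth $n$ in $C_i$ decays exponentially in $n$. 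This is where the finite support of $\mu$ and Corollary \ref{relhypAnc} play their role: quasi-additivity of the Green metric at transition points, together with strict positivity of the Green-distance/word-distance ratio (a consequence of the exponential moment via Theorem \ref{Ancona}), implies that the $\nu$-mass of shadows of progressively deeper horoballs in $C_i$ decays geometrically. Summing over $n$ and over the finitely many cusps yields $L(T^1 X / G) < \infty$.
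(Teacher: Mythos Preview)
Your construction of $\tilde L$ coincides with the paper's: push the Radon measure of Theorem \ref{invmeasurefloyd} forward to $\partial^2 X$ (this is Corollary \ref{invmeasure}) and couple with arclength via Hopf coordinates. The divergence is entirely in the finiteness argument.

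The paper does not decompose into thick and thin parts. Instead it argues by contradiction using ergodic theory: if $L$ were infinite, Hopf's infinite ratio ergodic theorem would force $L$-typical geodesics to spend asymptotic proportion $1$ of their time outside any set of finite $L$-measure, in particular outside the image of $G\cdot T^{1}B_R(o)$ for every $R$. This contradicts Proposition \ref{quantrec}, a soft recurrence estimate (proved by applying Birkhoff's theorem to the bilateral shift on $(G^{\mathbb Z},\overline P)$) saying that for suitable $R$ the geodesic determined by a typical bilateral sample path spends a definite proportion of its time within distance $R$ of the orbit $Go$. The only place finite support enters is in Proposition \ref{quantrec}, through the uniform bound on increment size; no shadow lemma, no cusp geometry, and no Ancona-type inequality is invoked in this step.

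Your direct approach is the classical one used for Bowen--Margulis--Sullivan measures and is in principle viable here, but the essential estimate you flag---exponential decay in $n$ of the $\hat\nu\times\nu$-mass of pairs whose connecting geodesic penetrates a fixed cusp to depth $n$---is not as immediate from Corollary \ref{relhypAnc} as you suggest. That corollary controls the Green metric at transition points along word geodesics in $G$, whereas what you need is a harmonic-measure shadow lemma for horoball slices in $X$, together with a summation over $P_i$-cosets in a fundamental domain. These ingredients can be assembled, but doing so is substantially more work than the paper's four-line contradiction via Hopf and Proposition \ref{quantrec}. The trade-off is that your route would give an explicit quantitative bound on $L(T^1X/G)$, while the paper's argument is purely qualitative.
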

We call this the harmonic invariant measure of $\mu$, in analogy with a classical construction where $\mu$ is the Brownian motion. When the action $ G \curvearrowright X$ is convex cocompact such a measure was constructed by Kaimanovich in \cite{Kaierg}
{generalizing results of Anderson and Schoen \cite{AS} for Brownian motion on negatively curved manifolds.}

By the result of \cite{Gequid}, closed geodesics corresponding to loxodromic elements equidistribute with respect to this harmonic invriant measure along typical random walk trajectories.

\section{Random walks on groups}
Let $ G$ be a finitely generated infinite group.
We endow $ G$ with the word distance $d(\cdot,\cdot)$ coming from a finite symmetric generating set $S$.
We set $$||g||=d(e,g)$$
Let $\mu$ be a probability measure on $ G$ whose support generates $G$.
This defines a $ G${-}invariant Markov chain on $ G$ with $n$ step transition probabilities $p_{n}(x,y)=\mu^{*n}(x^{-1}y)$.

{We say $\mu$ has finite support if $$supp (\mu)=\{g\in G: \mu(g)>0\}$$ is a finite set.
We say $\mu$ has exponential (resp. superexponential) moment if
$$\sum_{g\in G}c^{||g||}\mu(g)<\infty$$ for some (resp. for all) $c>1$.}
We define the reflected measure by $\hat{\mu}(g)=\mu(g^{-1})$.
The measure $\mu$ is said to be symmetric if $\hat{\mu}=\mu$.
A trajectory  {$\alpha$} of length $n${, denoted by ${\rm length} (\alpha)$,} is a sequence
{$g_{0},..., g_{n-1}$} of elements of $ G$.
Such a trajectory is said to have jump size bounded by $K$ if $d( g_{i}, g_{i+1})\leq K$ for all $i$.

A trajectory $$\alpha=g_{0},g_{1},...,g_{n}$$ in $ G$ is called $\mu$ admissible if $\mu(g^{-1}_{i}g_{i+1})>0$ for each $i$.
Note, {if $\mu$ has finite support}, an admissible trajectory has jump size bounded by $\displaystyle K=\max_{g\in supp(\mu)}||g||$.

Given an admissible trajectory, its weight is defined to be $$w(\alpha)=\mu(g^{-1}_{0}g_{1})\mu(g^{-1}_{1}g_{2})...\mu(g^{-1}_{n-1}g_{n}).$$

Let $Traj(x,y)$ denote the set of all admissible trajectories in $ G$ which begin at $x$ and end at $y$.
Let $Traj_{r}(x,y)\subset Traj(x,y)$ consist of trajectories of length $r$.
The Green's function associated to $\mu$ is defined as
$$\mathcal{G}(x,y)=\sum_{\alpha \in Traj(x,y)}w(\alpha).$$
{The $\mu$ random walk is called {\it transient} if  the probability of ever returning to the start point is less than $1$.
In this case, $\mathcal{G}(x,y)<\infty$ for all $x,y\in  G$ {; in the opposite case the random walk is called {\it recurrent}} \cite{Woess}.
If $G$ contains $\mathbb{Z}$ as a finite index subgroup, any measure  on $G$ induces a recurrent random walk. The same is true when $G$  contains $\mathbb{Z}^{2}$
as a finite index subgroup and $\mu$ has exponential moment.
Conversely, by work of Varopoulos \cite[Theorem 4.6]{Varopoulos}, if there is a measure $\mu$ on $G$ whose support generates $G$ and the $\mu${-}random walk is recurrent, then $G$ is either finite or contains $\mathbb{Z}$ or $\mathbb{Z}^{2}$ {as a finite index subgroup}.
We will from now on assume {that} the $\mu${-}ran\-dom walk is transient.}
Note $$p_{n}(x,y)=\sum_{\alpha \in Traj_{r}(x,y)}w(\alpha).$$

For each $x,y\in  G$ one can define a probability measure $P_{x,y}$ on
the set $Traj(x,y)$ of trajectories from $x$ to $y$ as follows:
for $V\subset Traj(x,y)$

\begin{equation}\label{probset} P_{x,y}(V)=\frac{1}{\mathcal{G}(x,y)}\sum_{\alpha \in V} w(\alpha)\end{equation}

For a subset $V\subset  G$ let $V^{c}$ denote the complement of $V$ in $ G$.
For $V\subset  G$ let

$\mathcal{G}(x,y,V)$ be the total weight of trajectories from $x$ to $y$ which are contained in $V$, except possibly for the endpoints.

For a real number $r$ define
\begin{equation}\label{weightedgreen}
\mathcal{G}(x,y|r)=\sum^{\infty}_{n=0}r^{n}p^{n}(x,y)
\end{equation}
It is easy to see that $\mathcal{G}(.,.|r)$ is $G$ equivariant, i.e.
$$\mathcal{G}(gx,gy|r)=\mathcal{G}(x,y|r)$$ for all $x,y,g\in G$, $r>0$.

When the support of $\mu$ generates $G$ as a semigroup, the convergence of the series in (\ref{weightedgreen}) does not depend on $x,y$ (see e.g. \cite[Lemma 1.7]{Woess}). Consequently, the radius $r(\mu)$ of convergence of $G(x,y|.)$ is independent of $x,y\in G$.

Note, 
$$r(\mu)=\lim \inf_{n\to \infty}p^{n}(x,y)^{-1/n}.$$

The number $\rho(\mu)=1/r(\mu)$ is called the spectral radius of $\mu$.
Kesten \cite{Kesten1}, \cite{Kesten2} and Day \cite{Day} proved that $\rho(\mu)<1$ whenever $G$ is nonamenable and the support of $\mu$ generates $G$ as a semigroup.

The following is the Harnack inequality, valid for any full-support random walk on a finitely generated group:
\begin{lemma}\label{Harnack}
For each $t\in (0, r(\mu))$ there is a {$\lambda=\lambda_{t}\in (0,1)$} such that $\mathcal{G}(x,y|t)\geq \mathcal{G}(x,z|t)\lambda^{d(y,z)}$ for all $x,y,z\in  G$
\end{lemma}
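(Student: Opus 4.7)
The plan is to deduce the Harnack inequality from a path-concatenation argument, using only that $\mathrm{supp}(\mu)$ generates $G$ as a semigroup and that all weights are nonnegative; no potential theory or geometric input is needed.

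First, I would record the Chapman--Kolmogorov style lower bound
\[
p_{n+m}(x,y)\;\geq\;p_n(x,z)\,p_m(z,y),
\]
obtained by restricting to trajectories from $x$ to $y$ that pass through $z$ after exactly $n$ steps. Multiplying by $t^{n+m}$ and summing over $n\geq 0$ gives, for any fixed integer $m$,
\[
\mathcal{G}(x,y|t)\;\geq\;t^m\,p_m(z,y)\,\mathcal{G}(x,z|t).
\]
Thus the lemma reduces to producing, for every pair $(y,z)\in G\times G$, some $m\in\N$ such that $t^m p_m(z,y)\geq \lambda_t^{d(y,z)}$ for a constant $\lambda_t\in(0,1)$ independent of $y$ and $z$.

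Next, I would use the semigroup-generating assumption to fix, for each generator $s\in S$ of the word metric, an admissible word $g^{(s)}_1,\ldots,g^{(s)}_{k_s}$ in $\mathrm{supp}(\mu)$ whose product equals $s$ and whose weight $c_s:=\prod_i\mu\bigl(g^{(s)}_i\bigr)$ is strictly positive. Set $K:=\max_s k_s$ and $c:=\min_s c_s>0$. Given $y,z\in G$, write $z^{-1}y=s_1 s_2\cdots s_n$ with $n=d(y,z)$ and concatenate the corresponding control words: this yields an admissible trajectory from $z$ to $y$ of some length $m\leq Kn$ whose weight is at least $c^n$, so $p_m(z,y)\geq c^n$. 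Since $t^m\geq \min(t,1)^{Kn}$ in both the regime $t\leq 1$ (where $t^m\geq t^{Kn}$) and the regime $t>1$ (where $t^m\geq 1$), we obtain
\[
t^m p_m(z,y)\;\geq\;\bigl(\min(t,1)^K\,c\bigr)^{d(y,z)},
\]
so taking $\lambda_t:=\min\!\bigl(\min(t,1)^K c,\tfrac{1}{2}\bigr)\in(0,1)$ completes the proof.

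There is no real obstacle: the argument is elementary and never uses the hypothesis $t<r(\mu)$ other than implicitly through the finiteness of $\mathcal{G}(x,z|t)$, which is what makes the inequality nontrivial. The only mild care required is the uniform treatment of the subcritical ($t<1$) and supercritical ($t\geq 1$) cases, which is exactly what the $\min(t,1)$ factor handles.
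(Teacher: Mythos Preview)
Your argument is correct and is the standard elementary proof of the Harnack inequality via path concatenation. The paper states Lemma~\ref{Harnack} as a known fact without proof, so there is nothing to compare; your write-up supplies exactly the expected justification.
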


This easily implies:
\begin{corollary}\label{harnackcor}
For each $t\in (0, r(\mu))$ there is an {$L_{t}>1$} such that $$L^{-d(x,y)}_{t} \leq
\mathcal{G}(x,y|t)\leq L^{d(x,y)}_{t}$$ for all $x,y,z\in  G$
\end{corollary}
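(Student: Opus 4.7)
The plan is to derive the corollary directly from the Harnack inequality of Lemma \ref{Harnack} by specializing one of its three variables, trapping $\mathcal{G}(\cdot,\cdot|t)$ between exponential functions of the word distance.

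For the lower bound, I would apply Lemma \ref{Harnack} with $z=x$, obtaining
$$\mathcal{G}(x,y|t)\;\geq\;\mathcal{G}(x,x|t)\,\lambda_t^{d(x,y)}.$$
Since $\mathcal{G}(x,x|t)=\sum_{n\geq 0}t^{n}p^{n}(x,x)\geq p^{0}(x,x)=1$, this already gives $\mathcal{G}(x,y|t)\geq \lambda_t^{d(x,y)}$, and the lower half of the bound follows with any $L_t\geq \lambda_t^{-1}$.

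For the upper bound, I would apply Lemma \ref{Harnack} with the roles of $y$ and $z$ swapped and then set $y=x$, which produces
$$\mathcal{G}(x,z|t)\;\leq\;\mathcal{G}(x,x|t)\,\lambda_t^{-d(x,z)}.$$
The crucial observation is that $\mathcal{G}(x,x|t)$ does not depend on $x$: the $G$-equivariance of $\mathcal{G}(\cdot,\cdot|t)$ recorded after equation (\ref{weightedgreen}) forces $\mathcal{G}(x,x|t)=\mathcal{G}(e,e|t)$, and this common value is a finite constant $C_t$ because the hypothesis $t<r(\mu)$ places $t$ strictly inside the radius of convergence of the series defining $\mathcal{G}(e,e|t)$. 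Choosing $L_t$ large enough to dominate both $\lambda_t^{-1}$ and $C_t$, for instance $L_t=C_t/\lambda_t$, then yields the desired upper bound (with a harmless multiplicative slack when $x=y$).

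There is essentially no obstacle here: the content of the corollary is already encoded in Lemma \ref{Harnack}. The two points worth flagging are the use of $G$-equivariance to convert the \emph{a priori} dependence of $\mathcal{G}(x,x|t)$ on $x$ into a single constant $C_t$, and the use of $t<r(\mu)$ to guarantee that this constant is finite.
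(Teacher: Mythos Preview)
Your proof is correct and is exactly the argument the paper has in mind: the authors write only ``This easily implies:'' before the corollary, and your two specializations of Lemma~\ref{Harnack} together with the equivariance $\mathcal{G}(x,x|t)=\mathcal{G}(e,e|t)<\infty$ are precisely the intended ``easy'' steps. Your remark about the harmless slack at $x=y$ is well observed---the stated inequality $\mathcal{G}(x,x|t)\leq L_t^{0}=1$ is actually false since $\mathcal{G}(e,e|t)>1$, so the corollary as written tacitly assumes $x\neq y$ (which is all that is used later).
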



We will need the following. \begin{proposition}\label{Probabilistic Lemma}
If $ G$ is nonamenable and the support of $\mu$ generates $G$ as a semigroup,  there exists $0<\phi<1$ and $D>0$ such that for any $x,y\in  G$ {and $M\in \Bbb N$ one has}
\begin{equation}\label{phibound}P_{x,y}( G \in Traj(x,y): length( G))\geq {M})\leq \phi^{{M}-D d(x,y)}. \end{equation}
\end{proposition}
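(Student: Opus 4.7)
The plan is to express the conditional probability on the left-hand side as a ratio of weighted Green's-function quantities, then exploit nonamenability to get a geometric tail in $M$ and the Harnack estimate to absorb the starting/ending points into an exponential factor in $d(x,y)$.

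First I would rewrite, directly from the definition (\ref{probset}),
\begin{equation*}
P_{x,y}\bigl(\{\alpha\in Traj(x,y):\mathrm{length}(\alpha)\geq M\}\bigr) \;=\; \frac{1}{\mathcal{G}(x,y)}\sum_{n\geq M} p_n(x,y),
\end{equation*}
since the admissible trajectories of a fixed length $n$ from $x$ to $y$ have total weight $p_n(x,y)$.

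Next I would use that $G$ is nonamenable together with the Kesten--Day theorem already recalled in the excerpt, which gives $\rho(\mu)<1$, equivalently $r(\mu)>1$. Pick any $r$ with $1<r<r(\mu)$. Then, using $r^{-n}\leq r^{-M}$ for $n\geq M$,
\begin{equation*}
\sum_{n\geq M} p_n(x,y) \;=\; \sum_{n\geq M} r^{-n}\bigl(r^n p_n(x,y)\bigr) \;\leq\; r^{-M}\sum_{n\geq 0} r^n p_n(x,y) \;=\; r^{-M}\,\mathcal{G}(x,y\mid r).
\end{equation*}
Applying Corollary~\ref{harnackcor} at $t=r$ and at $t=1$ (both of which lie in $(0,r(\mu))$) furnishes constants $L_r,L_1>1$ such that $\mathcal{G}(x,y\mid r)\leq L_r^{d(x,y)}$ and $\mathcal{G}(x,y)=\mathcal{G}(x,y\mid 1)\geq L_1^{-d(x,y)}$.

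Combining these three estimates yields
\begin{equation*}
P_{x,y}\bigl(\mathrm{length}\geq M\bigr) \;\leq\; r^{-M}\,(L_rL_1)^{d(x,y)}.
\end{equation*}
Setting $\phi=1/r\in(0,1)$ and $D=\log(L_rL_1)/\log r>0$ immediately rewrites the right-hand side as $\phi^{M-D\,d(x,y)}$, proving the claim. The argument is essentially routine once organized this way: the only conceptual input is that nonamenability produces the strictly geometric decay in $M$ (via $r(\mu)>1$), while the Harnack-type inequality of Corollary~\ref{harnackcor}, applied both at the subcritical parameter $r$ and at $1$, packages all dependence on the endpoints into an exponential factor in $d(x,y)$. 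I do not expect any serious obstacle; the only delicate point is to verify that the chosen $r$ indeed lies in the range of validity of Harnack, which is guaranteed precisely by nonamenability.
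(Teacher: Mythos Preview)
Your proof is correct and follows essentially the same approach as the paper: both use Kesten--Day to pick $r\in(1,r(\mu))$, bound $\sum_{n\geq M}p_n(x,y)\leq r^{-M}\mathcal{G}(x,y\mid r)$, and then apply Corollary~\ref{harnackcor} at both $t=r$ and $t=1$ to extract the factor $\phi^{M-Dd(x,y)}$. The only cosmetic difference is that the paper sets $L=\max(L_1,L_r)$ and obtains $D=2\log_r L$, whereas you keep $L_1$ and $L_r$ separate and obtain the slightly sharper $D=\log_r(L_1L_r)$.
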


\begin{proof}
Since $\Gamma$ is nonamenable, $r(\mu)>1$.
Let $t\in (1,r(\mu)).$
Then
$$G(x,y|t)=\sum^{\infty}_{n=0}t^{n}p^{n}(x,y)$$ converges for all
$x,y\in G$.
Let $\phi=1/t$ and $L=\max(L_{1},L_{t}).$

We have

$$\sum_{n\geq M}p^{n}(x,y)\leq t^{-M}\sum_{n\geq M}t^{n}p^{n}(x,y)
\leq t^{-M}\mathcal{G}(x,y|t)\leq \phi^{M}L^{d(x,y)}.$$

On the other hand,
$$\mathcal{G}(x,y)\geq L^{-d(x,y)}.$$

Thus we obtain
$$\sum_{n\geq M}p^{n}(x,y)\leq
\phi^{M}L^{d(x,y)}\leq \phi^{M}L^{2d(x,y)}\mathcal{G}(x,y)=\phi^{M-Dd(x,y)}\mathcal{G}(x,y)$$ where $D=2 \log_{t}L>0$.
\end{proof}

\section{Background on {convergence groups and   Floyd compactification{s}}}
By a \it graph \rm we mean a pair $(\Delta^0,\Delta^1)$ where
$\Delta^0$ is a set and $\Delta^1$ is a set of subsets
of cardinality 2 of $\Delta^0$.



A \it path \rm in $\Delta$ is a map $J\overset \gamma\to\Delta^0$
where $J$ is a finite nonempty convex subset of $\Bbb Z$, such that
$\{ \gamma(i), \gamma(i{+}1)\}{\in}\Delta^1$ for all $i{\in}J{\setminus}\{\s{max}J\}$.
The \it length \rm of such a path $\gamma$ is the number $\s{max}J{-}\s{min}J$.

If $\s{min}J{=}a,\s{max}J{=}b$ we write $J{=}\overline{a,b}$.

For $x,y$ let $\s{Path}_\Delta(x,y)=
\s{Path}(x,y)=\{ \gamma: \gamma$ is a path
$\overline{0,n}\to\Delta^0$ for some nonnegative integer
$n$ such that $ \gamma(0){=}x, \gamma(n){=}y\}$.


Suppose that $\Delta$ is connected.
So the ``standard'' distance function $d$ on $\Delta^0$ is given by\hfil\penalty-10000
$d_\Delta(x,y)=
d(x,y)=\s{min}\{\s{length}(\gamma): \gamma{\in}\s{Path}(x,y)\}$.

Let $\Bbb R_{>0}\overset f\to\Bbb R_{>0}$ be a nonincreasing function.
We use $f$ for rescaling the distance $d$ as follows.
Let $v{\in}\Delta^0$ be a ``basepoint''.
For $e{\in}\Delta^1$ we declare that the $(f,v)$\it-length \rm of the edge
$e$ is equal to $f(d(e,v))$.
The $(f,v)$\it-length \rm of a \bf path \rm$J\overset \gamma\to\Delta^0$
is the number\hfil\penalty-10000
$\s{length}_v^f(\gamma)=\sum_{j\in J\setminus\{\s{max}J\}}\s{length}_v^f\{ \gamma(j), \gamma(j{+}1)\}$,
and the $(f,v)$\it-distance \rm function\hfil\penalty-10000
$\delta^f_v(x,y)=\s{min}\{\s{length}_v^f \gamma: \gamma{\in}\s{Path}(x,y)\}$
is well-defined.

We suppose that the graph $\Delta$ is \it locally finite\rm, i.e,
the set of edges containing each vertex $v{\in}\Delta^0$ is finite.

If the ``rescaling function'' $f$ satisfies the condition\hfil\penalty-10000
{\begin{equation}\label{series} \sum_{k=0}^\infty f(k)<\infty\end{equation}}
 then the Cauchy completion
of the metric space $(\Delta^0,\delta^f_v)$ is compact.

Now we impose on $f$ one more condition:

{\begin{equation}\label{coef} \exist\kappa\geq 1\ \foral n\in\Bbb N\ :\   \Frac{f(n)}{f(n{+}1)}\leq \kappa.\end{equation}}

Any nonincreasing function satisfying {(\ref{series}) and (\ref{coef})} is called a
\it Floyd rescaling function\rm.
For such a function, the Cauchy completion ${\overline{\Delta}_f}$
(called the \it Floyd compactification \rm of $\Delta$ with respect to $f$)
does not depend on the choice of the base point $v$ and
every isometry of the metric space $(\Delta,d)$ is uniformely
continuous with respect to $\delta_f$ and hence extends to a homeomorphism
${\overline{\Delta}_f}\to{\overline{\Delta}_f}$.

The distance function $\delta^f_v$ in the case when $f(x){=}\Frac1{x^2{+}1}$
was introduced by W. Floyd in \cite{Floyd} who used it to study limit sets of
geometically finite Kleinian group. So we will call $\delta^f_v$ the
\it Floyd distance\rm.

The {complement} $\partial_f\Delta={\overline{\Delta}_f}\setminus \Delta^0$
is the \it Floyd boundary \rm of $\Delta$ with respect to $f$.

Suppose that $\Delta$ is a Cayley graph of a group $G$ with respect to
a finite generating set $\Cal S$. {We denote by $d(\cdot, \cdot)$ and $||\cdot||$ the word distance and its norm in the graph $(\Delta, \Cal S)$. For a fixed system $S$ rescaling the distance $d(v, \rm{edge})$ by a function $f$ we obtain in the same way  the Floyd compactification of $G$ and its boundary  denoted respectively by
 $\Gf$ and $\partial_f G.$}

{\it Remark{s}.} If $\underset{x\rightarrow\infty}{\s{lim\,sup}}
 \Frac{f(x)}{f(x{+}1)}{=}1$
 then the Floyd compactification does not depend on the choice
 of finite generating set {(see \cite[Lemma 2.5]{GePoJEMS} or \cite[Corollary 7.7]{GePoGGD}} for more details).


 For the construction of the Floyd compactification to make sense it suffices
to consider a function $\Bbb N\overset f\to\Bbb R_{>0}$ defined
on the set of positive integers. However we {extend the definition of $f$ to the set  of positive real numbers $\Bbb R_{>0}$  which will  simplify} some
calculations (see the formulas of Section 4 below).

\medskip

 {For the reader's convenience we recall now few standard definitions currently used in the paper.
An action of $G$ on a compactum $T$ is {\it convergence} if the induced action on the set of distinct triples of $T$ is discontinuous.}
{Suppose $G\curvearrowright T$ is a convergence action. The set of accumulation points  $\Lambda G$ of any orbit $Gx\ (x\in T)$ is called {\it limit set} of the action. As long as $\Lambda G$ has more than two points, it is uncountable and the unique minimal closed $G$-invariant subset of $T.$.
The action is  then said to be nonelementary. In this case, the orbit of every point in $\Lambda$ is infinite.
If $G$ admits a nonelementary convergence action, then $G$ must contain a free subgroup of rank 2, hence in particular is non-amenable.}

The action $G\curvearrowright T$ is {\it minimal} if $\Lambda G=T.$ There is a natural topology on the disjoint union ({\it attractor sum}) $X=G\sqcup \Lambda G$ such that the action $G\act X$ is also convergence \cite{Gerasimov}. In particular if $T=\partial_fG$ is the Floyd boundary then the action on $T$ is convergence \cite{Karl} and so is on $\Gf=G\sqcup \partial G_f.$}

{A point $\zeta\in\Lambda G$  is called  {\it conical} if there is a   sequence $g_{n}\in  G$ and distinct points $\alpha,\beta \in \Lambda G$ such that
$g_{n}\zeta \to \alpha$ and $g_{n}\eta \to \beta$ for all $\eta \in  X\ \setminus\{\zeta\}.$}

{A point $p\in \Lambda G$ is {\it bounded parabolic} if it is the unique  (parabolic) fixed point of its stabilizer ({\it maximal parabolic}) subgroup  $H$, which acts cocompactly on $\Lambda\setminus\{p\}$.
B.~Bowditch   proved that if $G$ is a relatively hyperbolic group then there exists a compactum $T$ on which the action is minimal, convergence and the action $G\act (T=\Lambda G)$ is geometrically finite, i.e. every point of $T$ is either conical or bounded parabolic \cite{Bowditch}. Furthermore  the action of $G$ extends to a convergence action on the compactum  $\overline{G}_{\mathcal B}=G\sqcup \Lambda G$ which we call {\it Bowditch compactification}.   In its turn the existence of a geometrically finite action of a finitely generated group on a metrizable compactum implies that the group is relatively hyperbolic with respect to the system of the parabolic points stabilizers \cite{Ya}. So the existence of a geometrically finite action can be taken as a definition of the relative hyperbolicity (the proof that this is equivalent to several other dynamical definitions can be found in \cite{Ge1}, \cite{Gerasimov}, \cite{GePoGGD})}

{A bi-infinite quasigeodesic  $\gamma :
\Bbb Z \to G$ is a {\it horocycle}  at $p\in
T$ if $\displaystyle\lim_{n\to\pm\infty}\gamma(n){=}p$.  The unique limit point $p$ of $\gamma$  is not conical \cite[Proposition 4.4.1]{GePoCrelle} and is called {\it base}
of the horocycle. A {\it horosphere} $P$ at the parabolic point $p$ is the set of all horocycles based at $p.$ On can equivalently define the horosphere as a  neighborhood of a left coset $gH\ (g\in G)$ where $H$ is the stabilizer of the parabolic point $p$ (see \cite{GePoCrelle}).}

{The Floyd compactification has been instrumental in studying  {\it relatively hyperbolic groups.}
{Indeed, whenever $G\curvearrowright B$ is a nonelementary geometrically finite minimal action on a compactum, Gerasimov proved
that there exists a positive $\lambda{\in}(0,1) $ such that for every function $f:\R_{>0}\to\R_{>0}$ satisfying the conditions (\ref{series}), (\ref{coef})  and $f(x)\leq \lambda^x\ (x\in\R)$ there exists a continuous equivariant surjection
$F:\partial_{f}G\to B$}  \cite[Proposition 3.4.6]{Gerasimov}.}

\section{Proof of Theorem 1.1{{\rm .a}}: Geometric part}

{Let} $G$ be a finitely generated group {equipped} with a word distance $d(\cdot, \cdot)$.

For a basepoint $o\in G$ let $\delta^{f}_{o}(x,y)$ denote the Floyd distance based at $o$ with respect to the rescaling function $f$.

Given a symmetric measure $\mu$ on $G$ Blachere and Brofferio \cite{BB} introduced a metric {$d_{\mathcal{G}}$} on $G$, called the Green metric, {given} by
$$d_{\mathcal{G}}(x,y)=-{\ln}\frac{\mathcal{G}(x,y)}{\mathcal{G}(e,e)}.$$
When $\mu$ is not symmetric, $d_{\mathcal{G}}$ still defines an asymmetric metric on $G$.
The expression makes sense whenever the Markov chain defined by $\mu$ is transient.
By \cite[Lemma 3.6]{BHM}, $d_{\mathcal G}$ is quasi-isometric to the word metric whenever $G$ is nonamenable and $\mu$ is symmetric and has exponential moment.
The goal of the next two sections is to prove the following, {assuming $\mu$ has finite support generating $G$.}

\begin{theorem}\label{Karlsson}  {\sf(Ancona-Karlsson type inequality). There is a function $A:\R^{+}\to \R^{+}$ such that for all $x,y,z\in G$ one has
\begin{equation}\label{AncKarl} d_{\mathcal{G}}(x,y)\geq d_{\mathcal{G}}(x,z)+d_{\mathcal{G}}(z,y)-A(\delta^{f}_{z}(x,y)).\end{equation}}
\end{theorem}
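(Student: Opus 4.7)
The plan is to re-express the inequality as a multiplicative bound on Green's functions and then use Theorem \ref{avoidball} together with the Harnack inequality. Taking exponentials, the desired
\[
d_{\mathcal{G}}(x,z)+d_{\mathcal{G}}(z,y) \le d_{\mathcal{G}}(x,y) + A(\delta^{f}_{z}(x,y))
\]
is equivalent to
\[
\mathcal{G}(x,y) \;\le\; e^{A(\delta^{f}_{z}(x,y))} \cdot \frac{\mathcal{G}(x,z)\,\mathcal{G}(z,y)}{\mathcal{G}(e,e)},
\]
so the task is to produce a multiplicative factor, depending only on $\delta^{f}_{z}(x,y)$, by which $\mathcal{G}(x,z)\mathcal{G}(z,y)/\mathcal{G}(e,e)$ has to be inflated to dominate $\mathcal{G}(x,y)$.

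Applying Theorem \ref{avoidball} with $\varepsilon=\tfrac12$ provides a radius $R=R(\delta^{f}_{z}(x,y))$ such that the $P_{x,y}$-mass of $\mu$-trajectories from $x$ to $y$ which enter the ball $B(z,R)$ is at least $\tfrac12$. Denoting by $\mathcal{G}^{R}(x,y)$ the total weight of such trajectories, this reads $\mathcal{G}^{R}(x,y) \ge \tfrac12\,\mathcal{G}(x,y)$, so it suffices to bound $\mathcal{G}^{R}(x,y)$ from above in the desired form.

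Decomposing every contributing trajectory at its first visit $w\in B(z,R)$, the Markov property gives the factorisation
\[
\mathcal{G}^{R}(x,y) \;=\; \sum_{w\in B(z,R)} H_{R}(x,w)\,\mathcal{G}(w,y) \;\le\; \sum_{w\in B(z,R)} \frac{\mathcal{G}(x,w)\,\mathcal{G}(w,y)}{\mathcal{G}(e,e)},
\]
where $H_{R}(x,w)$, the weight of trajectories from $x$ to $w$ that first enter $B(z,R)$ at $w$, is dominated by the first-hitting weight $F(x,w)=\mathcal{G}(x,w)/\mathcal{G}(e,e)$. For $w\in B(z,R)$ the Harnack inequality (Lemma \ref{Harnack}) yields $\mathcal{G}(x,w)\le L^{R}\mathcal{G}(x,z)$ and $\mathcal{G}(w,y)\le L^{R}\mathcal{G}(z,y)$, while the Cayley graph of $G$ has at most exponential ball growth, say $|B(z,R)|\le C^{R}$. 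Putting these together gives
\[
\mathcal{G}(x,y) \;\le\; 2\,C^{R} L^{2R}\cdot\frac{\mathcal{G}(x,z)\,\mathcal{G}(z,y)}{\mathcal{G}(e,e)},
\]
so $A(r):=\ln 2 + R(r)\bigl(\ln C+2\ln L\bigr)$ works.

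The main obstacle is the input Theorem \ref{avoidball}, which bridges the purely Floyd-geometric notion of separation between $x$ and $y$ as seen from $z$ (captured by $\delta^{f}_{z}(x,y)$) with the probabilistic statement that a $\mu$-random trajectory from $x$ to $y$ must in fact enter a controlled neighbourhood of $z$. I expect it to follow by combining Karlsson's lemma (which converts $\delta^{f}_{z}(x,y)>\varepsilon$ into $d(z,[x,y])\le R(\varepsilon)$ for a word geodesic $[x,y]$) with Proposition \ref{Probabilistic Lemma} (which forces the bulk of the $P_{x,y}$-mass to be concentrated on trajectories whose length is comparable to $d(x,y)$, and which therefore shadow any word geodesic from $x$ to $y$). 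This is the geometric heart of the present section; once Theorem \ref{avoidball} is available, the factorisation plus Harnack step above is essentially routine.
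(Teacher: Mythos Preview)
Your reduction of Theorem~\ref{Karlsson} to Theorem~\ref{avoidball} is correct and is exactly what the paper does (this is the paper's ``Proof of Theorem~\ref{relancona} from Theorem~\ref{avoidball}''): fix $\varepsilon=\tfrac12$, decompose over the first entrance into $B(z,R)$, and absorb the sum over the ball via Harnack and exponential growth. One small omission: the paper first disposes of the amenable case separately, since Theorem~\ref{avoidball} (via Proposition~\ref{Probabilistic Lemma}) uses $\rho(\mu)<1$. When $|\partial_f G|\le 2$ the inequality is handled directly by a finite-radius Harnack argument.

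Where your proposal has a genuine gap is in the last paragraph, your sketch of how Theorem~\ref{avoidball} ought to follow. The step ``trajectories whose length is comparable to $d(x,y)$ \emph{therefore shadow any word geodesic from $x$ to $y$}'' is false in general and is precisely the difficulty. A $\mu$-trajectory of length $\sim d(x,y)$ need not stay near any geodesic $[x,y]$; there is no fellow-travelling statement available here, and Karlsson's lemma applied to $[x,y]$ only tells you $z$ is near the geodesic, not near the random path. The paper's actual proof of Theorem~\ref{avoidball} does not go through geodesics at all. It introduces the hybrid sets $E_r(x)=N_r o\cap N^f_{e(r)}x$ (a word-metric ball intersected with a Floyd-metric ball), and Proposition~\ref{Geometric Proposition} shows, purely from the Floyd geometry, that any bounded-jump trajectory from $E_{\tau r}(x)$ to $E_{\tau r}(y)$ which avoids $E_r(x)$ must have length at least $h(r)$ with $h(r)/r\to\infty$. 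This is then fed into Proposition~\ref{Probabilistic Lemma} and summed over scales $\tau^i R$ to bound $P_{x,y}\bigl(\bigcup_i Q_{\tau^i R}(x,y)\bigr)$. The multi-scale decomposition and the Floyd-length lower bound for avoiding trajectories are the genuine geometric content; your Karlsson-lemma-plus-shadowing heuristic does not substitute for them.
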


\proof
{By equivariance we can assume $z=o$ is a fixed basepoint.
Note, if $G$ is amenable, Karlsson showed that $|\partial_{f}G|\leq 2$ \cite[Corollary {2}]{Karl}.
If the Floyd boundary is empty, $G$ is finite so the $\mu$ random walk is recurrent. The same is true when $|\partial_{f}G|=2$, since in that case $G$ is virtually $\mathbb{Z}$.
We now treat the case $|\partial_{f}G|=1$.
Then for each $\delta>0$ there is an $R$ such that if $x,y$ are both outside  {the ball} $B_{R}o$ (in the word metric) then $\delta^{f}_{o}(x,y)<\delta$.
So, if $\delta^{f}_{o}(x,y)>\delta$ we know that one of $x,y$ (say $x$) is in $B_{R}o$. By the Harnack inequality, this means $C^{-1} \leq G(x,o)\leq C$ and $C^{-1}\leq G(x,y)/G(o,y)\leq C$, where $C>1$ depends only on $R$ (hence only on $\delta$) so we must have
$G(x,y)\leq C^{2}G(x,o)G(o,y)$ whenever $\delta^{f}_{o}(x,y)>\delta$ and taking logarithms we obtain the desired inequality (\ref{AncKarl}) with  $A(\delta)={-2\ln C}$.
From now on we assume $G$ is nonamenable.}

Fix a constant $\tau>1$ (it will suffice throughout to consider $\tau=2$).
We begin with an elementary lemma.
\begin{lemma}\label{helpfunction}
There exists a function $e:\mathbb{R}^{+}\to \mathbb{R}^{+}$  such that as  $r\to\infty$ we have:



 \begin{equation}  \label{decre}
e(r)\to 0,\end{equation}

and

\begin{equation}\label{edecre}\frac{e(r)-e(\tau r)}{rf(r)}\to \infty.\end{equation}
\end{lemma}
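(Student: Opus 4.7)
\textbf{Proof plan for Lemma \ref{helpfunction}.} The lemma is purely about rescaling functions: it asks for a function $e$ going to $0$ at infinity whose successive decrements $e(r)-e(\tau r)$ swamp $rf(r)$. The key observation is that, although $rf(r)$ need only tend to $0$ (a consequence of the summability $\sum_n f(n)<\infty$ and monotonicity of $f$), the Cauchy condensation test shows that the much stronger geometric sum
\[
A_0 \coloneq \sum_{k\geq 0}\tau^{k}f(\tau^{k})<\infty .
\]
Indeed, since $f$ is nonincreasing,
\[
\int_{1}^{\infty} f(s)\,ds \;\asymp\; \sum_{k\geq 0}(\tau^{k+1}-\tau^{k})\,f(\tau^{k})
\;=\;(\tau-1)\sum_{k\geq 0}\tau^{k}f(\tau^{k}),
\]
and the integral is finite by $\sum_{n}f(n)<\infty$ and condition (\ref{coef}). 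Write $A_k=\sum_{j\geq k}\tau^{j}f(\tau^{j})$, so $A_k\to 0$.

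The plan is then to apply the classical ``square-root of the tail'' amplification trick to the sequence $a_k\coloneq\tau^{k}f(\tau^{k})$. Set
\[
c_k \coloneq \frac{a_k}{\sqrt{A_k}} .
\]
By the elementary inequality $\sqrt{A_k}-\sqrt{A_{k+1}}\geq\frac{A_k-A_{k+1}}{2\sqrt{A_k}}=\frac{a_k}{2\sqrt{A_k}}$, the sum $\sum_k c_k$ telescopes to at most $2\sqrt{A_0}<\infty$. Define $e_k\coloneq\sum_{j\geq k}c_j$, so $e_k\downarrow 0$ and $e_k-e_{k+1}=c_k=a_k/\sqrt{A_k}$.

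Finally, extend to a function on $\mathbb{R}^{+}$ by setting $e(r)\coloneq e_{k(r)}$ where $k(r)\coloneq\lfloor\log_{\tau}\max(r,1)\rfloor$. Then $e(r)\to 0$ as $r\to\infty$, proving (\ref{decre}). For $r\in[\tau^{k},\tau^{k+1})$ one has $\tau r\in[\tau^{k+1},\tau^{k+2})$, hence $e(r)-e(\tau r)=c_k$. On the other hand, since $f$ is nonincreasing,
\[
rf(r)\;\leq\;\tau^{k+1}f(\tau^{k})\;=\;\tau\,a_k ,
\]
so that
\[
\frac{e(r)-e(\tau r)}{rf(r)}\;\geq\;\frac{c_k}{\tau\,a_k}\;=\;\frac{1}{\tau\sqrt{A_k}}\;\longrightarrow\;\infty
\]
as $r\to\infty$ (since $A_k\to 0$), which is (\ref{edecre}).

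The only ``obstacle'' is conceptual, and it is the point at which the hypotheses on $f$ enter: a naive choice such as $e(r)=F(r)^{\alpha}$ with $F(r)=\int_{r}^{\infty}f$ fails because bounding $F(r)-F(\tau r)$ from below in terms of $rf(r)$ requires comparing $f(\tau r)$ to $f(r)$, and condition (\ref{coef}) is far too weak (it loses an exponential factor $\kappa^{(\tau-1)r}$). The discrete/condensation viewpoint avoids this by comparing $rf(r)$ directly to the "right" quantity $a_k=\tau^{k}f(\tau^{k})$, whose summability is equivalent to that of $f$ itself.
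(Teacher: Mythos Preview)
Your proof is correct and rests on the same ``square root of the tail'' amplification as the paper's, but packaged discretely via Cauchy condensation rather than via integrals. The paper defines $\alpha(s)=\int_s^\infty f(t)\,dt$, sets $g(t)=f(t/\tau)/\sqrt{\alpha(t/\tau)}$, and takes $e(r)=\int_r^\infty g(t)\,dt$; a short substitution shows this is exactly $e(r)=2\tau\sqrt{\alpha(r/\tau)}$, so the paper's $e$ is literally the square root of the (shifted) tail. The mean value theorem then gives $e(r)-e(\tau r)=(\tau-1)r\,g(s)$ for some $s\in[r,\tau r]$, and the crucial point is that $s/\tau\leq r$, so $f(s/\tau)\geq f(r)$ by monotonicity alone---no comparison of $f(\tau r)$ with $f(r)$ is ever needed. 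Thus the ``obstacle'' you flag in your last paragraph is real for the unshifted choice $e(r)=\sqrt{\alpha(r)}$ but does not arise in the paper's version: the shift by $\tau$ in the integrand plays exactly the role that your condensation step plays. Both routes deliver the same final bound $\dfrac{e(r)-e(\tau r)}{rf(r)}\gtrsim \alpha(r)^{-1/2}\to\infty$, so the difference is one of packaging rather than of substance.
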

\begin{proof}

Let
$$\alpha(s)=\int^{\infty}_{s}f(t)dt$$ and
$$g(t)=\frac{f(t/\tau)}{\alpha(t/\tau)^{1/2}},$$

{The function $\alpha(\cdot)$ is well-defined by the condition (\ref{series}) of the last section, and $\displaystyle \lim_{s\to\infty}\alpha(s)=0.$}

We also claim that the integral $\int^{\infty}_{0}g(t)dt$ converges.
Indeed,  we have
$$\frac{d\alpha}{ds}=-f(s)$$
Thus for every $M>0$ we obtain

$$\int^{M}_{0}\frac{f(t/\tau)}{\alpha(t/\tau)^{1/2}}dt=\tau \int^{\alpha(0)}_{\alpha(M/\tau)} \frac{d\alpha}{\sqrt\alpha}=2\tau\Big(\sqrt{\alpha(0)}-\sqrt{\alpha(M/\tau)}\Big)\leq 2\tau  \sqrt{\alpha(0)}.$$

Therefore the  function
\begin{equation}\label{er}
e(r)=\int^{\infty}_{r}g(t)dt
\end{equation}
is {also} well defined {and the condition (\ref{decre}) is satisfied.}

On the other hand, by the mean value theorem there is an $s\in [r,\tau r]$
with $$e(r)-e(\tau r)=(\tau r-r) g(s).$$
Thus $$\frac{e(r)-e(\tau r)}{rf(r)}=\frac{(\tau-1)g(s)}{f(r)}=(\tau-1)\frac{f(s/\tau)}{f(r)\alpha(s/\tau)^{1/2}}\geq \frac{\tau-1}{\alpha(s/\tau)^{1/2}}$$ $$\geq \frac{\tau-1}{\alpha(r/\tau)^{1/2}}\to \infty.$$
\end{proof}
\begin{remark}
If $f$ satisfies $f(r)\leq r^{-1-\epsilon}$ for some $\epsilon>0$  we can use {in the argument above} the simpler expression $e(r)=\frac{1}{\log(r)}.$
\end{remark}

For $S\subset G$ we denote by $N_{r}S\subset G$ the $r$-neigborhood of $S$ with respect to the word metric {$d$}.
Denote also by $N^{f}_{r}S$  the $r$-neighborhood of $S$ in the Floyd metric $\delta^{f}_{o}$.

Let $e:\R\to R$ be a function satisfying Lemma \ref{helpfunction}.

Let $E_{r}(x)=N_{r}o\cap N^{f}_{e(r)}x.$

The following geometric estimate is crucial for the proof of Theorem \ref{relancona}.
\begin{proposition}\label{Geometric Proposition}For any $K>0$ there are functions $R_{0}:\R\to \R$ and $h:\R \to \R$ with $h(r)/r\to \infty$ as $r\to \infty$
such that for all $x,y \in G$ and all $r>R_{0}(\delta^{f}(x,y))$, for each $u\in E_{\tau r}(x)$ and
$v\in E_{\tau r}(y)$, any trajectory  from $u$ to $v$ disjoint from $E_{r}(x)$ {with jump size bounded by $K$} has length at least $h(r)$.
\end{proposition}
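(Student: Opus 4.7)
I would split according to whether the trajectory $\gamma$ from $u$ to $v$ ever enters the word ball $N_r o$; in both cases the goal is to lower-bound the Floyd length of $\gamma$ and then convert this to a lower bound on the combinatorial step count. The key quantitative input is that by the slowly-varying condition (\ref{coef}), a single jump of word size at most $K$ with both endpoints at word distance at least $r-K$ from $o$ has Floyd length at most $C f(r)$, with $C = K\kappa^K$. A second crucial observation is that the endpoints $u$ and $v$ must lie outside $N_r o$: since $\gamma$ avoids $E_r(x)$ while $u \in E_{\tau r}(x)$ gives $\delta^f_o(u,x) \leq e(\tau r) < e(r)$, membership of $u$ in $N_r o$ would force $u \in E_r(x)$, a contradiction.

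For the first case, if $\gamma$ first enters $N_r o$ at some vertex $p$, then $p \notin E_r(x)$ forces $\delta^f_o(p,x) > e(r)$, so by the triangle inequality $\delta^f_o(u,p) > e(r) - e(\tau r)$. Every jump of the sub-trajectory from $u$ to $p$ has both endpoints at word distance at least $r - K$ from $o$, hence Floyd length at most $C f(r)$, so this sub-trajectory already has at least $(e(r) - e(\tau r))/(C f(r))$ steps. For the second case, $\gamma$ stays outside $N_r o$; the same per-step Floyd bound then applies along all of $\gamma$, while $\delta^f_o(u,v) \geq \delta^f_o(x,y) - 2 e(\tau r)$ forces the total step count to be at least $(\delta^f_o(x,y) - 2 e(\tau r))/(C f(r))$.

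Setting $h(r) = (e(r) - e(\tau r))/(C f(r))$, the growth condition $h(r)/r \to \infty$ is immediate from (\ref{edecre}). The first case is bounded below by $h(r)$ by construction; the main obstacle is that the second-case bound involves $\delta^f_o(x,y)$ whereas $h$ must not, and this is exactly what is resolved by allowing $R_0$ to depend on $\delta = \delta^f_o(x,y)$. I would take $R_0(\delta)$ large enough that for $r > R_0(\delta)$ both $e(\tau r) \leq \delta/4$ and $e(r) - e(\tau r) \leq \delta/2$ hold, which is possible since $e(r) \to 0$ by (\ref{decre}). Then the second-case bound simplifies to at least $\delta/(2 C f(r)) \geq h(r)$, unifying the two cases. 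The one delicate interplay is the balance inside Lemma \ref{helpfunction}: the denominator $C f(r)$ must decay fast enough relative to $r$ for the gap $e(r) - e(\tau r)$ in the numerator to produce $h(r)/r \to \infty$, and this is precisely what condition (\ref{edecre}) was engineered to guarantee.
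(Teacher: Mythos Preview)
Your proposal is correct and follows essentially the same approach as the paper: the same two-case split according to whether $\gamma$ meets $N_r o$, the same per-step Floyd bound $\lesssim f(r)$, the same lower bounds $e(r)-e(\tau r)$ and $\delta-2e(\tau r)$ on the Floyd length, and the same definition of $h$ with the same appeal to (\ref{edecre}). One small remark: your argument that $u\notin N_r o$ is correct and is used (implicitly) in the paper as well, but the parallel claim for $v$ does not follow by the same reasoning (the trajectory avoids $E_r(x)$, not $E_r(y)$); fortunately you never actually use $v\notin N_r o$, so this does not affect the proof.
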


\begin{proof}
See the figure below for an illustration.
Denote by $\delta$ the Floyd distance $\delta^{f}_{o}(x,y)$.
Let $\gamma=\gamma_{0},...,\gamma_{N-1}$ be a trajectory of length $N$, with jump size bounded by $K$, from $u=\gamma_{0}\in E_{\tau r}(x)$  to $v=\gamma_{N-1}\in E_{\tau r}(y)$ and not intersecting $E_{r}(x)$. Let $length(\gamma)=N$ and $$l^f_o(\gamma)=\sum^{N}_{n=1}\delta{_o^f}(\gamma_{n},\gamma_{n-1}).$$

First consider the case when $\gamma$ does not pass through $N_{r}o$. Then for $0\leq n <N$ any unit speed trajectory between $\gamma_n$ and $\gamma_{n+1}$ does not pass through $N_{r-K/2}o$,
so $$\delta^{f}_{o}(\gamma_{n},\gamma_{n+1})\leq d(\gamma_{n},\gamma_{n+1})f(r-K/2)\leq K f(r-K/2)$$
for each $0\leq n \leq N$.
So $$\delta^f_o(u,v)\leq l^{f}_{o}(\gamma)\leq
  K \cdot length (\gamma)\cdot f(r-K/2).$$

  We obtain $$length(\gamma)\geq \frac{\delta^{f}_{o}(u,v)}{K f(r-K/2)}\geq \frac{\delta-2e(\tau r)}{K f(r-K/2)}.$$

On the other hand, if $\gamma$ does pass through $N_{r}o$ let $\gamma(i_{0})$ and $\gamma(i_{1})$  be the first and last intersection of $\gamma$ with $N_{r}o$ respectively.
Since $$d(\gamma_{n},\gamma_{n+1})\leq K\  {(n\in\Bbb N)}$$ {we still have}
$$\delta^{f}_{o}(\gamma_{n},\gamma_{n+1})\leq f(r-K/2)d(\gamma_{n},\gamma_{n+1})\ {\rm for\ all}\ 0\leq n< i_{0}.$$
{The trajectory} $\gamma$ does not intersect $E_{r}(x)$ {so
$\delta^f_o(x,\gamma(i_0))\geq e(r).$}
Since $u\in E_{\tau r}(x)$ we have $\delta^f_o(x,u)\leq e(\tau r).$  {It follows} {that}
$$length(\gamma)\cdot f(r-K/2)\geq length(\gamma\vert_{[0, i_0]})\cdot f(r-K/2)$$

 $$\geq {1\over K}\sum_{0\leq n<{i_{0}-1}}f(r-K/2)d(\gamma_{n},\gamma_{n+1})\geq {1\over K}  \sum_{0\leq n<{i_{0}-1}}\delta^{f}_{o}(\gamma_{n},\gamma_{n+1})$$

$$\geq {1\over K}\delta_o^f(\gamma(i_0{)}, u) \geq  {1\over K}\vert\delta^f_o(x,\gamma_{i_0})-\delta^f_o(x,u)\vert\geq {1\over K}(e(r)-e(\tau r)),$$

\noindent where $\gamma\vert_{[0, i_0]}$ denotes the restriction of $\gamma$ to $[0, i_0].$

Since the function (\ref{er}) decays to zero there exists $R_0=R_{0}(\delta)$ such that for all $r\geq R_0$ we have \begin{equation}\label{lbound} \delta \geq e_{r}+e_{\tau r.}\end{equation} {By (\ref{coef}) there exists a constant $\kappa > 1$ such that  $\displaystyle f(r-K/2){\leq} \kappa^{[K/2+1]}f(r)$ where $[\cdot]$ denotes the integer part of a number.} Therefore in both cases we obtain
$$length(\gamma)\geq \frac{e(r)-e(\tau r)}{Kf(r-K/2)}\geq C(K)\frac{e(r)-e(\tau r)}{Kf(r)}$$
where {$\displaystyle C(K)={\kappa^{[K/2+1]} \over K}.$}
 {Set $$h(r)=C(K)\frac{e(r)-e(\tau r)}{Kf(r)}.$$}  {It follows from (\ref{edecre})  that} $h(r)/r\to \infty$ as $r\to \infty$ completing the proof.

\end{proof}
\noindent\begin{picture}(0,230)(0,0)
\put(-140,-500){\includegraphics*{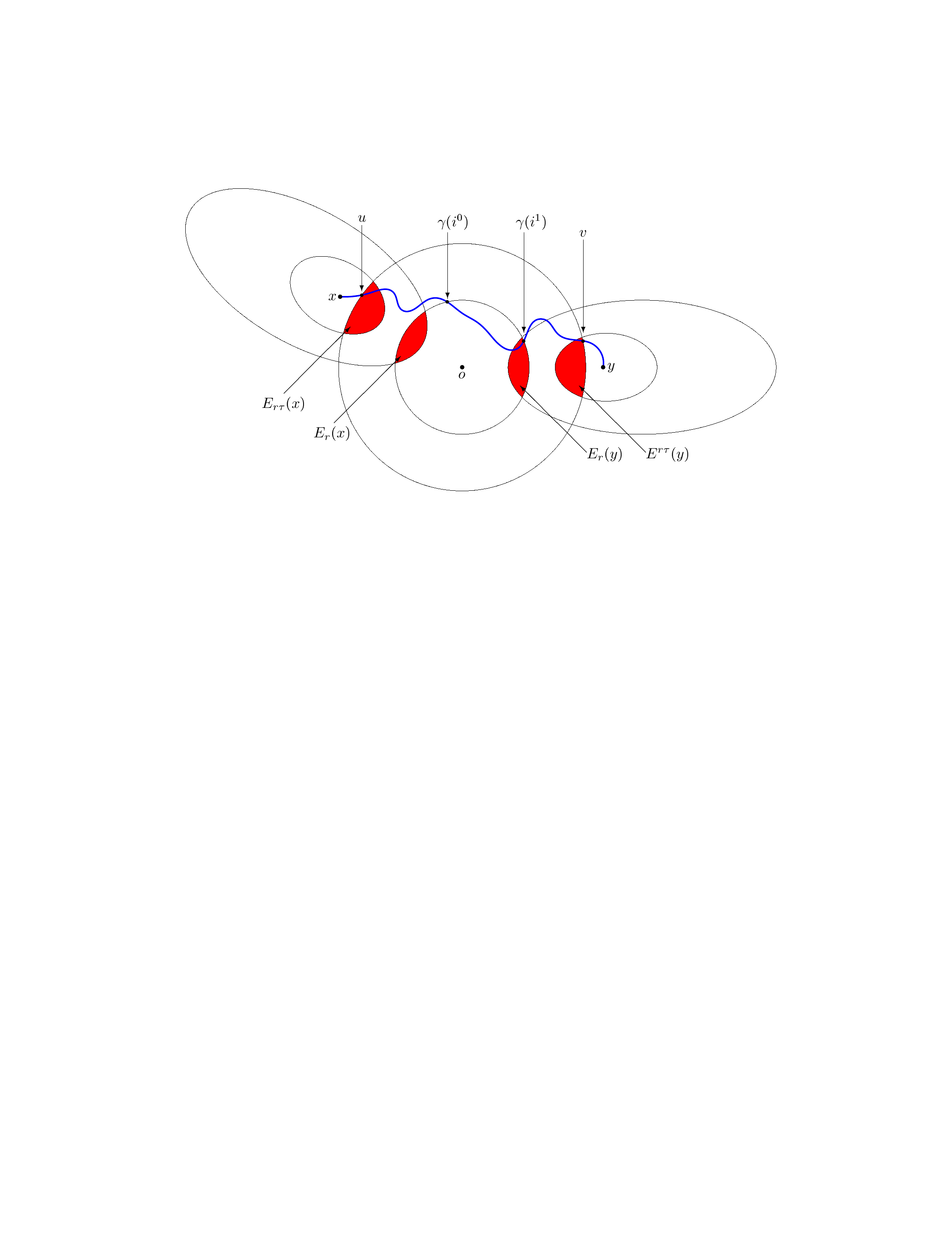}}
\end{picture}

\section{End of {the proof} of Theorem 1.1{.{\rm a}}: Probabilistic part}
The goal of this section is to prove the following multiplicative version of Theorem \ref{Karlsson}  \begin{theorem}\label{relancona}
There is a decreasing function $S:\R^{+}\to \R^{+}$ such that for all $w,x,y\in G$
\begin{equation}\label{anconamult}\mathcal{G}(x,y)\leq S(\delta^{f}_{w}(x,y))\mathcal{G}(x,w)\mathcal{G}(w,y)\end{equation}
\end{theorem}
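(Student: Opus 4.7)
The plan is to split admissible trajectories from $x$ to $y$ into those that hit a ``shadow'' region of $x$ near $w$ and those that bypass it, and to control the two families by the Harnack inequality and the combination of Proposition \ref{Geometric Proposition} with Proposition \ref{Probabilistic Lemma} respectively. By the $G$-equivariance $\mathcal{G}(gx,gy)=\mathcal{G}(x,y)$, it suffices to treat $w=o$, the Floyd basepoint. Write $\delta=\delta^{f}_{o}(x,y)$ and let $K$ be the maximum jump size of $\mu$ (finite by hypothesis). Invoking Proposition \ref{Geometric Proposition} with this $K$ gives the functions $R_{0}$ and $h$ with $h(r)/r\to\infty$, and Proposition \ref{Probabilistic Lemma} supplies constants $\phi\in(0,1)$ and $D>0$. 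The scale $r=r(\delta)$ is chosen large enough that $r>R_{0}(\delta)$ and $\phi^{h(r)-2D\tau r}\le \tfrac{1}{2}$; this last inequality is achievable precisely because $h(r)/r\to\infty$.

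For the hitting contribution, set $E_{r}(x)=N_{r}o\cap N^{f}_{e(r)}x$ and decompose
\[
\mathcal{G}(x,y)=\mathcal{G}_{\mathrm{hit}}(x,y)+\mathcal{G}_{\mathrm{miss}}(x,y),
\]
according as the trajectory visits $E_{r}(x)$ or not. By the strong Markov property at the first entry into $E_{r}(x)$,
\[
\mathcal{G}_{\mathrm{hit}}(x,y)\le \sum_{u\in E_{r}(x)}\mathcal{G}(x,u)\mathcal{G}(u,y).
\]
Since every such $u$ lies in $N_{r}o$, Lemma \ref{Harnack} (applied at $t=1$) gives $\mathcal{G}(x,u)\le \lambda^{-r}\mathcal{G}(x,o)$ and $\mathcal{G}(u,y)\le \lambda^{-r}\mathcal{G}(o,y)$, and $|E_{r}(x)|\le |S|^{r}$, so $\mathcal{G}_{\mathrm{hit}}(x,y)\le(|S|\lambda^{-2})^{r}\mathcal{G}(x,o)\mathcal{G}(o,y)$, i.e.\ bounded by $S_{1}(\delta)\mathcal{G}(x,o)\mathcal{G}(o,y)$.

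For the avoidance term, Proposition \ref{Geometric Proposition} says that any subtrajectory from $u\in E_{\tau r}(x)$ to $v\in E_{\tau r}(y)$ avoiding $E_{r}(x)$ has length at least $h(r)$. Both $u$ and $v$ lie in $N_{\tau r}o$, hence $d(u,v)\le 2\tau r$, and Proposition \ref{Probabilistic Lemma} yields
\[
\mathcal{G}(u,v;\mathrm{avoid}\ E_{r}(x))\le \phi^{h(r)-2D\tau r}\mathcal{G}(u,v)\le \tfrac{1}{2}\mathcal{G}(u,v).
\]
A strong Markov decomposition at the first visit to $E_{\tau r}(x)$ and the last visit to $E_{\tau r}(y)$ inserts this bound into the global Green function; the initial leg $x\to E_{\tau r}(x)$ and the terminal leg $E_{\tau r}(y)\to y$ are absorbed into $\mathcal{G}(x,o)$ and $\mathcal{G}(o,y)$ via Harnack ratios. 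Rearranging the resulting self-bounded inequality produces $\mathcal{G}(x,y)\le S(\delta)\mathcal{G}(x,o)\mathcal{G}(o,y)$ with $S(\delta)=2 S_{1}(\delta)$ (up to a multiplicative constant from the summation over boundary points of $E_{\tau r}$).

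The main obstacle will be the last step: Proposition \ref{Geometric Proposition} applies directly only when $x\in E_{\tau r}(x)$ and $y\in E_{\tau r}(y)$, which holds when both $d(x,o)$ and $d(y,o)$ are $\le \tau r$. In the general case $d(x,o),d(y,o)\gg \tau r$ one must first decompose the trajectory at its entry into $E_{\tau r}(x)$ and its exit from $E_{\tau r}(y)$, and then verify that the exterior pieces are controlled by Harnack in such a way that the final constant depends only on $\delta$ (and not on $d(x,y)$). Carrying out this factorization while keeping the summation over the endpoints of the shadow regions—whose cardinality is exponential in $r$—inside a single function of $\delta$ is the chief technical hurdle.
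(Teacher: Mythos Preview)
Your plan has the right ingredients but a genuine gap in the ``miss'' part. You decompose trajectories according to whether they visit $E_{r}(x)$, and for those that do not, you propose to apply Proposition~\ref{Geometric Proposition} to the segment between the first visit to $E_{\tau r}(x)$ and the last visit to $E_{\tau r}(y)$. But a trajectory that avoids $E_{r}(x)$ need not visit $E_{\tau r}(x)$ or $E_{\tau r}(y)$ at all: for instance, it may never enter $N_{\tau r}o$, or it may enter $N_{\tau r}o$ only at points Floyd-far from $x$. Your final paragraph notices the difficulty but misdiagnoses it: the problem is not that $x,y$ lie outside $E_{\tau r}$, it is that the entry/exit points you want to decompose at may not exist. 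Consequently no single scale $r=r(\delta)$ can close the argument, and the ``self-bounded inequality'' you allude to has an uncontrolled remainder.

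The paper's remedy is a telescoping over \emph{all} scales $\tau^{i}R$. Let $Q_{r}(x,y)$ be the set of trajectories that pass through both $E_{\tau r}(x)$ and $E_{\tau r}(y)$ but miss $E_{r}(x)$ or $E_{r}(y)$. Since $x\in E_{\tau^{i}R}(x)$ and $y\in E_{\tau^{i}R}(y)$ for all sufficiently large $i$, every trajectory that avoids $N_{R}o$ lies in $Q_{\tau^{i}R}(x,y)$ for some $i\ge 0$. Your single-scale computation, done correctly (this is Lemma~\ref{longpathsunlikely}), gives $P_{x,y}(Q_{\tau^{i}R})\le 2\phi^{h(\tau^{i}R)-2D\tau^{i+1}R}$, and summing over $i$ yields a convergent series bounded by $\epsilon/2$ for $R=R(\epsilon,\delta)$ large. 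This gives Theorem~\ref{avoidball}, from which the multiplicative inequality follows via the first-visit decomposition and Harnack exactly as in your ``hit'' part (with the simpler target $N_{R}o$ in place of $E_{r}(x)$).
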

As noted before, we can assume $G$ is nonamenable.
We then obtain (\ref{anconamult}) {as a corollary of} the following:
\begin{theorem}\label{avoidball}
{There is a {decreasing} function $R:\mathbb{R}^{+}\times \mathbb{R}^{+} \to \mathbb{R}^{+}$ such that for each $\varepsilon >0$ and for all $x,y,w\in G$ one has}
\begin{equation}\label{karlssonprob}P_{x,y}(\gamma \in Traj(x,y):\gamma\cap {N_{R(\epsilon,\hspace*{0.5mm}\delta^{f}_{w}(x,y))}w\neq \emptyset)}>1-\epsilon,\end{equation} {where $N_{R(\epsilon,\hspace*{0.5mm} \delta^{f}_{w}(x,y))}w$ is the ball centered at $w$ of radius $R=R(\epsilon,\hspace*{0.5mm}\delta^{f}_{w}(x,y))$}.
\end{theorem}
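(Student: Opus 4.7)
The plan is that trajectories from $x$ to $y$ which avoid a large ball around $w$ must be long by Proposition~\ref{Geometric Proposition}, and long trajectories are rare by Proposition~\ref{Probabilistic Lemma}. By equivariance take $w=o$, set $\delta=\delta^f_o(x,y)$, and assume $G$ is nonamenable (the amenable case was treated at the start of the proof of Theorem~\ref{Karlsson}). Let $\phi\in(0,1)$ and $D>0$ be the constants from Proposition~\ref{Probabilistic Lemma} and $L$ the Harnack constant. Since $E_r(x)\subset N_r(o)\subset N_R(o)$ whenever $R\ge r$, an admissible trajectory avoiding $N_R(o)$ also avoids $E_r(x)$; with $R:=\tau r$ it therefore suffices to show
\begin{equation*}
\mathcal{G}(x,y;\,\gamma\cap E_r(x)=\emptyset)\;\le\;\varepsilon\,\mathcal{G}(x,y)
\end{equation*}
for $r=r(\varepsilon,\delta)$ chosen sufficiently large at the end.

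The key uniform estimate is this. For $r>R_0(\delta)$ and every $u\in E_{\tau r}(x)$, $v\in E_{\tau r}(y)$, Proposition~\ref{Geometric Proposition} says every admissible trajectory from $u$ to $v$ avoiding $E_r(x)$ has length at least $h(r)$, with $h(r)/r\to\infty$. Combined with $d(u,v)\le 2\tau r$ (since $u,v\in N_{\tau r}(o)$), the tail bound from the proof of Proposition~\ref{Probabilistic Lemma} gives
\begin{equation*}
\mathcal{G}(u,v;\,\gamma\cap E_r(x)=\emptyset)\;\le\;\sum_{N\ge h(r)}p_N(u,v)\;\le\;\phi^{h(r)}L^{2\tau r},
\end{equation*}
and the Harnack lower bound $\mathcal{G}(u,v)\ge L^{-2\tau r}$ then yields
\begin{equation*}
P_{u,v}(\gamma\cap E_r(x)=\emptyset)\;\le\;\phi^{h(r)}L^{4\tau r}\;=:\;\varepsilon'(r),
\end{equation*}
uniformly in $u,v$, with $\varepsilon'(r)\to 0$.

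To transfer from $(x,y)$ to $(u,v)$, decompose every admissible trajectory $\gamma$ from $x$ to $y$ avoiding $E_r(x)$ at its first visit $u$ to $E_{\tau r}(x)$ and its last visit $v$ to $E_{\tau r}(y)$ (when both exist in the correct order); by the strong Markov property combined with the Green identity $\mathcal{G}(x,u)\mathcal{G}(u,v)\le\mathcal{G}(x,v)\mathcal{G}(e,e)$ applied twice, the contribution is bounded by $\varepsilon'(r)\,|N_{\tau r}(o)|^2\,\mathcal{G}(e,e)^2\,\mathcal{G}(x,y)$. Because $|N_{\tau r}(o)|$ grows only exponentially in $r$ while $h(r)/r\to\infty$, this is $\le \varepsilon\,\mathcal{G}(x,y)/2$ for $r$ large. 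The main obstacle is the remainder: trajectories that either miss one of $E_{\tau r}(x)$, $E_{\tau r}(y)$ entirely, or visit them in the wrong order. When $x\in N_{\tau r}(o)$ we have $x\in E_{\tau r}(x)$, so the first passage is trivially at $u=x$ (and symmetrically for $y$), so the delicate sub-case is $x,y\notin N_{\tau r}(o)$. Wrong-order visits are handled by applying Proposition~\ref{Geometric Proposition} after swapping $x\leftrightarrow y$ and time-reversing; trajectories missing both $E_{\tau r}$-neighborhoods are handled by the direct Floyd-length inequality (each edge outside $N_R(o)$ has Floyd length at most $Kf(R-K/2)$, forcing length $\ge\delta/(Kf(R-K/2))$) combined with Harnack. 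Choosing $r$ large enough bounds the full remainder by $\varepsilon\,\mathcal{G}(x,y)/2$, completing the proof.
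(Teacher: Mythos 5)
There is a genuine gap, and it sits exactly where the paper's key idea lives. Your reduction to the single fixed scale $r=r(\varepsilon,\delta)$ (``it suffices to bound trajectories avoiding $E_r(x)$'') is logically permissible, and your treatment of the main case (trajectories hitting $E_{\tau r}(x)$ and then $E_{\tau r}(y)$, decomposed at first/last visits and compared via $\mathcal{G}(x,u)\mathcal{G}(u,v)\le \mathcal{G}(e,e)\mathcal{G}(x,v)$, with the volume factor $|N_{\tau r}o|^2$ absorbed by the superexponential decay of $\phi^{h(r)}$) is fine and essentially parallels the paper's Lemma \ref{longpathsunlikely}. The problem is the remainder. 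First, a trajectory missing $E_{\tau r}(x)$ (or both $E_{\tau r}$-sets) need not avoid the ball $N_{\tau r}(o)$: it can enter the ball in the region that is Floyd-far from both $x$ and $y$, and then the edge-by-edge bound ``Floyd length $\le Kf(R-K/2)$ outside $N_R(o)$'' simply does not apply to it, so the claimed length lower bound $\delta/(Kf(R-K/2))$ is unjustified (and the case ``miss exactly one of the two $E_{\tau r}$-sets'' is not addressed at all). Second, and more fundamentally, even for trajectories that genuinely stay outside the ball, a length lower bound $M=M(r,\delta)$ that is \emph{fixed} cannot be converted into a small probability: Proposition \ref{Probabilistic Lemma} (or Harnack) gives $P_{x,y}(\mathrm{length}\ge M)\le \phi^{M-Dd(x,y)}$, and $d(x,y)$ is unbounded in terms of $\varepsilon$, $\delta$ and $r$ (for fixed $\delta^f_o(x,y)$ the points $x,y$ can be arbitrarily far from $o$ and from each other, and then \emph{every} trajectory has length $\ge M$). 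So ``combined with Harnack'' does not bound the remainder by $\varepsilon\,\mathcal{G}(x,y)/2$; at a single scale there is no mechanism to beat the $Dd(x,y)$ penalty.

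This is precisely why the paper does not argue at one scale. It defines $Q_r(x,y)$ (hit both $E_{\tau r}$-sets but miss $E_r(x)$ or $E_r(y)$) and observes that any trajectory avoiding $N_R o$ lies in $\bigcup_{i\ge 0}Q_{\tau^i R}(x,y)$: for $i$ large enough $x$ and $y$ themselves belong to $E_{\tau^i R}(x)$ and $E_{\tau^i R}(y)$, and at the minimal such scale the trajectory misses one of the $E$-sets one level down. At that adapted scale the decomposition points $u,v$ lie in $N_{\tau^{i+1}R}o$, so $d(u,v)\le 2\tau^{i+1}R$ is comparable to the scale, the superlinear bound $h(\tau^i R)$ beats $2D\tau^{i+1}R$, and the geometric series $\sum_i \phi^{h(\tau^i R)-2D\tau^i R}$ is summable and small. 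To repair your proof you would either have to reproduce this multi-scale union (in which case your intermediate single-scale claim about $E_r(x)$ becomes a corollary rather than an input), or find a genuinely new argument for trajectories that reach the ball only at points Floyd-far from $x$, or never reach it at all, with endpoints at uncontrolled word distance; the tools you invoke (Proposition \ref{Geometric Proposition}, Proposition \ref{Probabilistic Lemma}, Harnack) do not suffice for that as used.
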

\begin{proof}[Proof of Theorem \ref{relancona} from Theorem \ref{avoidball}]
Let $\epsilon=1/2$ and $R(t)=R_{1/2}(t)$ given by Theorem \ref{avoidball}.
Then
$$P_{x,y}(\gamma \in Traj(x,y):\gamma\cap N_{R_{\epsilon}(\delta^{f}_{w}(x,y))}w\neq \emptyset)>1/2.$$
This implies, with
$R=R_{{1/2}}(\delta^{f}_{w}(x,y))$
$$\mathcal{G}(x,y)\leq 2\sum_{z\in N_{R}w}\mathcal{G}(x,z)\mathcal{G}(z,y)$$

By the Harnack inequalities there is a {constant $L$}, depending only on $(G, \mu)$ with
$${L^{-d(z,w)}\leq \mathcal{G}(p,z)/\mathcal{G}(p,w)\leq L^{d(z,w)}}$$ for all $p\in G$ and $z\in N_{R\hspace*{0.3mm}}w$.
Thus,
$$\mathcal{G}(x,y)\leq 2L^{2{R}}|N_{R\hspace*{0.4mm}}{w}|\mathcal{G}(x,w)\mathcal{G}(w,y)$$ for $R=R(\delta^{f}_{w}(x,y)).$ \end{proof}
The rest of this section is devoted to proving Theorem \ref{avoidball}.

First, note by equivariance we can assume {that} $w$ is our fixed basepoint $o\in G$.
{L}et $Q_{r}(x,y)$ be the set of trajectories from $x$ to $y$ which pass both $E_{\tau r}(x)$ and $E_{\tau r}(y)$ but either do not pass $E_{r}(x)$ or do not pass $E_{r}(y)$.
We will use Proposition \ref{Geometric Proposition} together with Proposition \ref{Probabilistic Lemma} to prove the following.
\begin{lemma}\label{longpathsunlikely}
For all $x,y \in G$ and all $r>R_{0}(\delta^{f}_{o}(x,y))$ we have
\begin{equation}\label{probubound}P_{x,y}(Q_{r}(x,y){\bf )}\leq \phi^{h(r)-2D\tau r},\end{equation} where the functions $h$ and $R_{0}$ come from Proposition \ref{Geometric Proposition}{, and the constants $D$ and $\phi$ from Proposition \ref{Probabilistic Lemma}}.
\end{lemma}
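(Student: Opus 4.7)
The strategy is to combine the deterministic length lower bound of Proposition \ref{Geometric Proposition} with the exponential decay of long trajectories from Proposition \ref{Probabilistic Lemma}, applied not to the full walk $x\to y$ but to a well-chosen middle sub-walk whose endpoints are confined to $N_{\tau r}o$.

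First I would verify the deterministic statement that every $\gamma\in Q_{r}(x,y)$ has $\mathrm{length}(\gamma)\geq h(r)$. Pick any visits $\gamma(t_{u})=u\in E_{\tau r}(x)$ and $\gamma(t_{v})=v\in E_{\tau r}(y)$. By definition $\gamma$ misses at least one of $E_{r}(x)$, $E_{r}(y)$; the subpath of $\gamma$ between $t_{u}$ and $t_{v}$ (reversed if $t_{v}<t_{u}$, which is harmless because Proposition \ref{Geometric Proposition} is purely geometric and also applies after swapping the roles of $x$ and $y$) has jump size bounded by $K$ and avoids the appropriate $E_{r}(\cdot)$, so Proposition \ref{Geometric Proposition} forces its length, and hence $\mathrm{length}(\gamma)$, to be at least $h(r)$.

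Next I would localize the probabilistic estimate by a Markov-type decomposition. Split $Q_{r}=Q_{r}^{x}\cup Q_{r}^{y}$ according to which of $E_{r}(x), E_{r}(y)$ is avoided, and split each further by the relative order of the first visit to $E_{\tau r}(x)$ and the last visit to $E_{\tau r}(y)$. In the main sub-case---$\gamma$ misses $E_{r}(x)$ and its first visit $u$ to $E_{\tau r}(x)$ precedes its last visit $v$ to $E_{\tau r}(y)$---decompose $\gamma=\gamma_{1}\gamma_{2}\gamma_{3}$ at these stopping points. The weights multiply, $w(\gamma)=w(\gamma_{1})w(\gamma_{2})w(\gamma_{3})$, and the middle piece $\gamma_{2}\colon u\to v$ has length $\geq h(r)$. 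Since $u,v\in N_{\tau r}o$ one has $d(u,v)\leq 2\tau r$, so Proposition \ref{Probabilistic Lemma} applied to $u\to v$ walks yields
\[
\sum_{\substack{\gamma_{2}\colon u\to v\\ \mathrm{length}\,\geq\, h(r)}} w(\gamma_{2})\;\leq\;\phi^{h(r)-D d(u,v)}\,\mathcal{G}(u,v)\;\leq\;\phi^{h(r)-2D\tau r}\,\mathcal{G}(u,v).
\]
Summing $\phi^{h(r)-2D\tau r}\,F(x,u)\,\mathcal{G}(u,v)\,F(v,y)$ over $(u,v)\in E_{\tau r}(x)\times E_{\tau r}(y)$, where $F$ denotes the corresponding first/last-visit weights, and using that $\sum_{u,v}F(x,u)\,\mathcal{G}(u,v)\,F(v,y)\leq\mathcal{G}(x,y)$ (since the left side enumerates $x\to y$ trajectories decomposed uniquely by first visit to $E_{\tau r}(x)$ and last visit to $E_{\tau r}(y)$), produces the bound on $P_{x,y}$ for this sub-case. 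The three remaining sub-cases yield the same bound by symmetry, and the resulting multiplicative constant is absorbed into the exponent by decreasing $h$ by an additive constant (which still satisfies $h(r)/r\to\infty$).

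The main obstacle is conceptual rather than computational: a direct application of Proposition \ref{Probabilistic Lemma} to the walk from $x$ to $y$ would only give the weaker exponent $h(r)-Dd(x,y)$, useless when $d(x,y)\gg\tau r$. The decomposition is precisely designed to force the ``long'' segment of $\gamma$ to occur between two points of $N_{\tau r}o$, so that $d(u,v)\leq 2\tau r$ replaces $d(x,y)$ in the exponent; verifying that the first/last-visit factorization indeed sums to something $\leq \mathcal{G}(x,y)$ is the other place requiring care.
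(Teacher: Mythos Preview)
Your proposal is correct and follows essentially the same route as the paper: split $Q_r$ according to which of $E_r(x),E_r(y)$ is missed, decompose each admissible $\gamma$ at the first entrance $u\in E_{\tau r}(x)$ and the last exit $v\in E_{\tau r}(y)$, invoke Proposition~\ref{Geometric Proposition} to force the middle segment to have length $\geq h(r)$, and then apply Proposition~\ref{Probabilistic Lemma} to the middle segment using $d(u,v)\leq 2\tau r$. Your version is in fact slightly more careful than the paper's in two respects: you explicitly treat the sub-case where the first visit to $E_{\tau r}(x)$ might come after the last visit to $E_{\tau r}(y)$ (the paper's identity (\ref{outer}) tacitly assumes $t_u<t_v$), and you absorb the resulting multiplicative constant into $h$, whereas the paper's proof ends with $2\phi^{h(r)-2D\tau r}$ rather than the $\phi^{h(r)-2D\tau r}$ of the lemma statement.
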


\begin{proof}
First note that any trajectory $\gamma$ with $P_{x,y}(\gamma)>0$ has jump size bounded by $\displaystyle K=\max_{g\in supp(\mu)}||g||$.

Since a trajectory in $Q_{r}(x,y)$ misses either $E_{r}(x)$ or $E_{r}(y) $ we have
$$P_{x,y}(Q_{r}(x,y))\leq$$ $$P(\gamma \in Traj(x,y):\gamma \cap E_{r}(x)=\emptyset, \gamma \cap E_{\tau r}(x) \neq \emptyset, \gamma \cap E_{\tau r}(x) \neq \emptyset)+$$ $$P(\gamma \in Traj(x,y):\gamma \cap E_{r}(y)=\emptyset, \gamma \cap E_{\tau r}(x) \neq \emptyset, \gamma \cap E_{\tau r}(y) \neq \emptyset)$$

Let us estimate the first of these (by symmetry, the second is the same).

The total weight of trajectories from $x$ to $y$ which pass both $E_{\tau r}(x)$ and $E_{\tau r}(y)$ but not $E_{r}(x)$ is:

\begin{equation}\label{outer} \mathcal{G}(x,y)P_{x,y}(\gamma \in Traj(x,y):\gamma \cap E_{r}(x)=\emptyset, \gamma \cap E_{\tau r}(x) \neq \emptyset, \gamma \cap E_{\tau r}(y) \neq \emptyset)=$$ $$\sum_{u\in E_{\tau r}(x), v\in E_{\tau r}(y)}\mathcal{G}(x,u,E^{c}_{\tau r}(x))\mathcal{G}(u,v,E^{c}_{r}(x))\mathcal{G}(v,y,E^{c}_{\tau r}(y)){.}\end{equation}

Here $u$ is the first entrance point of $\gamma$ into $E_{\tau r}(x)$, $v$ is the last exit point
out of $E_{\tau r}(y)${, and   $E^c$ denotes the complement of a set $E$}.

On the other hand the total weight of all trajectories from $x$ to $y$ which pass both $E_{\tau r}(x)$ and $E_{\tau r}(y)$ (possibly also passing $E_{r}(x))$ is:

\begin{equation}\label{totalpr} \mathcal{G}(x,y)P_{x,y}(\gamma \in Traj(x,y):  \gamma \cap E_{\tau r}(x) \neq \emptyset, \gamma \cap E_{\tau r}(y) \neq \emptyset)=$$
$$\sum_{u\in E_{\tau r}(x), v\in E_{\tau r}(y)}\mathcal{G}(x,u,E^{c}_{\tau r}(x))\mathcal{G}(u,v)\mathcal{G}(v,y,E^c_{\tau r}(y))\end{equation}
Note the only difference {between  (\ref{totalpr})} and (\ref{outer}) is in the middle factor.



{By}  Proposition \ref{Geometric Proposition}, if $r>R_{0}(\delta^{f}(x,y))$, any trajectory from $u\in E_{\tau r}(x)$ to $v \in E_{\tau r}(y)$ disjoint from $E_{r}(x)$ has length at least $h(r)$ while $d(u,v)\leq 2\tau r$.

Thus,   Proposition \ref{Probabilistic Lemma}  implies:
$$\frac{\mathcal{G}(u,v,E^{c}_{r}(x))}{\mathcal{G}(u,v)}=P_{u,v}(\gamma \in Traj(u,v):\gamma \cap E_{r}(x)=\emptyset) \leq \phi^{h(r)-2\tau r D}.$$
Applying this estimate for every pair $u,v$ in (\ref{outer}) by (\ref{totalpr}) we  get

$$P_{x,y}(\gamma \in Traj(x,y):\gamma \cap E_{r}(x)=\emptyset, \gamma \cap E_{\tau r}(x) \neq \emptyset, \gamma \cap E_{\tau r}(y) \neq \emptyset)\leq$$

$$\phi^{h(r)-2\tau r D}
P_{x,y}(\gamma \in Traj(x,y):  \gamma \cap E_{\tau r}(x) \neq \emptyset, \gamma \cap E_{\tau r}(y) \neq \emptyset)\leq \phi^{h(r)-2\tau r D}$$

Thus $P_{x,y}(Q_{r}(x,y))\leq 2\phi^{h(r)-2\tau r D}$.

\end{proof}


We are now ready to prove Theorem \ref{relancona}
\begin{proof}[Proof of Theorem \ref{relancona}]
{Choose $R=R(\delta^{f}(x,y))$ to be larger than the number $R_{0}(\delta^{f}(x,y))$ from Lemma \ref{longpathsunlikely}. By Proposition \ref{Geometric Proposition}, $h(R)/R\to \infty\ (R\to\infty$). Then choosing $R$  sufficiently large we can assume that}
$h(R y)\geq (2D+2)R y\ (\foral y\geq 1).$ Putting $y=\tau^i\ (i\in\Bbb N)$ we obtain
$h(\tau^{i}R)-2\tau^{i}RD\geq 2 \tau^{i}R \geq (i+1)R$ for each $i\geq 0$.

Thus \begin{equation}\label{phibound}\sum^{\infty}_{i=0}\phi^{h(\tau^{i}R)-2\tau^{i} R D}\leq \sum^{\infty}_{{i=0}}\phi^{(i+1)R}=\phi^{R}/(1-{\phi^R})\leq \frac{\epsilon}{4}\end{equation}
 when $R$ is large enough.

{Any} trajectory in $Traj(x,y)$ either passes through $N_{R\hspace*{0.3MM}}o$ or is an element of
$\displaystyle\bigcup^{\infty}_{i=0}Q_{\tau^{i}R}(x,y)$.
{By (\ref{probubound}) and (\ref{phibound}) } {we have
\begin{equation}\label{seriesbound}\begin{aligned}\displaystyle P_{x,y}(\bigcup^{\infty}_{i=0}Q_{\tau^{i}R}(x,y))\leq \sum^{\infty}_{i=0}P_{x,y}(Q_{\tau^{i}R}(x,y))\\  \leq\sum^{\infty}_{i=0}2\phi^{h(\tau^{i}R)-2\tau^{i} R D}\leq \epsilon/2.\end{aligned}\end{equation}
}
Therefore:
\begin{equation}\label{randKarlsson}
P_{x,y}(\gamma\in Traj(x,y):\gamma \cap N_{ R(\delta^{f}(x,y))}o\neq \emptyset)\geq 1-\epsilon/2.
\end{equation}
This completes the proof of Theorem \ref{avoidball}
\end{proof}

\section{Proof of Theorem \ref{Ancona}{.\rm b}: extension to infinite support}

Assume again that $G$ is a nonamenable group and $\mu$ a probability measure on $G$ whose support generates $G$.
The goal of this section is to generalize  Theorem 1.1{.\rm a} to measures with infinite support but superexponential moment: {these are} measures $\mu$ for which
$$\sum_{g\in G: ||g||>N}\mu(g)$$ decays superexponentially in $N$, or equivalently $$\sum_{g\in G} c^{||g||}\mu(g)<\infty$$for all $c>1$.
{We now proceed by proving the analogue Theorem \ref{avoidball} in this context}.
We assume in this section that the Floyd function $f$ decays at least as fast as $x\to x^{-2-c}$ for some $c>0$.
It suffices to only consider  functions of the form $f(x)=x^{-2-c}\ {(c>0)}$. {Indeed the function $R(\cdot)$ is decreasing so} once we prove Theorem \ref{avoidball} for a fixed Floyd function, the analogue for faster decaying Floyd functions follows automatically.
Let $\tau>1$.
We will use the following modification of
Lemma {\ref{helpfunction}}.
\begin{lemma}\label{helpinfinite}
There exists a function $e:\mathbb{R}^{+}\to \mathbb{R}^{+}$ such that as $r\to \infty$ we have
$e(r)\to 0$ and $$\frac{e(r)-e(\tau r)}{r^{2}f(r)}\to \infty$$.
\end{lemma}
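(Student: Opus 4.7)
The idea is to repeat the construction of Lemma \ref{helpfunction} but replace the exponent $1/2$ on $\alpha$ by a parameter $\beta\in(0,1)$ chosen large enough to exploit the stronger decay of $f$ and gain an extra factor of $r$ in the denominator of (\ref{edecre}). Since by the reduction in the surrounding discussion we may assume $f(x)=x^{-2-c}$, the tail integral $\alpha(s)=\int_s^\infty f(t)\,dt$ satisfies $\alpha(s)=\frac{s^{-1-c}}{1+c}$, so raising $\alpha$ to a power close to $1$ produces a large factor $1/\alpha(r/\tau)^\beta\sim r^{(1+c)\beta}$ while still allowing the auxiliary integrand to be integrable.

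\medskip

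Concretely, I will pick any $\beta\in\bigl(\tfrac{1}{1+c},\,1\bigr)$ (nonempty because $c>0$), and define
\[
\alpha(s)=\int_s^\infty f(t)\,dt,\qquad g(t)=\frac{f(t/\tau)}{\alpha(t/\tau)^{\beta}},\qquad e(r)=\int_r^\infty g(t)\,dt.
\]
Integrability of $g$ on $[0,\infty)$ follows from the change of variables $u=\alpha(t/\tau)$, $du=-\tfrac{1}{\tau}f(t/\tau)\,dt$, exactly as in Lemma \ref{helpfunction}:
\[
\int_0^M g(t)\,dt \;=\; \tau\int_{\alpha(M/\tau)}^{\alpha(0)}\frac{du}{u^{\beta}}\;\leq\;\frac{\tau\,\alpha(0)^{1-\beta}}{1-\beta},
\]
which is finite precisely because $\beta<1$. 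Thus $e$ is well-defined and $e(r)\to 0$ as $r\to\infty$, giving the first assertion.

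\medskip

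For the second assertion, by the mean value theorem there exists $s_r\in[r,\tau r]$ with $e(r)-e(\tau r)=(\tau-1)\,r\,g(s_r)$. Since $s_r/\tau\in[r/\tau,r]$ and both $f$ and $\alpha$ are decreasing, $f(s_r/\tau)\geq f(r)$ and $\alpha(s_r/\tau)\leq \alpha(r/\tau)$, so
\[
\frac{e(r)-e(\tau r)}{r^{2}f(r)} \;=\; \frac{(\tau-1)\,g(s_r)}{r\,f(r)} \;\geq\; \frac{\tau-1}{r\,\alpha(r/\tau)^{\beta}}.
\]
For $f(x)=x^{-2-c}$ we have $\alpha(r/\tau)=\tau^{1+c}r^{-1-c}/(1+c)$, whence the right-hand side is comparable to $r^{(1+c)\beta-1}$, and this tends to $+\infty$ by our choice $\beta>1/(1+c)$.

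\medskip

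\emph{Main obstacle.} The argument is a clean variant of Lemma \ref{helpfunction}; the only delicate point is balancing the exponent $\beta$: it must be strictly less than $1$ to keep $\int_0^\infty g<\infty$, and strictly greater than $1/(1+c)$ to produce the required growth after dividing by $r^{2}f(r)$. Both constraints are simultaneously satisfiable exactly when $c>0$, which is precisely the hypothesis under which we are working. If one wanted the lemma for a general Floyd function with $x^{2+\alpha}f(x)\to 0$ rather than the normalized form, the same $\beta$ works after absorbing the $o(\cdot)$ into constants, using the multiplicative control $f(r-K/2)\leq \kappa^{[K/2]+1}f(r)$ coming from (\ref{coef}).
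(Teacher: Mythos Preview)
Your argument is correct, but the paper takes a much shorter route: it simply sets $e(r)=1/\log r$ and computes $e(r)-e(\tau r)=\log\tau/(\log r\cdot\log(\tau r))$ while $r^{2}f(r)=r^{-c}$, so the ratio is comparable to $r^{c}/(\log r)^{2}\to\infty$. Your construction instead adapts Lemma~\ref{helpfunction} by replacing the exponent $1/2$ on $\alpha$ with a parameter $\beta\in(1/(1+c),1)$. This is valid and somewhat more conceptual --- it makes explicit that the admissible window for $\beta$ is nonempty exactly when $c>0$, so the hypothesis is used sharply --- but it is considerably more work than the one-line verification the paper gives. One cosmetic wrinkle: with $f(x)=x^{-2-c}$ taken literally at $x=0$, one has $\alpha(0)=\infty$, so the displayed bound $\int_{0}^{M}g\le\tau\alpha(0)^{1-\beta}/(1-\beta)$ is vacuous; this is harmless, since you only need $e(r)=\int_{r}^{\infty}g<\infty$ for $r>0$, and your same substitution gives $e(r)=\tfrac{\tau}{1-\beta}\,\alpha(r/\tau)^{1-\beta}$ directly.
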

\begin{proof}
An easy computation shows $e(r)=\frac{1}{\log(r)}$ does the trick.
\end{proof}

We need the following adaptation of Proposition {\ref{Geometric Proposition}} (this is where we use the assumption on the Floyd function).
\begin{proposition}\label{infiniteavoid}
There are functions $R_{0}:\R\to \R$
and $h:\R\to \R$ with $h(r)/r\to \infty$ as $r\to \infty$
such that for all $x,y \in G$ and all $r>R_{0}(\delta^{f}(x,y))$, for each $u\in E_{\tau r}(x)$ and
$v\in E_{\tau r}(y)$, any path from $u$ to $v$ disjoint from  $E_{r}(x)$ { with jump size bounded by $r/100$ } has length at least $h(r)$.
\end{proposition}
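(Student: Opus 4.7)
The plan is to re-run the argument of Proposition~\ref{Geometric Proposition}, replacing the constant jump bound $K$ by the variable bound $r/100$ while carefully tracking the extra factor of $r$ this produces. The polynomial decay hypothesis $f(x)\leq x^{-2-c}$ is used precisely to keep the ratios $f(\alpha r)/f(r)$ bounded uniformly in $r$ for each fixed $\alpha\in(0,1)$; the general bound (\ref{coef}) alone would give only $\kappa^{r/100}$, which grows exponentially in $r$ and is useless here.

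The central geometric step is the following per-jump Floyd estimate. If $\gamma_n,\gamma_{n+1}$ both lie outside $N_{r'}o$ and $d(\gamma_n,\gamma_{n+1})\leq r/100$, then by the triangle inequality every vertex of a word geodesic joining them lies outside $N_{r'-r/200}o$ (if $p$ is such a vertex, then $\min(d(p,\gamma_n),d(p,\gamma_{n+1}))\leq r/200$). Consequently
\[
\delta^f_o(\gamma_n,\gamma_{n+1}) \;\leq\; \frac{r}{100}\,f(r'-r/200) \;\leq\; C_c\, r f(r),
\]
for a constant $C_c$ depending only on $c$.

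With this in hand one runs the two-case dichotomy of the original proof, but using $N_{r/2}o$ in place of $N_ro$ so as to absorb the possible $r/100$-overshoot of a single jump. In Case~1 the trajectory $\gamma$ misses $N_{r/2}o$ entirely, and summing the per-jump estimate over the whole trajectory gives $\text{length}(\gamma)\cdot C_c\, rf(r) \geq \delta^f_o(u,v) \geq \delta-2e(\tau r)$. In Case~2 let $\gamma(i_0)$ be the first entry into $N_{r/2}o$; then all jumps of the initial segment have both endpoints outside $N_{r/2-r/100}o$, so the per-jump estimate still applies with a constant of the same order. Since $\gamma(i_0)\in N_{r}o$ and $\gamma$ avoids $E_r(x)$, one has $\delta^f_o(x,\gamma(i_0))\geq e(r)$, whence $\delta^f_o(u,\gamma(i_0))\geq e(r)-e(\tau r)$ by the triangle inequality, and $i_0\geq(e(r)-e(\tau r))/(C'_c\, rf(r))$.

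For $r$ larger than some $R_0(\delta)$ the term $\delta-2e(\tau r)$ dominates $e(r)-e(\tau r)$, so in either case
\[
\text{length}(\gamma)\;\geq\; h(r)\;:=\;\frac{\mathrm{const}\cdot(e(r)-e(\tau r))}{r f(r)},
\]
and $h(r)/r=\mathrm{const}\cdot(e(r)-e(\tau r))/(r^2 f(r))\to\infty$ by Lemma~\ref{helpinfinite}. The main obstacle, really just a bookkeeping one, is the extra factor of $r$ produced by the variable jump size; this is exactly what forces the stronger normalization $r^2 f(r)$ in Lemma~\ref{helpinfinite} and the polynomial-decay hypothesis on $f$, and otherwise the proof is a straightforward adaptation of Proposition~\ref{Geometric Proposition}.
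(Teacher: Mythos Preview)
Your proposal is correct and follows essentially the same approach as the paper, which merely says the proof is ``identical to that of Proposition~\ref{Geometric Proposition} with $r/100$ in place of the constant $K$'' and records the resulting $h(r)=\mathrm{const}\cdot(e(r)-e(\tau r))/(rf(r))$. Your write-up fills in the details the paper omits --- in particular, you make explicit why the polynomial decay $f(x)\le x^{-2-c}$ is needed (to bound $f(\alpha r)/f(r)$ uniformly, where the general condition~(\ref{coef}) would only give an exponential-in-$r$ factor), and your use of $N_{r/2}o$ rather than $N_ro$ in the dichotomy is a harmless cosmetic variation that does not change the argument.
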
\begin{proof}
{It is essentially identical to that of Proposition \ref{Geometric Proposition} with $r/100$ in place of the constant $K$.} {Similar calculations to those of \ref{Geometric Proposition} give $$h(r)={\rm const}\cdot{e(r)-e(\tau r)\over rf(r)}.$$}
\end{proof}

We continue with the proof of Theorem 1.1 b.
For each $n$ we can write $\mu=\mu_{n}+\sigma_{n}$ where
$\mu_{n}$ is the restriction of $\mu$ to the ball
$\{g\in G: ||g||\leq n\}$ and $\sigma_{n}=\mu-\mu_{n}$.

The contribution to $\mathcal{G}(x,y)$ of trajectories of length $M$, with exactly $m$ jumps of size greater than $n$ is
bounded by
$${M \choose m}|\sigma_{n}|^{m}|\mu_{n}|^{M-m}$$
{where for a measure $\sigma$ on $G$ {we use the notation} $|\sigma|=\sigma(G)$.}

Since $\mu$ has superexponential moment, there exists some function
$W: \mathbb{R}\to \mathbb{R}$ with $W(t)/t\to \infty$ as $t\to \infty$ such that for each $K>1$:
\begin{equation} \label{superexp}
K^{W({n})}\sigma_{{n/100}}(G)\to 0
\end{equation}
as ${n}\to \infty$.
{For example, one can take} {$$W(n)=\sqrt{n \log|\sigma_{n/100}|^{-1}},$$
satisfying the above requirement as $\mu$ has superexponential moment.}

We want to extend the proof of Theorem \ref{avoidball}. Theorem \ref{relancona} will then follow as in the finite support case.
\begin{proof}[Proof of Theorem \ref{avoidball} for measures of superexponential moment]
By equivariance we can assume $z=o$ is the basepoint.
The only step where we need to deviate from the finite support case is estimating for each $u\in E_{\tau r}(x)$ and $v\in E_{\tau r}(x)$ the quantities $\mathcal{G}(u,v,E^{c}_{r}(x))$ and  $\mathcal{G}(u,v,E^{c}_{r}(x))$.

We want to prove {that}
$\mathcal{G}(u,v,E^{c}_{r}(x))\leq \Psi(r)\mathcal{G}(u,v)$ and $\mathcal{G}(u,v,E^{c}_{r}(y))\leq \Psi(r)\mathcal{G}(u,v)$ where
$\Psi(r)$
decays superexponentially in $r$.
If this is true we can proceed as in the proof of Theorem \ref{avoidball}: by decomposing as in (\ref{outer}) and (\ref{totalpr})
{similarly to (\ref{seriesbound})} we {obtain} $P_{x,y}(Q_{r}(x,y))\leq 2 \Psi(r)$.  {Since} $\Psi(r)$ decays super-exponentially we have
$\sum^{\infty}_{i=0}\Psi(\tau^{i}R)<\epsilon$ for large enough $R=R(\epsilon,\delta^{f}_{o}(x,y)).$
Thus $$\mathcal{G}(x,y,B^{c}(o,R))<\sum^{\infty}_{i=0}\mathcal{G}(x,y)\Psi(\tau^{i}r)\leq \epsilon \mathcal{G}(x,y).$$

We estimate $\mathcal{G}(u,v,E^{c}_{r}(x))$; the estimate for $\mathcal{G}(u,v,E^{c}_{r}(y))$ is identical.

{We have} $d(u,v)\leq 2 \tau r${, by Proposition \ref{infiniteavoid}} any trajectory with no jumps of length greater than $r/100$ has length at least ${h(r).}$ By Proposition \ref{Probabilistic Lemma} the contribution to
$\mathcal{G}(u,v,E^{c}_{r}(x))$ of trajectories with no jumps greater than $r/100$ is at most
$\phi^{h(r)-2D\tau r}\mathcal{G}(u,v)$.

Also by Proposition \ref{Probabilistic Lemma}, the contribution of trajectories of length at least $W(r)$ is at most $\phi^{W(r)-2D\tau r}\mathcal{G}(u,v)$.

It remains to control the contribution to $\mathcal{G}(u,v,E^{c}_{r}(x))$ of trajectories of length at most $W(r)$ with at least one jump of size at least $r/100$.

This is bounded above by
\begin{equation}\label{comb}
\sum^{W(r)}_{m=0}\sum^{m-1}_{k=0}{m\choose k}|\sigma_{r/100}|^{m-k}|\mu_{r/100}|^{k}
\end{equation}

(The condition that there is at least one jump of size $\geq r/100$ is reflected in the fact that the inner sum ends with $k=m-1$ rather than $k=m$).

Since $m-k\geq 1$, in the above expression we have
$|\sigma_{r/100}|^{m-k}\leq |\sigma_{r/100}|$ so  (\ref{comb}) is bounded above by

$$|\sigma_{r/100}| \sum^{W(r)}_{m=0}\sum^{m-1}_{k=0}{m\choose k}|\mu_{r/100}|^{k}$$
which via binomial expansion is bounded above by
$$|\sigma_{r/100}| \sum^{W(r)}_{m=0}(1+|\mu_{r/100}|)^{m}\leq |\sigma_{r/100}| \sum^{W(r)}_{m=0}2^{m}\leq 2^{W(r)+1}|\sigma_{r/100}|.$$

{By the Harnack inequality there is a universal $1>\lambda>0$ such that $$\mathcal{G}(u,v)\geq \mathcal{G}(e,e)\lambda^{d(u,v)} \geq \mathcal{G}(e,e)\lambda^{2\tau r}.$$
Thus the contribution to $\mathcal{G}(u,v,E^{c}_{r}(x))$ of trajectories of length at most $W(r)$ with at least one jump of size at least $r/100$ is bounded above by}

{$$2^{W(r)+1}|\sigma_{r/100}|\lambda^{-2\tau r}\mathcal{G}(e,e)^{-1}\mathcal{G}(u,v).$$}

Since {$W(r)/r\to \infty\ (r\to \infty)$ the above quantity is bounded above by ${\rm (\rm const})^{W(r)}|\sigma_{r/100}|$.}

{By (\ref{superexp}) the latter quantity  tends to $0$ superexponentially fast as $r\to \infty$.}

{Putting everything together, we see {that}
$$\mathcal{G}(u,v,E^{c}_{r}(x))\leq \Psi(r)\mathcal{G}(u,v)$$ where $$\Psi(r)=\max{(2^{W(r)+1}\lambda^{-2\tau r}\mathcal{G}(e,e)^{-1}|\sigma_{r/100}|,\phi^{W(r)-2D\tau r}),\phi^{h(r)-2D\tau r})}$$ {tends to zero superexponentially fast as $r\to\infty$.}}
\end{proof}

We will now provide several useful consequences of Theorem \ref{Ancona} for relatively hyperbolic groups.
Suppose $G$ is relatively hyperbolic with respect to a collection $\mathcal{P}$ of subgroups then the following {Proposition provides a characterization of transition points (see the definition in the introduction) in terms of the Floyd function $f.$}

{\begin{proposition}
[\cite{GePoCrelle}, Corollary 5.10]
For each $\epsilon>0$ and $R>0$ there is a number $\delta>0$ such that
if $y$ is  an $(\epsilon,R)$-transition point of a word geodesic from $x$ to $z$ then $\delta^{f}_{y}(x,z)>\delta$.
\end{proposition}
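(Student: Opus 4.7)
The plan is to argue by contradiction using the convergence dynamics of $G$ on the Floyd and Bowditch compactifications. Suppose the conclusion fails: there exist fixed $\epsilon, R > 0$ and sequences $(x_n, y_n, z_n)_{n\in\mathbb{N}}$ in $G$ such that $y_n$ is an $(\epsilon, R)$-transition point on some word geodesic $[x_n, z_n]$ while $\delta^f_{y_n}(x_n, z_n) \to 0$. By $G$-equivariance of both the Floyd metric and of the transition-point condition, I left-translate by $y_n^{-1}$ to reduce to the case $y_n = e$. Compactness of $\overline{G}_f$ lets me pass to a subsequence along which $x_n \to \xi$ and $z_n \to \eta$ in $\overline{G}_f$. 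Neither $\xi$ nor $\eta$ can lie in $G$, for otherwise the Harnack-like continuity of the Floyd metric would give $\delta^f_e(x_n,z_n) \to \delta^f_e(\xi,\eta) > 0$; hence $\|x_n\|, \|z_n\| \to \infty$ and $\xi = \eta \in \partial_f G$.

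Next, I push $\xi$ forward under the Gerasimov surjection $\phi: \partial_f G \to \partial_B G$ recalled at the end of Section 3. The image $\phi(\xi)$ is either bounded parabolic or conical. In the parabolic case, with fixed point $p$ and stabilizer $H$, the fact that $x_n, z_n \to p$ in $\overline{G}_B$ while each word geodesic $[x_n, z_n]$ passes through $e$ forces, by the standard horosphere-tracking property of geometrically finite actions, the intersection $B(e, R) \cap [x_n, z_n]$ to lie in an arbitrarily small neighborhood of the horosphere $P$ at $p$ (which contains $e$ up to bounded error once $n$ is large). This contradicts the hypothesis that $e = y_n$ is an $(\epsilon, R)$-transition point for the fixed parameters. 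In the conical case, I use the convergence characterization of conical points to produce $g_n \in G$ with $g_n \phi(\xi) \to \alpha$ and $g_n \zeta' \to \beta \ne \alpha$ for every $\zeta' \ne \phi(\xi)$ in $\overline{G}_B$. Applied to $x_n$ and $z_n$, both of which converge to $\phi(\xi)$, this produces translated geodesics $g_n[x_n, z_n]$ passing through $g_n e$ whose endpoints both accumulate at $\alpha$; but two sequences accumulating on the same conical point cannot admit a word geodesic through a fixed compact set, so this case is vacuous.

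I expect the parabolic case to be the main technical point: quantifying that the ball $B(e, R)$ slice of the geodesic truly lies within $N_\epsilon(P)$ requires using the interaction between the Floyd metric and the coset geometry, essentially invoking Karlsson's lemma in the form stated in the introduction (which controls the word distance from a vertex to a Floyd-short path) together with the fact that, in a relatively hyperbolic group, a word geodesic whose endpoints project near a parabolic point must deeply penetrate the associated horosphere on any bounded subsegment passing near a point of that coset. The conical case, by contrast, is handled by a direct convergence-group argument, and its only role is to confirm that all the content of the lemma is concentrated in the parabolic behavior of $\partial_B G$, which is precisely what the notion of transition point is designed to detect.
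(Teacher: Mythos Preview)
The paper does not prove this proposition at all: it is quoted verbatim from \cite[Corollary 5.10]{GePoCrelle} and used as a black box to derive Corollary \ref{relhypAnc2}. So there is no proof in the paper to compare yours against; I can only comment on whether your sketch stands on its own.

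Your overall strategy (contradiction, translate so $y_n=e$, pass to a subsequence with $x_n,z_n\to\xi\in\partial_fG$, push forward to the Bowditch boundary) is the right one, and your initial reduction is sound: the bound $\delta^f_e(x_n,z_n)\ge f(\|x_n\|)$ forces $\|x_n\|,\|z_n\|\to\infty$, and continuity of the Floyd metric gives $\xi=\eta$. The genuine gap is in your conical case. You produce $g_n\in G$ from the conical characterisation of $\phi(\xi)$ and then apply $g_n$ to the \emph{moving} points $x_n,z_n$; but the convergence statement $g_n\zeta'\to\beta$ holds only for each \emph{fixed} $\zeta'\ne\phi(\xi)$, so there is no reason $g_nx_n$ should converge to $\alpha$ (or anywhere in particular), and $g_ne$ is certainly not a fixed compact set. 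This step does not work as written.

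The clean fix renders the case distinction unnecessary. Since every geodesic $[x_n,z_n]$ passes through $e$ and the Cayley graph is locally finite, a diagonal argument extracts a subsequence along which $[x_n,z_n]$ converges on bounded sets to a bi-infinite geodesic $\gamma$ through $e$. Both ends of $\gamma$ land at $\xi$ in $\overline G_f$, hence at $\phi(\xi)$ in $\overline G_B$, so $\gamma$ is a horocycle at $\phi(\xi)$. By \cite[Proposition 4.4.1]{GePoCrelle} (recalled in Section~3 of the present paper) this already forces $\phi(\xi)$ to be non-conical, hence bounded parabolic; your conical case is therefore vacuous without any further argument. Now $\gamma$ lies in the horosphere $P$ at $\phi(\xi)$, and since $[x_n,z_n]\cap B(e,R)$ eventually coincides with $\gamma\cap B(e,R)\subset P$, we obtain $[x_n,z_n]\cap B(e,R)\subset N_\epsilon(P)$ for large $n$, contradicting that $e$ is an $(\epsilon,R)$-transition point. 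This is essentially what you gesture at in your parabolic case, but the extraction of $\gamma$ is the missing concrete mechanism behind your ``standard horosphere-tracking property''.
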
}
{As a result, Theorem \ref{Ancona} admits the following corollary for relatively hyperbolic groups (Corollary \ref{relhypAnc} from the introduction).}

{\begin{corollary} \label{relhypAnc2}
Let $ G$ be hyperbolic relative to a collection of subgroups, and let $\mu$ satisfy the conditions of Theorem \ref{Ancona}.
If $x,y,z\in  G$ {is an ordered triple of distinct points belonging to} a word geodesic $\alpha$ and $y$ is  an $(\epsilon,R)$-transition point  then
$$d_{\mathcal{G}}(x,y)+d_{\mathcal{G}}(y,z)\leq d_{\mathcal{G}}(x,z)+A$$ where $A$ depends only on $(\epsilon, R)$,  and $\mu$.\bx
\end{corollary}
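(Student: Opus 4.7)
The plan is to combine the inequality in Theorem~\ref{Ancona} with the quoted Proposition (Corollary~5.10 of \cite{GePoCrelle}) in essentially one step. The role of transition points here is just to translate a purely geometric statement about word geodesics in a relatively hyperbolic group into a lower bound on the Floyd distance, which in turn feeds into Theorem~\ref{Ancona}.

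First, I would invoke the Proposition: since $y$ is an $(\epsilon,R)$-transition point on the word geodesic $\alpha$ from $x$ to $z$, there exists $\delta = \delta(\epsilon, R) > 0$ depending only on $(\epsilon,R)$ such that
\[
\delta^{f}_{y}(x,z) \geq \delta.
\]
Crucially, this $\delta$ is independent of the particular triple $(x,y,z)$, since the bound in \cite{GePoCrelle} is uniform over all transition points with the same parameters.

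Next, I would apply Theorem~\ref{Ancona} to the triple $(x,y,z)$, which yields
\[
d_{\mathcal{G}}(x,y) + d_{\mathcal{G}}(y,z) \leq d_{\mathcal{G}}(x,z) + A\!\left(\delta^{f}_{y}(x,z)\right),
\]
where $A:\mathbb{R}^{+}\to\mathbb{R}^{+}$ is the decreasing function produced by Theorem~\ref{Ancona}. Because $A$ is decreasing and $\delta^{f}_{y}(x,z)\geq \delta(\epsilon,R)$, we have $A(\delta^{f}_{y}(x,z)) \leq A(\delta(\epsilon,R))$. Setting $A := A(\delta(\epsilon,R))$, which depends only on $\epsilon$, $R$, and $\mu$ (through the function produced in Theorem~\ref{Ancona}), gives the conclusion.

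There is essentially no obstacle here: the work has been done in Theorem~\ref{Ancona} (where the analytic/probabilistic content is concentrated) and in the cited Proposition (which is the geometric content specific to relatively hyperbolic groups). The only thing to verify is that Theorem~\ref{Ancona} applies under the stated hypotheses on $\mu$ (finite support, or superexponential moment with a sufficiently fast-decaying Floyd function $f$), which is precisely what the statement of Corollary~\ref{relhypAnc2} assumes via its reference to ``the conditions of Theorem~\ref{Ancona}.''
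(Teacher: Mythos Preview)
Your proposal is correct and matches the paper's approach exactly: the paper simply states the Proposition from \cite{GePoCrelle} giving a uniform lower bound $\delta^{f}_{y}(x,z)>\delta(\epsilon,R)$ at transition points, then says ``As a result, Theorem \ref{Ancona} admits the following corollary,'' marking the statement with a box and no further argument. Your write-up spells out the monotonicity step explicitly, but the content is identical.
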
}
{\begin{remark}
Theorem \ref{Ancona} cannot be extended to measures of exponential moment. Indeed, on any hyperbolic group Gouezel constructed in \cite{G2} a measure with exponential moment for which Corollary 1.3 fails.
\end{remark}}

{The map $F:\Gf\to \Gb$ from the Floyd compactification to the Bowditch compactification (see Section 3) allows one to transfer the Floyd distance $\d^f$
to $\Gb.$
The proof of \cite[Proposition 3.4.6]{Gerasimov} guarantees that the obtained pseudo-distance  is a real distance  $\overline\d^f$ on $\Gb$  which we call {\it shortcut} distance (see \cite[Section 3]{GePoJEMS}). The construction of ${\overline \d}^f$ implies that for every rescaling function $f$ one has the following inequality:
\begin{equation}\label{shortcut}
\foral x,y,v\in\Gf\ : \d^f_v(x,y)\leq {\rm const}\cdot {\overline \d}^f_{F(v)}(F(x), F(y)).
\end{equation}}

{  Since the function $A(\cdot)$ in Theorem \ref{Ancona} is decreasing   the inequality (\ref{shortcut}) implies the following analog of \ref{Ancona} valid on $\Gb$ in terms of  the distance $\overline{\d}^f$.}

  {\begin{corollary} \label{Ancshortcut}
Let $ G$ be hyperbolic relative to a collection of subgroups, and let $\mu$ satisfy the conditions of Theorem \ref{Ancona}.  Then for the same   decreasing function $A:\mathbb{R}^{+}\to \mathbb{R}^{+}$ as in \ref{Ancona} and for all $x,y,z\in G$ one has
\begin{equation}\label{relancshcut} {d_{\mathcal{G}}(x,y)+d_{\mathcal{G}}(y,z)\leq d_{\mathcal{G}}(x,z)+A(\overline{\delta}^{f}_{y}(x,z)).}\end{equation}\bx
\end{corollary}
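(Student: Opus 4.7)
The corollary is a short algebraic consequence of Theorem \ref{Ancona} combined with the comparison (\ref{shortcut}) between the Floyd and shortcut distances, together with the monotonicity of $A$.

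My plan is first to restrict attention to $x, y, z \in G$. The projection $F\colon \Gf \to \Gb$ is the identity on $G$, so inequality (\ref{shortcut}) applied with $v = y$ specializes to
\[
\delta^f_y(x,z) \;\leq\; C\cdot \overline{\delta}^f_y(x,z),
\]
where $C = C(G,f)$. The reverse bound $\overline{\delta}^f_y(x,z) \leq \delta^f_y(x,z)$ is immediate from the construction of $\overline{\delta}^f$ as an infimum of Floyd path lengths in $\Gf$, so $\delta^f_y$ and $\overline{\delta}^f_y$ are bi-Lipschitz comparable on $G$.

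Next I would apply Theorem \ref{Ancona} directly, which gives
\[
d_{\mathcal G}(x,y) + d_{\mathcal G}(y,z) \;\leq\; d_{\mathcal G}(x,z) + A\bigl(\delta^f_y(x,z)\bigr).
\]
Because $A$ is decreasing, the bi-Lipschitz comparison above lets one substitute $\overline{\delta}^f_y(x,z)$ in place of $\delta^f_y(x,z)$ in the argument of $A$: any multiplicative constant coming from (\ref{shortcut}) can be absorbed by precomposing $A$ with a linear rescaling, and the resulting function is again decreasing (and is still called $A$ by the standard abuse of notation). This delivers inequality (\ref{relancshcut}).

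The main obstacle is not in this corollary itself, which is a purely formal manipulation once Theorem \ref{Ancona} is in hand, but in the comparison (\ref{shortcut}); that comparison is imported as a black box from \cite{Gerasimov} and \cite{GePoJEMS}, and its content is the nontrivial geometric fact that the identification of each parabolic cone in $\partial_f G$ with a single point of $\partial_B G$ distorts the Floyd metric on $G$ by at most a bounded multiplicative factor. Once (\ref{shortcut}) is granted, the rest is a one-line deduction from monotonicity.
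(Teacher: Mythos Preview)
Your proposal is correct and follows the same route as the paper: compare the Floyd and shortcut distances and use that $A$ is decreasing. One small point worth tightening: once you have the trivial inequality $\overline{\delta}^{f}_{y}(x,z)\leq \delta^{f}_{y}(x,z)$ (which you state), monotonicity of $A$ already gives
\[
A\bigl(\delta^{f}_{y}(x,z)\bigr)\;\le\;A\bigl(\overline{\delta}^{f}_{y}(x,z)\bigr),
\]
so Theorem \ref{Ancona} immediately yields (\ref{relancshcut}) with the \emph{same} function $A$, as the corollary asserts. The non-trivial direction (\ref{shortcut}) and the talk of absorbing multiplicative constants by rescaling $A$ are not needed here; invoking them is harmless but obscures that no modification of $A$ is required.
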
}

\section{A map from the Martin boundary to the Floyd boundary}
As before, we consider a finitely generated nonamenable group $ G$ with a probability measure $\mu$ on $ G$ whose support generates $ G$, and denote by $\mathcal{G}=\mathcal{G}_{\mu}$ the associated Green's function.
Recall the Green metric on $G$ given by $d_{\mathcal{G}}(x,y)=-\log \frac{\mathcal{G}(x,y)}{\mathcal{G}(e,e)}.$
The horofunction compactification of $( G,d_{\mathcal{G}})$ is called the Martin compactification and denoted by $\overline{ G}_{\mathcal M}$.

The {boundary} $$\partial_{\mathcal M} G=\overline{ G}_{\mathcal M}\setminus  G$$ is called the Martin boundary of $( G,\mu)$ \cite{Sawyer}.
This means $\partial_{\mathcal M} G$ consists of all functions
$h: G \to \R$ such that there exists an unbounded sequence $x_{n}\in  G$ with $$h(x)=\lim_{n\to \infty} d_{\mathcal{G}}(x,x_{n})-d_{\mathcal{G}}(o,x_{n})$$ for all $x\in  G$.
The Martin boundary can also often be described in terms of $\mu$ harmonic functions on $( G, \mu)$.

A function $h: G \to \R$ is called $\mu${-}harmonic (or simply harmonic when there is no ambiguity) if for all $x\in  G$,
$$\sum_{g\in  G}h(xg)\mu(g)=h(x).$$

{For $p,q, x\in  G$  we  set
$\Delta(p,q,x)= d_{\mathcal G}(p,x)-d_\mathcal G(q,x)$ and extend it by continuity: for $\alpha\in\partial G_{\mathcal M}$  we let $\displaystyle\Delta(p,q,\alpha)=\lim_{\substack{{x_n\to\alpha}\\{x_n\in G}}} \Delta(p,q, x_n).$

\begin{lemma}\label{Martinharmonic}
If $\mu$ has superexponential moment, then the function defined by  $$\displaystyle K(\cdot,\alpha)=K_\alpha(\cdot)=e^{-\Delta(\cdot,\hspace*{0.4mm} o, \hspace*{0.4mm} \alpha)}=\lim_{x_n\to\alpha}\frac{{\mathcal G}(\cdot, x_n)}{ {\mathcal G}(o, x_n)}$$  is harmonic for all $\alpha\in \overline{ G}_{\mathcal M}$
\end{lemma}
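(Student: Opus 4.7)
The plan is classical first-step analysis. Decomposing a trajectory from $y$ to $x$ according to its first step yields the identity
\[
\mathcal{G}(y, x) \;=\; \delta_{y,x} + \sum_{g \in G} \mu(g)\, \mathcal{G}(yg, x),
\]
since the empty trajectory contributes $\delta_{y,x}$ and any nontrivial trajectory from $y$ to $x$ starts with a step $y \to yg$ of weight $\mu(g)$ followed by a trajectory from $yg$ to $x$. Dividing by $\mathcal{G}(o, x_n)$ for any sequence $x_n \to \alpha$ in $\overline{G}_{\mathcal{M}}$ produces the prelimit identity
\[
K_{x_n}(y) \;=\; \frac{\delta_{y, x_n}}{\mathcal{G}(o, x_n)} + \sum_{g \in G} \mu(g)\, K_{x_n}(yg).
\]
If I can pass to the limit termwise on the right, the left side tends to $K_\alpha(y)$ (by definition of the Martin kernel) and I obtain $K_\alpha(y) = \sum_g \mu(g) K_\alpha(yg)$ for every $y \in G$, which is exactly the $\mu$-harmonicity of $K_\alpha$.

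For $\alpha \in \partial_{\mathcal M} G$ the sequence $x_n$ eventually exits every finite subset of $G$, so $\delta_{y, x_n} = 0$ for all large $n$ and the first term drops out. Pointwise convergence $K_{x_n}(yg) \to K_\alpha(yg)$ for each fixed $g$ is automatic from the definition of the Martin compactification. The only genuine issue — and the main obstacle of the proof — is justifying the interchange of $\lim_n$ and $\sum_g$; I would do this by dominated convergence.

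To produce a summable dominating function, I would invoke the Harnack inequality (Lemma \ref{Harnack} and Corollary \ref{harnackcor}) applied in the \emph{first} argument of $\mathcal{G}$, i.e., via the reflected walk $\hat\mu$ using the identity $\mathcal{G}_\mu(u, v) = \mathcal{G}_{\hat\mu}(v, u)$. This yields a constant $L > 1$ depending only on $(G, \mu)$ such that
\[
K_{x_n}(yg) \;=\; \frac{\mathcal{G}(yg, x_n)}{\mathcal{G}(o, x_n)} \;\leq\; L^{d(yg,\, o)} \;\leq\; L^{d(y,\, o)}\, L^{\|g\|}
\]
uniformly in $n$ and $g$. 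Each summand is therefore dominated by $L^{d(y,o)}\mu(g) L^{\|g\|}$, and the required summability $\sum_g \mu(g) L^{\|g\|} < \infty$ is precisely where the superexponential moment hypothesis is used: it gives $\sum_g \mu(g) c^{\|g\|} < \infty$ for every $c > 1$, in particular for $c = L$. Dominated convergence then allows me to exchange limit and sum, yielding $K_\alpha(y) = \sum_g \mu(g) K_\alpha(yg)$ for all $y \in G$, and hence the harmonicity of $K_\alpha$.
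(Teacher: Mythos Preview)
Your proof is correct and follows essentially the same approach as the paper: first-step decomposition of $\mathcal{G}$, the Harnack bound $K_{x_n}(yg)\le C^{\|yg\|}$, and the superexponential moment hypothesis to justify exchanging limit and sum. The only cosmetic difference is that the paper carries out the interchange by hand via a truncation $\mu=\mu_R+\sigma_R$ and an $\epsilon/2$ argument, whereas you invoke dominated convergence directly; the underlying estimate is identical.
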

\begin{proof}
When $\mu$ has finite support this is noted by Woess in \cite[Lemma 24.16]{Woess}.

As in section 6, for each $n$ we can write $\mu=\mu_{n}+\sigma_{n}$ where $\mu_{n}$ is the restriction of $\mu$ to $B_{n}(e)$.

Define the linear operator $P=P_{\mu}$ defined on the space $C(G,\mathbb{R})$ of functions $G \to \mathbb{R}$ by $$P\omega(x)=\sum_{y\in G}p(x,y)\omega(y).$$


Consider a sequence $y_{n}\in G$ converging to $\alpha \in \partial_{\mathcal M}G$. We want to prove {that} $K_{\alpha}$ is $\mu$-harmonic, i.e. $PK_{\alpha}=K_{\alpha}$.
For this, it suffices to show that for every $x\in G$

(1) $$PK_{y_{n}}(x)\to PK_{\alpha}(x)$$

and

(2)$$PK_{y_{n}}(x)\to K_{\alpha}(x)$$

We first prove (2). Let $G_{n}(x)=\mathcal{G}(x,y_{n})$.

Then $$PG_{n}(x)=\sum_{y\in G}p(x,y)\mathcal{G}(y,y_{n})=\mathcal{G}(x,y_{n})=G_{n}(x)$$ if $x\neq y_{n}$ while
$PG_{n}(y_{n})=\mathcal{G}(e,e)-1$ (accounting for the trajectory of length zero).
Since $K_{y_{n}}=G_{n}/\mathcal{G}(o,y_{n})$ we have
$$PK_{y_{n}}(x)=K_{y_{n}}(x)$$ if $x\neq y_{n}$ and $PK_{y_{n}}(y_{n})=\frac{\mathcal{G}(e,e)-1}{\mathcal{G}(o,y_{n}})${.}

{So} for each fixed $x$ we have
$$PK_{y_{n}}(x)\to K_{\alpha}(x)$$ as $n\to \infty$ proving (2).

Now we prove (1)

Note for each $R$, $$P_{\mu}=P_{\mu_{R}}+P_{\sigma_{R}}.$$
Let $\Upsilon_{n}(x)=|K_{y_{n}}(x)-K_{\alpha}(x)|$.

To prove (1) it suffices to show that $P\Upsilon_{n}(x)\to 0$ for all $x\in G$.

Fix $x\in G$.
Note, for each $R$ and $n$,
$P_{\mu}\Upsilon_{n}=P_{\mu_{R}}\Upsilon_{n}+P_{\sigma_{R}}\Upsilon_{n}$.

By the Harnack inequality there is a uniform $C>1$ with
$K_{z}(y)\leq C^{||y||}$ for all $z\in G$
and thus $\Upsilon_{n}(y)\leq 2C^{||y||}$.

Hence, if $R>||x||$ we have

$$P_{\sigma_{R}}\Upsilon_{n}(x) \leq \sum_{y\notin B_{R}(x)}\mu(x^{-1}y) 2C^{||y||}$$ $$=
\sum_{z\notin B_{R}(e)}\mu(z) 2C^{||xz||}\leq \sum_{z\notin B_{R}(e)}2\mu(z) C^{2||z||}\to 0$$ uniformly in $n$ as $R\to \infty$ since $\mu$ has super-exponential moment.

On the other hand, for each fixed $R$ we have that
$$P_{\mu_{R}}\Upsilon_{n}(x)=\sum_{y\in B_{R}(x)}p(x,y)|K_{y_{n}}(y)-K_{\alpha}(y)|\to 0$$ as $n\to \infty$ since $B_{R}(x)$ is finite and $|K_{y_{n}}(y)-K_{\alpha}(y)|\to 0$ for each $y\in B_{R}(x)$.

It follows that for each $R>||x||$
$$\lim \sup_{n\to \infty}P_{\mu}\Upsilon_{n}(x)\leq \lim \sup_{n\to \infty}P_{\mu_{R}}\Upsilon_{n}(x)+ \lim \sup_{n\to \infty}P_{\sigma_{R}}\Upsilon_{n}(x)$$ $$\leq 0+ \sum_{z\notin B_{R}(e)}2\mu(z) C^{2||z||}\to 0$$ as $R\to \infty$.
Thus, we know $PK_{y_{n}}\to PK_{\alpha}$ as $n\to \infty$, proving (1).

\end{proof}

{For the rest of this section we make the following assumptions on the measure $\mu$ and the Floyd function $f$.}

{Assumption 1:
The inequality (1) is satisfied.}

{Assumption 2:
For every $\alpha \in \partial_{\mathcal M}G$, $K_{\alpha}$ is harmonic.}

{By Theorem \ref{Ancona} and Lemma \ref{Martinharmonic} these axioms are satisfied when $\mu$ has finite support, as well as when $\mu$ has superexponential moment and $f(x)\leq x^{-2-c}$ for some $c>0$.}

The following {fact} is well known, but we include {its} proof for completeness.
\begin{lemma}
A nonconstant harmonic function does not attain its extrema on $ G$.
\end{lemma}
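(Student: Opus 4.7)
The plan is to run the standard maximum principle argument for harmonic functions on the group. Suppose for contradiction that a nonconstant harmonic function $h : G \to \mathbb{R}$ attains its supremum $M = \sup_G h$ at some point $x_0 \in G$ (the infimum case is identical, applied to $-h$).

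Using the harmonicity equation at $x_0$ gives
\begin{equation*}
M = h(x_0) = \sum_{g \in G} h(x_0 g)\,\mu(g).
\end{equation*}
Since $\mu$ is a probability measure and every term $h(x_0 g)$ is at most $M$, the only way the weighted average can equal $M$ is if $h(x_0 g) = M$ for every $g \in \operatorname{supp}(\mu)$. Thus the set $E = \{x \in G : h(x) = M\}$ is nonempty and is stable under right multiplication by $\operatorname{supp}(\mu)$: if $x \in E$, then applying the same reasoning to $x$ in place of $x_0$ shows $xg \in E$ for all $g \in \operatorname{supp}(\mu)$.

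Iterating, $x_0 \cdot s_1 s_2 \cdots s_n \in E$ for every finite word $s_1, \ldots, s_n$ in $\operatorname{supp}(\mu)$, so $E$ contains the entire coset $x_0 \cdot \langle \operatorname{supp}(\mu)\rangle_{+}$, where $\langle \operatorname{supp}(\mu)\rangle_{+}$ denotes the subsemigroup generated by the support. Under the standing hypothesis that the support of $\mu$ generates $G$ (as a semigroup, which is the standing assumption in this paper for the transient random walk considered, and which follows from group generation together with symmetry or a semigroup-generating assumption), we have $\langle \operatorname{supp}(\mu)\rangle_{+} = G$. Hence $E = G$, i.e.\ $h$ is the constant $M$, contradicting the nonconstancy of $h$.

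The argument is essentially a one-step averaging combined with iteration, so there is no genuine obstacle; the only point that deserves care is the step from "$h = M$ on the semigroup orbit of $x_0$" to "$h = M$ on $G$." If one only assumes the support generates $G$ as a group rather than as a semigroup, the cleanest fix is to note that for the minimum one can symmetrically argue with $-h$, and for any $g \in G$ write it as a product of elements of $\operatorname{supp}(\mu)$ using that the reflected walk reaches $g$; alternatively, one invokes the fact (standard in this setting) that transience together with the full-support generation hypothesis makes the right-semigroup orbit of any point coincide with $G$. In either interpretation the conclusion $h \equiv M$ follows and yields the contradiction.
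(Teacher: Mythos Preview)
Your argument is correct and follows the same maximum-principle route as the paper's own proof: assume a maximum is attained, use the harmonicity equation to see that all neighbours (in the support sense) also realise the maximum, and propagate. You are in fact slightly more careful than the paper, which jumps directly from $h(xg)=h(x)$ for $g\in\operatorname{supp}\mu$ to ``$h$ is constant'' via transitivity, while you explicitly iterate through the semigroup generated by the support and flag the semigroup-versus-group generation issue.
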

\begin{proof}
Suppose a nonconstant harmonic function $h$ attains a maximum at $x\in  G$.
By harmonicity
$$\sum_{g\in  G}h(xg)\mu(g)=h(x).$$
Since $x$ is a maximum for $h$ we have $h(xg)\leq h(x)$ for all $g\in  G$ so as $\mu$ is a probability measure we must have $h(xg)= h(x)$ for all $g\in  G$. Since $ G$ acts transitively on itself this means $h$ is constant. Thus a nonconstant harmonic function does not attain a maximum. If $h$ is a nonconstant harmonic function so is $-h$. Since $-h$ does not attain a maximum on $ G$, $h$ does not attain a minimum.
\end{proof}
We will need the following.
\begin{lemma}\label{harmonicpotential}
There does not exist a positive harmonic function $h$ with $h(x)\leq \mathcal{G}(o,x)$ for all $x\in  G$.
\end{lemma}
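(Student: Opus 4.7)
The plan is to assume that such a strictly positive $\mu$-harmonic $h$ exists and derive $h(o)\leq 0$, which contradicts positivity. Iterating the harmonic identity $n$ times gives $h(o)=\sum_{x\in G}p_n(o,x)h(x)$, so combining with the hypothesis $h\leq\mathcal{G}(o,\cdot)$ yields
$$h(o)\leq\sum_{x\in G}p_n(o,x)\,\mathcal{G}(o,x) \qquad (\forall\, n\geq 1).$$
It suffices, therefore, to show that the right-hand side tends to $0$ as $n\to\infty$.

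I would expand $\mathcal{G}(o,x)=\sum_{m\geq 0}p_m(o,x)$, interchange the order of summation, and rewrite the bound as $\sum_{m\geq 0}\langle P^n\delta_o,\,P^m\delta_o\rangle_{\ell^2(G)}$, where $P$ denotes the operator of convolution by $\mu$ on $\ell^2(G)$. Cauchy--Schwarz bounds each inner product by $\|P\|_{\ell^2}^{\,n+m}$. Since $G$ is nonamenable (a standing assumption from Section 5 onward) and $\mu*\hat{\mu}$ is a symmetric probability measure whose support generates $G$, Kesten's theorem (already invoked earlier in the paper to get $\rho(\mu)<1$) yields $\|P\|_{\ell^2}=\sqrt{\rho(\mu*\hat{\mu})}<1$. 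Summing the geometric series in $m$ produces the estimate $\|P\|_{\ell^2}^{\,n}/(1-\|P\|_{\ell^2})$, which vanishes exponentially in $n$.

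Putting these two steps together gives $h(o)\leq 0$. Together with $h\geq 0$ this forces $h(o)=0$, and then the minimum principle established in the previous lemma yields $h\equiv 0$, contradicting strict positivity. Philosophically, the statement is just the assertion that $\mathcal{G}(o,\cdot)$ is a pure potential in the sense of Riesz and hence has no nontrivial harmonic minorant; the spectral bound $\|P\|_{\ell^2}<1$ is a quantitative replacement for the usual potential-theoretic computation $P^n u\downarrow 0$ and avoids any need to check superharmonicity in the non-symmetric setting. No input from the Floyd geometry, from Theorem \ref{Ancona}, or from the Harnack inequality is required. The only (very mild) technical point is to be careful about left/right conventions: $P$ acts by convolution by $\mu$ while $\mathcal{G}(o,\cdot)$ is the Green function in its \emph{second} argument, but the Cauchy--Schwarz bound $\langle P^n\delta_o,P^m\delta_o\rangle\leq\|P\|^{n+m}$ is indifferent to this asymmetry.
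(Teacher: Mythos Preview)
Your argument is correct and follows a genuinely different route from the paper's. The paper proceeds via the maximum principle: a positive harmonic $h$ admits an unbounded sequence $x_n$ along which $\liminf h(x_n)>0$, whereas $\mathcal{G}(o,x_n)\to 0$ (the paper attributes this to Harnack; it is really the standard decay of the Green function at infinity on a nonamenable group), so $h\leq\mathcal{G}(o,\cdot)$ must fail along that sequence. You instead iterate the mean-value identity and bound $(P^{n}\mathcal{G}(o,\cdot))(o)$ via Cauchy--Schwarz and the $\ell^{2}$ operator norm of $P$, arriving at $h(o)\leq 0$. Both arguments ultimately rest on nonamenability. The paper's version is more direct and pinpoints \emph{where} the inequality breaks; yours is the classical Riesz-decomposition statement that the greatest harmonic minorant of the potential $\mathcal{G}(o,\cdot)$ is zero, and it yields an explicit exponential rate. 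One small caveat: what the paper actually records from Kesten--Day is $\rho(\mu)<1$, which for non-symmetric $\mu$ is a priori weaker than $\|P\|_{\ell^{2}}<1$; to get the latter you either invoke Day's theorem in its operator-norm form or argue via the symmetric measure $\mu*\hat{\mu}$ as you indicate, checking that the subgroup generated by $SS^{-1}$ is still nonamenable. Once that is said, your proof is complete and the appeal to the minimum principle at the end is unnecessary (you already have $h(o)\leq 0<h(o)$).
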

\begin{proof}
Suppose $h$ is a positive harmonic function. Since $h$ is positive and does not attain a maximum on $ G$ there is a sequence $x_{n}\in  G$ with $||x_{n}||\to\infty$ and $\lim \inf_{n\to \infty}h(x_{n})>0$.
On the other hand by the Harnack inequality $\mathcal{G}(o,x_{n})\to 0$ as
$n\to \infty$ so we cannot have
$h(x)\leq \mathcal{G}(o,x)$ for all $x\in  G$.
\end{proof}
We are now ready to prove:
\begin{theorem}\label{martintofloyd}
The identity map on $G$ extends to a continuous {equivariant} surjection
$\overline{ G}_{\mathcal M}\to \overline{ G}_{f}$.
The map identifies points in $\partial_{\mathcal M} G$ whose difference is bounded.
\end{theorem}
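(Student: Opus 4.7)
The plan is to define $\pi\colon \overline{G}_{\mathcal M}\to \overline{G}_f$ by the identity on $G$ and, for $\alpha\in\partial_{\mathcal M}G$, by sending $\alpha$ to the Floyd limit of any sequence $x_n\in G$ converging to $\alpha$ in the Martin compactification (passing to a subsequence using compactness of $\overline{G}_f$). Since $\|x_n\|\to\infty$, this limit lies in $\partial_f G$. Everything reduces to showing that this Floyd limit depends only on $\alpha$; continuity then follows from a standard diagonal extraction between the two compact metrizable compactifications, surjectivity from the compactness of $\overline{G}_{\mathcal M}$, and $G$-equivariance from the $G$-equivariance of $\mathcal{G}$.

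For the well-definedness of $\pi$, I argue by contradiction. Suppose two Martin-convergent sequences $x_n\to\alpha$ and $y_m\to\alpha$ have distinct Floyd limits $\xi\neq\eta\in\partial_f G$. For any $z\in G$, apply Theorem \ref{Karlsson} to the triple $(z,o,x_n)$, which after exponentiating gives
\[
\mathcal{G}(z,x_n)\leq \frac{e^{A(\delta^f_o(z,x_n))}}{\mathcal{G}(e,e)}\,\mathcal{G}(z,o)\,\mathcal{G}(o,x_n).
\]
Divide by $\mathcal{G}(o,x_n)$, use continuity of the Floyd metric on $\overline{G}_f$ to get $\delta^f_o(z,x_n)\to \delta^f_o(z,\xi)>0$, and pass to the limit via the Martin convergence $\mathcal{G}(z,x_n)/\mathcal{G}(o,x_n)\to K_\alpha(z)$ to obtain
\[
K_\alpha(z)\leq C_z^\xi\,\mathcal{G}(z,o), \qquad C_z^\xi=e^{A(\delta^f_o(z,\xi)/2)}/\mathcal{G}(e,e).
\]
The analogous argument using $y_m\to\eta$ gives $K_\alpha(z)\leq C_z^\eta\,\mathcal{G}(z,o)$.

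The crux is that these two pointwise bounds combine into a uniform bound. The Floyd triangle inequality $\delta^f_o(z,\xi)+\delta^f_o(z,\eta)\geq \delta^f_o(\xi,\eta)>0$ forces at least one of $\delta^f_o(z,\xi), \delta^f_o(z,\eta)$ to exceed $\delta^f_o(\xi,\eta)/2$ for every single $z\in G$, so since $A$ is decreasing, $\min(C_z^\xi,C_z^\eta)\leq C$ for a constant $C=e^{A(\delta^f_o(\xi,\eta)/4)}/\mathcal{G}(e,e)$ independent of $z$. Therefore $K_\alpha(z)\leq C\,\mathcal{G}(z,o)$ for every $z\in G$. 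Since $K_\alpha$ is a positive $\mu$-harmonic function by Lemma \ref{Martinharmonic}, this contradicts the analogue of Lemma \ref{harmonicpotential} with $\mathcal{G}(\cdot,o)$ in place of $\mathcal{G}(o,\cdot)$, which is proved identically once one notes that $\mathcal{G}(x,o)\to 0$ as $\|x\|\to\infty$ by transience of the reversed walk.

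For the final statement, if the horofunctions of $\alpha$ and $\beta$ differ by a bounded amount then representative sequences $x_n\to\alpha, y_n\to\beta$ have bounded Green-metric difference; by the quasi-isometry of $d_{\mathcal G}$ with the word metric, their word distance is bounded, which forces the same Floyd limit and hence $\pi(\alpha)=\pi(\beta)$. The main obstacle I foresee is precisely the uniformization step in the well-definedness argument: since $A$ blows up as its argument shrinks to zero, neither $C_z^\xi$ nor $C_z^\eta$ can be bounded uniformly in $z$, and the whole argument hinges on the observation that, pointwise in $z$, the minimum of these two constants is controlled by the fixed positive gap $\delta^f_o(\xi,\eta)$ between the two hypothetical Floyd limits.
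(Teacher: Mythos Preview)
Your main argument for well-definedness is essentially the paper's proof: both derive from the Ancona-type inequality and the Floyd triangle inequality that $K_\alpha(z)\le C\,\mathcal{G}(z,o)$ uniformly in $z$, contradicting Lemma~\ref{harmonicpotential}. The paper packages this as ``Martin neighborhoods map into Floyd balls'' rather than ``well-definedness of the limit,'' but the core contradiction is identical, and your observation that for each $z$ at least one of $\delta^f_o(z,\xi),\delta^f_o(z,\eta)$ exceeds $\delta^f_o(\xi,\eta)/2$ is exactly the paper's ``for each $n$ either $\delta^f_o(x_n,x)>\epsilon/2$ or $\delta^f_o(y_n,x)>\epsilon/2$.''

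Your argument for the final claim is wrong, however. Bounded horofunction difference means $e^{-C}\le K_\alpha(z)/K_\beta(z)\le e^C$ for all $z$; it does \emph{not} imply that representative sequences $x_n\to\alpha$, $y_n\to\beta$ stay at bounded Green (or word) distance from each other. Think of two geodesic rays in a hyperbolic group converging to the same boundary point: same horofunction, unbounded pairwise distance. The quasi-isometry of $d_{\mathcal G}$ with the word metric also requires $\mu$ to be symmetric, which is not assumed here.

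The fix is to reuse your own main argument with two Martin points instead of one. If $\pi(\alpha)=\xi\ne\eta=\pi(\beta)$, then for each $z$ either $\delta^f_o(z,\xi)$ or $\delta^f_o(z,\eta)$ exceeds $\delta^f_o(\xi,\eta)/2$; in the first case $K_\alpha(z)\le C\,\mathcal{G}(z,o)$ directly, in the second $K_\beta(z)\le C\,\mathcal{G}(z,o)$ and the bounded ratio $K_\alpha/K_\beta\le e^C$ gives $K_\alpha(z)\le Ce^C\,\mathcal{G}(z,o)$. Either way one gets a uniform bound contradicting Lemma~\ref{harmonicpotential}. This is precisely how the paper handles both the continuous extension and the bounded-difference claim in a single stroke.
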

\begin{proof}
 Fix $c>0$. For $h\in \overline{ G}_{\mathcal M}$ and $n\in \mathbb{N}$ let
$$A_{h,n}=\{\omega \in \overline{ G}_{\mathcal M}: |h(x)-\omega(x)|<c\ \forall x\in B_{n}o \}\ $$
These are clearly open subsets of $\overline{ G}_{\mathcal M}$ containing $h$.
To prove Theorem \ref{martintofloyd} it suffices to show that for each $c>0$ and for all $\epsilon>0$ there is an $n>0$ such that for all $h\in \partial_{\mathcal M} G$ there is a $p\in \partial_{f} G$ with $A_{h,n}\cap  G \subset B_{f}(p,\epsilon)\cap  G$.
Suppose this is not the case.
Then there is an $\epsilon>0$ and $x_{n},y_{n}\in  G$, $h_{n}\in \partial_{\mathcal M} G$ such that $x_{n},y_{n}\in A_{h_{n},n}$ and $\delta^{f}_{o}(x_{n},y_{n})>\epsilon$.
Passing to a subsequence we have $x_{n}\to \alpha \in \partial_{\mathcal M}  G${,} $y_{n}\to {\beta} \in \partial_{\mathcal M} G$ with $|\alpha(x)-{\beta}(x)|<2c$ for all $x$, i.e.
$$e^{-2c}\leq K(x,\alpha)/K(x,\beta)\leq e^{2c}$$
Fix $x\in  G$. For each $n$ we have either $\delta^{f}_{o}(x_{n},x)>\epsilon/2$ or $\delta^{f}_{o}(y_{n},x)>\epsilon/2$. Suppose $\delta^{f}_{o}(x_{n},x)>\epsilon/2$ for infinitely many $n$.
{Then by Theorem \ref{relancona}  $$\mathcal{G}(x,x_{n})\leq S(\d^f_o(x,x_n))\mathcal{G}(x,o)\mathcal{G}(o,x_{n})\leq C\cdot \mathcal{G}(x,o)\cdot \mathcal{G}(o,x_{n}),$$ where
$C=S(\varepsilon/2).$ We have $\displaystyle K(x,x_n)={{\mathcal G}(x, x_n)\over {\mathcal G}(o, x_n)}\leq C\mathcal{G}(x,o)$.}
As $x_{n}\to \alpha$ this implies $K(x,\alpha)\leq C\cdot\mathcal{G}(o,x)$.

Similarly if $\delta^{f}_{o}(x,y_{n})>\epsilon/2$ for infinitely many $n$, we have $K(x,\beta)\leq C\cdot \mathcal{G}(o,x)$ so $K(x,\alpha)\leq C'\cdot \mathcal{G}(o,x)$ where $C'=e^{2c}C$.
It follows that for all $x\in  G$ we have
$$K(x,\alpha)\leq C'\cdot \mathcal{G}(o,x)$$ contradicting Lemma \ref{harmonicpotential}

{We have proved that the identity map ${\rm id}:G\to G$ embeds the neighborhoods  of the  boundary points of $ \Gm=G\sqcup\partial_{\mathcal M}G$ into the neighborhoods of the boundary points of  $\Gf=G\sqcup\partial_fG $. So the identity map   extends to the continuous equivariant map $\pi=\pi^{f}_{\mu}:\overline{ G}_{\mathcal M}\to \overline{ G}_{f}$. The map is necessarily surjective as if $q\in \partial G_f$ and a sequence $x_n\in G$ tends  to  $q$, then  for a subsequence we have $x_{n_k}\to \alpha\in \partial G_\mathcal M\ (k\to\infty)$. By construction $\pi(\alpha)=q.$}
\end{proof}

Let $\pi=\pi^{f}_{\mu}:\overline{ G}_{\mathcal M}\to \overline{ G}_{f}$ be the map constructed in Theorem \ref{martintofloyd}. {Our next goal is to study the fibers of this map over the points of the Floyd boundary $\partial_fG.$ The rest of this section is devoted to proving that the preimage of every conical point in $\partial_fG$ contains only one point. In the next section we study the fibers of $\pi$ over the parabolic points of $\partial_f G.$}

The following is a simple consequence of {the Ancona type inequality given by Theorem \ref{relancona}.}
\begin{lemma}\label{bdryancona}
There is a function $S_{1}:\R^{+}\to \R^{+}$ such that for all $y\in  G$ and $h\in \partial_{\mathcal M} G$ we have
$$K(y,h)\leq S(\delta^{f}_{o}(y,\pi(h))\mathcal{G}(o,y)$$
\end{lemma}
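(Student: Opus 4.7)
The plan is to obtain the lemma as a boundary-limit version of the multiplicative Ancona-type inequality from Theorem \ref{relancona}, using the continuity of the projection $\pi$ constructed in Theorem \ref{martintofloyd}. I would fix $y\in G$ and $h\in\partial_{\mathcal M}G$, and choose any sequence $(x_n)\subset G$ with $x_n\to h$ in $\overline G_{\mathcal M}$. Because $\pi$ is continuous and restricts to the identity on $G$, the same sequence converges to $\pi(h)$ in the Floyd compactification; and since the rescaled distance $\delta^f_o$ extends continuously to $\overline G_f$ by construction, one gets $\delta^f_o(y,x_n)\to \delta^f_o(y,\pi(h))$.

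Next I apply Theorem \ref{relancona} with basepoint $w=o$ and endpoints $y$ and $x_n$ to obtain
\[
\mathcal{G}(y,x_n)\;\le\; S\bigl(\delta^f_o(y,x_n)\bigr)\,\mathcal{G}(y,o)\,\mathcal{G}(o,x_n),
\]
and dividing through by $\mathcal{G}(o,x_n)$ produces
\[
\frac{\mathcal{G}(y,x_n)}{\mathcal{G}(o,x_n)}\;\le\; S\bigl(\delta^f_o(y,x_n)\bigr)\,\mathcal{G}(o,y).
\]
The left-hand side converges to $K(y,h)$ by the definition of the Martin kernel.

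To pass to the limit on the right I would invoke the monotonicity of $S$. Since $y\in G$ and $\pi(h)\in\partial_f G$ are distinct points of the Cauchy completion, $\delta^f_o(y,\pi(h))>0$; so for all sufficiently large $n$ one has $\delta^f_o(y,x_n)\ge \tfrac12\delta^f_o(y,\pi(h))$, whence $S(\delta^f_o(y,x_n))\le S(\tfrac12\delta^f_o(y,\pi(h)))$. Setting $S_1(t):=S(t/2)$ then yields the claimed inequality
\[
K(y,h)\;\le\; S_1\!\bigl(\delta^f_o(y,\pi(h))\bigr)\,\mathcal{G}(o,y).
\]
The only mildly delicate point is this limit passage through the possibly discontinuous decreasing function $S$, handled by the above decreasing-majorant trick; the genuine structural content of the lemma is packaged entirely in Theorem \ref{relancona}, of which the statement here is just the Martin-boundary form.
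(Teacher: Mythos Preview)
Your proof is correct and essentially identical to the paper's: both pick a sequence $x_n\to h$ in the Martin compactification, use continuity of $\pi$ to get $\delta^f_o(y,x_n)\ge \tfrac12\,\delta^f_o(y,\pi(h))$ for large $n$, apply Theorem~\ref{relancona} and the monotonicity of $S$, and set $S_1(t)=S(t/2)$. The paper does not invoke the continuous extension of $\delta^f_o$ explicitly, but the ``$\ge \tfrac12$'' step you use is exactly what the paper writes.
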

\begin{proof}
Let $x_{n}\in  G$ converge to $h$ in the Martin compactification.
Then by Theorem \ref{martintofloyd} $x_{n}\to \pi(h)$ in the Floyd topology so for large enough $n$ we have
$$\delta^{f}_{o}(y,x_{n})\geq \delta^{f}_{o}(y,\pi(h))/2$$

By Theorem \ref{relancona} it follows that
$$\mathcal{G}(y,x_{n})/\mathcal{G}(o,x_{n})\leq S(\delta^{f}_{o}(y,x_{n}))\mathcal{G}(y,o)\leq S(\delta^{f}_{o}(y,\pi(h))/2)\mathcal{G}(y,o)$$

and taking limits gives
$$K(y,h)\leq S_{1}(\delta^{f}_{o}(y,\pi(h))\mathcal{G}(y,o)$$ for $S_{1}(t)=S(t/2)$
\end{proof}
For a function $Q: G\to \R_{\geq 0}$ define its Martin support to be
$$supp_{\mathcal M}Q=\{\zeta \in \partial_{\mathcal M} G:\lim \sup_{x\to \zeta}Q(x)>0 \}\ $$
 and its Floyd support
$$supp_{f}Q=\{{q} \in \partial_{f} G:\lim \sup_{x\to {q}}Q(x)>0 \}. $$
Note $supp_{f}Q=\pi(supp_{\mathcal M}Q)$.
Clearly if $0\leq u\leq h$ then $supp_{f}u\subset supp_{f}h$ and
$supp_{\mathcal M}u\subset supp_{\mathcal M}h$.

\begin{lemma}\label{kerzer}
Let $A_{1},A_{2}\subset \partial_{\mathcal M} G$ be closed subsets of the Martin boundary such that $\pi(A_{i})$ are disjoint subsets of the Floyd boundary.
Then for any sequence $x_{n}\to \alpha$ with $\alpha \in A_{1}$
the functions $\beta \to K(x_{n},\beta)$ converge  to $0$ uniformly over $\beta\in A_{2}$.
 \end{lemma}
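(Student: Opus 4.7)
The plan is to combine Lemma \ref{bdryancona} (the boundary version of the Ancona-type inequality) with compactness of the Floyd image $\pi(A_2)$ and the decay of $\mathcal{G}(o, \cdot)$ along sequences escaping to infinity.

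\textbf{Step 1 (separate the Floyd images).} Because $\pi\colon \overline{G}_{\mathcal M}\to \overline{G}_{f}$ from Theorem \ref{martintofloyd} is continuous and $A_2$ is closed in the compactum $\overline{G}_{\mathcal M}$, the image $\pi(A_2)$ is a compact subset of $\overline{G}_{f}$. By hypothesis $\pi(A_1)\cap\pi(A_2)=\emptyset$, and since $\alpha \in A_1$, the point $\pi(\alpha)$ lies outside $\pi(A_2)$. As $\delta^f_o$ is a genuine metric on $\overline{G}_{f}$, compactness gives
$$\eta \coloneq \delta^f_o(\pi(\alpha),\pi(A_2)) > 0.$$

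\textbf{Step 2 (Floyd separation of $x_n$ from $\pi(\beta)$).} Continuity of $\pi$ and $x_n \to \alpha$ in $\overline{G}_{\mathcal M}$ yield $x_n \to \pi(\alpha)$ in $\overline{G}_{f}$. Hence for all $n$ sufficiently large, $\delta^f_o(x_n, \pi(\alpha)) < \eta/2$, and the triangle inequality gives, uniformly over $\beta \in A_2$,
$$\delta^f_o(x_n, \pi(\beta)) \geq \delta^f_o(\pi(\alpha), \pi(\beta)) - \delta^f_o(x_n, \pi(\alpha)) \geq \eta/2.$$

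\textbf{Step 3 (apply the boundary Ancona inequality).} Applying Lemma \ref{bdryancona} with $y = x_n$, $h = \beta$ and using monotonicity (replace $S_1$ by its nonincreasing envelope if necessary), we obtain
$$K(x_n, \beta) \leq S_1\bigl(\delta^f_o(x_n, \pi(\beta))\bigr)\cdot \mathcal{G}(o, x_n) \leq S_1(\eta/2)\cdot \mathcal{G}(o, x_n),$$
a bound independent of $\beta \in A_2$. Thus it suffices to show $\mathcal{G}(o, x_n) \to 0$.

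\textbf{Step 4 (decay of the Green function at infinity).} Since $\alpha \in \partial_{\mathcal M} G$, necessarily $d(o, x_n) \to \infty$. For a nonamenable group with $\mu$ satisfying the hypotheses of Theorem \ref{Ancona}, Kesten's theorem gives $\rho(\mu)<1$, and the estimate in the proof of Proposition \ref{Probabilistic Lemma} with $M = d(o,x_n)/K$ yields exponential decay of $\mathcal{G}(o, x_n)$ in $d(o, x_n)$; the analogous argument works in the superexponential-moment case after the jump-size truncation of Section 6. Consequently $\mathcal{G}(o, x_n) \to 0$, and Step 3 gives $K(x_n, \beta) \to 0$ uniformly in $\beta \in A_2$.

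The argument is essentially a direct compactness-plus-Ancona deduction; the only step that requires any outside input is Step 4, the vanishing of $\mathcal{G}(o, x)$ at infinity on nonamenable groups, which is standard in this setting and is where the assumption (implicit throughout this section, and explicit at the start of Theorem \ref{Karlsson}) that we have reduced to the nonamenable case is used.
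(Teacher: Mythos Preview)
Your proof is correct and follows essentially the same route as the paper: separate the Floyd images, apply the Ancona-type inequality to bound $K(x_n,\beta)$ by a constant times the Green function at $x_n$, and then use that the Green function vanishes at infinity. The paper carries this out by picking neighborhoods $U_i$ of the $A_i$ and applying Theorem \ref{relancona} to points of $G$ before passing to the limit; you instead invoke the already-packaged Lemma \ref{bdryancona}, which is slightly cleaner.

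Two small remarks. First, Lemma \ref{bdryancona} actually gives the bound $K(y,h)\le S_1(\cdot)\,\mathcal{G}(y,o)$ (see its proof), not $\mathcal{G}(o,y)$; with $y=x_n$ this is $\mathcal{G}(x_n,o)$, which matters when $\mu$ is not symmetric, though of course it also tends to $0$. Second, your Step 4 is more elaborate than needed and the appeal to Proposition \ref{Probabilistic Lemma} does not quite give exponential decay without further work on the constants; the paper simply asserts $\mathcal{G}(x_n,o)\to 0$ (as in the proof of Lemma \ref{harmonicpotential}), which is standard for transient walks and suffices here.
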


\begin{proof}
Let $U_{i}$ be closed neighborhoods of $A_{i}$ in $ {\Gm}$ such that
$\pi(U_{i})$ are disjoint. Then there is a $d>0$ such that ${\delta^{f}_{o}}(u_{1},u_{2})>d$ for all $u_{i}\in U_{i}$.

By Theorem \ref{relancona} there is a $C=C(d)>0$ such that $K(u_{1},u_{2})<C\cdot \mathcal{G}(u_{1},o)$ for all $u_{i}\in U_{i}\cap G$.

Passing to the limit $u_2\to \beta\in A_2$ we obtain $K(u_{1},\beta)<C{\cdot}\mathcal{G}(u_{1},o)$ for any $u_{1}\in U_{1}$.

Now, suppose $x_{n}\to \alpha$ with $\alpha \in A_{1}$. Then $x_{n}\in U_{1}$ for large enough $n$.
Thus, for each $\beta\in A_{2}$ we have  $K(x_{n},\beta)<C{\cdot}\mathcal{G}(x_{n},o)\to 0$.
\end{proof}
A positive $\mu$-harmonic function $h: G \to \R_{+}$ is called minimal harmonic if for every $\mu$-harmonic function $q: G \to \R_{+}$ with $q\leq h$ we have $q=c{\cdot} h$ for some constant $c\in \R$.

The following is the Martin representation theorem, see e.g. \cite{Sawyer}.

Let the minimal Martin boundary $\partial^{min}_{\mathcal M} G \subset \partial_{\mathcal M} G$ consist of those $\alpha \in  \partial_{\mathcal M} G$ for which $K(.,\alpha)$ is minimal.

\begin{theorem} [Martin Representation Theorem]\label{martinrep}
Any minimal harmonic function $h: G \to \R_{+}$ with
$h(o)=1$ is of the form $h(x)=K(x,\alpha)$ for some
$\alpha \in \partial_{\mathcal M} G$.
For any positive $\mu$-harmonic function $h: G \to \R_{+}$ there is a finite measure $\nu^{h}$ on $\partial^{min}_{\mathcal M} G $ such that
$$h(x)=\int_{\alpha \in \partial^{min}_{\mathcal M} G}K(x,\alpha)d\nu^{{h}}({\alpha})$$ for every $x\in  G$.
\end{theorem}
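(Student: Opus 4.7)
The plan is to deduce both assertions from compactness of the Martin compactification together with a Riesz--Martin balayage argument and Choquet's theorem. By replacing $h$ with $h/h(o)$ it suffices to treat positive harmonic functions normalized by $h(o)=1$, so let $\mathcal{H}_o$ be the convex set of such functions, equipped with the topology of pointwise convergence. By the Harnack inequality (Lemma \ref{Harnack}), $\mathcal{H}_o$ is uniformly locally bounded, hence a metrizable compact convex subset of $\mathbb{R}^G$. By Lemma \ref{Martinharmonic}, every Martin kernel $K(\cdot,\alpha)$ with $\alpha\in\partial_{\mathcal M}G$ lies in $\mathcal{H}_o$, and so does $K(\cdot,x)$ for every $x\in G$.

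The first step is to obtain an integral representation for an arbitrary $h\in\mathcal{H}_o$. For each $n$, form the réduite $R^n h$: the pointwise infimum of all positive superharmonic functions dominating $h$ on $G\setminus B_n(o)$. Since the $\mu$-walk is transient and $h$ is harmonic, a standard Riesz-type decomposition shows that $R^n h$ is a Green potential $R^n h(x)=\sum_y \mathcal{G}(x,y)\sigma_n(y)$, where $\sigma_n$ is supported in a neighborhood of $\partial B_n(o)$. Passing to $\tilde\sigma_n(y)=\mathcal{G}(o,y)\sigma_n(y)$ and using $h(o)=R^n h(o)=1$ produces a probability measure with
\begin{equation*}
h(x) = \sum_{y} K(x,y)\,\tilde\sigma_n(y).
\end{equation*}
Viewing $\tilde\sigma_n$ as a measure on the compact space $\overline{G}_{\mathcal M}$, extract a weak-$*$ convergent subsequence $\tilde\sigma_{n_k}\to\nu^h$. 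Because $||y||\to\infty$ for $y$ in the support of $\tilde\sigma_n$, the limit $\nu^h$ is supported on $\partial_{\mathcal M}G$. Continuity of $K(x,\cdot)$ on $\overline{G}_{\mathcal M}$, built into the Martin topology, then yields
\begin{equation*}
h(x) = \int_{\partial_{\mathcal M}G} K(x,\alpha)\, d\nu^h(\alpha).
\end{equation*}

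Next, I would identify the extreme points of $\mathcal{H}_o$ with the minimal harmonic functions. If $0\leq q\leq h$ is harmonic with $q\notin\{0,h\}$, then
\[
h = q(o)\cdot\tfrac{q}{q(o)} + (1-q(o))\cdot\tfrac{h-q}{1-q(o)}
\]
exhibits $h$ as a nontrivial convex combination inside $\mathcal{H}_o$, so the extreme points of $\mathcal{H}_o$ are precisely the minimal harmonic functions. Since $\mathcal{H}_o$ is a metrizable compact convex subset of a locally convex space, Choquet's theorem supplies a representing probability measure for $h$ on its extreme points. Combining this with the integral representation from the previous step, and using that $\alpha\mapsto K(\cdot,\alpha)$ is continuous and sends $\partial^{min}_{\mathcal M}G$ onto the set of extreme points, one obtains the required $\nu^h$ supported on $\partial^{min}_{\mathcal M}G$. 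Applied to a minimal $h$, the integral representation forces $\nu^h$ to be a Dirac mass at some $\alpha_0\in\partial^{min}_{\mathcal M}G$; hence $h=K(\cdot,\alpha_0)$, proving the first assertion.

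The main obstacle will be the balayage step: constructing the potentials $\mathcal{G}\sigma_n$ representing $R^n h$ and showing that no harmonic part survives the reduction, which requires a careful Riesz-type decomposition for positive superharmonic functions on the countable state space $G$. This material is classical for transient countable Markov chains and is carried out in detail in \cite{Sawyer} and \cite[Ch.~24]{Woess}; the Ancona-type inequality of Theorem \ref{relancona} plays no role here, although it is crucial for the finer fibre analysis of the projection $\pi:\overline{G}_{\mathcal M}\to\overline{G}_f$ established in the preceding theorems.
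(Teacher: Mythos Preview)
The paper does not supply its own proof of this statement: it is quoted as a classical result with a reference to \cite{Sawyer} (and implicitly \cite[Ch.~24]{Woess}). Your outline follows exactly the standard route found in those references---balayage of $h$ onto complements of large balls to produce an integral over $\partial_{\mathcal M}G$, identification of extreme points of $\mathcal{H}_o$ with minimal harmonic functions, and Choquet's theorem to concentrate the representing measure on $\partial^{min}_{\mathcal M}G$---so there is nothing to compare. Your sketch is correct in substance; the only points you gloss over are (i) that the map $\alpha\mapsto K(\cdot,\alpha)$ is a Borel isomorphism from the $G_\delta$ set $\partial^{min}_{\mathcal M}G$ onto the extreme points of $\mathcal{H}_o$, needed to transport the Choquet measure back to the boundary, and (ii) that for a minimal $h$ the representing measure from the balayage step must already be concentrated on $\{\alpha:K(\cdot,\alpha)=h\}$, which is how one actually deduces $h=K(\cdot,\alpha_0)$ before invoking Choquet. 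Both are routine and are handled in the sources you cite.
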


\begin{proposition}
Let $h$ be any positive harmonic function.
Then the representing measure $\nu^{h}$ is supported on $\pi^{-1}\overline{supp_{f}h}$.
\end{proposition}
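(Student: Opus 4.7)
My plan is to reduce the claim to showing that $\nu^h(C) = 0$ for every compact subset $C \subset \partial^{min}_{\mathcal M} G$ whose image $\pi(C)$ avoids $F := \overline{\operatorname{supp}_f h}$. Since $\nu^h$ is a finite Borel measure on the compact metrizable space $\overline G_{\mathcal M}$, inner regularity will then upgrade this to $\nu^h(\partial^{min}_{\mathcal M}G \setminus \pi^{-1}(F)) = 0$, which is the desired conclusion. So I fix such a $C$, suppose for contradiction that $\nu^h(C) > 0$, and consider the positive harmonic function
\[ h_C(x) = \int_C K(x,\alpha)\, d\nu^h(\alpha), \]
which is strictly positive on $G$ and satisfies $h_C \le h$.

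The key step will be to prove $\operatorname{supp}_f h_C \subset \pi(C)$. Given $q \in \partial_f G \setminus \pi(C)$ and a sequence $y_n \in G$ with $y_n \to q$ in the Floyd topology, I would pass, by compactness of $\overline G_{\mathcal M}$, to a subsequence converging to some $\beta \in \partial_{\mathcal M} G$ with $\pi(\beta) = q$. Since $\{\beta\}$ and $C$ are closed in $\overline G_{\mathcal M}$ with disjoint $\pi$-images, Lemma \ref{kerzer} delivers $K(y_n,\alpha) \to 0$ uniformly in $\alpha \in C$, whence
\[ h_C(y_n) \le \nu^h(C) \cdot \sup_{\alpha \in C} K(y_n,\alpha) \longrightarrow 0. \]
A standard sub-subsequence argument promotes this to convergence along the full sequence, giving $q \notin \operatorname{supp}_f h_C$. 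Combining with the monotonicity $\operatorname{supp}_f h_C \subset \operatorname{supp}_f h \subset F$ (from $h_C \le h$ and the observation preceding Lemma \ref{kerzer}) and the disjointness $\pi(C) \cap F = \emptyset$, I conclude $\operatorname{supp}_f h_C = \emptyset$.

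To close the contradiction I would invoke the fact that a positive $\mu$-harmonic function cannot have empty Floyd support. Indeed, by the maximum principle already recorded in the paper, if $h_C$ is nonconstant its supremum is not attained on $G$, so a sequence $x_n$ approaching that supremum must leave every finite set, and any Floyd limit point of a subsequence of $x_n$ lies in $\operatorname{supp}_f h_C$; the remaining case that $h_C$ is a positive constant gives $\operatorname{supp}_f h_C = \partial_f G$, which is nonempty because $G$ is nonamenable. Either way $\operatorname{supp}_f h_C \ne \emptyset$, contradicting the previous paragraph. Hence $\nu^h(C) = 0$, and the inner-regularity step finishes the proof. The main obstacle I expect is the inclusion $\operatorname{supp}_f h_C \subset \pi(C)$, where Lemma \ref{kerzer} is indispensable for turning Floyd separation between $q$ and $\pi(C)$ into uniform Martin-kernel decay over $\alpha \in C$; some care is needed to pass from a Martin-convergent subsequence back to an arbitrary Floyd-convergent approach to $q$.
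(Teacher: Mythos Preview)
Your proof is correct and follows essentially the same route as the paper's: both assume $\nu^h$ gives positive mass to a closed set $C$ (or $A$) with $\pi$-image disjoint from $F=\overline{\operatorname{supp}_f h}$, form the partial integral $h_C$, and derive a contradiction by combining the maximum principle with Lemma~\ref{kerzer}. The only difference is organizational: you first establish the auxiliary inclusion $\operatorname{supp}_f h_C \subset \pi(C)$ and then intersect with $\operatorname{supp}_f h_C \subset F$ to get emptiness, whereas the paper skips this and argues directly---it picks a sequence $x_n$ with $h'(x_n)\to\sup h'>0$, observes via $h'\le h$ that any Martin limit $\beta$ of $x_n$ satisfies $\pi(\beta)\in F$, and applies Lemma~\ref{kerzer} once with $A_1=\{\beta\}$, $A_2=A$ to force $h'(x_n)\to 0$.
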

\begin{proof}
Suppose not. Then there is a closed subset $A\subset \partial_{\mathcal M} G\setminus \pi^{-1}\overline{supp_{f}h}$ with $\nu^{h}(A)>0$. {Consider the positive harmonic function}
$$h'(x)=\int_{{\gamma} \in A}K(x,{\gamma})d\nu^{h}({\gamma}).$$ {By the Martin representation theorem the set  $\partial^{min}_{\mathcal M} G$ is a subset of $\partial_{\mathcal M}G$ of  full $\nu^h$-measure}, so we have  $h'\leq h$ everywhere.

Since $h'$ cannot attain its maximal value on $ G$ there is a sequence $x_{n}\in  G$ converging to some $\beta\in \partial_{\mathcal M} G$ with $h'(x_{n})\to c=supp_{\mathcal M} h'>0$.
This implies $\lim \inf h(x_{n})\geq c>0$ so $\beta \in \pi^{-1}supp_{f}h$.
Since $A$ is a closed set disjoint from the closure of $\pi^{-1}supp_{f}h$ we get  by Lemma \ref{kerzer} that
$K(x_{n},{\gamma})\to 0$ uniformly for ${\gamma} \in A$.
This implies $h'(x_{n})=\int_{{\gamma}\in A}K(x_{n},{\gamma})d\nu^{h}({\gamma})\to 0\ (x_n\to\beta)$ contradicting
$h'(x_{n})\to c>0$.
\end{proof}
\begin{corollary}\label{floydsupport}
For every $\alpha\in \partial_{\mathcal M} G$, if $h=K(.,\alpha)$ then $\nu^{h}$ is supported on $\pi^{-1}(\pi(\alpha))$.
\end{corollary}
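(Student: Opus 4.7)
The plan is to obtain this as an essentially immediate consequence of the preceding Proposition together with Lemma \ref{bdryancona}. By the Proposition applied to the positive harmonic function $h = K(\cdot,\alpha)$, we already know $\nu^h$ is supported on $\pi^{-1}\overline{\mathrm{supp}_f h}$, so it suffices to show $\overline{\mathrm{supp}_f K(\cdot,\alpha)} \subseteq \{\pi(\alpha)\}$.

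First I would fix a point $q \in \partial_f G$ with $q \neq \pi(\alpha)$ and show $q \notin \mathrm{supp}_f K(\cdot,\alpha)$. Choose a sequence $y_n \in G$ with $y_n \to q$ in the Floyd compactification. Since $q \neq \pi(\alpha)$, by continuity of $\delta^f_o$ on $\Gf$ there exists $\delta_0 > 0$ such that $\delta^f_o(y_n, \pi(\alpha)) \geq \delta_0$ for all sufficiently large $n$. Applying Lemma \ref{bdryancona} gives
\begin{equation*}
K(y_n, \alpha) \leq S_1(\delta_0) \cdot \mathcal{G}(o, y_n).
\end{equation*}
Because the $\mu$-random walk is transient and $\|y_n\| \to \infty$ (as any sequence converging to a boundary point leaves every finite set), we have $\mathcal{G}(o, y_n) \to 0$. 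Hence $\limsup_{y \to q} K(y, \alpha) = 0$, so $q \notin \mathrm{supp}_f K(\cdot,\alpha)$.

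Since $q$ was an arbitrary point of $\partial_f G \setminus \{\pi(\alpha)\}$, we conclude $\mathrm{supp}_f K(\cdot,\alpha) \subseteq \{\pi(\alpha)\}$, and the singleton (or empty set) is already closed, so $\overline{\mathrm{supp}_f K(\cdot,\alpha)} \subseteq \{\pi(\alpha)\}$. Combining with the preceding Proposition yields
\begin{equation*}
\mathrm{supp}(\nu^h) \subseteq \pi^{-1}\overline{\mathrm{supp}_f h} \subseteq \pi^{-1}(\pi(\alpha)),
\end{equation*}
which is the desired conclusion.

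I do not expect a substantial obstacle here: the only minor point requiring care is that $\mathcal{G}(o, y_n) \to 0$ whenever $y_n$ escapes every finite set, which is the standard consequence of transience of the random walk (used implicitly throughout the section). The rest is bookkeeping in the Floyd topology. No new potential-theoretic input beyond the Ancona-type inequality of Theorem \ref{relancona} (packaged through Lemma \ref{bdryancona}) and the Martin representation is needed.
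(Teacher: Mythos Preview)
Your proof is correct and matches the paper's intended approach: the paper states this as an immediate corollary of the preceding Proposition without further proof, and your argument supplies exactly the missing detail, namely that $\overline{\mathrm{supp}_f K(\cdot,\alpha)}\subseteq\{\pi(\alpha)\}$ via Lemma~\ref{bdryancona}. The only cosmetic point is that your sentence ``Choose a sequence $y_n$'' should be read as ``let $y_n$ be an arbitrary sequence converging to $q$'', since the conclusion $\limsup_{y\to q}K(y,\alpha)=0$ requires the bound along every such sequence; your argument clearly gives this.
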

\begin{corollary}\label{minimalfunctions}
For every $\zeta \in \partial_{f} G$, $\pi^{-1}\zeta$ contains a point of $\partial^{m}_{\mathcal M} G$.
\end{corollary}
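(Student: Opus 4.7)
The plan is to chain together three facts already in hand: surjectivity of $\pi$ from Theorem \ref{martintofloyd}, the Martin Representation Theorem \ref{martinrep}, and the support statement in Corollary \ref{floydsupport}.

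First, since $\pi:\overline{G}_{\mathcal M}\to \overline{G}_f$ is an equivariant continuous surjection, I can pick any $\alpha \in \pi^{-1}\zeta \subset \partial_{\mathcal M} G$. I set $h(\cdot) = K(\cdot,\alpha)$; by construction $h$ is a nonnegative $\mu$-harmonic function (Lemma \ref{Martinharmonic}) with $h(o)=1$, so in particular $h\not\equiv 0$.

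Next, I apply Theorem \ref{martinrep} to write $h(x) = \int_{\partial^{min}_{\mathcal M}G} K(x,\beta)\, d\nu^{h}(\beta)$ for a finite Borel measure $\nu^{h}$ carried by the minimal Martin boundary $\partial^{min}_{\mathcal M}G$. Evaluating at $x=o$ gives $\nu^{h}(\partial^{min}_{\mathcal M}G) = h(o) = 1$, so $\nu^{h}$ is nonzero and therefore has nonempty support in $\partial^{min}_{\mathcal M}G$.

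Finally, I invoke Corollary \ref{floydsupport}, which says that $\nu^{h}$ is additionally supported on $\pi^{-1}(\pi(\alpha)) = \pi^{-1}\zeta$. Thus
\[
\emptyset \neq \operatorname{supp}(\nu^{h}) \subset \partial^{min}_{\mathcal M}G \cap \pi^{-1}\zeta,
\]
and any point of this intersection is the desired minimal element of the fiber. There is no real obstacle here: the whole argument is a one-line assembly, and the only thing to verify carefully is that the measure $\nu^{h}$ is genuinely nonzero, which is automatic from the normalization $h(o)=1$.
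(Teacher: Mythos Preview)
Your argument is correct and follows essentially the same route as the paper: pick $\alpha\in\pi^{-1}\zeta$ by surjectivity, set $h=K(\cdot,\alpha)$, and use Corollary \ref{floydsupport} together with the fact that the representing measure $\nu^{h}$ is nonzero (the paper phrases this as $\nu^{h}$ giving full, hence nonzero, measure to $\pi^{-1}\zeta\cap\partial^{m}_{\mathcal M}G$). Your version simply makes the nonvanishing of $\nu^{h}$ explicit via the normalization $h(o)=1$.
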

\begin{proof}
{Since the map $\pi$ is surjective} there exists  $h\in \pi^{-1}(\zeta)$. Then by Corollary \ref{floydsupport}  $\nu^{h}$ gives full (hence nonzero) measure to $\pi^{-1}(\zeta)\cap \partial^{m}_{\mathcal M} G$ so this set must be nonempty.
\end{proof}
\begin{corollary}\label{quasiconicalinjective}
If $\zeta \in \partial_{f} G$ is a point such that there is a $C>0$ with $K(x,\beta)/K(x,\alpha)\leq C$ for all $x\in G$ and $\alpha,\beta \in \pi^{-1}\zeta$ then $\pi^{-1}\zeta$ consists of a single point.
\end{corollary}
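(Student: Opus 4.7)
The plan is to reduce to showing that $\pi^{-1}(\zeta) \cap \partial^{min}_{\mathcal M} G$ contains exactly one point, and then invoke the Martin representation theorem to conclude. Throughout, I will use the normalization $K(o,\alpha)=1$ for every $\alpha\in\partial_{\mathcal M}G$, so that distinct Martin boundary points correspond to distinct harmonic functions.

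First I would handle the easy half: suppose $\alpha,\beta\in\pi^{-1}(\zeta)\cap\partial^{min}_{\mathcal M}G$. The hypothesis gives $K(\cdot,\beta)\leq C\,K(\cdot,\alpha)$ everywhere on $G$. Since $K(\cdot,\alpha)$ is minimal harmonic and $K(\cdot,\beta)$ is a positive harmonic function dominated by the positive multiple $C\cdot K(\cdot,\alpha)$ (which is itself minimal), the definition of minimality forces $K(\cdot,\beta)=c\cdot K(\cdot,\alpha)$ for some constant $c>0$. Evaluating at $o$ yields $c=1$, hence $\alpha=\beta$. Combined with Corollary \ref{minimalfunctions}, this shows that $\pi^{-1}(\zeta)\cap\partial^{min}_{\mathcal M}G=\{\alpha^*\}$ is a singleton.

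Next I would use Corollary \ref{floydsupport} to bootstrap from the minimal boundary to the whole boundary. Let $\alpha\in\pi^{-1}(\zeta)$ be arbitrary and set $h=K(\cdot,\alpha)$. By the Martin representation theorem there is a probability measure $\nu^h$ on $\partial^{min}_{\mathcal M}G$ (probability because $h(o)=1$) with
\[
K(x,\alpha)=\int_{\partial^{min}_{\mathcal M}G}K(x,\gamma)\,d\nu^h(\gamma),
\]
and by Corollary \ref{floydsupport} the measure $\nu^h$ is supported on $\pi^{-1}(\zeta)\cap \partial^{min}_{\mathcal M}G=\{\alpha^*\}$. Therefore $\nu^h=\delta_{\alpha^*}$, giving $K(\cdot,\alpha)=K(\cdot,\alpha^*)$ and hence $\alpha=\alpha^*$. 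Since $\alpha$ was arbitrary, $\pi^{-1}(\zeta)=\{\alpha^*\}$.

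The only conceptual step is the minimality argument in paragraph two; everything else is bookkeeping using results already proved in the excerpt. No single step looks technically hard, since the bounded-ratio hypothesis was tailor-made to make the minimality comparison immediate; the main obstacle is simply to organize the two cases (minimal vs. general point of $\pi^{-1}(\zeta)$) in the right order.
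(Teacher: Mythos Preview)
Your proof is correct, but it takes a detour that the paper avoids. In your first paragraph you restrict both $\alpha$ and $\beta$ to lie in $\partial^{min}_{\mathcal M}G$, and then in your second paragraph you invoke Corollary~\ref{floydsupport} together with the integral representation to handle a non-minimal $\beta$. The paper observes that this second step is unnecessary: once you have fixed a minimal $\alpha\in\pi^{-1}(\zeta)$ (which exists by Corollary~\ref{minimalfunctions}), the very same minimality argument from your first paragraph already applies to an \emph{arbitrary} $\beta\in\pi^{-1}(\zeta)$, because $K(\cdot,\beta)$ is positive harmonic (Assumption~2 / Lemma~\ref{Martinharmonic}) and dominated by $C\,K(\cdot,\alpha)$. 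Minimality of $K(\cdot,\alpha)$ then forces $K(\cdot,\beta)=c\,K(\cdot,\alpha)$, and normalization at $o$ gives $c=1$, so $\beta=\alpha$. Your route via the representing measure works, but it obscures the fact that the minimality comparison alone already finishes the job in one stroke.
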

\begin{proof}
Let $\alpha \in \pi^{-1}\zeta$ be such that $h=K(.,\alpha)$ is minimal and $\beta \in \pi^{-1}\zeta$ be arbitrary. By assumption $K(x,\beta)/K(x,\alpha)\leq C$ for all $x\in  G$ and thus by minimality of $K(.,\alpha)$ we have $K(x,\alpha)=c\cdot K(x,\beta)$ for all $x\in  G$ for some constant $c$. By definition of the Martin boundary, we must have $K(o,\alpha)=K(o,\beta)=1$ so $c=1$ thus $\alpha=\beta$.
\end{proof}

We will use Corollary \ref{quasiconicalinjective} to prove that if $\zeta \in \partial_{f} G$ is conical, {then} $\pi^{-1}(\zeta)$ consists of a single point.

\begin{proposition}\label{pseudocon}
Assume $\zeta \in \partial_{f} G$ is conical.
Then there is a constant $C=C_{\zeta}$ such that for each $x\in  G$ there exists a neighborhood $P_{x}(\zeta)$  of $\zeta$ in  $\partial_{f} G$ for which one has $$C^{-1} \leq K(x,p)/K(x,q)\leq C$$ for all $p,q\in P_{x}(\zeta).$
\end{proposition}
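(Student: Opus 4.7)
The plan is to exploit the conicality of $\zeta$ to produce a sequence $o_n\in G$ tending to $\zeta$ along which $\delta^f_{o_n}(o,\zeta)$ stays uniformly bounded away from zero. This $o_n$ will then serve as a pivot: Theorem \ref{relancona} applied with $w=o_n$ decomposes both $\mathcal{G}(o,p)$ and $\mathcal{G}(x,p)$ for $p$ near $\zeta$, and the common factor $\mathcal{G}(o_n,p)$ cancels in the Martin kernel $K(x,p)=\mathcal{G}(x,p)/\mathcal{G}(o,p)$, leaving a bounded multiple of the fixed number $K(x,o_n)$, independent of $p$.

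To build the pivot, invoke the conical definition: pick $g_n\in G$ and distinct $a,b\in\partial_f G$ with $g_n\zeta\to a$ and $g_n\eta\to b$ for every $\eta\in X\setminus\{\zeta\}$, and set $o_n:=g_n^{-1}o$. By $G$-invariance of the Floyd distance,
\[
\delta^f_{o_n}(o,\zeta)=\delta^f_o(g_no,g_n\zeta)\longrightarrow \delta^f_o(b,a)=:2\epsilon_0>0,
\]
so $\delta^f_{o_n}(o,\zeta)\ge\epsilon_0$ for all large $n$. Since $d(o,o_n)\to\infty$ and $f\to 0$, the $(f,o_n)$-length of any fixed word-path from $o$ to $x$ tends to $0$, so $\delta^f_{o_n}(o,x)\to 0$ and $\delta^f_{o_n}(x,\zeta)\ge\epsilon_0/2$ for all large $n$. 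Fix one such $n=n(x)$; since $\delta^f_{o_n}$ extends continuously to $\overline G_f$, there is a Floyd neighborhood $P_x(\zeta)$ of $\zeta$ in $\overline G_f$ on which $\delta^f_{o_n}(o,\cdot)\ge\epsilon_0/2$ and $\delta^f_{o_n}(x,\cdot)\ge\epsilon_0/4$.

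For $p\in P_x(\zeta)\cap G$, Theorem \ref{relancona} applied to the triples $(o,o_n,p)$ and $(x,o_n,p)$ gives
\[
\mathcal{G}(o,p)\le S_0\,\mathcal{G}(o,o_n)\,\mathcal{G}(o_n,p),\qquad \mathcal{G}(x,p)\le S_0\,\mathcal{G}(x,o_n)\,\mathcal{G}(o_n,p),
\]
with $S_0:=S(\epsilon_0/4)$, while decomposing trajectories at their first passage through $o_n$ yields the elementary lower bounds
\[
\mathcal{G}(o,p)\ge \mathcal{G}(o,o_n)\,\mathcal{G}(o_n,p)/\mathcal{G}(e,e),\qquad \mathcal{G}(x,p)\ge \mathcal{G}(x,o_n)\,\mathcal{G}(o_n,p)/\mathcal{G}(e,e).
\]
Dividing cancels $\mathcal{G}(o_n,p)$ and gives $C_1^{-1}K(x,o_n)\le K(x,p)\le C_1 K(x,o_n)$ with $C_1:=S_0\,\mathcal{G}(e,e)$. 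For $p\in P_x(\zeta)\cap\partial_f G$ and any $\alpha\in\pi^{-1}(p)$, a sequence $p_k\in G$ converging to $\alpha$ in the Martin compactification also converges to $p$ in $\overline G_f$ by continuity of $\pi$, hence lies eventually in $P_x(\zeta)\cap G$; the sandwich then passes to the limit to give the same two-sided bound for $K(x,\alpha)$. Taking ratios yields $K(x,p)/K(x,q)\le C_1^2=:C_\zeta$ for all $p,q\in P_x(\zeta)$.

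The main obstacle is producing the pivot sequence $o_n$ with the uniform lower bound $\delta^f_{o_n}(o,\zeta)\ge\epsilon_0$; the distinctness $a\ne b$ of the two accumulation points in the conical definition is crucial here, as it is what keeps $\delta^f_o(g_no,g_n\zeta)$ from collapsing to $0$. Once this sequence is in hand, everything else reduces to a routine Ancona-style cancellation combined with the standard lower bound on Green's function obtained by first-passage decomposition.
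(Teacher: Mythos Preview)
Your proof is correct and follows essentially the same route as the paper: both arguments set $o_n=g_n^{-1}o$ as the pivot, establish a uniform lower bound on $\delta^f_{o_n}(\cdot,p)$ for $p$ near $\zeta$, apply Theorem~\ref{relancona} together with the submultiplicativity $\mathcal{G}(x,p)\ge \mathcal{G}(x,o_n)\mathcal{G}(o_n,p)/\mathcal{G}(e,e)$, and cancel the common factor $\mathcal{G}(o_n,p)$. The only cosmetic differences are that the paper defines the neighborhood as $P_x(\zeta)=g_n^{-1}U$ for a fixed neighborhood $U$ of $\alpha$ and reads off the Floyd separation directly from $g_n x\in V$, $g_n p\in U$, whereas you obtain it via the triangle inequality $\delta^f_{o_n}(x,\zeta)\ge \delta^f_{o_n}(o,\zeta)-\delta^f_{o_n}(o,x)$ and then continuity of $\delta^f_{o_n}$ at~$\zeta$.
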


\begin{proof}

Let $g_{n}\in  G$ and distinct points $\alpha,\beta  \in \partial_{f} G$ be such that
$g_{n}\zeta \to \alpha$ and $g_{n}\eta \to \beta$ for all $\eta \in ( G \cup \partial_{f} G)\setminus\{\zeta\}.$

Let $U,V\subset  G \cup \partial_{f} G$ be disjoint closed neighborhoods of $\alpha$ and $\beta$ respectively and $0<\epsilon<\delta^{f}_{o}(U,V)$.

Fix $x,y\in  G$, then for
 $n$ large enough and $s=g_{n}$ we have $sx,sy \in V$ and $s\zeta \in U.$
Let $P_{x,y}(\zeta)=s^{-1}U$.
Then $P_{x,y}(\zeta)$ is a closed neighborhood of $\zeta$ in $ G \cup \partial_{f} G$.
For $p\in P_{x,y}(\zeta)$ we have $sp\in U$ and $sy,sx \in V$ so $\delta^{f}_{o}(sp,sx)>\epsilon$ and $\delta^{f}_{o}(sp,sy)>\epsilon$. Thus, $\delta^{f}_{s^{-1}o}(p,x)>\epsilon$ and $\delta^{f}_{s^{-1}o}(p,y)>\epsilon$.
Hence there is a constant $C=C_{\epsilon}$ such that by the Harnack inequality   and Theorem \ref{anconamult} we obtain
$$\mathcal{G}(p,s^{-1}o)\mathcal{G}(s^{-1}o,y)\leq \mathcal{G}(p,y)\leq C \mathcal{G}(p,s^{-1}o)\mathcal{G}(s^{-1}o,y)$$ and
$$\mathcal{G}(p,s^{-1}o)\mathcal{G}(s^{-1}o,x)\leq \mathcal{G}(p,x)\leq C \mathcal{G}(p,s^{-1}o)\mathcal{G}(s^{-1}o,x)$$ for all $p\in P_{x,y}(\zeta)$.

Hence,
$$ C^{-1} \cdot  \frac{\mathcal{G}(s^{-1}o,x)} {\mathcal{G}(s^{-1}o,y)} \leq \frac{\mathcal{G}(x,p)}{\mathcal{G}(y,p)}\leq C\cdot \frac{ \mathcal{G}(s^{-1}o,x)}{ \mathcal{G}(s^{-1}o,y)}.$$

This is true for every $p\in P_{x,y}(\zeta)$ hence for distinct $p,q\in P_{x,y}(\zeta)$ we have
$$C^{-4}\leq\frac{\mathcal{G}(x,p)/\mathcal{G}(y,p)}{\mathcal{G}(x,q)/\mathcal{G}(y,q)}\leq C^{4}$$
In particular, letting $y=o$ and $P_{x}=P_{x,o}$ we have $$D^{-1}\leq K(x,p)/K(x,q)\leq D$$ for all $p,q\in P_{x}$ where $D$ is a constant.
\end{proof}

\begin{corollary}
For each conical $\zeta\in \partial_{f} G$ there is a constant $D=D(\zeta)$ such that for all $\alpha,\beta \in \pi^{-1}\zeta$ and $x\in  G$ we have $K(x,\alpha)/K(x,\beta)\leq D$.
\end{corollary}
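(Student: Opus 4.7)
The plan is to derive this as a direct consequence of Proposition \ref{pseudocon} by approximating $\alpha$ and $\beta$ by sequences in $G$. The key observation is that in Proposition \ref{pseudocon} the constant $C_\zeta$ does \emph{not} depend on $x$ — only the neighborhood $P_x(\zeta)$ does. So the argument is essentially a continuity/limit argument, using that the map $\pi$ sends Martin-convergent sequences to Floyd-convergent sequences.

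Fix a conical point $\zeta \in \partial_f G$, fix $\alpha,\beta \in \pi^{-1}(\zeta)$, and fix an arbitrary $x \in G$. Choose sequences $(p_n), (q_n) \subset G$ with $p_n \to \alpha$ and $q_n \to \beta$ in the Martin topology. By definition of the Martin kernel we have
\begin{equation*}
K(x,\alpha) = \lim_{n\to\infty} K(x,p_n), \qquad K(x,\beta) = \lim_{n\to\infty} K(x,q_n).
\end{equation*}
By Theorem \ref{martintofloyd} the extended map $\pi$ is continuous, so $p_n \to \pi(\alpha) = \zeta$ and $q_n \to \pi(\beta) = \zeta$ in the Floyd topology.

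Now apply Proposition \ref{pseudocon} at the point $x$: there is a neighborhood $P_x(\zeta)$ of $\zeta$ in $\overline{G}_f$ and a constant $C = C_\zeta$ (independent of $x$) such that $K(x,p)/K(x,q) \leq C$ for all $p,q \in P_x(\zeta) \cap G$. Since $p_n, q_n \to \zeta$ in the Floyd topology, both sequences eventually lie in $P_x(\zeta)$, so $K(x,p_n)/K(x,q_n) \leq C$ for all sufficiently large $n$. Passing to the limit yields
\begin{equation*}
\frac{K(x,\alpha)}{K(x,\beta)} \;\leq\; C_\zeta,
\end{equation*}
and setting $D = D(\zeta) = C_\zeta$ gives the desired uniform bound, valid for every $x \in G$ and every $\alpha,\beta \in \pi^{-1}(\zeta)$.

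There is no real obstacle here: the whole work has been done in Proposition \ref{pseudocon}, whose statement was tailored precisely so that the constant is $x$-independent while the neighborhood is $x$-dependent, and this split is exactly what makes the limit argument go through. The only thing one must verify carefully is that Proposition \ref{pseudocon} applies to each fixed $x$ simultaneously with the same constant $C_\zeta$ — which is built into its statement — and that Theorem \ref{martintofloyd} ensures the Floyd convergence of any Martin-approximating sequence. Combined with Corollary \ref{quasiconicalinjective}, this immediately yields the desired conclusion that $\pi^{-1}(\zeta)$ is a singleton for every conical $\zeta \in \partial_f G$.
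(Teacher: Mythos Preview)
Your proof is correct and follows essentially the same route as the paper: choose Martin-approximating sequences $p_n\to\alpha$, $q_n\to\beta$, observe via Theorem \ref{martintofloyd} that both converge to $\zeta$ in the Floyd topology, apply Proposition \ref{pseudocon} at each fixed $x$ to get the uniform bound on $K(x,p_n)/K(x,q_n)$ for large $n$, and pass to the limit. Your emphasis that $C_\zeta$ is independent of $x$ while $P_x(\zeta)$ is not is exactly the point, and the paper's proof does the same thing in slightly terser form.
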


\begin{proof}
Let $p_{n},q_{n}\in  G$ with $p_{n},q_{n}\to \zeta$ in the Floyd compactification and $p_{n}\to \alpha$, $q_{n}\to \beta$ in the Martin compactification. Then by Proposition \ref{pseudocon}
for each $x\in  G$ we have a neighborhood  $P_x\subset  G \cup \partial_{f} G$ of $\zeta$ such that $D^{-1} \leq K(x,p)/K(x,q)\leq D$ for all $p,q\in P_x$ and some uniform constant D.
Then for large enough $n$ we have $p_{n}, q_{n}\in P_x,$ so  $D^{-1} \leq K(x,p_{n})/K(x,q_{n})\leq D$.
Passing to the limits   $p_n\to\alpha, q_n\to \beta\ (n\to\infty)$ in the Martin boundary, we obtain the result. \end{proof}
\begin{corollary}\label{conicalinjective}
If $\zeta \in \partial_{f} G$ is conical, $\pi^{-1}(\zeta)$ consists of a single point.
\end{corollary}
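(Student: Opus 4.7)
The plan is essentially an immediate combination of the two preceding corollaries. By the Corollary just above \ref{conicalinjective}, for a conical point $\zeta \in \partial_f G$ there is a constant $D = D(\zeta)$ such that
\[
K(x,\alpha)/K(x,\beta) \le D
\]
for all $\alpha, \beta \in \pi^{-1}(\zeta)$ and all $x \in G$. This is precisely the hypothesis of Corollary \ref{quasiconicalinjective}, so applying it with $C = D$ gives that $\pi^{-1}(\zeta)$ consists of a single point.

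So the proof I would write is a two-line argument: invoke the previous corollary to get the uniform bound on the ratios of Martin kernels over $\pi^{-1}(\zeta)$, then invoke Corollary \ref{quasiconicalinjective} to conclude. There is no obstacle at this final step; all the real work has already been done. The nontrivial content sits upstream, namely: (i) Proposition \ref{pseudocon}, which uses conicality of $\zeta$ together with the Ancona-type multiplicative inequality (Theorem \ref{relancona}) and the Harnack inequality to control ratios of Green functions on a suitable neighborhood of $\zeta$ in $\overline{G}_f$; (ii) passing this control to the Martin boundary by taking sequential limits along $p_n \to \alpha$, $q_n \to \beta$ with $p_n, q_n \to \zeta$; and (iii) Corollary \ref{quasiconicalinjective} itself, which uses minimality of some $K(\cdot,\alpha)$ with $\alpha \in \pi^{-1}(\zeta) \cap \partial_{\mathcal M}^{\min} G$ (guaranteed by Corollary \ref{minimalfunctions}) together with the normalization $K(o,\cdot) = 1$ to force $\alpha = \beta$.

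In summary, the proof I would present is:

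\begin{proof}
Let $\zeta \in \partial_f G$ be conical. By the preceding corollary, there exists a constant $D = D(\zeta) > 0$ such that $K(x,\alpha)/K(x,\beta) \le D$ for all $\alpha, \beta \in \pi^{-1}(\zeta)$ and all $x \in G$. Applying Corollary \ref{quasiconicalinjective} with $C = D$, we conclude that $\pi^{-1}(\zeta)$ is a single point.
\end{proof}

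The only thing worth double-checking is that the hypothesis of Corollary \ref{quasiconicalinjective} is stated symmetrically in $\alpha, \beta$ (which it is, as one may swap the roles and the bound $D$ still works), so no additional argument is needed.
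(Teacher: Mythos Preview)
Your proposal is correct and matches the paper's approach exactly: the paper states Corollary \ref{conicalinjective} without proof immediately after the unnamed corollary giving the uniform bound $K(x,\alpha)/K(x,\beta)\leq D$, and having already announced that Corollary \ref{quasiconicalinjective} will be used for this purpose. Your two-line argument is precisely the intended deduction.
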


\section{Preimage of the limit set of a subgroup acting cocompactly outside of it}
Let $G$ be a finitely generated group and $\partial_fG$ denotes its Floyd boundary with respect to the rescaling function $f$.  We also denote by $X$   the Floyd compactification   $\GAF$
of $G$ which is the union $G\sqcup\partial_fG$ (where we identify $G$ with the set of vertices of the Cayley graph of $G$).

 Without loss of generality we can assume that the Floyd boundary is not a point; otherwise the results below become trivial. Recall {that if} $G$ is  relatively hyperbolic then there exists  ${\nu}\in (0,1)$ such that then $\pfg$ is not trivial for every rescaling function $f$ satisfying $f(n)\leq{\nu}^n\ (n\in \Bbb N)$  \cite{Gerasimov}. {We denote by $\delta_v=\delta_v^f$ the Floyd distance   for a fixed rescaling function $f$ and based at the vertex $v$ of the graph.} By \cite{Karl} the action $G\act\GAF$ is a convergence action. For a subgroup $H< G$ we denote by $\Lambda H$ its limit set for the action on $\GAF.$  Since the action is convergence $\La H$  coincides with the boundary $\partial^fH$ of the orbit $H$ in $\Gf$.
 If there is no ambiguity   we keep the notation  $\La H$ for the boundary $\psf H.$

 We consider geodesics (infinite or not) in the Cayley graph equipped with the word metric $d(\cdot,\cdot)$. Denote by  $\ch$  the convex hull   $$\displaystyle \{{\gamma}:\Z\to G\  {\rm is\ a\ geodesic}\ :\  \lim_{n\to\pm\infty}\ga(n)\in\La H\}$$ of the limit set $\La H$ in $X.$

Let   $\pmg$  be the Martin boundary of $G$ with respect to a symmetric measure $\mu$ on $G$ satisfying Assumption 1 and 2 {and $\Gm=\Gam$ its Martin compactification.} Let  $\psm H$ be the topological boundary
 of $H$ in $\Gm$, i.e. the set of accumulation points of $H$  in $\Gm$.

  A subgroup $H$ of $G$ is called quasiconvex if any quasigeodesic between two elements of $H$ belongs to a uniform neighborhood of $H.$ It is called {\it fully quasiconvex} if it is quasiconvex and every parabolic subgroup $P$ of $G$ either intersects $H$ in a subgroup having finite index in $P$ or is finite. By \cite[Theorem B]{GePoCrelle} the cocompactness of the action $H$ on $X\setminus\Lambda H$ is equivalent to the full quasiconvexity of $H$ in a relatively hyperbolic group $G.$ By Corollary {\ref{minimalfunctions}}}
for every point $\xi\in\partial_f G$ its preimage $\pi^{-1}(\xi)$ contains {points} from the minimal Martin boundary
$\partial^{m}_{\mathcal M} G.$ The aim of this section is the following proposition refining this statement
for the limit points of the fully quasiconvex subgroups.

\begin{proposition}
\label{lsp}
Let $\pi:\pmg\to\pfg$ be a continuous equivariant map  from the Martin boundary  to the Floyd boundary of $G.$ Let $H< G$ be a subgroup acting cocompactly on $X\setminus\Lambda H$.
Then \begin{equation}\label{preimbound} \pi^{-1}(\Lambda H)\cap \partial^{m}_{\mathcal M} G\subseteq\psm H\end{equation}
\end{proposition}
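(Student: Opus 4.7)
Let $\alpha\in\pi^{-1}(\Lambda H)\cap\partial^{m}_{\mathcal M}G$ and set $\zeta=\pi(\alpha)\in\Lambda H$. The goal is to produce a sequence in $H$ converging to $\alpha$ in the Martin topology; this will give $\alpha\in\psm H$.

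Choose $x_n\in G$ with $x_n\to\alpha$ in $\Gm$; by continuity of $\pi$ one has $x_n\to\zeta$ in $\Gf$. Since $H\act X\setminus\Lambda H$ is cocompact, fix a compact set $K\subset X\setminus\Lambda H$ with $H\cdot K=X\setminus\Lambda H$ and pick $h_n\in H$ with $y_n:=h_n^{-1}x_n\in K$. Pass to a subsequence so that $y_n\to y_\infty\in K$ in the Floyd topology. I first claim $h_n\to\zeta$ in $\Gf$: the $h_n$'s cannot stay word-bounded, for otherwise $x_n=h_ny_n$ would accumulate in $K\cap (X\setminus\Lambda H)$, contradicting $x_n\to\zeta\in\Lambda H$. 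Extract a further subsequence with $h_n\to p$ and $h_n^{-1}\to q$ in $X$. Because $G\act X$ is a convergence action, $h_n$ converges to $p$ uniformly on compacta of $X\setminus\{q\}$; since $K$ is compact and disjoint from $q\in\Lambda H$, we get $x_n=h_ny_n\to p$, hence $p=\zeta$, while $q\in\Lambda H$ as a limit of elements of $H$.

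By compactness of $\Gm$, pass to a further subsequence with $h_n\to\beta\in\partial_{\mathcal M}G$. To show $\beta=\alpha$, fix $g\in G$. The $G$-equivariance of the Floyd distance gives
$$\delta^f_{h_n}(g,x_n)=\delta^f_o(h_n^{-1}g,y_n),\qquad \delta^f_{h_n}(o,x_n)=\delta^f_o(h_n^{-1}o,y_n),$$
and the convergence action yields $h_n^{-1}g\to q$ and $h_n^{-1}o\to q$ (since $g,o\neq p=\zeta$). Both quantities therefore tend to $\epsilon_0:=\delta^f_o(q,y_\infty)>0$, positivity holding because $q\in\Lambda H$ while $y_\infty\in K\cap(X\setminus\Lambda H)$. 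Combining Theorem \ref{relancona} with the elementary supermultiplicativity $\mathcal{G}(a,c)\geq\mathcal{G}(a,b)\mathcal{G}(b,c)/\mathcal{G}(e,e)$ applied with $b=h_n$, one obtains for all sufficiently large $n$ the two-sided comparisons
$$\mathcal{G}(g,x_n)\asymp\mathcal{G}(g,h_n)\mathcal{G}(h_n,x_n)\quad\text{and}\quad\mathcal{G}(o,x_n)\asymp\mathcal{G}(o,h_n)\mathcal{G}(h_n,x_n),$$
with multiplicative constants depending only on $\epsilon_0$ and $(G,\mu)$. Dividing the first by the second and letting $n\to\infty$,
$$D^{-1}K(g,\beta)\leq K(g,\alpha)\leq DK(g,\beta)$$
for a constant $D=D(\epsilon_0,G,\mu)>0$ independent of $g$.

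Hence $K(\cdot,\beta)/D$ is a nonnegative harmonic function (Lemma \ref{Martinharmonic}) dominated by the minimal function $K(\cdot,\alpha)$; minimality forces $K(\cdot,\beta)=c\,K(\cdot,\alpha)$ for some $c>0$, and evaluating at $o$ gives $c=1$. Thus $\beta=\alpha$, so $h_n\to\alpha$ in $\Gm$ and $\alpha\in\psm H$. The main obstacle I anticipate is the clean verification of the positive lower bound $\epsilon_0$ on the Floyd distances, which requires a careful interplay between the convergence dynamics of $h_n$ on $\Gf$ and the compact fundamental region $K\subset X\setminus\Lambda H$; once this uniform bound is in place, the double application of the Ancona-type inequality combined with the minimality of $K(\cdot,\alpha)$ closes the argument as above.
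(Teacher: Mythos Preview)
Your proof is correct and reaches the same conclusion via a genuinely different route than the paper.

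The paper projects each $x_n$ to the convex hull $\mathcal H$ of $\Lambda H$, obtaining points $o_n\in\mathcal H$, and proves a separate geometric lemma (Lemma~\ref{geodep}) showing that $\delta^f_{o_n}(o,x_n)$ is uniformly bounded below and that $o_n$ lies within bounded word-distance of the geodesic $[o,x_n]$. This lemma is established by translating back to a fixed compact fundamental domain and invoking Karlsson's lemma. Only at the very end does the paper use quasiconvexity of $H$ to replace the convex-hull points $o_n$ by actual elements of $H$. You bypass the convex hull entirely: you use cocompactness of $H\act X\setminus\Lambda H$ to choose $h_n\in H$ directly, and then obtain the uniform Floyd lower bound $\delta^f_{h_n}(g,x_n)\to\epsilon_0>0$ from the north--south dynamics of the convergence action together with continuity of the Floyd metric on $\Gf$. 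This replaces the paper's Lemma~\ref{geodep} by a shorter dynamical argument and avoids the final quasiconvexity step. The paper's approach, on the other hand, makes the dependence of the constants on $H$ more explicit through the compact fundamental set, which is what it needs for the uniformity over conjugacy classes of parabolic subgroups in Corollary~\ref{ppoint}; your constant $\epsilon_0=\delta^f_o(q,y_\infty)$ depends on the particular sequence and subgroup, so extracting that uniformity would require an extra argument.
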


\medskip

{\it Remark.} There exist relatively hyperbolic groups with symmetric finitely supported measures whose minimal Martin boundary is a proper subset of the Martin boundary. Indeed, suppose $G_{1}$ is nonamenable, $G_{2}$ any finitely generated infinite group, and $\mu_{i}$ finitely supported generating measures on $G_{i}$.
Let $G=G_{1}\times G_{2}$ be the Cartesian product and $\mu=\mu_{1}\times \mu_{2}$ be the product measure.
Picardello and Woess show \cite[Corollary 4.4]{PW} {that} the Martin boundary of $(G,t\mu)$ contains non-minimal points for any $t$ up to and including the inverse of the spectral radius of $\mu$.

Then Theorems 26.18 and 26.21 of \cite{Woess} imply that whenever $({\Gamma},m)$ is any finitely generated group and $m$ a finitely supported measure on ${\Gamma},$  the Martin boundary of the free product $(G*{\Gamma},\mu+m)$ contains non-minimal points. \footnote{\small\sf {We thank Wolfgang Woess for explaining us this example.}}

\medskip
\noindent {\it Proof of Proposition \ref{lsp}.} In all arguments below   the subgroup $H$ acting cocompactly on $X\setminus\Lambda H$ is fixed. For a vertex $x\in G$ we denote by ${\s Pr}_{\ch} x$ the   projection set  $\{y\in\ch\ :\ d(y,x)\leq d(x,\ch)\}$ of   $x$ to $\ch.$

\begin{lemma}\label{geodep} There exist two   constants  $D=D(H)<+\infty$ and $\delta=\delta(H)>0$ such that for every    sequence $x_n$ converging to a point $\eta\in\La H$  and every vertex $o\in G$  for the sequence  $o_n\in{\s Pr}_{\ch} x_n$  we have $\delta^f_{o_n}(o, x_n)
\geq \delta$ and  $d(o_n, \gamma_n)\leq D\ (n >n_0)$ where $ \gamma_n=[o, x_n]$ is a geodesic between $o$ and $x_n$.  \end{lemma}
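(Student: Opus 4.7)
The proof I have in mind proceeds by contradiction and combines the cocompactness of the $H$-action on $X\setminus\La H$ with the convergence dynamics of $G\act X$. Since Karlsson's lemma (recalled in the introduction) gives $d(v,[x,y])\leq K(\delta^{f}_{v}(x,y))$ with $K$ decreasing, the word-metric bound $d(o_{n},\gamma_{n})\leq D$ follows immediately from the Floyd lower bound $\delta^{f}_{o_{n}}(o,x_{n})\geq\delta$, so I focus on the latter. The case in which $o_{n}$ stays bounded in $G$ is trivial, since then every $\gamma_{n}=[o,x_{n}]$ starts at $o$ within uniformly bounded distance of $o_{n}$. Assume therefore that $d(o,o_{n})\to\infty$ and, after passing to a subsequence, that $o_{n}\to\eta'\in\partial_{f}G$ in the Floyd topology; since $o_{n}\in\ch$ and bi-infinite geodesics spanning $\La H$ accumulate only on $\La H$, one checks $\eta'\in\La H$, and the projection relation together with $x_{n}\to\eta$ forces $\eta'=\eta$.

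Using cocompactness of $H\act (X\setminus\La H)$ and the discreteness of $G$ in $X$, I would pick $h_{n}\in H$ so that, after extraction, $h_{n}^{-1}o_{n}=o^{*}$ for a fixed vertex $o^{*}\in G$. Since $o_{n}$ is unbounded, $h_{n}\to\infty$ in $G$, and the convergence property of $G\act X$ yields, after a further subsequence, distinct points $\alpha,\beta\in\La H$ with $h_{n}\zeta\to\alpha$ for all $\zeta\in X\setminus\{\beta\}$ and $h_{n}^{-1}\zeta\to\beta$ for all $\zeta\in X\setminus\{\alpha\}$. From $h_{n}o^{*}=o_{n}\to\eta$ we identify $\alpha=\eta$, and since $o\in G$ we have $o\neq\alpha$, whence $h_{n}^{-1}o\to\beta$.

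Assume, for contradiction, $\delta^{f}_{o_{n}}(o,x_{n})\to 0$. By isometry invariance, $\delta^{f}_{o^{*}}(h_{n}^{-1}o,h_{n}^{-1}x_{n})\to 0$, and extracting $h_{n}^{-1}x_{n}\to\beta'\in X$ forces $\beta'=\beta$ by continuity of $\delta^{f}_{o^{*}}$ on the Floyd compactification. I split into two cases. If $d(x_{n},\ch)$ is bounded, then $h_{n}^{-1}x_{n}$ lies in a bounded neighbourhood of $o^{*}$ and, by discreteness of $G$ in $X$, is eventually a fixed vertex $x^{*}\in G$, contradicting $\beta'=\beta\in\partial_{f}G$. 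Otherwise $d(x_{n},\ch)\to\infty$, so $\beta'\in\partial_{f}G$, and for every $n$ the vertex $o^{*}$ is the nearest point in $\ch$ to $h_{n}^{-1}x_{n}$. But $h_{n}^{-1}x_{n}\to\beta\in\La H$, whereas a sequence in $G$ Floyd-converging to a point of $\La H$ cannot have a fixed vertex of $\ch$ as its nearest-$\ch$ projection, since $\beta$ lies in the Floyd closure of $\ch$ and bi-infinite geodesics in $\ch$ ending at $\beta$ supply projection candidates escaping every bounded subset of $G$.

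This last projection-dynamics step is the main obstacle. Its rigorous justification will combine cocompactness of the $H$-action with Karlsson's lemma: given $\beta\in\La H$, pick a bi-infinite geodesic in $\ch$ with $\beta$ as one endpoint, use $H$-translates to bring segments of this geodesic near the sample points $h_{n}^{-1}x_{n}$, and conclude via Karlsson that nearest-point projections onto $\ch$ must follow this geodesic into $\beta$ rather than stabilize at $o^{*}$. This yields the required uniform $\delta>0$, whence $D=K(\delta)$ by Karlsson's lemma.
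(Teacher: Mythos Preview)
Your approach differs substantially from the paper's and leaves a real gap at exactly the point you identify as ``the main obstacle.''

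The paper's proof is direct rather than by contradiction, and its key move is to normalize $x_{n}$, not $o_{n}$. One picks $h_{n}\in H$ with $h_{n}x_{n}$ in a compact fundamental domain $\mathcal{F}\subset X\setminus\La H$ for the $H$-action. Since $x_{n}\to\eta\in\La H$, the elements $h_{n}$ are unbounded in $H$, so $h_{n}o$ enters any Floyd neighbourhood $N^{f}_{\varepsilon}(\La H)$. Meanwhile $h_{n}o_{n}$ lands in the finite projection set $F=\mathsf{Pr}_{\ch}(\mathcal{F}\cap G)$, hence stays uniformly close to a fixed vertex $v$. One then reads off a uniform positive lower bound for $\delta^{f}_{v}(\mathcal{F},N^{f}_{\varepsilon}(\La H))$ from compactness of $\mathcal{F}$ and its positive Floyd distance to $\La H$, and translates back by $h_{n}^{-1}$. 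No contradiction, no projection dynamics, no case split on $d(x_{n},\ch)$.

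By contrast, your normalization $h_{n}^{-1}o_{n}=o^{*}$ forces you, in the case $d(x_{n},\ch)\to\infty$, to rule out a sequence $y_{n}=h_{n}^{-1}x_{n}$ Floyd-converging to $\beta\in\La H$ while keeping $o^{*}$ as nearest point in $\ch$. Your sketch for this---bring bi-infinite geodesics in $\ch$ near $y_{n}$ via $H$-translates and invoke Karlsson---does not obviously work: Floyd-proximity of $y_{n}$ to $\beta$ gives no word-metric proximity to any ray in $\ch$ tending to $\beta$, and Karlsson's lemma bounds the distance from a basepoint to a geodesic, not the behaviour of nearest-point projections onto $\ch$. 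Note also that $d(y_{n},\ch)=d(x_{n},\ch)\to\infty$ by $H$-invariance of $\ch$, so you cannot argue that $y_{n}$ eventually comes within bounded word-distance of $\ch$. In hyperbolic settings this projection statement is true and provable, but here $G$ is an arbitrary finitely generated group with nontrivial Floyd boundary, and you have not supplied a mechanism valid at that generality.

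The fix is simply to change normalization: translate so that $h_{n}x_{n}\in\mathcal{F}$ rather than $h_{n}^{-1}o_{n}=o^{*}$. This turns the hard projection-dynamics obstacle into a one-line separation estimate.
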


\proof The set $\ch\cup\La H$ is closed subset of $X.$ Since the action of $H$  on $X\setminus\Lambda H$ is   cocompact,   the quotient $\ch/H$ is   finite \cite[Proposition 4.5]{GePoCrelle} (an $H$-invariant set $\ch$ having such a property is called  {\it weakly homogeneous} in \cite{GePoCrelle}).
Let    $\c F$  denote a compact fundamental set    for the action of $H$ on $X\setminus\Lambda H$. Then there exists a constant {$\nu=\nu(H) >0$} such that $\delta_1(\c F, \La H)\geq \nu$ where $\delta^f_1$ is the Floyd metric based at $1\in G$.

  Let $F={\s Pr}_{\ch} (\c F\cap G)$. Since $\ch$ is $H$-invariant and weakly homogeneous by \cite[Proposition 3.5]{GePoCrelle} the
 diameter $d={\rm diam}(F)$ with respect to the word metric  is finite and depends only on the constant {$\nu$} above.

Let $\ga_n:\N\to G$ be a geodesic between $o$ and $x_n$ such that $\displaystyle\lim_{n\to\infty}x_n=\eta\in \La H.$ Then there exists a sequence  $h_n\in H$ such that $y_n=h_n(x_n)\in \c F\cap G$. Since the action of $H$ on the Cayley graph of $G$ is isometric, the images $h_n(o_n)$ of the projections $o_n$ of $x_n$ to $\ch$ are projections of $y_n$ to $\ch$. So $h_n(o_n)\in F$ and $d(v, h_n(o_n))\leq d$ for a fixed point $v\in F$.

Set $z_n=h_n(o).$
 and fix a sufficiently small $\ve\in ]0, \delta[.$ Denote by $\s N^{{f}}_\ve(\La H)$ the $\ve$-neighbourhood of $\La H$ in $X$ with respect to the Floyd distance $\delta^{{f}}_{{1}}.$  {We have}  $z_n\in N^{{f}}_\ve(\La H)$ for $n>n_0$.

 Using the inequality {(\ref{coef})}  we obtain \begin{equation}\label{flun}\delta^{{f}}_v(\c F,\s N_\ve(\La H))\geq {\nu-\ve\over {\kappa}^{d(1,F)}}>0.\end{equation} Since  $F$ is a finite set depending on the subgroup $H$ only, {the} above lower bound depends on  $H$ and fixed $\ve \in]0, \nu[$. Applying $h_n^{-1}$ to  (\ref{flun}) we obtain a constant $\delta=\delta(H)>0$ for which  $\delta^f_{o_n}(o, x_n)\geq \delta >0$.

By Karlsson lemma \cite[Lemma 1]{Karl} there exists a constant $D=D(H, \ve)$ such that $d(o_n, \ga_{{n}})\leq D.$ \qed

\bigskip

 {\sf End of the proof of  Proposition \ref{lsp}}. {By Corollary \ref{minimalfunctions}}
For a point  $\xi\in \Lambda H$ fix a  point $\alpha\in  \pi^{-1}(\xi)\cap \partial^{m}_{\mathcal M} G$ such that the harmonic function $K_{\alpha}$ is minimal. Consider a sequence $x_n\to \alpha\ (n\to\infty)$ and their projections $o_n\in {\s Pr}_{\ch} (x_n)$ to $\ch.$

For a geodesic ${\beta}_n=[1, x_n]$    by Lemma \ref{geodep} we  obtain points $w_n\in {\beta}_n$ such that $d(o_n, w_n)=d(o_n, {\beta}_n)\leq D.$ Then applying {the}  Harnack inequality (Lemma 2.1) {for any $x\in G$ we have}

\begin{equation}\label{kan}{K_{o_n}x\over K_{w_n}x}= {\mathcal{G}(x,o_n)\cdot \mathcal{G}(1, w_n)\over \mathcal{G}(1,o_n)\cdot \mathcal{G}(x, w_n)}\leq \lambda^{-2d(w_n, o_n)}\leq \lambda^{-2D}.\end{equation}

We also have

\begin{equation} \label{AKgeod}{K_{w_n}x\over K_{x_n}x}={\mathcal{G}(x, w_n)\cdot \mathcal{G}(1,x_n)\over \mathcal{G}(1, w_n)\cdot \mathcal{G}(x, x_n)}  \leq A(\delta^f_{w_n}(1, x_n)).\end{equation}

Indeed, in the nominator of (\ref{AKgeod}) by  { Theorem \ref{relancona} we have}:

\centerline{$\mathcal{G}(1,x_n)\leq {S}(\delta^f_{w_n}(1,x_n))\cdot \mathcal{G}(1,w_n)\cdot  \mathcal{G}(w_n, x_n)$;}

\noindent and in the denominator we used the (triangle) inequality
$\mathcal{G}(x, x_n)\geq \mathcal{G}(x, w_n)\cdot \mathcal{G}(w_n, x_n).$

By Lemma \ref{geodep} $\delta^f_{o_n}(1,x_n)\geq\delta$ and $d(o_n, w_n)\leq D$ so $\delta^f_{w_n}(1,x_n)\geq\lambda^{-D}\cdot \delta$   which is a uniform constant too.
The function ${S}(\cdot)$ is decreasing
so (\ref{kan}) and (\ref{AKgeod}) imply \begin{equation}\label{upbd}{K_{o_n}x \over K_{x_n}x}\leq C,\  {\rm where}\  C=A(\lambda^{-D}\delta)\cdot \lambda^{-2D}.\end{equation}

Replacing  in the previous argument  the geodesic   $[1, x_n]$ by a geodesic  $[x, x_n]$ we similarly  obtain the points $w_n\in [x, x_n]$ such that for the  projections $o_n\in {\s Pr}_{\ch} x_n$  we  have $d(w_n, o_n)\leq D$ and $\delta^f_{o_n}({x, x_n})\geq \delta$ for  the same constants $D$ and $\delta$ from Lemma \ref{geodep}. {Then} the previous argument implies  the double inequality:

\begin{equation}\label{doublein} {1\over C}\leq {K_{o_n}x\over K_{x_n}x}\leq  C,
\end{equation}
where $C$ is as in (\ref{upbd}).

Up to passing to a subsequence we can assume that the sequence $o_n\in \ch$ converges to some point $\beta\in \partial_{\mathcal M} G$.   From (\ref{doublein})
 we obtain \begin{equation}\label{doublebd}{1\over C}\leq {{K_{\beta}x\over K_{\alpha}x}}\leq  C.\end{equation}
 Then ${K_\beta}\leq C\cdot {K_\alpha}$ and so ${K_\beta}=C\cdot {K_\alpha}$ by  minimality of $\alpha$.

 We have that $\displaystyle\alpha=\lim_{n\to\infty} o_n={\beta}$ and $o_n\in \ch.$  Since $H$ is quasiconvex (\cite[Proposition 4.5]{GePoCrelle}) there exists a constant $C_1$ such that for every $o_n\in \ch$ there exists $o'_n\in H$ such that $d(o_n, o'_n)\leq C_1$. Applying again the Harnack inequality we obtain  $\displaystyle {K_{o'_n}x\over K_{o_n}x}\leq C'$.
 Since $\alpha\in \partial_{\mathcal M}^m G$ is minimal by the same argument as above we also have $\displaystyle\lim_{n\to\infty}o'_n=\alpha.$
 We have proved that every minimal point in $\pi^{-1}(\Lambda H)$ is an accumulation point of the $H$-orbit. The Proposition is proved.\bx

\bigskip

{\it Remarks.} 1. In the proof above we use the set $\ch$ instead of $H$ (i.e. an $H$-horosphere instead of $H$) because the action of $G$ on the set of horospheres (by the right multiplication) preserves all distances, unlike the action by conjugation. This invariance will be used in Corollary \ref{ppoint} below.

\medskip

2. One can also notice that the choice of approximation sequence $(o_n)\subset \ch$  as   projection  of the approximating sequence $(x_n)\subset G$ is constructive. One can prove that
$\displaystyle\lim_{n\to\infty} \pi(o_n)=\lim_{n\to\infty} \pi(x_n)=\xi\in \Lambda H$ without assuming that {the} limit point $\alpha$ on the Martin boundary is minimal.   Indeed, if it is not the case, the word distance $d(o_n, x_n)$ is unbounded. By Lemma \ref{geodep} there exists a point $u_n\in [x_n,\xi[$ such that $d(o_n, u_n)\leq D$. Then since $\pi(x_n)\to\xi$ we obtain that the infinite geodesic rays $[x_n, \xi[$ converge to a horocycle $l$ based at $\xi$ (implying  by \cite[Lemma 3.6]{GePoJEMS}  that $\xi$ is a parabolic point). But $l\subset \ch$ so $d(x_n, o_n) > d(x_n, \ch)\ (n> n_0)$ which is impossible by definition of
$o_n$. However this argument does not give a uniform estimate for $d(o_n, x_n)$ and therefore we needed to use the   inequality {(\ref{relanc})} instead.

 \bigskip

{Consider now a minimal geometrically finite action of $G$ on a compactum $T$.}
By Theorem \ref{martintofloyd} there exists an equivariant continuous map $\pi : \Gm\to\Gf$ from the Martin to the Floyd compactification. There also exists an equivariant continuous (Floyd) map $F$ from the Floyd compactification $\Gf$ to the Bowditch compactification $\Gb=G\sqcup T$ \cite{Gerasimov}. So we have an equivariant continuous map   $\varphi=F\circ\pi:\Gm\to\Gb.$

\begin{corollary}\label{ppoint} Let $p\in T$ be a bounded parabolic point and $H$ the stabilizer of $p$ for the action $G\curvearrowright T$. Then
the {inclusion} (\ref{preimbound}) is satisfied for the map $\varphi$:

\medskip

\begin{equation}\label{embedparab}\varphi^{-1}(T)\cap \partial^{m}_{\mathcal M} G\subseteq\psm H.\end{equation}

\medskip

Furthermore there exists a uniform constant $C>0$ such that for every bounded parabolic point
 $p\in T$ and {every} $\alpha \in \varphi^{-1}(p)$ {there is some $\beta \in \partial^{\mathcal M}H$ such that}

 $$C^{-1}\leq K_{\alpha}/K_{\beta}\leq C.$$
 \end{corollary}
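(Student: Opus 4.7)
The plan is to deduce the inclusion (\ref{embedparab}) directly from Proposition \ref{lsp}, and then to extract a uniform comparison constant $C$ by inspecting the proof of that proposition without invoking minimality and then invoking the finiteness of $G$-orbits of bounded parabolic points in a relatively hyperbolic group.

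For the first claim, recall that the stabilizer $H$ of a bounded parabolic point $p\in T$ is a maximal parabolic subgroup of $G$ which acts cocompactly on $T\setminus\{p\}$. By \cite[Theorem B]{GePoCrelle}, $H$ is fully quasiconvex and in particular acts cocompactly on $X\setminus\Lambda H$, where $\Lambda H\subset\partial_fG$ is its Floyd limit set. Moreover, the Floyd map $F:\Gf\to\Gb$ collapses exactly the sets of the form $\Lambda H_q$ over parabolic points $q$, so $F^{-1}(p)=\Lambda H$ and hence $\varphi^{-1}(p)=\pi^{-1}(\Lambda H)$. Proposition \ref{lsp} applied to $H$ then yields $\varphi^{-1}(p)\cap\partial^{m}_{\mathcal M}G\subseteq\psm H$, which is (\ref{embedparab}).

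For the second claim I would rerun the core estimate from the proof of Proposition \ref{lsp} \emph{without} using minimality of $\alpha$. Given $\alpha\in\varphi^{-1}(p)$, choose $x_n\to\alpha$ in $\Gm$ and set $o_n\in\s{Pr}_\ch(x_n)$. By Lemma \ref{geodep} there are constants $D=D(H)$ and $\delta=\delta(H)$, and points $w_n$ on the geodesic $[1,x_n]$ with $d(o_n,w_n)\leq D$ and $\delta^f_{o_n}(1,x_n)\geq\delta$, so that $\delta^f_{w_n}(1,x_n)\geq\lambda^{-D}\delta$. Harnack gives $K_{o_n}(x)/K_{w_n}(x)\leq\lambda^{-2D}$ for every $x\in G$, and the computation (\ref{AKgeod}) applied with basepoint $x$ in place of $1$, combined with Theorem \ref{relancona}, gives $K_{w_n}(x)/K_{x_n}(x)\leq A(\lambda^{-D}\delta)$. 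Running the symmetric argument on geodesics $[x,x_n]$ produces $C_H^{-1}\leq K_{o_n}(x)/K_{x_n}(x)\leq C_H$ for every $x\in G$, with $C_H$ depending only on $D,\delta$ and the universal Ancona function $A$. Extracting a subsequence so that $o_n\to\beta_0\in\Gm$, and then using quasiconvexity of $H$ to replace each $o_n\in\ch$ by a point $o'_n\in H$ within uniformly bounded distance and applying Harnack once more, the subsequential limit $\beta:=\lim o'_n$ lies in $\psm H$ and satisfies $C'_H{}^{-1}\leq K_\alpha/K_\beta\leq C'_H$.

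The main obstacle, and the actual content of the uniform comparison, is independence of $C'_H$ from the parabolic point $p$. Here I use that a relatively hyperbolic group admits only finitely many $G$-orbits of bounded parabolic points. If $p'=gp$ then the stabilizer of $p'$ is $gHg^{-1}$, and by the $G$-invariance of the Cayley graph structure and the random walk,
\[
\delta^f_{gv}(gx,gy)=\delta^f_v(x,y),\qquad \mathcal G(gx,gy)=\mathcal G(x,y),
\]
so the constants $D,\delta$ produced by Lemma \ref{geodep} and the Harnack/Ancona constants used above are unchanged under replacing $(p,H)$ by $(gp,gHg^{-1})$. Hence $C'_H$ depends only on the $G$-orbit of $p$, and taking the maximum over the finitely many orbits yields the desired uniform $C$.
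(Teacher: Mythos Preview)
Your proof is correct and follows essentially the same approach as the paper: you reduce the inclusion to Proposition \ref{lsp} via $F^{-1}(p)=\Lambda H$ and cocompactness of $H$ on the complement, then observe that the double inequality (\ref{doublebd}) in the proof of Proposition \ref{lsp} is established \emph{before} minimality is invoked, so it holds for arbitrary $\alpha\in\varphi^{-1}(p)$, and finally you obtain uniformity in $p$ from $G$-invariance of the metrics and finiteness of conjugacy classes of maximal parabolic subgroups. The paper's proof is terser but identical in substance; the only stylistic difference is that the paper phrases the invariance step in terms of the $G$-action on the system of horospheres (cf.\ Remark~1 after Proposition \ref{lsp}) rather than directly on the conjugates $gHg^{-1}$, which amounts to the same thing since $\ch_{gHg^{-1}}=g\,\ch_H$.
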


 \proof We need to show that the constant $C$ can be chosen uniformly being not depending on a parabolic point. Indeed for every parabolic point $p\in T$ the action of its stabilizer $H$ on $T$ is cocompact on $T\setminus\{p\}$. Then $F^{-1}(p)$ is the limit set $\Lambda_fH$ for the action $H\act \Gf$ \cite[Theorem A]{GePoJEMS}. Consequently $(F^{-1} (p))^c=\partial_f G\setminus \plf H.$ Since $F$ is equivariant and continuous and $\partial_fG$ is compact, $H$ acts cocompactly on $(F^{-1} (p))^c$.
So  by Proposition \ref{lsp} we obtain  the inequality (\ref{doublebd}) where $\displaystyle\alpha=\lim_{n\to\infty} o_n\in{\partial_{\mathcal M} H.}$

The constant $C$ found in Proposition \ref{lsp} depends only on the subgroup $H$. Furthermore the system of all horospheres
$\{\ch_H\ :\ H$ is maximal parabolic subgroup for the action $G\act T\}$
 is $G$-invariant and contains at most finitely many $G$-non-equivalent horospheres \cite[Main Theorem.a]{Ge1}. Since
$\delta^f_v(x,y)=\delta^f_{gv}(gx, gy)$ and $d(gx, gy)=d(x,y)\ (g\in G)$  the constant $C$ is the same for the conjugacy class of the maximal parabolic subgroup $H$. Since there are at most finitely many such classes {\cite{Ge1},}   it can be chosen uniformly for all maximal parabolic subgroups of $G$ for the geometrically finite action $G\act T.$ \qed

\medskip

Here are several open questions motivated by the above discussion:

\medskip

\noindent {\bf Questions.} {Let $H<G$ be a fully quasiconvex subgroup of a relatively hyperbolic group $G.$} \begin{enumerate}
 \medskip

 \item [a)] Is $\psm H={\varphi}^{-1}(\Lambda H) ?$

\item [b)] Does the inequality (\ref{doublebd})  imply that the points $\alpha$ and $\beta$ give rise to the same point
at the Martin boundary of $G$ (without assuming the minimality of one them)?
 \end{enumerate}

Note that  $b)\Rightarrow a)$ by the proof of Proposition \ref{lsp}.

{We also note that by the existence of the continuous extension $\pi:\Gm\to \Gf$ of the identity map ${\rm id}:G\to G$ (Theorem \ref{martintofloyd}),
besides the inclusion (\ref{embedparab}) we also have $\psm H\subseteq \pi^{-1}(\Lambda H).$ It is not clear at this moment how to obtain the opposite inclusion.}\bx
 \bx

\bigskip

\section{Some recurrence properties for random walks on convergence groups}
In this section, independent from the previous ones, we review and prove some results on random walks on groups which we will need in Sections 9 and 10.
Let $G$ be an infinite group.
Let $\mu$ be a probability measure on $G$ and
let $\mu^{\mathbb{Z}}$ be the product measure on $G^{\mathbb{Z}}$.

Let $T:G^{\mathbb{Z}}\to G^{\mathbb{Z}}$ be the following invertible
transformation:
$T$ takes the two-sided sequence $(h_{i})_{i\in \mathbb{Z}}$
to the sequence $(\omega_{i})_{i\in \mathbb{Z}}$ with
$\omega_{0}=e$ and $g_{n}=g_{n-1}h_n$ for $n\neq 0$.
Explicitly, this means
$$\omega_{n}=h_{1}\cdots h_{n}\quad \text{ for }n>0$$ and
$$\omega_{n}=h^{-1}_{0}h^{-1}_{-1}\cdots h^{-1}_{-n+1}\quad \text{ for }n<0.$$

Similarly, let $\mu^{\mathbb{N}}$ be the product measure on $G^{\mathbb{N}}$.
Let $T_{+}:G^{\mathbb{N}}\to G^{\mathbb{N}}$ be the transformation that
takes the one-sided infinite sequence $(h_{i})_{i\in \mathbb{N}}$
to the sequence $(\omega_{i})_{i\in \mathbb{N}}$ with
$\omega_{0}=e$ and $\omega_{n}=\omega_{n-1}h_n$ for $n\neq 0$.
Explicitly, for $n>0$ this means
$$\omega_{n}=h_{1}\cdots  h_{n}.$$

Let $\overline{P}$ be the pushforward measure
$T_{*}\mu^{\mathbb{Z}}$ and $P$ the pushforward measure
$T_{+*}\mu^{\mathbb{N}}$.

The measure $P$ describes the distribution of $\mu$
sample paths, i.e. of products of independent $\mu$-distributed increments.
Let $\hat{\mu}$ be the measure on $G$ given by $\hat{\mu}(g)=\mu(g^{-1})$.
Let $\hat{P}$ be the pushforward measure
$T_{+*}\hat{\mu}^{\mathbb{N}}$.
The measure space $(G^{\mathbb{Z}},\overline{P})$ is
naturally isomorphic to $(G^{\mathbb{N}},P)\otimes (G^{\mathbb{N}},\hat{P})$
via the map sending the bilateral path $\omega$ to the pair of unilateral
paths
$((\omega_{n})_{n\in \mathbb{N}},(\omega_{-n})_{n\in \mathbb{N}})$.

Let $\sigma: G^{\mathbb{Z}}\to G^{\mathbb{Z}}$ be the left Bernoulli shift:
$\sigma(\omega)_{n}=\omega_{n+1}.$
By basic symbolic dynamics (see e.g. \cite{Furman}), $\sigma$ is invertible,
measure preserving and ergodic with respect to $\mu^{\Z}$.
Therefore, when restricted to sequences with $e$ at the $0th$ coordinate,
\[U=T\circ \sigma \circ T^{-1}\] is invertible, measure preserving and
ergodic with respect to $\overline{P}$.
Note that for each $n\in \mathbb{Z}$,
$$(U\omega)_{n}=\omega^{-1}_{1}\omega_{n+1}$$
and more generally
$$(U^{k}\omega)_{n}=\omega^{-1}_{k}\omega_{n+k}.$$

We will use the following result of Guivarch \cite{Guivarch}.
\begin{theorem}
Let $G$ be any countable group, let $\mu$ be any measure on $G$ whose support generates a nonamenable subgroup, and let $d$ be any proper left invariant metric on $G$. Then for $P$ a.e. $\omega \in G^{\N}$ we have $\lim\inf_{n\to \infty} \frac{d(\omega_{n},e)}{n}>0$, so in particular $d(\omega_{n},e)\to \infty$.
\end{theorem}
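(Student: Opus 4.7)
My plan splits the proof into an existence step for the drift and a positivity step, both going back to Guivarch's original argument.

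For existence, I would apply Kingman's subadditive ergodic theorem to $a_n(\omega) := d(\omega_n, e)$. Left-invariance of $d$ gives
$$a_{n+m}(\omega) \;\leq\; d(\omega_n,e) + d(\omega_n,\omega_{n+m}) \;=\; a_n(\omega) + a_m(\sigma^n \omega),$$
where $\sigma$ denotes the shift on $(G^{\N},P)$ corresponding to re-centering the path at $\omega_n$; this is exactly the hypothesis of Kingman's theorem. Since $\sigma$ is measure-preserving and ergodic, Kingman (in its $L^0$ version, not requiring $\mathbb{E}[a_1]<\infty$) yields a deterministic constant $\ell\in[0,\infty]$ with $a_n/n\to\ell$ almost surely.

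For positivity of $\ell$, I would invoke Kesten's theorem: nonamenability of $H:=\langle\mathrm{supp}(\mu)\rangle$ (extended to non-symmetric $\mu$ by passing to $\mu*\hat\mu$) gives a spectral radius bound $\rho(\mu)<1$. A Cauchy--Schwarz estimate on $\mu^{*n}=\mu^{*\lfloor n/2\rfloor}*\mu^{*\lceil n/2\rceil}$ combined with translation-invariance of $\ell^2$-norms upgrades this to the uniform bound $p_n(e,g)\leq C\rho^n$. Properness of $d$ makes each ball $B_R=\{g:d(g,e)\leq R\}$ finite. In the finitely generated case, where $d$ is quasi-isometric to a word metric for a finite symmetric generating set $S$, we have $|B_R|\leq(2|S|+1)^R$, hence
$$P(d(\omega_n,e)\leq\epsilon n)\;\leq\;C|B_{\epsilon n}|\rho^n\;\leq\;C(2|S|+1)^{\epsilon n}\rho^n,$$
which is summable in $n$ once $\epsilon<-\log\rho/\log(2|S|+1)$. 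Borel--Cantelli then forces $d(\omega_n,e)>\epsilon n$ eventually, so $\ell\geq\epsilon>0$.

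The main obstacle is extending the positivity argument to a general countable $G$ with an arbitrary proper left-invariant metric, where $|B_R|$ can grow super-exponentially and $H$ need not be finitely generated. The standard remedy is a truncation scheme: approximate $\mu$ by measures $\mu_k$ supported on $\mathrm{supp}(\mu)\cap B_k$, each of which lives in a finitely generated (and, for $k$ large, still nonamenable) subgroup; one applies the argument above to $\mu_k$ with uniform control on $\rho(\mu_k)$, then controls the rare ``big jumps'' contributed by $\mu-\mu_k$ via a Borel--Cantelli/moment estimate. The delicate point is aligning the two pieces so that the linear lower bound survives the limit $k\to\infty$.
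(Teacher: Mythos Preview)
The paper does not prove this theorem; it is quoted without proof as a result of Guivarch, so there is no proof in the paper to compare against.

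Your argument in the case where $d$ is quasi-isometric to a word metric is correct and is essentially Guivarch's: Kingman gives existence of the drift $\ell$, Kesten--Day gives $\rho(\mu)<1$, the uniform bound $p_n(e,g)\le C\rho^n$ follows, and exponential ball growth plus Borel--Cantelli forces $\ell>0$.

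However, you have mis-located the obstacle. The difficulty is not in passing from finitely generated to general countable $G$; it is that an arbitrary proper left-invariant metric need not have exponentially bounded balls \emph{even on a finitely generated group}, so truncating to $H_k=\langle\mathrm{supp}(\mu)\cap B_k\rangle$ does not help: the restriction of $d$ to $H_k$ is still the given metric and need not be quasi-isometric to any word metric on $H_k$. Concretely, on the free group $F_2$ with word length $|\cdot|$, the function $d(g,h)=\sqrt{|g^{-1}h|}$ is a proper left-invariant metric (subadditivity of $\sqrt{\,\cdot\,}$ gives the triangle inequality), its $R$-ball has cardinality $\asymp 3^{R^2}$, and for simple random walk one has $d(\omega_n,e)\asymp\sqrt{n}$, so $\liminf d(\omega_n,e)/n=0$ almost surely. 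Thus the statement is in fact false in the generality written; Guivarch's result is for word metrics (equivalently, metrics with at most exponential ball growth), which is exactly the regime your argument handles. The only downstream use in the paper is the weaker conclusion that sample paths are unbounded, and that does follow in full generality from transience of random walks on nonamenable groups.
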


Suppose $G$ acts continuously on an infinite compact Hausdorff space $B$.
A Borel probability measure $\nu$ on $B$ is called $(G,\mu)${-}stationary if $$\nu(A)=\sum_{g\in G}\nu(g^{-1}A)\mu(g)$$ for all Borel $A\subset B$.
The following is classical.
\begin{proposition}
If $G$ acts continuously on a compact Hausdorff space $X$ and $\mu$ any probability measure on $G$ then there is some $(G,\mu)${-}stationary measure $\nu$ on $G$.
\end{proposition}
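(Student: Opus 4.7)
The plan is to produce the stationary measure via a standard weak-$*$ compactness and Cesàro averaging argument on the space of probability measures on $X$.

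First, I would set up the relevant function space. Let $M(X)$ denote the set of Borel probability measures on $X$, viewed as a subset of the dual $C(X)^*$ of the Banach space of continuous real-valued functions on $X$ equipped with the supremum norm. By the Banach-Alaoglu theorem together with the Riesz representation theorem, $M(X)$ is a nonempty convex subset of $C(X)^*$ which is compact in the weak-$*$ topology. (Nonemptiness follows from the existence of a point mass $\delta_x$ for any $x \in X$.)

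Next, I would introduce the convolution operator associated to $\mu$. For $\nu \in M(X)$, define
\[
\mu * \nu \;=\; \sum_{g \in G} \mu(g)\, g_*\nu,
\]
where $g_*\nu$ denotes the pushforward of $\nu$ under the action of $g$ on $X$. Since $\mu$ is a probability measure on $G$ and each $g_*\nu$ is a probability measure on $X$, the series converges in the weak-$*$ topology (it is a countable convex combination in a compact convex set) to an element of $M(X)$. Using that $G$ acts on $X$ by continuous maps, the operator $\nu \mapsto \mu * \nu$ is affine and weak-$*$ continuous on $M(X)$.

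The key step is to produce a fixed point of this operator, since $\nu$ is $(G,\mu)$-stationary precisely when $\mu * \nu = \nu$; indeed, testing against the indicator of a Borel set $A$ gives back the definition $\nu(A) = \sum_{g \in G} \nu(g^{-1}A)\mu(g)$. Starting from any $\nu_0 \in M(X)$, I would form the Cesàro averages
\[
\nu_n \;=\; \frac{1}{n}\sum_{k=0}^{n-1} \mu^{*k} * \nu_0,
\]
which lie in the compact set $M(X)$. By weak-$*$ compactness, some subnet $\nu_{n_\alpha}$ converges to a limit $\nu \in M(X)$. A direct computation gives $\mu * \nu_n - \nu_n = \frac{1}{n}(\mu^{*n} * \nu_0 - \nu_0)$, and the right hand side tends to zero in the weak-$*$ topology since both terms lie in the bounded set $M(X)$. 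Passing to the limit along the subnet and using weak-$*$ continuity of $\mu * (\cdot)$, we obtain $\mu * \nu = \nu$, as required.

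There is essentially no obstacle here; the only mild subtlety is justifying weak-$*$ continuity of the convolution operator when the support of $\mu$ is infinite, which is handled by noting that for every $\varphi \in C(X)$ the map $\nu \mapsto \int \varphi\, d(\mu*\nu) = \sum_g \mu(g) \int \varphi\circ g\, d\nu$ is a uniformly convergent sum (dominated by $\|\varphi\|_\infty \mu(g)$) of weak-$*$ continuous functionals, hence itself weak-$*$ continuous. Alternatively, one could invoke the Markov-Kakutani fixed point theorem applied to the single affine continuous map $\nu \mapsto \mu*\nu$ on $M(X)$ to obtain the fixed point in one stroke.
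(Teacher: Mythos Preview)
Your proof is correct and follows essentially the same approach as the paper: the paper also forms the Ces\`aro averages $\nu_n=\frac{1}{n}\sum_{k=0}^{n-1}\mu^{*k}*\delta_x$ and extracts a weak-$*$ limit point, which is stationary by the same telescoping computation. If anything, your treatment is more careful, since you use subnets rather than subsequences (metrizability of $M(X)$ is not guaranteed for a general compact Hausdorff $X$) and you justify weak-$*$ continuity of the convolution operator.
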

\begin{proof}
The space $P(X)$ of Borel probability measures on $X$ is weak-* compact and metrizable and the operator $P_{\mu}(\nu)=\mu \star \nu$ acts by isometries. The Banach fixed point theorem guarantees a fixed point, which is by definition a stationary measure. More explicitly, for any $x\in X$,
any weak limit of a subsequence of the probability measures
$$\nu_{n}=\frac{1}{n}\sum^{n-1}_{k=0}\mu^{*k}*\delta_{x}$$ on $X$ is clearly stationary, and such a limit exists by compactness of $P(X)$

\end{proof}

The following is due to Furstenberg \cite{Furstenberg}; the proof uses Doob's dominated Martingale convergence theorem.
\begin{proposition}\label{stationaryconvergence}
Let $\nu$ be a $(G,\mu)${-}stationary measure on $B$.
For $P$ almost every sample path $\omega \in G^\N$, $\omega_{n}\nu$ weakly converges to some measure $\nu_{\omega}$.
Moreover, $\nu$ decomposes as
$$\nu = \int_{\omega \in G^\N}\nu_{\omega} dP(\omega)$$
\end{proposition}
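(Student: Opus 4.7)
The plan is to exhibit $n\mapsto (\omega_n\nu)$ as a bounded vector-valued martingale on the path space $(G^{\N},P)$ and apply Doob's convergence theorem. First I would fix $f\in C(B)$ and set $M_n(\omega)=\int_B f(\omega_n b)\,d\nu(b)$. Let $\mathcal{F}_n$ be the $\sigma$-algebra on $G^{\N}$ generated by the coordinates $\omega_1,\dots,\omega_n$ (equivalently, by the increments $h_1,\dots,h_n$). Since the increment $h_{n+1}=\omega_n^{-1}\omega_{n+1}$ is independent of $\mathcal{F}_n$ and distributed as $\mu$, the stationarity of $\nu$ gives
\begin{equation*}
\mathbb{E}[M_{n+1}\mid\mathcal{F}_n](\omega)=\int_G\!\int_B f(\omega_n g b)\,d\nu(b)\,d\mu(g)=\int_B f(\omega_n b)\,d\nu(b)=M_n(\omega),
\end{equation*}
so $(M_n)_{n\geq 0}$ is a martingale, uniformly bounded by $\|f\|_\infty$. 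By Doob's martingale convergence theorem, $M_n(\omega)$ converges for $P$-a.e.\ $\omega$.

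Since $B$ is compact Hausdorff and we only need weak-$\ast$ convergence, I would (by replacing $B$ with the closure of the $G$-orbit of $\mathrm{supp}\,\nu$ if necessary, which is a separable metrizable invariant subset supporting $\nu$) fix a countable dense family $\{f_k\}\subset C(B)$. Applying the previous paragraph to each $f_k$ and intersecting the countably many full-measure sets yields a single measurable set $\Omega_0\subseteq G^{\N}$ with $P(\Omega_0)=1$ on which $M_n(\omega;f_k)$ converges for every $k$. By the uniform bound $|M_n(\omega;f)|\leq\|f\|_\infty$ and a $3\varepsilon$-argument, for $\omega\in\Omega_0$ the limit $\Lambda_\omega(f)=\lim_n(\omega_n\nu)(f)$ exists for every $f\in C(B)$ and defines a positive linear functional of norm $\leq 1$. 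Since $\Lambda_\omega(1)=1$, the Riesz representation theorem provides a Borel probability measure $\nu_\omega$ on $B$ with $\nu_\omega(f)=\Lambda_\omega(f)$; that is, $\omega_n\nu\to\nu_\omega$ weakly.

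For the decomposition, I would iterate the stationarity identity $\nu=\int_G g_\ast\nu\,d\mu(g)$ to obtain, for every $n\geq 0$ and every $f\in C(B)$,
\begin{equation*}
\int_B f\,d\nu=\int_{G^{\N}}(\omega_n\nu)(f)\,dP(\omega)=\int_{G^{\N}} M_n(\omega)\,dP(\omega).
\end{equation*}
Since $|M_n|\leq\|f\|_\infty$ and $M_n\to\nu_\omega(f)$ almost surely, the bounded convergence theorem yields
\begin{equation*}
\int_B f\,d\nu=\int_{G^{\N}}\nu_\omega(f)\,dP(\omega),
\end{equation*}
which is exactly the asserted decomposition $\nu=\int_{\omega\in G^{\N}}\nu_\omega\,dP(\omega)$ tested against an arbitrary $f\in C(B)$.

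The only mildly delicate point is the passage from pointwise convergence on a countable dense subset of $C(B)$ to weak-$\ast$ convergence of measures, which requires $C(B)$ to be separable; this is the reason one passes to the (second countable) support of the orbit of $\nu$. Everything else is a direct application of the stationarity relation, Doob's theorem, and bounded convergence, so I expect no serious obstacle beyond this separability bookkeeping.
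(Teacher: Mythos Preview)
Your proof is correct and follows exactly the approach the paper indicates: the paper does not actually prove this proposition but attributes it to Furstenberg and remarks that ``the proof uses Doob's dominated Martingale convergence theorem,'' which is precisely your strategy of exhibiting $(\omega_n\nu)(f)$ as a bounded martingale and passing to the limit. The separability caveat you flag is real but harmless here, since every boundary to which the paper applies this result (Floyd, Martin, Bowditch, Gromov) is metrizable.
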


The measure $\nu$ on $B$ is said to be preserved by the action $G\curvearrowright B$ if for every $g\in G$ and any Borel $A\subset B$ we have $\nu(A)=0$ if and only if $\nu(gA)>0$.

The following is classical, see e.g. the survey by Furman \cite{Furman}. We provide a proof for completeness.
\begin{theorem} \label{basicstationary}
Let $G\curvearrowright B$ be any action of a countable group by homeorphisms  on a compact Hausdorff space. Let $\mu$ be a measure on $G$ and $\nu$ a $(G,\mu)${-}stationary measure on $B$. If every orbit of $G$ is infinite and the support of $\mu$ generates $G$ as a semi-group, then $\nu$ has no atoms and it's measure class is preserved by the action $G\curvearrowright X$. If in addition the action is minimal, then $\nu$ has full support on $X$.
\end{theorem}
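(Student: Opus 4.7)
The plan is to prove the three conclusions in the natural order: (i) $\nu$ has no atoms; (ii) the measure class of $\nu$ is $G$-invariant; (iii) under minimality, $\operatorname{supp}(\nu)=B$. The common engine is the iterated stationarity identity
\begin{equation}\label{iteratedstat}
\nu(A)=\sum_{g\in G}\mu^{*n}(g)\,\nu(g^{-1}A),\qquad n\geq 1,
\end{equation}
obtained by applying $\nu=\mu*\nu$ repeatedly. Observe that the hypothesis that $\operatorname{supp}(\mu)$ generates $G$ as a semigroup means exactly that for every $g\in G$ there is some $n=n(g)$ with $\mu^{*n}(g)>0$; indeed if $g=h_1\cdots h_n$ with $h_i\in\operatorname{supp}(\mu)$, then $\mu^{*n}(g)\geq \mu(h_1)\cdots\mu(h_n)>0$.

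For the absence of atoms, I would argue by contradiction. Suppose $m=\sup_{y\in B}\nu(\{y\})>0$ and let $F=\{y\in B:\nu(\{y\})=m\}$. Since $\nu$ is a probability measure, $F$ is finite, with $|F|\leq 1/m$. Pick $x\in F$. Applying \eqref{iteratedstat} to the singleton $A=\{x\}$, we find that for every $g\in\operatorname{supp}(\mu^{*n})$ one has $\nu(\{g^{-1}x\})\geq \nu(\{x\})=m$, hence $\nu(\{g^{-1}x\})=m$ by maximality, so $g^{-1}x\in F$. Since $\operatorname{supp}(\mu)$ generates $G$ as a semigroup, $\bigcup_n\operatorname{supp}(\mu^{*n})=G$, and consequently the whole orbit $Gx$ is contained in the finite set $F$. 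This contradicts the assumption that every $G$-orbit on $B$ is infinite.

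For the $G$-invariance of the measure class, suppose $\nu(A)=0$ for some Borel $A\subset B$. Then \eqref{iteratedstat} forces $\mu^{*n}(g)\,\nu(g^{-1}A)=0$ for every $n$ and every $g$, so $\nu(g^{-1}A)=0$ whenever $g\in\operatorname{supp}(\mu^{*n})$ for some $n$. By the semigroup-generation hypothesis this covers all $g\in G$. Replacing $g$ by $g^{-1}$ yields $\nu(gA)=0$ for every $g\in G$, which is what quasi-invariance of $\nu$ asserts.

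Finally, assume the action $G\curvearrowright B$ is minimal. The set $C=\operatorname{supp}(\nu)$ is closed by definition. To see it is $G$-invariant, let $x\in C$ and $g\in G$. Any open neighborhood $V$ of $gx$ pulls back to an open neighborhood $g^{-1}V$ of $x$, so $\nu(g^{-1}V)>0$; by the quasi-invariance established in the previous step, $\nu(V)>0$, whence $gx\in C$. Thus $C$ is a nonempty closed $G$-invariant subset of $B$, and minimality forces $C=B$. The main (admittedly mild) subtlety throughout is making sure to pass from stationarity with respect to $\mu$ to stationarity with respect to the convolution powers $\mu^{*n}$, so as to exploit the semigroup-generation hypothesis rather than the stronger assumption that $\operatorname{supp}(\mu)$ itself equals $G$.
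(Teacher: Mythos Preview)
Your proof is correct and follows essentially the same approach as the paper: both use iterated stationarity $\nu=\mu^{*n}*\nu$ together with the semigroup-generation hypothesis to handle atoms and quasi-invariance, and then deduce full support from minimality. The only cosmetic difference is in the last step, where the paper argues by covering $B$ by $G$-translates of an open null set, while you show directly that $\operatorname{supp}(\nu)$ is a nonempty closed $G$-invariant set; these are dual formulations of the same idea.
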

\begin{proof}
Let $x\in X$.
Any weak limit of a subsequence of the probability measures
$$\nu_{n}=\frac{1}{n}\sum^{n-1}_{k=0}\mu^{*k}*\delta_{x}$$ on $X$ is clearly stationary, and such a limit exists
since the space of probability measures on $X$ with the weak topology is compact.
Assume every $G$ orbit in $B$ is infinite.
We {first} show that $\nu$ has no atoms.
Suppose $\nu$ has an atom. Since $\nu$ is finite, there must be an atom $b\in X$ of maximal mass. Then $\nu(gb)\leq \nu(b)$ for all $g\in G$.
By stationarity we have for each $n>0$
$$\nu(b)=\sum_{g\in G}\mu^{*n}(g)\nu(g^{-1}b).$$
Since $\mu^{*n}$ is a probability measure, and $\nu(g^{-1}b)\leq \nu(b)$ for all $g$ it follows that $\nu(g^{-1}b)=\nu(b)$ for all $g$ with $\mu^{*n}(g)>0$. Since the support of $\mu$ generates $G$, for all $g\in G$ there is an $n>0$ with $\mu^{*n}(g)>0$ so $\nu(g^{-1}b)=\nu(b)$ for all $g\in G$. Since the orbit $Gb\subset B$ is infinite, this contradicts finiteness of $\nu$.

{Now, we show that $G\curvearrowright X$ preserves the measure class of $\nu$. Indeed, suppose $\nu(A)=0$.
By stationarity we have for each $n>0$
$$0=\nu(A)=\sum_{g\in G}\mu^{*n}(g)\nu(g^{-1}A).$$
Since $\nu(g^{-1}A)\geq 0$ it follows that $\nu(g^{-1}A)=0$ for all $g$ with $\mu^{*n}(g)>0$. Since the support of $\mu$ generates $G$, for all $g\in G$ there is an $n>0$ with $\mu^{*n}(g)>0$ so $\mu(g^{-1}A)=0$ for all $g\in G$.}

Finally, assume $G\curvearrowright B$ is minimal.
Suppose $A\subset B$ is an open set.
If $\nu(A)=0$ then quasi-invariance implies $\nu(gA)=0$ for all $g\in G$.
But minimality of the action $G\curvearrowright B$ implies $B=\cup_{g\in G} gA$ which would mean $\nu(B)=0$ contradicting the fact that $\nu$ is a probability measure. Thus $\nu$ has full support on $B$.
\end{proof}
\begin{definition} \cite[Definition 2.10]{Furman}
Let $G\curvearrowright B$ be a  minimal action by homeomorphisms with every orbit infinite. Let $\mu$ be a measure on $G$ whose support generates $G$ as a semi-group. Let $\nu$ be a $\mu$ stationary measure on $B$.
The pair $(B,\nu)$ is called a $(G,\mu)$ boundary if for $P$ almost every sample path $\omega\in G^\N$, there is an $\omega_{+}\in B$ such that  $\omega_{n}\nu$ converges to the Dirac measure $\delta_{\omega_{+}}$ at $\omega_{+}$.
\end{definition}
Suppose $(B,\nu)$ is a $(G,\mu)$ boundary.
Then we have a measurable map \cite{Furman}
$bnd:G^\N\to B$, $bnd(\omega)=\omega_{+}$ and the disintegration formula from Proposition \ref{stationaryconvergence} implies
$$\nu(A)=P(\omega\in G^\N: \omega_{+}\in A)$$ for every Borel $A\subset B$.
In other words, $$\nu =bnd_{*}P.$$
Let $\hat{\nu}=bnd_{*}\hat{P}$ be the similarly constructed stationary measure for $\hat{\mu}$

Consequently, for {$\overline{P}$-almost} every $\omega \in G^\Z$
there are distinct (as $\nu$ and $\hat{\nu}$ are non-atomic) points $bnd_{\pm}(\omega)=\omega_{\pm}\in B$ such that
$$\omega_{n}\nu\to \delta_{\omega_{+}}$$ and
$$\omega_{-n}\hat{\nu}\to \delta_{\omega_{-}}$$ as
$n\to +\infty$
and
$$\nu\times \hat{\nu}=(bnd_{+}\times bnd_{-})_{*}\overline{P}.$$

The following is proved by Kaimanovich in Theorem 6.3 of
\cite{KaiStrip} (the assumptions are different but the proof carries over verbatim)
\begin{proposition}\label{stationaryergodic} \cite[Theorem 6.3]{KaiStrip}
Let $(B,\nu)$ be a $(G,\mu)$ boundary.
The (diagonal) $G$ action on $B\times B$ is $\nu \times \hat{\nu}$ ergodic.
\end{proposition}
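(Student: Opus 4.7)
The plan is to reduce ergodicity of the diagonal $G$-action on $(B\times B,\nu\times\hat\nu)$ to ergodicity of the dynamical system $(G^{\mathbb{Z}},\overline{P},U)$, which has already been set up in this section. Let $F:B\times B\to\mathbb{R}$ be a bounded measurable function invariant under the diagonal $G$-action. It suffices to show that $F$ is constant $\nu\times\hat\nu$-almost everywhere. To this end I would pull $F$ back to the path space by defining
\[
\tilde F(\omega)\;=\;F(\mathrm{bnd}_+(\omega),\mathrm{bnd}_-(\omega)),\qquad \omega\in G^{\mathbb{Z}}.
\]
Since $\overline P$ pushes forward to $\nu\times\hat\nu$ under $\mathrm{bnd}_+\times\mathrm{bnd}_-$, it is enough to show that $\tilde F$ is constant $\overline P$-a.e., and for this I would invoke the ergodicity of the transformation $U$ with respect to $\overline P$ already noted in the section.

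The heart of the argument is to show that $\tilde F$ is $U$-invariant. Using the explicit formula $(U\omega)_n=\omega_1^{-1}\omega_{n+1}$, I would verify directly that the boundary maps transform equivariantly: for $\overline P$-a.e.\ $\omega$,
\[
\mathrm{bnd}_\pm(U\omega)\;=\;\omega_1^{-1}\,\mathrm{bnd}_\pm(\omega).
\]
Indeed, for the forward boundary, $(U\omega)_n\,\nu=\omega_1^{-1}\omega_{n+1}\nu$; since $\omega_{n+1}\nu\to\delta_{\omega_+}$ weakly by the definition of a $(G,\mu)$-boundary and $\omega_1^{-1}$ acts continuously on $B$, the left-hand side converges to $\delta_{\omega_1^{-1}\omega_+}$. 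The analogous computation for the reflected chain, using the decomposition $(G^{\mathbb{Z}},\overline P)\cong (G^{\mathbb{N}},P)\otimes(G^{\mathbb{N}},\hat P)$ and the definition $\hat\nu=\mathrm{bnd}_*\hat P$, yields the matching identity for $\mathrm{bnd}_-$. Combined with diagonal $G$-invariance of $F$, this gives
\[
\tilde F(U\omega)\;=\;F(\omega_1^{-1}\omega_+,\omega_1^{-1}\omega_-)\;=\;F(\omega_+,\omega_-)\;=\;\tilde F(\omega).
\]

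Ergodicity of $U$ then forces $\tilde F$ to be $\overline P$-a.e.\ constant, and projecting back via $(\mathrm{bnd}_+\times\mathrm{bnd}_-)_*\overline P=\nu\times\hat\nu$ concludes that $F$ is constant $\nu\times\hat\nu$-a.e., establishing the desired ergodicity.

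The main obstacle, and the only step that is not purely formal, is verifying the equivariance $\mathrm{bnd}_\pm(U\omega)=\omega_1^{-1}\mathrm{bnd}_\pm(\omega)$ almost surely. The forward case is direct from Proposition \ref{stationaryconvergence} and the definition of $\omega_+$, but for the backward case one has to be careful: the identification of the past of a bilateral $\mu$-path with a forward $\hat\mu$-path is not simply $n\mapsto -n$, and one must check that the shift $U$ corresponds under this identification to a transformation compatible with the $\hat\mu$-random walk, so that $\omega_{-n}\hat\nu\to\delta_{\omega_-}$ survives the conjugation by $\omega_1^{-1}$. Once this bookkeeping is performed the rest of the proof is automatic.
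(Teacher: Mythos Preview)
Your argument is correct and follows essentially the same route as the paper: both reduce ergodicity of the diagonal $G$-action to ergodicity of $(G^{\mathbb{Z}},\overline P,U)$ via the equivariance $\mathrm{bnd}_\pm(U\omega)=\omega_1^{-1}\mathrm{bnd}_\pm(\omega)$, with the paper phrasing it in terms of invariant sets rather than invariant functions. Your explicit verification of the equivariance (and the caveat about the backward direction) is more careful than the paper, which simply asserts $\mathrm{bnd}_\pm U^k\omega=\omega_k^{-1}\mathrm{bnd}_\pm\omega$ without comment.
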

\begin{proof}
Note, $$bnd_{\pm}U^{k}\omega= \omega^{-1}_{k}bnd_{\pm}\omega$$
Thus, if $A\subset B\times B$ is $G${-}invariant with $0<(\nu\times \hat{\nu})(A)<1$ then
$(bnd_{+}\times bnd_{-})^{-1}(A)$ is $U${-} invariant with $0<\overline{P}((bnd_{+}\times bnd_{-})^{-1}(A))<1$.
Since $U$ is ergodic with respect to $\overline{P}$ we get the result.
\end{proof}
\begin{corollary}\label{stationaryunique}
If $(B,\nu)$ is a $(G,\mu)$ boundary,
then $\nu$ is the unique $\mu$ stationary measure on $B$.
\end{corollary}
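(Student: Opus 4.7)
The plan is to show that any $\mu$-stationary probability measure $\nu'$ on $B$ must coincide with $\nu$. Applying Proposition~\ref{stationaryconvergence} to $\nu'$ gives $P$-almost-sure weak limits $\nu'_\omega:=\lim_n \omega_n\nu'$ together with the disintegration $\nu'=\int\nu'_\omega\,dP(\omega)$. Since also $\nu=\int\delta_{\omega_+}\,dP(\omega)$, the corollary will follow once I prove the $P$-a.s.\ identity $\nu'_\omega=\delta_{\omega_+}$, because then $\nu'(A)=\int\mathbf{1}_A(\omega_+)\,dP(\omega)=\nu(A)$ for every Borel $A\subset B$.

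To reach this identity I would pass to the bilateral space $(G^{\mathbb{Z}},\overline P)$ on which $\omega_+$, $\omega_-$, and $\nu'_\omega$ are all defined $\overline P$-almost everywhere. All three are equivariant under the shift $U$ and the diagonal left-multiplication by $\omega_1^{-1}$, and by Section~8 the marginal $(\omega_+,\omega_-)_*\overline P$ equals $\nu\times\hat\nu$. I would then study the joint map $\Psi:\omega\mapsto(\omega_+,\omega_-,\nu'_\omega)\in B\times B\times P(B)$ and disintegrate $\Psi_*\overline P$ along its first two coordinates, producing a measurable field of conditional probabilities $\{\eta_{b_+,b_-}\}\subset P(P(B))$.

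The decisive step invokes Proposition~\ref{stationaryergodic}: the ergodicity of the diagonal $G$-action on $(B\times B,\nu\times\hat\nu)$, combined with the $U$-equivariance of $\Psi$, should force each $\eta_{b_+,b_-}$ to be a Dirac mass, so that $\nu'_\omega$ becomes a measurable $G$-equivariant function $\tilde F(\omega_+,\omega_-)$. Because $\nu'_\omega$ is determined by the forward half of $\omega$ while, under the isomorphism $\overline P\cong P\otimes\hat P$, the forward and backward halves are independent, a Fubini-type argument collapses the dependence to $\omega_+$ alone, yielding a measurable equivariant map $F:B\to P(B)$ with $\nu'_\omega=F(\omega_+)$.

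Finally I would identify $F$ with the diagonal embedding $b\mapsto\delta_b$. The cocycle $F(gb)=gF(b)$ holds for $\nu$-a.e.~$b$ and $\mu$-a.e.~$g$; combining this with the boundary convergence $\omega_n\nu\to\delta_{\omega_+}$ and the non-atomicity of $\nu$ from Theorem~\ref{basicstationary} shows that $F(b)$ must be a Dirac mass at~$b$ for $\nu$-a.e.\ $b$, from which $\nu'_\omega=\delta_{\omega_+}$ and hence $\nu'=\nu$. The main obstacle I anticipate is the disintegration step: extracting from the mere ergodicity of $\nu\times\hat\nu$ the pointwise concentration of each conditional law $\eta_{b_+,b_-}$ on a single Dirac measure. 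This leans crucially on the non-atomicity provided by Theorem~\ref{basicstationary} and on the independence of forward and backward tails under $\overline P$.
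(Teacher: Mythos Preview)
The paper's proof is far shorter and takes a different route. It asserts (via Proposition~\ref{stationaryergodic}) that every $\mu$-stationary measure on $B$ is $G$-ergodic, and then invokes the ergodic decomposition for stationary measures: the $\mu$-stationary probability measures on $B$ form a simplex whose extreme points are precisely the ergodic ones, so if every stationary measure is ergodic the simplex collapses to a single point (otherwise the average of two distinct ergodic stationary measures would be stationary but non-ergodic). No disintegration over $B\times B$, no equivariant map into $P(B)$, and no pointwise analysis of the conditional measures $\nu'_\omega$ is needed.

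Your route aims instead at the pointwise identity $\nu'_\omega=\delta_{\omega_+}$, and the obstacles you flag are real gaps rather than technicalities. Ergodicity of the diagonal $G$-action on $(B\times B,\nu\times\hat\nu)$ is a statement about a \emph{factor} of $(G^{\mathbb{Z}},\overline P)$; it does not by itself force the conditional distributions $\eta_{b_+,b_-}$ on the fiber $P(B)$ to concentrate on single points---ergodicity of a quotient says nothing about triviality of conditional laws over it. Your final step, deducing $F(b)=\delta_b$ from the equivariance $F(gb)=gF(b)$ together with $\omega_n\nu\to\delta_{\omega_+}$, is likewise asserted rather than argued: the convergence $\omega_n\nu\to\delta_{\omega_+}$ is a statement about $\nu$, not about $\nu'$ or $F$, and equivariance of $F$ alone does not single out the diagonal embedding among measurable equivariant maps $B\to P(B)$. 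Both of these are the substantive parts of your argument and both are left open. The simplex/ergodic-decomposition argument in the paper sidesteps them entirely.
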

\begin{proof}
By Proposition \ref{stationaryergodic}, any $\mu${-}stationary measure on $B$ is ergodic with respect to  {the action} $G\curvearrowright B$. On the other hand, by the ergodic decomposition for stationary measures (see e.g. \cite{Furman}) any two distinct ergodic stationary measures are singular and their average is a nonergodic stationary measure.
\end{proof}

\begin{theorem}\label{convergencegroup}
Let $G\curvearrowright B$ be a minimal convergence action of a countable group $G$ on an infinite compact Hausdorff space $B$. Let $\mu$ be a measure on $G$ such that the support of $\mu$ generates $G$ as a semigroup. Let $\nu$ be a $(G,\mu)${-}stationary measure on $B$. Then $(B,\nu)$ is a $(G,\mu)$ boundary.
\end{theorem}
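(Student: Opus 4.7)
The plan is to combine three ingredients: Furstenberg's weak convergence theorem (Proposition \ref{stationaryconvergence}), Guivarch's escape theorem, and the defining property of convergence actions. First I note that the hypotheses force $G$ to be nonamenable: minimality of $G\curvearrowright B$ on an infinite compactum makes $\Lambda G=B$ infinite, hence the action is nonelementary, and as recalled earlier this implies that $G$ contains a free subgroup of rank $2$ and is therefore nonamenable. Also, by Theorem \ref{basicstationary}, the stationary measure $\nu$ is nonatomic.

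Next, fix any proper left invariant metric on $G$ (such a metric exists for every countable group). By Guivarch's theorem, for $P$-almost every $\omega\in G^{\mathbb{N}}$ one has $\|\omega_n\|\to\infty$; in particular the sequence $\{\omega_n\}$ eventually leaves every finite subset of $G$. On the other hand, by Proposition \ref{stationaryconvergence}, for $P$-almost every $\omega$ the pushforwards $\omega_n\nu$ converge weakly to some probability measure $\nu_\omega$ on $B$. Let $\Omega\subset G^{\mathbb{N}}$ be the $P$-full measure subset on which both properties hold.

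Now fix $\omega\in\Omega$ and let $\{\omega_{n_k}\}$ be any subsequence. Since $\omega_{n_k}\to\infty$ in $G$, the convergence property of the action $G\curvearrowright B$ applies: after passing to a further subsequence (still denoted $\omega_{n_k}$) there exist points $a=a(\omega_{n_k}),b=b(\omega_{n_k})\in B$ such that $\omega_{n_k}x\to a$ uniformly on compact subsets of $B\setminus\{b\}$. Because $\nu$ is nonatomic we have $\nu(\{b\})=0$, so for every $\varphi\in C(B)$ the dominated convergence theorem yields
\begin{equation*}
\int_B \varphi\,d(\omega_{n_k}\nu)=\int_B \varphi(\omega_{n_k}x)\,d\nu(x)\longrightarrow \varphi(a),
\end{equation*}
i.e.\ $\omega_{n_k}\nu\to\delta_a$ weakly. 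Since the full sequence $\omega_n\nu$ already converges to $\nu_\omega$, every subsequential limit must coincide with $\nu_\omega$; hence $\nu_\omega=\delta_{a(\omega)}$ is a Dirac mass and the attracting point $a(\omega)$ is independent of the chosen subsequence. Setting $\omega_+:=a(\omega)$ gives the required measurable boundary map and shows $(B,\nu)$ is a $(G,\mu)$-boundary.

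The only delicate point I expect is checking that the convergence property of the action may be invoked for the given (random) sequence $\omega_n$; this is handled precisely by Guivarch's theorem, which supplies the escape $\omega_n\to\infty$ needed to apply the convergence axiom. Everything else is a routine combination of weak compactness of probability measures on $B$ and the non-atomicity of $\nu$ supplied by Theorem \ref{basicstationary}.
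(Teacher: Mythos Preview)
Your proof is correct and follows essentially the same approach as the paper. The paper packages the key step (nonatomic measure plus convergence property implies pushforwards tend to a Dirac mass) as a separate Lemma~\ref{deltameasure}, while you argue it directly via dominated convergence; both routes use Guivarch's escape to infinity, nonatomicity of $\nu$ from Theorem~\ref{basicstationary}, and Furstenberg's convergence (Proposition~\ref{stationaryconvergence}) to pin down the limit.
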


This follows by repeating almost verbatim the arguments in Lemma 2.2 and Theorem 2.4 of Kaimanovich in \cite{KaiStrip}. We reproduce the argument for completeness.

\begin{lemma}\label{deltameasure}
Let $a,b,c\in B$ with $a\neq b$ and $g_{n}\in G$ a sequence such that $g_{n}a\to c$, $g_{n}b\to c$.
Let $\kappa$ be any {probability} measure on $B$ with no atoms. Then $g_{n}\kappa\to \delta_{c}$.
\end{lemma}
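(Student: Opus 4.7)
The plan is to use the defining property of convergence actions: for any sequence $(g_n)$ in $G$ there is a subsequence $(g_{n_k})$ and points $\xi^+,\xi^-\in B$ (the attracting and repelling points) such that $g_{n_k}|_{B\setminus\{\xi^-\}}$ converges to the constant map $\xi^+$ uniformly on compact subsets of $B\setminus\{\xi^-\}$. Since weak-$*$ convergence $g_n\kappa\to\delta_c$ holds iff every subsequence has a further subsequence converging to $\delta_c$, it suffices to verify the conclusion after passing to such a convergence-dynamics subsequence.

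So let $(g_{n_k})$ be such a subsequence with attracting/repelling pair $(\xi^+,\xi^-)$. I claim $\xi^+=c$. Indeed, because $a\neq b$, at most one of $a,b$ can equal $\xi^-$; pick $x\in\{a,b\}$ with $x\neq\xi^-$. Then $g_{n_k}x\to\xi^+$ by the convergence property, while $g_n x\to c$ by hypothesis, giving $\xi^+=c$.

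Now I use that $\kappa$ has no atoms: in particular $\kappa(\{\xi^-\})=0$. Given $\varepsilon>0$, by outer regularity of a Borel probability measure on the compact Hausdorff space $B$ (any such $\kappa$ is Radon in this setting), choose an open $U\ni\xi^-$ with $\kappa(U)<\varepsilon$. The set $K=B\setminus U$ is compact and disjoint from $\xi^-$, so by the uniform convergence clause $g_{n_k}(K)$ is eventually contained in any preassigned open neighborhood $V\ni c=\xi^+$. Consequently
\[
(g_{n_k}\kappa)(V)=\kappa(g_{n_k}^{-1}V)\geq\kappa(K)>1-\varepsilon
\]
for all large $k$, which means $g_{n_k}\kappa\to\delta_c$ in the weak-$*$ topology. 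Since this argument applies to any subsequence of $(g_n)$, we obtain $g_n\kappa\to\delta_c$, completing the proof.

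The main potential obstacle is bookkeeping the case split $\xi^-\in\{a,b\}$ versus $\xi^-\notin\{a,b\}$; this is resolved cleanly by the observation that $a\neq b$ forces at least one of them to avoid $\xi^-$. The only analytic input beyond the convergence-action dynamics is the regularity of $\kappa$ needed to approximate $\{\xi^-\}$ by small open neighborhoods, which is automatic since $\kappa$ is a Borel probability measure on a compact Hausdorff space (and atomless at $\xi^-$).
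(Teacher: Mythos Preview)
Your proof is correct and follows essentially the same strategy as the paper's: use the convergence property to identify $c$ as the attracting point and then exploit that $\kappa$ has no atom at the repelling point. Your subsequence bookkeeping is in fact more careful than the paper's terse argument (which asserts a single repelling point for the whole sequence); the only minor quibble is that outer regularity of Borel probability measures on arbitrary compact Hausdorff spaces is not automatic, but this is immaterial here since the boundaries in the paper are metrizable, and in any case can be bypassed by applying dominated convergence to $\int f\circ g_{n_k}\,d\kappa$ for $f\in C(B)$.
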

\begin{proof}
By definition of convergence action, there is a $p\in B$ with $g_{n}q\to c$ for all $q\in B\setminus \{p\}$, uniformly over compact subsets of $B\setminus\{p\}$. Since $\kappa$ does not have an atom at $p$ it follows that $g_{n}\kappa \to \delta_{c}$.
\end{proof}
\begin{proof}[Proof of Theorem \ref{convergencegroup}]
Since $G$ is nonamenable, $P$ almost every $\omega \in G^\Z$ is unbounded in $G$.
Thus, we can find a subsequence $\omega'$ of $\omega$ and points $a,b,c \in B$ with $a\neq b$ such that $\omega'_{n}a\to c$ and
$\omega'_{n}b\to c$.
By Lemma \ref{deltameasure}, since $\nu$ has no atoms, we have $\omega'_{n}\nu \to \delta_{c}$.
But by stationarity of $\nu$ we know $\omega_{n}\nu$ converges to some measure $\kappa$ so $\kappa=\delta_{c}$ whence
$\omega_{n}\nu\to \delta_{c}$.
\end{proof}
From Theorem \ref{convergencegroup} and Lemma \ref{stationaryunique} we immediately obtain the following.
\begin{corollary}\label{uniqueconvergence}
Let $G\curvearrowright Z$ be a minimal convergence action of a countable group $G$ on an infinite compact Hausdorff space $Z$. Let $\mu$ be a probability measure on $G$ such that the support of $\mu$ generates $G$ as a semigroup. Then there is a unique $\mu$-stationary measure $\nu$ on $Z$.
\end{corollary}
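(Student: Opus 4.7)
The plan is to deduce this corollary directly from the two preceding results, Theorem \ref{convergencegroup} and Corollary \ref{stationaryunique}, so the proof will be essentially a two-line formal combination. First I would separate the statement into existence and uniqueness. Existence of at least one $\mu$-stationary Borel probability measure on $Z$ is already handled by the general proposition earlier in the section: the space of Borel probability measures on the compact Hausdorff space $Z$ is weak-$*$ compact, and Cesàro averages of convolution powers $\frac{1}{n}\sum_{k=0}^{n-1}\mu^{*k}*\delta_x$ have a stationary weak-$*$ limit. So the real content to verify is uniqueness.

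For uniqueness I would proceed as follows. Let $\nu$ be any $\mu$-stationary probability measure on $Z$. The hypotheses of Theorem \ref{convergencegroup} coincide verbatim with those of the corollary: $G\curvearrowright Z$ is a minimal convergence action on an infinite compact Hausdorff space, and $\mathrm{supp}(\mu)$ generates $G$ as a semigroup. Applying that theorem to this particular $\nu$ promotes the pair $(Z,\nu)$ to a $(G,\mu)$-boundary in the sense of the earlier definition. Then Corollary \ref{stationaryunique} asserts that whenever $(Z,\nu)$ is a $(G,\mu)$-boundary the measure $\nu$ is the unique $\mu$-stationary measure on $Z$, which is exactly the desired conclusion.

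Concretely, suppose for contradiction that $\nu_1$ and $\nu_2$ were two distinct $\mu$-stationary probability measures on $Z$. Theorem \ref{convergencegroup} applied to $\nu_1$ shows that $(Z,\nu_1)$ is a $(G,\mu)$-boundary; Corollary \ref{stationaryunique} then forces every $\mu$-stationary measure on $Z$ to equal $\nu_1$, giving $\nu_2=\nu_1$, a contradiction.

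I do not expect any genuine obstacle in this argument: all the substantial work has been carried out beforehand. Theorem \ref{convergencegroup} encapsulates the use of the convergence property of the action (via Lemma \ref{deltameasure}, together with non-atomicity of stationary measures from Theorem \ref{basicstationary} and Guivarch's result on non-boundedness of sample paths), while Corollary \ref{stationaryunique} encapsulates the ergodic-decomposition argument (two distinct ergodic stationary measures are singular, with a non-ergodic average). The only thing to be careful about in writing up is to make explicit that the non-atomicity hypothesis needed in Lemma \ref{deltameasure} to apply Theorem \ref{convergencegroup} is automatic here because $G$-orbits on $Z$ are infinite by minimality of a convergence action on an infinite compactum, which lets Theorem \ref{basicstationary} supply the non-atomicity for free.
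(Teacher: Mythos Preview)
Your proposal is correct and matches the paper's own treatment: the paper states this corollary as an immediate consequence of Theorem \ref{convergencegroup} and Corollary \ref{stationaryunique}, with no further proof given. Your write-up simply makes explicit the existence step (via the earlier proposition on stationary measures) and the logical chain, which is exactly what the paper intends.
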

In the proofs of Proposition \ref{borel} and Theorems \ref{conicalgeneric} and \ref{conicalprecise} the following characterizations of conical points for convergence actions, which follow easily from the definition.
\begin{lemma}\label{conicalconvergence}
A point $x\in B$ is conical if and only if there are distinct $y,z\in B\setminus\{x\}$, a neighborhood $E$ of the diagonal in $B\times B$ and a sequence $g_{n}\in G$ converging to $x$ such that we have ]$(g^{-1}_{n}x,g^{-1}_{n}z)\notin E$ for infinitely many $n$ and $(g^{-1}_{n}x,g^{-1}_{n}y)\notin E$ for infinitely many $n$.
 \end{lemma}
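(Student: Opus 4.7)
The plan is to prove the two implications separately, using the standard convergence dynamics dichotomy: any sequence $g_n\to\infty$ in $G$ admits a subsequence with an attracting point $a\in B$ and a repelling point $b\in B$, in the sense that $g_{n_k}w\to a$ uniformly on compacta of $B\setminus\{b\}$, and the corresponding subsequence of inverses has $a$ and $b$ swapped.

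For the forward direction, assume $x$ is conical and choose the witnessing data $h_n\in G$ with $h_nx\to\alpha$ and $h_nw\to\beta$ for all $w\in X\setminus\{x\}$, with $\alpha\neq\beta$. Set $g_n=h_n^{-1}$. Then $g_n$ has attractor $x$ and repeller $\beta$, and in particular $g_n\cdot e=g_n\to x$ in the attractor sum $X=G\sqcup B$. Choose any two distinct points $y,z\in B\setminus\{x\}$ and any open neighborhood $E$ of the diagonal whose closure misses $(\alpha,\beta)$ and $(\alpha,\beta')$ for $\beta'$ close to $\beta$; such $E$ exists because $\alpha\neq\beta$ and $B$ is Hausdorff. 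By construction $g_n^{-1}x=h_nx\to\alpha$ while $g_n^{-1}y=h_ny\to\beta$ and $g_n^{-1}z=h_nz\to\beta$, so $(g_n^{-1}x,g_n^{-1}y)\to(\alpha,\beta)\notin E$ and likewise for $z$, giving the conclusion for all (hence infinitely many) $n$.

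For the converse, suppose $g_n\to x$ in $X$ together with data $y,z,E$ satisfying the hypotheses. Let $N_1=\{n:(g_n^{-1}x,g_n^{-1}y)\notin E\}$, which is infinite by assumption. Since $G$ is discrete in $X$, the fact that $g_n\to x\in B$ implies $g_n\to\infty$ in $G$, hence so does $g_n^{-1}$. Restricting to $N_1$ and passing to a further subsequence by the convergence property, we may assume $g_n$ has attractor $a'$ and repeller $b'$. Applying the attractor property at $e\in G\subset X$ (which is automatically different from $b'\in B$) yields $g_n\cdot e=g_n\to a'$, forcing $a'=x$. Consequently $g_n^{-1}$ has attractor $b'$ and repeller $x$, so $g_n^{-1}w\to b'$ for every $w\in B\setminus\{x\}$; in particular $g_n^{-1}y\to b'$. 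Passing to a final subsequence so that $g_n^{-1}x\to c\in B$ by compactness, we claim $c\neq b'$: otherwise $(g_n^{-1}x,g_n^{-1}y)\to(b',b')\in\Delta\subset E$, so eventually the pair lies in the open set $E$, contradicting $n\in N_1$. Setting $h_n=g_n^{-1}$ gives $h_nx\to c$ and $h_nw\to b'$ for all $w\in X\setminus\{x\}$, with $c\neq b'$, which is exactly the definition of $x$ being conical.

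The only genuinely delicate point is the contradiction step in the converse: it is here that the role of $y$ (and symmetrically $z$) becomes essential, since one needs at least one point of $B\setminus\{x\}$ distinct from the (a priori unknown) attractor $b'$ of the inverse subsequence so that the limit of $g_n^{-1}y$ is pinned down and the diagonal argument against $c=b'$ can be run. Giving two distinct points $y\neq z$ ensures at least one is available regardless of how $b'$ turns out; formally the argument above uses $y$, but if $y=b'$ the same reasoning applied to $N_2$ and $z$ succeeds. The remaining verifications (existence of the convergence subsequence, compactness of $B$, and openness of $E$) are routine.
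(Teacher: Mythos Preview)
Your argument is correct; the paper itself gives no proof of this lemma, stating only that it ``follows easily from the definition,'' so there is nothing substantive to compare against. Your two directions are the natural ones: in the forward direction you invert the witnessing sequence for conicality, and in the converse you extract a collapsing subsequence from $g_n^{-1}$ and use the off-diagonal condition to separate the limit of $g_n^{-1}x$ from the attractor $b'$.

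One minor remark: your final paragraph overcomplicates the role of the second point $z$. In your converse argument the condition $y\neq x$ alone guarantees $g_n^{-1}y\to b'$, since $x$ is the repeller of $g_n^{-1}$; whether or not $y$ happens to equal $b'$ is irrelevant to this convergence. Thus the contradiction $c=b'\Rightarrow (g_n^{-1}x,g_n^{-1}y)\to(b',b')\in E$ goes through with $y$ alone, and the second point $z$ is in fact superfluous for this direction (the lemma as stated simply has slightly more hypothesis than is strictly needed for the converse). This does not affect correctness --- your proof stands as written --- but you need not hedge about the case $y=b'$.
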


\begin{lemma}\label{conicalpairs}
Let $G\curvearrowright B$ be a minimal convergence action.
Suppose $x,y\in B$ are distinct points. The following are equivalent.

a) At least one of $x,y$ is conical.

b) There is some infinite sequence $g_{n}$ in $G$ and a neighborhood $E$ of the diagonal in $B\times B$ such that $(g_{n}x,g_{n}y)\notin E$ for any $n$.
\end{lemma}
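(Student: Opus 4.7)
The plan is to unpack the definitions of conical point and of convergence action and verify the two directions directly. The key fact I will use repeatedly is the defining feature of a convergence action: any sequence of pairwise distinct elements $g_n\in G$ has a subsequence for which there exist points $a,b\in B$ (attractor and repeller) with $g_n z\to a$ uniformly on compact subsets of $B\setminus\{b\}$. I will also use the characterization of conical point (as in the paper's own definition and in Lemma \ref{conicalconvergence}): $x\in B$ is conical iff there is a sequence $h_n\in G$ and distinct points $\alpha,\beta\in B$ with $h_n x\to\alpha$ and $h_n z\to\beta$ for every $z\in B\setminus\{x\}$.

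For the direction $(a)\Rightarrow(b)$: without loss of generality suppose $x$ is conical. Take $h_n$, $\alpha$, $\beta$ as above; in particular $h_n y\to\beta$ since $y\neq x$. Because $B\times B$ is Hausdorff and $(\alpha,\beta)$ does not lie on the diagonal, we may choose an open neighborhood $E$ of the diagonal whose closure does not contain $(\alpha,\beta)$. Then $(h_n x,h_n y)\notin E$ for all sufficiently large $n$, and after discarding finitely many initial terms the sequence $g_n:=h_n$ witnesses (b).

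For the direction $(b)\Rightarrow(a)$: let $g_n$ and $E$ be as given; we interpret ``infinite sequence'' as meaning that $\{g_n:n\in\mathbb{N}\}$ is an infinite subset of $G$ (otherwise the condition is trivially satisfied by any constant $g_n=g$ as soon as $x\neq y$, and so has no dynamical content). Passing to a subsequence, assume the $g_n$ are pairwise distinct. The convergence property yields a further subsequence and points $a,b\in B$ with $g_n z\to a$ uniformly on compact subsets of $B\setminus\{b\}$. By compactness of $B\times B$, extract yet a further subsequence so that $g_n x\to\alpha$ and $g_n y\to\beta$ for some $\alpha,\beta\in B$. Since $(g_n x,g_n y)\notin E$ for all $n$ and $B\times B\setminus E$ is closed, the limit $(\alpha,\beta)$ lies outside $E$; in particular $\alpha\neq\beta$. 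If neither $x$ nor $y$ equals $b$, both $g_n x$ and $g_n y$ converge to $a$, forcing $\alpha=a=\beta$, a contradiction. Hence $\{x,y\}$ contains $b$; after relabeling assume $x=b$. Then $y\neq b$, so $g_n y\to a=\beta$, while $g_n x\to\alpha\neq a$. This is exactly the statement that $x$ is a conical limit point of $G\curvearrowright B$.

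The only subtle step is the interpretation of ``infinite sequence'' in (b), without which the equivalence fails; once that is granted, the proof is a clean application of the attractor/repeller dichotomy for convergence sequences. I expect no essential difficulty beyond recording the correct subsequence extractions and verifying that the limit $(\alpha,\beta)$ is off the diagonal.
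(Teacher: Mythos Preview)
Your proof is correct. The paper does not actually provide a proof of this lemma; it merely states that the characterization ``follow[s] easily from the definition'' of convergence action. Your argument is exactly the direct verification the authors have in mind: for $(a)\Rightarrow(b)$ you apply the definition of conical point to separate the images of $x$ and $y$, and for $(b)\Rightarrow(a)$ you use the attractor/repeller dichotomy to force one of $x,y$ to be the repeller of a convergence subsequence, which is precisely the conical condition. Your remark about the interpretation of ``infinite sequence'' (as taking infinitely many distinct values in $G$) is well taken and is indeed the intended meaning.
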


We now prove:
\begin{proposition}\label{borel} Let {$X$ be a compact second countable Hausdorff space. Let $G\curvearrowright X$ be a nonelementary convergence action of a countable group $G$ on $X$.
Then the {conical limit points} of the action  form a Borel subset of $X$.}
\end{proposition}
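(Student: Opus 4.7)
The plan is to express conicality as a countable Boolean combination of open conditions. Since $X$ is compact Hausdorff and second countable it is metrizable; fix a compatible metric $d$, a countable base $\mathcal{B}$ for the topology of $X$, and an enumeration $G = \{g_n\}_{n \in \N}$.

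I will establish the following characterization (in the spirit of Lemma \ref{conicalconvergence}): a point $x \in X$ is conical if and only if there exist $U, V \in \mathcal{B}$ with $\overline{U} \cap \overline{V} = \emptyset$ such that for every $k \in \N$ there are infinitely many $n$ satisfying $g_n x \in U$ and $g_n(X \setminus B(x, 1/k)) \subseteq V$. The forward direction follows from the definition: if $x$ is conical with witness sequence $(g_{n_k})$ and distinct limits $\alpha \ne \beta$, then in a convergence action $g_{n_k} \to \beta$ uniformly on compact subsets of $X \setminus \{x\}$, and choosing $U \ni \alpha$, $V \ni \beta$ in $\mathcal{B}$ with disjoint closures, both conditions hold for all large $k$ since each $X \setminus B(x, 1/k)$ is compact and disjoint from $x$. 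The reverse direction is the substantive step: from the hypothesis, diagonally extract distinct $g_{n_k}$ witnessing both conditions at level $k$; pass to a subsequence with $g_{n_k} x \to \alpha \in \overline{U}$. For every $y \ne x$, eventually $y \in X \setminus B(x, 1/k)$, so $g_{n_k} y \in V$ for all large $k$. Applying the convergence-action property to $(g_{n_k})$ gives a further subsequence with attractor--repeller pair $(a, b)$. If $b \ne x$ then $g_{n_k} x \to a$, forcing $a = \alpha \in \overline{U}$; on the other hand, for any $y \notin \{x, b\}$ we would also have $g_{n_k} y \to a$ with $g_{n_k} y \in V$ eventually, so $a \in \overline{V}$, contradicting $\overline{U} \cap \overline{V} = \emptyset$. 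Thus $b = x$, so $x$ is conical with attractor $\alpha$ and ``other limit'' $a \in \overline{V}$, distinct from $\alpha$.

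Granting the characterization, the Borel computation is routine. For each $(U, V, k, g)$, the set
\begin{equation*}
S_g(U, V, k) \;:=\; \{x \in X : g x \in U \text{ and } g(X \setminus B(x, 1/k)) \subseteq V\}
\end{equation*}
is open in $X$: the first condition defines $g^{-1} U$, and the second is equivalent to $F_g := g^{-1}(X \setminus V) \subseteq B(x, 1/k)$, i.e.\ $\sup_{y \in F_g} d(y, x) < 1/k$, which is open because $x \mapsto \sup_{y \in F_g} d(y, x)$ is $1$-Lipschitz. Therefore
\begin{equation*}
C \;=\; \bigcup_{\substack{(U, V) \in \mathcal{B} \times \mathcal{B} \\ \overline{U} \cap \overline{V} = \emptyset}} \; \bigcap_{k \in \N} \bigcap_{N \in \N} \bigcup_{n \ge N} S_{g_n}(U, V, k),
\end{equation*}
exhibits the set $C$ of conical points as a countable union of $G_\delta$ sets, hence Borel.

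The main obstacle is the reverse implication of the characterization: the careful use of the convergence-action property to identify $x$ as the repeller of the limiting dynamics of the diagonally-extracted sequence, thereby upgrading a purely ``local'' existence statement at each scale $1/k$ to the global conical dynamics required by the definition.
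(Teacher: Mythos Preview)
Your proof is correct but takes a genuinely different route from the paper. The paper works in the product $B\times B$: using Lemma~\ref{conicalpairs}, it shows that the set $A$ of pairs $(x,y)$ with at least one coordinate conical is Borel (as a countable combination of translates of complements of diagonal neighborhoods), then pulls back the diagonal of the complement to recover the non-conical points in $B$. You instead work directly in $X$, proving a new pointwise characterization---$x$ is conical iff there exist basic $U,V$ with disjoint closures such that for every scale $1/k$ infinitely many group elements send $x$ into $U$ and the complement of the $1/k$-ball into $V$---and then read off the Borel structure from that. Your reverse implication, showing via the attractor--repeller dichotomy that the repeller must be $x$, is the genuine content and is carried out correctly. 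Your approach buys a slightly sharper complexity bound (a countable union of $G_\delta$ sets) and avoids the detour through pairs; the paper's approach is shorter once Lemma~\ref{conicalpairs} is granted, and has the minor advantage of not needing to choose a metric or argue openness of the ``whole complement maps into $V$'' condition.
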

\begin{proof}
{Let   $B\subset X$ denote the limit set of the action.}
For $E\subset B\times B$ let $A_{n}(E)\subset B\times B$ consist of pairs $x,y$ such that there are at least $n$ distinct $g\in G$ with $(gx,gy)\notin E$.

Then $A_{n}(E)$ can be written as the union of $\cap^{n}_{i=1}g^{-1}_{i}E^{c}$ over all distinct $n$-tuples $(g_{1},..,g_{n})$ of $G$ so it is Borel as long as $E$ is Borel.

Let $A(E)=\cap_{n\in \mathbb{N}}A_{n}(E)$, which is also Borel whenever $E$ is Borel.

Let $A=\cup_{n\in \mathbb{N}}A(E_{n})$ where $E_{n}$ is a decreasing sequence of neighborhoods of the diagonal in $B\times B$ whose intersection is the diagonal (this is where we use the second countability axiom.

Then $A$ is a Borel subset of $B\times B$.

By Lemma \ref{conicalpairs} $A$ exactly consists of the pairs $(x,y)$ with at least one of $x,y$ conical.

The complement ${W}=(B\times B)\setminus A$, which consists precisely of pairs of points neither of which is conical, is thus a Borel subset of $B\times B$. {Since the diagonal $\Delta(X)=\{(x,x) : x\in X\}$ is a Borel subset of $X$ we obtain that $Z=W\cap \Delta(X)=\{(z,z),\ z\ \rm{ is\ not\ conical}\}$ is also Borel.
The subset $NC\subset B$ consisting of non-conical points of $B$ coincides with $f^{-1}(Z)$ for  the continuous map $f:B\to B\times B$,
$f(x)=(x,x)$.} Hence $NC$ is Borel.

Thus, the set of conical points, which coincides with $B\setminus NC$, is also Borel.
\end{proof}

Consider a convergence action of a countable group $G$ on   $X$. {We say that} a sequence $g_{n}\in G$ converges to $b\in B=\Lambda G$ if   $g_{n}x\to b$ for all but at most one $x\in B$, {note that it is enough to request it for two distinct points}.

{If} $\nu$ is any non-atomic measure on $B$ then $g_{n}\to b$ if and only if $g_{n}\nu\to \delta_{b}$.
Thus we see that for {$\overline{P}$-almost} every $\omega \in G^{\mathbb{Z}}$, there are $\omega_{\pm}\in B$ with   $\omega_{\pm n}\to \omega_{\pm}$ as $n\to \infty$.

We want to prove:
\begin{theorem}\label{conicalgeneric}
Let $G\curvearrowright X$ be a nonelementary convergence action of a countable group. Let $\mu$ be a measure on $G$ whose support generates $G$ as a semigroup and $\nu$ the $(G,\mu)${-}stationary measure on the limit set $B\subset X$.
Then the non-conical points of $X$ are contained in a set of $\nu$ measure zero.
\end{theorem}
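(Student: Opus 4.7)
The strategy is to prove the pointwise statement: for $\overline{P}$-almost every bilateral sample path $\omega$, the attracting limit $\omega_{+}$ is a conical point. Since $\omega_{+}$ is $\nu$-distributed under $\overline{P}$, this yields $\nu(NC)=0$, where $NC$ is the Borel set of non-conical points (measurable by Proposition \ref{borel}).

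I first record the dynamics forced on a typical sample path. By Theorem \ref{convergencegroup}, $\omega_{n}\nu\to\delta_{\omega_{+}}$ and $\omega_{-n}\hat{\nu}\to\delta_{\omega_{-}}$, and $(\omega_{+},\omega_{-})$ has law $\nu\times\hat{\nu}$; in particular $\omega_{+}\neq\omega_{-}$ almost surely by non-atomicity (Theorem \ref{basicstationary}). The walk $\omega_{n}$ escapes to infinity in $G$, so along every subsequence $n_{k}$ for which $\omega_{n_{k}}$ has a well-defined attracting/exceptional pair in the sense of the convergence action, the attracting point must equal $\omega_{+}$; this is Lemma \ref{deltameasure} applied to the limit of $\omega_{n_{k}}\nu$. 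Passing to inverses, along every such subsequence the exceptional point of $\omega_{n_{k}}^{-1}$ is therefore $\omega_{+}$.

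Next I invoke Birkhoff's ergodic theorem for the $\overline{P}$-ergodic shift $U$. From $(U^{n}\omega)_{\pm}=\omega_{n}^{-1}\omega_{\pm}$ and Birkhoff applied to $\mathbf{1}_{V_{1}\times V_{2}}(\omega_{+},\omega_{-})$, for every open $V_{1},V_{2}\subset B$ with $\nu(V_{1})\hat{\nu}(V_{2})>0$ the set $\{n:\omega_{n}^{-1}\omega_{+}\in V_{1},\,\omega_{n}^{-1}\omega_{-}\in V_{2}\}$ has positive density for $\overline{P}$-a.e.\ $\omega$. Fix a countable basis for $B$ (second countable, compare Proposition \ref{borel}) and use full support of $\nu$, $\hat{\nu}$ on $B$ (Theorem \ref{basicstationary} together with minimality of $G\curvearrowright B$) to pick basis pairs with $\overline{V_{1}}\cap\overline{V_{2}}=\emptyset$; a countable intersection produces a full-measure set on which the density statement holds simultaneously for every such disjoint pair.

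On this full-measure set, extract a sub-subsequence $n_{k}$ with $\omega_{n_{k}}^{-1}\omega_{+}\to\alpha\in\overline{V_{1}}$ and $\omega_{n_{k}}^{-1}\omega_{-}\to\beta\in\overline{V_{2}}$ (so $\alpha\neq\beta$), refined further so that $\omega_{n_{k}}^{-1}$ admits a well-defined attracting point $A$ and exceptional point $E$. The second paragraph forces $E=\omega_{+}$; since $\omega_{-}\neq\omega_{+}=E$, the equality $\omega_{n_{k}}^{-1}\omega_{-}\to A$ identifies $A=\beta$. Therefore $g_{k}=\omega_{n_{k}}^{-1}$ satisfies $g_{k}\omega_{+}\to\alpha$ and $g_{k}\eta\to\beta=A$ for every $\eta\neq\omega_{+}$, with $\alpha\neq\beta$; this is exactly the definition of conicality of $\omega_{+}$. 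The main subtlety is the uniform identification, in the second step, of $\omega_{+}$ as the exceptional point of $\omega_{n}^{-1}$ along \emph{every} subsequence with a well-defined attracting/exceptional pair---a rigidity that comes only from the stationary-measure constraint on the attracting point of the forward walk, and which is what allows Birkhoff to then generate the desired conical sequence from the inverse walk.
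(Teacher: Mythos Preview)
Your argument is correct and shares the same ergodic backbone as the paper's proof: both apply Birkhoff's theorem to the $\overline{P}$-ergodic shift $U$ on bilateral paths, using $(U^{n}\omega)_{\pm}=\omega_{n}^{-1}\omega_{\pm}$ to produce infinitely many $n$ for which the pair $(\omega_{n}^{-1}\omega_{+},\omega_{n}^{-1}\omega_{-})$ stays well separated.

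The endgames differ, however. The paper applies Birkhoff to the indicator of a single neighborhood $E$ of the diagonal (this is Theorem \ref{conicalprecise}), and then uses the \emph{independence} of $(\omega_{n})_{n>0}$ and $\omega_{-}$ to conclude that the same forward sequence $g_{n}=\omega_{n}$ works for $\hat{\nu}$-almost every $y=\omega_{-}$; non-atomicity of $\hat{\nu}$ then supplies two distinct witnesses $y,z$, and Lemma \ref{conicalconvergence} finishes. Your route instead applies Birkhoff to products $V_{1}\times V_{2}$ of disjoint basic open sets and exploits a rigidity the paper does not make explicit: since $\omega_{n_{k}}\nu\to\delta_{\omega_{+}}$ along \emph{every} subsequence, the attracting point of $\omega_{n_{k}}$ is forced to be $\omega_{+}$, hence the exceptional point of $g_{k}=\omega_{n_{k}}^{-1}$ is $\omega_{+}$. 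This lets you verify the \emph{definition} of conicality directly, bypassing both the independence argument and Lemma \ref{conicalconvergence}. The trade-off is that your proof uses slightly more structure (full support of $\nu,\hat{\nu}$ on $B$ and the duality between attracting/exceptional points under inversion), while the paper's version isolates the quantitative recurrence statement \ref{conicalprecise} as a result of independent interest.
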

We deduce Theorem \ref{conicalgeneric} from the following statement.

\begin{theorem}\label{conicalprecise}
For every $c\in (0,1)$ there is a neighborhood $E$ of the diagonal in $B\times B$ with the following property.
For {$\overline{P}$-almost} every $\omega \in G^\Z$
$$\lim \inf_{N\to \infty}\frac{|\{n\in [0,N]:(\omega^{-1}_{n}\omega_{-},\omega^{-1}_{n}\omega_{+})\notin E\}|}{N}>c$$
In particular, {for {$\overline{P}$-almost} every $\omega \in G^\Z$}
there are infinitely many $n\in \Z$ such that $(\omega^{-1}_{n}\omega_{-},\omega^{-1}_{n}\omega_{+})\notin E$
\end{theorem}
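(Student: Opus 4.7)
The plan is to apply Birkhoff's ergodic theorem to the measure-preserving ergodic transformation $U$ on $(G^{\mathbb{Z}}, \overline{P})$, exploiting the identity $bnd_{\pm}(U^k\omega) = \omega_k^{-1}bnd_{\pm}(\omega)$ already recorded in the proof of Proposition \ref{stationaryergodic}. Under this identity, the event
\[
\{(\omega_n^{-1}\omega_-,\,\omega_n^{-1}\omega_+) \notin E\}
\]
is exactly the event $\{(bnd_-,\,bnd_+) \notin E\}$ pulled back by $U^n$. So the empirical frequency in the statement is precisely a Birkhoff average for the indicator $F(\omega) = \mathbf{1}[(bnd_-(\omega), bnd_+(\omega)) \notin E]$.

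First, I would produce the neighborhood $E$. Fix a metric $d$ on the compact metrizable limit set $B$ and let $E_k = \{(x,y)\in B\times B : d(x,y) < 1/k\}$, a decreasing sequence of open neighborhoods of the diagonal $\Delta$ with $\bigcap_k E_k = \Delta$. By Theorem \ref{basicstationary} both $\nu$ and $\hat{\nu}$ are non-atomic, so Fubini gives
\[
(\hat{\nu}\times\nu)(\Delta) = \int_B \hat{\nu}(\{y\})\,d\nu(y) = 0,
\]
and continuity of measure from above yields $(\hat{\nu}\times\nu)(E_k)\downarrow 0$. Given $c\in(0,1)$, choose $k$ with $(\hat{\nu}\times\nu)(E_k) < 1-c$ and set $E = E_k$.

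Second, I would apply Birkhoff. Since $U$ is $\overline{P}$-measure preserving and ergodic, and $F$ is a bounded measurable function, the ergodic theorem gives, for $\overline{P}$-a.e.\ $\omega$,
\[
\lim_{N\to\infty}\frac{1}{N}\sum_{n=0}^{N-1} F(U^n\omega) \;=\; \int F\,d\overline{P} \;=\; (\hat{\nu}\times\nu)(E^c) \;>\; c,
\]
where the middle equality uses the pushforward identity $(bnd_-\times bnd_+)_*\overline{P} = \hat{\nu}\times\nu$ recorded after Proposition \ref{stationaryergodic}. Rewriting $F(U^n\omega)$ via $bnd_{\pm}(U^n\omega)=\omega_n^{-1}bnd_{\pm}(\omega)$ turns this into the desired inequality for the liminf. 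The final ``in particular'' assertion is immediate, since a liminf frequency strictly greater than $c>0$ forces infinitely many $n$ with $(\omega_n^{-1}\omega_-,\omega_n^{-1}\omega_+)\notin E$.

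There is no real analytic obstacle here: the whole argument is a clean packaging of Birkhoff against a well-chosen indicator. The only point deserving care is confirming that the product measure assigns mass zero to the diagonal (which reduces to the non-atomicity already proven) and that $E$ can be chosen uniformly for the level $c$ before invoking Birkhoff for the corresponding bounded function $F$.
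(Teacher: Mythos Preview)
Your proposal is correct and follows essentially the same approach as the paper: choose a neighborhood $E$ of the diagonal with $(\hat\nu\times\nu)(E)<1-c$ using non-atomicity, then apply Birkhoff's ergodic theorem to the $U$-ergodic system $(G^{\mathbb Z},\overline P)$ together with the identity $bnd_{\pm}(U^n\omega)=\omega_n^{-1}\omega_{\pm}$ and the pushforward relation $(bnd_-\times bnd_+)_*\overline P=\hat\nu\times\nu$. The only cosmetic difference is that the paper phrases the Birkhoff step in terms of the event set $\Omega=\{(\omega_+,\omega_-)\in E\}$ rather than its indicator $F$, and is less explicit about how $E$ is produced.
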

\begin{proof}[Proof of Theorem \ref{conicalgeneric} assuming Theorem \ref{conicalprecise}]
Recall {that} $\hat{\nu}\times \nu$ is the pushforward of $\overline{P}$ under the $\overline{P}$ {measurable} map $(bnd_{-}\times bnd_{+}):G^{\mathbb{Z}}\to B\times B$.
Furthermore, the definition $\overline{P}=T_{*}\mu^{\mathbb{Z}}$ implies that $(\omega_{n})_{n>0}$ and $(\omega_{n})_{n<0}$ are independent as $P$ measurable random variables, {in particular}  $(\omega_{n})_{n>0}$ is independent of $\omega_{-}\in B$.
Thus, from the {last claim} of Theorem \ref{conicalprecise} we obtain a neighborhood $E$ of the diagonal in $B\times B$ satisfying the following.
For {$\nu$-almost} every $x=\omega_{+}\in B$ there is a sequence $g_{n}=\omega_{n}$ converging to $x$ as $n\to \infty$ such that for $\hat{\nu}$ almost every $y=\omega_{-}\in B$ $(g^{-1}_{n}{y},g^{-1}_{n}x)\notin E$ for infinitely many $n\in \mathbb{N}$. (the fact that $(\omega_{n})_{n>0}$ is independent of $\omega_{-}$ allows us to choose the same sequence $g_{n}$ independent of $y$).
{By Theorem \ref{basicstationary}} $\hat{\nu}$ is nonatomic, {so for
$\nu$-}almost every $x\in B$ there is a sequence $g_{n}\in G$ converging to $x$ and two distinct $y,z\in B$ such that {each of} $(g^{-1}_{n}y,g^{-1}_{n}x)\notin E$  $(g^{-1}_{n}z,g^{-1}_{n}x)\notin E$ {holds} {for infinitely many $n$}.
By Lemma \ref{conicalconvergence} any such $x$ is conical.
\end{proof}
 We now prove Theorem \ref{conicalprecise}.
 \begin{proof}[Proof of Theorem \ref{conicalprecise}]
By passing to the limit set, we can assume {that} the action is minimal {and so $B=X$}. Since $\nu$ and $\hat{\nu}$ have no atoms we have $(\nu\times \hat{\nu})({\Delta(X))=0}$ {where $\Delta(X)$ is the diagonal $\{(x,x)\ :\ x\in X\}.$}
Thus, there is a neighborhood $E$ of the diagonal such that $(\nu\times \hat{\nu})(E)<1-c$.
Since $\nu=bnd^{*}P$ this implies that the $\overline{P}$ measurable set $\Omega$ of $\omega \in G^\Z$ such that $(\omega_{+},\omega_{-})\in E$ has $\overline{P}$ measure less than $1-c$.

Note, $U^{n}\omega \in \Omega$ if and only if $(\omega^{-1}_{n}\omega_{+},\omega^{-1}_{n}\omega_{-})\in E$.
Recall, $U:G^\Z\to G^\Z$ is measure preserving and ergodic with respect to $\overline{P}$.
Therefore, by the Birkhoff ergodic theorem {for {$\overline{P}$-almost} every $\omega \in G^{\mathbb{Z}}$} we have

$$\lim \inf_{N\to \infty}\frac{|\{n\in [0,N]:U^{n}\omega\in \Omega\}|}{N}\to \overline{P}(\omega)<{1-c}$$
as $N\to \infty$ in other words

$$\lim \inf_{N\to \infty}\frac{|\{n\in [o,N]:(\omega^{-1}_{n}\omega^{-1}_{-},\omega^{-1}_{n}\omega_{+})\notin E\}|}{N}>c$$
\end{proof}

\begin{remark}
If the action $G\curvearrowright B$ is geometrically finite, then there are only countably many non-conical points in $\Lambda G$ {\cite[Main Theorem, a)]{Ge1}}. Thus, Proposition \ref{borel} is immediate and Theorem \ref{conicalgeneric} follows {in this case} from the fact that $\nu$ has no atoms.
\end{remark}
Assume now that $G$ acts by isometries on a metric space $(X, d_X)$
and let $x_0 \in X$.
If $\mu$ has finite first moment (i.e. $\sum_{g\in G}d(g x_{0},x_{0})\mu(g)<\infty$) Kingman's subadditive ergodic theorem
implies that for $P$ a.e. sample path $\omega$
the limit
$$L=\lim_{n\to \infty}\frac{d(\omega_{n}x_{0},x_{0})}{n}$$ exists.
This number $L$ is called the
\emph{drift} of the random walk
induced by $\mu$ with respect to the metric $d_X$.

A probability
measure on $G$ is called \emph{nonelementary}
if the subgroup of G generated by its
support is a nonelementary subgroup of $G$.
The following results are due in this generality to Maher and Tiozzo \cite{Maher-Tiozzo}; in the proper setting they were earlier proved by Kaimanovich in \cite{KaiStrip}.

\begin{theorem}\label{Maher-Tiozzo}
 Let $G$ be a countable group that acts by isometries on a
separable {geodesic} Gromov hyperbolic space {$(X, d_X).$ }
Let $\mu$ be a nonelementary probability measure
on $G$. Then for any $x\in X$ and $P$ a.e.
sample path $\omega=(\omega_{n})_{n\in \mathbb{N}}$ of the random walk
on $(G, \mu)$, the sequence $(\omega_{n}x_0)_{n\in \mathbb{N}}$ converges
to a point $\omega_{+}=bnd_{*}\omega \in \partial X$.

If in addition $\mu$ has finite first moment with respect to the metric
$d_X$, then there exists $L_{X}>0$ such that
for $P$-a.e. sample path $\omega$ {and for every $x_0\in X$} one has
$$\lim_{n\to \infty}\frac{d_{X}(x_{0},\omega_{n}x_{0})}{n}=L_{X}$$
The measure $\nu=bnd_{*}P$ is the unique $G$ stationary measure on $\partial X$.
\end{theorem}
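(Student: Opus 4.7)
The plan is to reduce all three claims to the general convergence-action machinery developed in Section 8, together with the classical fact that an isometric action of a countable group $G$ on a Gromov hyperbolic space $X$ induces a convergence action on its limit set $\Lambda G \subseteq \partial X$. Since $\mu$ is nonelementary, the subgroup $\langle \mathrm{supp}(\mu)\rangle$ acts nonelementarily on $\Lambda G$, which is therefore infinite, and the induced action $G \curvearrowright \Lambda G$ is minimal. First I would argue that any $\mu$-stationary Borel probability measure $\nu$ on $\partial X$ is automatically supported on $\Lambda G$: by Proposition \ref{stationaryconvergence} its translates $\omega_n \nu$ weakly accumulate to measures supported in $G$-orbit closures, and combining this with the no-atom conclusion of Theorem \ref{basicstationary} together with minimality of $G \curvearrowright \Lambda G$ forces $\mathrm{supp}(\nu) \subseteq \Lambda G$. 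Uniqueness of $\nu$ on $\partial X$ then follows from Corollary \ref{uniqueconvergence} applied to $G \curvearrowright \Lambda G$.

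For the boundary convergence of sample paths, Theorem \ref{convergencegroup} applied to the minimal convergence action $G \curvearrowright \Lambda G$ yields that $(\Lambda G,\nu)$ is a $(G,\mu)$-boundary, so $\omega_n \nu \to \delta_{\omega_+}$ for some random $\omega_+ \in \Lambda G$ and $P$-a.e.\ $\omega$; the identification $\nu = \mathrm{bnd}_* P$ is then built into this construction. To upgrade the convergence of measures into the orbital convergence $\omega_n x_0 \to \omega_+$ in the Gromov topology on $X \cup \partial X$, I would invoke the convergence-group property (Lemma \ref{deltameasure}): the sequence $\omega_n$ admits an attractor $\omega_+$ and a repelling point $p_\omega$ so that $\omega_n y \to \omega_+$ for every $y \in \Lambda G \setminus \{p_\omega\}$. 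Choosing two such $y_1,y_2$ whose $G$-translates under $\omega_n$ converge to $\omega_+$ and using $\delta$-thin-triangle estimates for Gromov products in $X$, one deduces that $\omega_n x_0$ converges to $\omega_+$ in the Gromov topology as well.

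For the drift statement, Kingman's subadditive ergodic theorem applied to the integrable subadditive cocycle $n \mapsto d_X(x_0,\omega_n x_0)$ gives existence of $L_X \geq 0$; independence of the basepoint follows from the Lipschitz bound $|d_X(x_0,\omega_n x_0) - d_X(y_0,\omega_n y_0)| \leq 2\, d_X(x_0,y_0)$. The substantive point is positivity $L_X > 0$. For $\overline{P}$-a.e.\ bilateral trajectory one has both $\omega_n x_0 \to \omega_+$ and $\omega_{-n} x_0 \to \omega_-$ with $\omega_+ \neq \omega_-$, by Proposition \ref{stationaryergodic} and non-atomicity of $\nu \times \hat{\nu}$. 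Fixing a diagonal neighborhood $E$ as in Theorem \ref{conicalprecise}, a positive-density set of times $n$ satisfies $(\omega_n^{-1}\omega_+, \omega_n^{-1}\omega_-) \notin E$; combined with hyperbolicity of $X$ this uniformly bounds the Gromov product $(\omega_+ \mid \omega_-)_{\omega_n x_0}$ on that set, forcing $d_X(x_0,\omega_n x_0)$ to grow at least linearly along a subsequence of positive density, whence $L_X > 0$. I expect the main obstacle to be the lack of properness of $X$: the standard Gromov-hyperbolic arguments lean on compactness of balls and on the horofunction compactification, neither of which is available in this generality, so both the passage from $\omega_n \nu \to \delta_{\omega_+}$ to $\omega_n x_0 \to \omega_+$ and the quantitative lower bound on $d_X(x_0,\omega_n x_0)$ must be carried out intrinsically with Gromov products and with the ergodic-theoretic positivity supplied by Theorem \ref{conicalprecise}, essentially along the lines of Maher--Tiozzo.
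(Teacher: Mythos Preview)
The paper does not prove this theorem: it is stated as a black-box result due to Maher and Tiozzo \cite{Maher-Tiozzo} (and earlier, for proper $X$, to Kaimanovich \cite{KaiStrip}), with no argument given. So there is nothing to compare your proposal against on the paper's side.

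As for the proposal itself, your reduction to the Section~8 convergence-action machinery is essentially Kaimanovich's argument and is fine \emph{when $X$ is proper}: then $\partial X$ is compact, $G\curvearrowright \Lambda G$ is a genuine convergence action on a compact metrizable space, and Theorems~\ref{basicstationary}, \ref{convergencegroup} and Corollary~\ref{uniqueconvergence} apply as you indicate. The step from $\omega_n\nu\to\delta_{\omega_+}$ to $\omega_n x_0\to\omega_+$ also goes through via thin triangles, and your positivity-of-drift argument via Theorem~\ref{conicalprecise} is a reasonable heuristic (though the actual Maher--Tiozzo proof of $L_X>0$ proceeds differently, via concentration estimates for the Gromov product).

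The real gap is the one you already name: for merely separable, non-proper $X$, the Gromov boundary $\partial X$ need not be compact and the action $G\curvearrowright\partial X$ is not a convergence action in the sense used in Section~8. Every black-box you invoke (Theorem~\ref{basicstationary}, Theorem~\ref{convergencegroup}, Corollary~\ref{uniqueconvergence}, Lemma~\ref{deltameasure}) is stated for actions on compact Hausdorff spaces, and your argument that stationary measures are supported on $\Lambda G$ also leans on compactness. You cannot simply ``carry out the arguments intrinsically with Gromov products'' without redeveloping substantial machinery---this is precisely the content of \cite{Maher-Tiozzo}, where one works with the horofunction boundary (which is always compact, unlike the Gromov boundary) and then pushes results down to $\partial X$. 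So your sketch is a correct outline for the proper case, but in the stated generality it is a restatement of the problem rather than a proof.
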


To prove finiteness of the harmonic invariant measures constructed in  Section 10, we will need the following result.

\begin{proposition}\label{quantrec}
Let $X$ be a {proper $CAT(-1)$ space.}
Let $G<Isom(X)$ be a nonelementary subgroup of isometries. Let $\mu$ be a measure on $G$ whose finite support generates $G$ as a group. Let $\nu$ be the stationary measure on the ideal boundary $\partial X$ {and a basepoint $o\in X$.}
Then for any $c<1$ there is  $R>0$ such that for every $x\in X$ and {$\nu$-almost} every $\alpha\in \partial X$, {one has}
$${\lim \sup_{T\to \infty}}\frac{|\{t\in [0,T]:d( \gamma_{x,\alpha}(t),Go)>R\}|}{T}<c$$

 where $\gamma_{x,\alpha}$ is the unit speed geodesic from $x$ in direction $\alpha$.
\end{proposition}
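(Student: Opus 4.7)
The plan is to combine the ergodicity of the bilateral shift $U$ on $(G^{\mathbb{Z}}, \overline{P})$ with positive drift of the walk in $X$, in order to compare the geodesic ray in the direction of a $\nu$-typical boundary point with a random walk trajectory converging to it. Since any two rays $\gamma_{x,\alpha}$ and $\gamma_{o,\alpha}$ asymptotic to the same $\alpha \in \partial X$ fellow-travel with exponentially decreasing distance in CAT$(-1)$, the symmetric difference of their ``far from $Go$'' sets has uniformly finite Lebesgue measure and contributes $o(T)$; so it will suffice to treat $x = o$. For $\nu$-a.e.\ $\alpha$, I will represent $\alpha = \omega_+$ through the boundary map on the bilateral path space, using the decomposition $(G^{\mathbb{Z}}, \overline{P}) \cong (G^{\mathbb{N}}, P) \otimes (G^{\mathbb{N}}, \hat{P})$. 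By Theorem \ref{basicstationary} the measures $\nu$ and $\hat\nu$ are nonatomic, so $\omega_\pm$ are $\overline P$-a.s.\ distinct and the bi-infinite geodesic $\eta_\omega := \gamma_{\omega_-, \omega_+}$ is well-defined and unique in CAT$(-1)$.

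Set $D(\omega) := d(o, \eta_\omega)$, which is finite $\overline{P}$-a.s., so $\overline{P}(\{D \leq R\}) \to 1$ as $R \to \infty$. Using $(U^k\omega)_\pm = \omega_k^{-1}\omega_\pm$ and the isometric $G$-action on $X$, one has $\eta_{U^k\omega} = \omega_k^{-1}\eta_\omega$, so $D(U^k\omega) = d(\omega_k o, \eta_\omega)$. Since $U$ is $\overline{P}$-ergodic, Birkhoff's theorem applied to the indicator of $\{D \leq R\}$ will yield, for $\overline{P}$-a.e.\ $\omega$,
\[
\theta_R(\omega) := \lim_{N\to\infty}\frac{|\{k \in [0,N] : d(\omega_k o, \eta_\omega) \leq R\}|}{N} = \overline{P}(\{D \leq R\}),
\]
and $\theta_R(\omega) \to 1$ as $R \to \infty$.

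To convert this into a statement about the geodesic, I will parameterize $\eta_\omega$ by arc length with $\eta_\omega(0)$ nearest $o$, and let $t_k$ be the projection parameter of $\omega_k o$ on $\eta_\omega$. Projections onto geodesics in CAT$(-1)$ are $1$-Lipschitz, so $|t_{k+1} - t_k| \leq d(\omega_k o, \omega_{k+1} o) \leq K_\mu := \max_{g \in \mathrm{supp}\,\mu} d(go, o)$, while Theorem \ref{Maher-Tiozzo} together with the CAT$(-1)$ Pythagorean comparison at $\eta_\omega(0)$ and the Busemann function at $\omega_+$ force $t_k/k \to L_X > 0$. For $k$ in the good set $J_R(\omega) := \{k : d(\omega_k o, \eta_\omega) \leq R\}$, any $s \in [t_{k-1}, t_k]$ satisfies $d(\eta_\omega(s), Go) \leq R + K_\mu$; bad indices in $[0, N]$ contribute total length at most $(1 - \theta_R(\omega)) \cdot N \cdot K_\mu$ to $\eta_\omega|_{[t_0, t_N]}$, whose total length is $\sim L_X N$. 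Thus the fraction of $s \in [t_0, t_N]$ with $d(\eta_\omega(s), Go) > R + K_\mu$ is at most $K_\mu(1 - \theta_R(\omega))/L_X$. Asymptotic fellow-traveling of $\gamma_{o, \omega_+}$ and $\eta_\omega$ at $\omega_+$ transfers this estimate to $\gamma_{o, \omega_+}$ up to enlarging $R$ by $1$ and modifying an initial segment of zero asymptotic density. Given $c < 1$, I will choose $R_0$ with $\overline{P}(\{D \leq R_0\}) > 1 - cL_X/(2K_\mu)$ and set $R := R_0 + K_\mu + 1$; the lim sup in the statement is then at most $c/2 < c$. The hardest step will be the geometric bookkeeping in this paragraph: converting ``density of good indices'' into ``density of geodesic times close to $Go$'' requires the bounded-jump constant $K_\mu$, the positive drift $L_X$, and the $1$-Lipschitz projection property in CAT$(-1)$ to conspire correctly, and the transfer from $\eta_\omega$ to the ray $\gamma_{o, \omega_+}$ through asymptotic fellow-traveling.
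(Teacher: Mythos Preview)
Your proposal is correct and follows essentially the same route as the paper: reduce to the bi-infinite geodesic $\eta_\omega$ joining $\omega_\pm$, apply Birkhoff's theorem to the indicator of $\{d(o,\eta_\omega)\leq R\}$ under the ergodic shift $U$ (using $D(U^k\omega)=d(\omega_k o,\eta_\omega)$), and then convert index-density into time-density on the geodesic via bounded jump size and positive drift $L_X>0$. The only cosmetic difference is that the paper parametrizes by $s_n=d(\omega_n o,\eta_\omega(0))$ rather than your projection parameters $t_k$, which makes the limit $s_n/n\to L_X$ immediate from Theorem~\ref{Maher-Tiozzo} without needing the CAT$(-1)$ Pythagorean comparison or Busemann-function argument you invoke.
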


For $\omega \in G^{\Z}$ such that $\omega_{n}$ converges to distinct points $\omega_{\pm}$
as $n\to \pm \infty$ let $ \gamma_{\omega}$ be the unit speed geodesic from $\omega_{-}$ to $\omega_{+}$ parametrized so that
$ \gamma_{\omega}(0)$ is at minimal distance to $o$.

Since geodesic rays with the same endpoint in $\partial X$ are asymptotic, and using the fact that $\nu=bnd_{*}P$, Proposition \ref{quantrec} follows from the following.

\begin{proposition}
For every $c>0$ there is an $R>0$ such that for {$\overline{P}$-almost} every
$\omega \in G^\Z$,
$$\lim \sup_{T\to \infty}\frac{|\{t\in [0,T]:d( \gamma_{\omega}(t),Go)>R\}|}{T}<c.$$
\end{proposition}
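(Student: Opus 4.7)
My plan is to apply the Birkhoff ergodic theorem for the measure-preserving ergodic shift $U$ on $(G^{\mathbb{Z}},\overline{P})$ to a bounded cocycle that records the amount of time spent far from the orbit $Go$ along $\gamma_{\omega}$, and then to convert the resulting averages over integer steps into $T$-averages using the positivity of the Maher--Tiozzo drift $L_{X}$. By Theorem~\ref{Maher-Tiozzo} applied to $\mu$ and $\hat{\mu}$, for $\overline{P}$-a.e.\ $\omega$ the endpoints $\omega_{\pm}$ exist and are distinct, so $\gamma_{\omega}$ is well defined and $d(o,\gamma_{\omega})<\infty$ almost surely.

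Let $s(\omega)\in\mathbb{R}$ be the time at which $\gamma_{\omega}$ is closest to $\omega_{1}o$. Since $\operatorname{supp}\mu$ is finite, $|s(\omega)|\leq K:=\max_{g\in\operatorname{supp}\mu}\|g\|$, and one checks the cocycle identity
\[
\gamma_{U^{n}\omega}(t)=\omega_{n}^{-1}\,\gamma_{\omega}\bigl(t+s_{n}(\omega)\bigr),\qquad s_{n}(\omega):=\sum_{i=0}^{n-1}s(U^{i}\omega),
\]
which forces $d(\gamma_{U^{n}\omega}(t),Go)=d(\gamma_{\omega}(t+s_{n}(\omega)),Go)$ by $G$-invariance of $Go$. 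Combining Theorem~\ref{Maher-Tiozzo} with the Busemann function centred at $\omega_{+}$ yields $s_{N}(\omega)/N\to L_{X}>0$ almost surely. Now define
\[
\Phi_{R}(\omega):=\int_{I(\omega)}\mathbf{1}\{d(\gamma_{\omega}(t),Go)>R\}\,dt,
\]
where $I(\omega)=[0,s(\omega)]$ when $s(\omega)\geq 0$ and $I(\omega)=[s(\omega),0]$ otherwise. For $t\in I(\omega)$ we have $d(\gamma_{\omega}(t),o)\leq|t|+d(o,\gamma_{\omega}(0))\leq K+d(o,\gamma_{\omega})$ and $o\in Go$, whence
\[
0\leq\Phi_{R}(\omega)\leq K\cdot\mathbf{1}\{d(o,\gamma_{\omega})>R-K\},
\]
so dominated convergence gives $\mathbb{E}_{\overline{P}}[\Phi_{R}]\to 0$ as $R\to\infty$.

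Birkhoff applied to $(U,\overline{P})$ then gives $\frac{1}{N}\sum_{n=0}^{N-1}\Phi_{R}(U^{n}\omega)\to\mathbb{E}_{\overline{P}}[\Phi_{R}]$ for $\overline{P}$-a.e.\ $\omega$, and the cocycle identity telescopes the sum into $\int_{0}^{s_{N}(\omega)}\mathbf{1}\{d(\gamma_{\omega}(t),Go)>R\}\,dt$ up to an asymptotically negligible correction coming from indices with $s(U^{i}\omega)<0$. Taking $N=N(T)\sim T/L_{X}$ and using $s_{N(T)}\sim T$ yields
\[
\limsup_{T\to\infty}\frac{|\{t\in[0,T]:d(\gamma_{\omega}(t),Go)>R\}|}{T}\leq\frac{\mathbb{E}_{\overline{P}}[\Phi_{R}]}{L_{X}},
\]
which is $<c$ for $R$ large enough. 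The main technical subtlety is the telescoping step in the presence of negative values of $s(\omega)$; the cleanest implementation is to recast the construction as the suspension of $(U,\overline{P})$ by the roof function $s$, of mean $L_{X}>0$, so that continuous-time Birkhoff on the resulting flow produces the $T$-average limit $\mathbb{E}_{\overline{P}}[\Phi_{R}]/L_{X}$ directly.
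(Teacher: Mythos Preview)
Your approach and the paper's share the same core: apply Birkhoff to the $U$-ergodic system $(G^{\mathbb{Z}},\overline{P})$ for the indicator of the event $\{d(o,\gamma_{\omega})>A\}$ (equivalently, under $U^{n}$, the event $\{d(\omega_{n}o,\gamma_{\omega})>A\}$), then convert the discrete density into a continuous-time density using the positive drift $L_{X}$. The difference is in how the conversion is carried out, and your version has a real gap there.

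You package the conversion as a suspension of $U$ with roof function $s(\omega)$, the signed displacement of the nearest-point projection when passing from $o$ to $\omega_{1}o$. But a suspension flow requires a strictly positive roof function, and your $s$ can be negative; the proposed ``cleanest implementation'' therefore does not go through as stated. The telescoping you sketch before that is also not clean: when some $s(U^{i}\omega)<0$, the intervals $[s_{i},s_{i+1}]$ overlap and the sum $\sum_{i<N}\Phi_{R}(U^{i}\omega)$ can overcount the integral over $[0,s_{N}]$ by an amount comparable to $N$ times the density of backtracks, which is not a priori negligible. Separately, the claim $s_{N}/N\to L_{X}$ needs more than an appeal to Busemann functions: it follows, but only after showing $d(o,\gamma_{U^{N}\omega})/N\to 0$ along a full-density set of $N$ (which one gets from Birkhoff applied to truncations of $d(o,\gamma_{\omega})$), and then combining with $|\,|s_{N}|-d(o,\omega_{N}o)\,|\le d(o,\gamma_{\omega})+d(o,\gamma_{U^{N}\omega})$.

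The paper sidesteps all of this by an elementary trick: it takes $s_{n}=d(\omega_{n}o,\gamma_{\omega}(0))$, which is nonnegative, has bounded increments $|s_{n+1}-s_{n}|\le d$, and satisfies $s_{n}/n\to L_{X}$ directly from Maher--Tiozzo since $|s_{n}-d(o,\omega_{n}o)|\le d(o,\gamma_{\omega}(0))$. A short geometric argument then shows that if $d(\omega_{n}o,\gamma_{\omega})\le A$ then $d(\gamma_{\omega}(t),\omega_{n}o)\le 2A+d$ for $|t-s_{n}|\le d$, so every ``bad'' time $t$ with $d(\gamma_{\omega}(t),Go)>2A+d$ lies in some interval $[s_{n}-d,s_{n}+d]$ with $U^{n}\omega\in\Omega_{A}$. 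One then simply counts such $n\le 1.1T/L_{X}$ using Birkhoff, getting at most $(cT/50d)$ bad intervals of length $2d$. No suspension, no cocycle telescoping, no sign issues. Your framework can be repaired (e.g.\ by inducing on first returns to make the roof positive), but the paper's direct counting is both simpler and complete.
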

\begin{proof}
Let $d=\max{d(go,o):g\in {\rm supp}\  \mu}$.
Then for every $\omega \in G^\Z$ and every $n$ we have
$d(\omega_{n}o,\omega_{n+1}o)\leq  d$.
Let $\Omega_{0}\subset G^{\Z}$ be the set of bi-infinite sample paths $\omega$ such
that $\omega_{n}$ converges  to distinct points $\omega_{\pm}\in \partial X$
as $n\to \pm \infty$ and such that
$d(\omega_{n}o,o)/|n|\to L$ as $n\to \pm \infty$.
{Since $\nu$ and $\hat{\nu}$ are nonatomic we know $(\nu\times \hat{\nu})(\Delta(\partial X))=0$}, and therefore $\Omega_{0}\subset G^\Z$ has full $\overline{P}$ measure.

For $\omega \in \Omega_{0}$ set ${s_n}=d(\omega_{n}o, {\gamma}(0))$.

{Note $s_{n}\to \infty$ as $n\to \infty$ and (by the triangle inequality) $|s_{n+1}-s_{n}|<d$ for all $n$.} {The points $\gamma(s_n)$ form $d$-net of the curve $\gamma$: for every $t> s_0=d(o,\gamma(0))$ there exists  $s_{n_t}$ such that $d(\gamma(t), \gamma(s_{n_t}))=|t-s_{n_t}| < d.$  We can also assume that $n_t$ is chosen to minimize the expression $|t-{s_n}|$ bounded by $d$.}

Consider $A>0$.
If $d(\omega_{n_{t}}o, {\gamma})\leq A$ then {we claim that}
$d(\omega_{n_{t}}o, {\gamma}(s_{n_{t}}))\leq 2A$. {Indeed,
there is some $s$ with $d(\omega_{n_{t}}o, {\gamma}(s))\leq A$.
{It follows}
$$|s-s_{n_{t}}|=|d({\gamma}(s),{\gamma}(0))-d(\omega_{n_{t}}o, {\gamma}(0))|\leq
d(\omega_{n_{t}}o, {\gamma}(s))\leq A$$ so
$$d(\omega_{n_{t}}o, {\gamma}(s_{n_{t}}))\leq d(\omega_{n_{t}}o, {\gamma}(s)) + d({\gamma}(s_{n_{t}}),{\gamma}(s))\leq 2A$$} {as was claimed.}

{Since $d(\gamma(s_{n_{t}}), \gamma(t)=\vert s_{n_t}-t\vert\leq d$ by triangle inequality we obtain}
$d(\omega_{n_{t}}o, {\gamma}(t))\leq 2A+d$.

Thus, if $t>d(o, {\gamma})$ is such that $d( {\gamma}(t),Go)>2A+d$ we have $t\in[{s_n}-d,{s_n}+d]$ for an $n$ with
$d(\omega_{n}o, {\gamma})> A$.
{Since $\omega\in \Omega_0$ we have} ${s_n}/n\to L$ as $n\to \infty,$ and hence
$s_{n_{t}}/n_{t}\to L$ as $t\to \infty$.
Since $|t-s_{n_{t}} |<d$ this implies
$t/n_{t}\to L$ as $t\to \infty$.
Thus for each $\omega \in \Omega_{0}$ there is an $T_{0}=T_{0}(\omega)>0$ such that for $t>T_{0}$,  $n_{t}\leq 1.1t/L$.
Hence if $t>T_{0}$ and
$d( {\gamma}(t),Go)>2A+d$ we have
$t\in [{s_n}-d,{s_n}+d]$ for some $n<1.1t/L$ with $d(\omega_{n}o, {\gamma})>A$.
Let $\Omega_{A}\subset \Omega_{0}$ be the measurable set of sequences $\omega$ such that $d(o, \gamma_{\omega})>A.$

The complements $\{\Omega^{c}_{N}\}_{N\in \mathbb{N}}$ form an increasing sequence of sets with union $\Omega_0$.  Clearly each $\omega \in \Omega_{0}$ is in $\Omega^{c}_{N}$ for some natural $N$ so $\cup^{\infty}_{N=1}\Omega^{c}_{N}=\Omega_{0}$ whence
$$\overline{P}(\Omega^{c}_{N})\to \overline{P}(\Omega^{c}_{0})=1$$ and thus $\overline{P}(\Omega_{N})\to 0$ as $N\to \infty$.
Choose $A$ large enough so that
$\overline{P}(\Omega_{A})<\frac{cL}{100d}$.

Note, $U^{n}\omega \in \Omega_{A}$ if and only if
$d(\omega_{n}o, {\gamma_{\omega}})>A$.

By the Birkhoff ergodic theorem, for {$\overline{P}$-almost} every $\omega \in G^\Z$
\begin{equation}\label{birkhoff}\lim_{N\to \infty} \frac{|\{n\in [0,N]: U^{n}\omega \in \Omega_{A}\}|}{N}\to \overline{P}(\Omega_{A})<\frac{c{L}}{100d}\end{equation}
Let $\Omega_{1}\subset \Omega_{0}$ be the full measure set where this convergence holds.

Consider $\omega \in \Omega_{1}$.
Then by (\ref{birkhoff}) there is a $T_{1}(\omega)>T_{0}(\omega)$ such that for $T>T_{1}(\omega)$ the number of integers $n\in [0, {N}]$  {(where $N=[1.1T/L]\to\infty$) satisfying}
$d(\omega_{n}o, {\gamma})>A$ is less than
${N \frac{cL}{100d}}<\frac{cT}{50d}.$
Thus, for $T>T_{1}(\omega)$, the set of $t\in [0,T]$ with $d( {\gamma}(t),Go)>2A+d$ is contained in
$$[0,T_{1}]\cup\bigcup_{\{n\in \N\cap[0,{N}]:d(\omega_{n}o, {\gamma})>A\}}[s_{n}-d,{s_n}+d]$$

\noindent which has length (Lebesgue measure) at most
$T_{1}+2d\frac{cT}{50d}=T_{1}+\frac{cT}{25}$ which is less than $cT$ for large enough $T$.

\end{proof}

\section{Application: harmonic invariant measures}

Let $G$ be a finitely generated nonamenable group and $\mu$ a measure on $ G$ whose support generates $G$ satisfying Assumption 1 and Assumption 2.

{We denote by  $\delta^{f}$} the Floyd metric on the Cayley graph of $ G$ with respect to a finite generating set and a Floyd function $f$. Let $\partial_{f} G$ be the associated Floyd boundary. {We will assume throughout this section that $\partial_{f}G$ has at least three points (and is therefore uncountable).}

Let
$$d_{\mathcal{G}}(x,y)=-\log \frac{\mathcal{G}(x,y)}{\mathcal{G}(e,e)}$$ be the Green metric {and} $\partial_{\mathcal M} G$ be the Martin boundary of $( G,\mu)$.

Let $\Delta:G\times G\times \partial_{\mathcal{M}}G\to \mathbb{R}$ be the Busemann cocycle for $d_{\mathcal{G}}$ constructed in Section 7 and

$$K_{\alpha}(x)=e^{-\Delta(x,o,\alpha)}$$ the Martin kernel.
{Let $\pi:\partial_{\mathcal M}G\to \partial_{f}G$ be the equivariant surjection constructed in Section 7.}

Let $\hat{\mu}(g)=\mu(g^{-1})$ be the reflected measure of $\mu$.  Let $$\hat{\mathcal{G}}(x,y)=\mathcal{G}(y,x)$$ and
$$\hat{d}_{\mathcal{G}}=-\log \hat{\mathcal{G}},$$

be the Green function and Green metric associated to $\hat{\mu}$. Let $\hat{\partial}_{\mathcal M}G$, $\hat{\Delta}$ and $\hat{K}$ be defined by analogy.

Let $\hat{\pi}:\hat{\partial}_{\mathcal M}G\to \partial_{f}G$ be the associated equivariant surjection.

It is known (see e.g. \cite{Sawyer}) that for $P$ almost every $\omega \in  G^{\N}$, $\omega_{n}$ converges to a single point in $\partial_{\mathcal M} G$ and so we can define the harmonic measure $\nu$
on $\partial_{\mathcal M} G$
by

$$\nu(A)=P(\omega\in  G^{\mathbb{N}}:\lim_{n\to \infty}\omega_{n} \in A)$$
From the definition it follows that
$\nu$ is $( G,\mu)$ stationary.
Moreover,
$\nu$ satisfies
\begin{equation}\label{Martinkernel}
\frac{dg\nu}{d\nu}(\alpha)=K(g,\alpha)
\end{equation}
for $\nu$ a.e. $\alpha \in \partial_{\mathcal M} G$ (see e.g \cite[Theorem 5.1]{Sawyer} ).


Let $\hat{\nu}$ be the similarly constructed $\hat{\mu}$ stationary measure on $\hat{\partial}_{\mathcal M}G$



 Karlsson \cite{Karlpoisson} proved that $G\curvearrowright \partial_{f} G$ is a minimal convergence action, and thus (as Karlsson also proved)  by Corollary \ref{uniqueconvergence} there is a unique $\mu$ stationary probability measure on
$\partial_{f} G$, denoted by $\nu_f$.

Since the pushforward $\pi_{\star}\nu$ is stationary, it follows that $\nu_{f}=\pi_{\star}\nu$.

Similarly, $\hat{\nu}_{f}=\hat{\pi}_{\star}\hat{\nu}$ is the unique $\hat{\mu}$ stationary probability measure on $\partial_{f} G$.
By Proposition \ref{stationaryergodic}, $\hat{\nu}_{f}\times \nu_{f}$ is ergodic with respect to the $ G$ action.

By Theorem \ref{basicstationary} $\nu_{f}$ and $\hat{\nu}_{f}$ have no atoms so
$\hat{\nu}_{f}\times \nu_{f}$ assigns zero weight to the diagonal.

For $\beta \in \partial_{\mathcal M} G$, $\alpha \in  \hat{\partial}_{\mathcal M} G$
and
$g\in G$ let

$$\Theta(\alpha,\beta)=\lim \inf_{x\to \alpha}\frac{K(x,\beta)}{\mathcal{G}(x,e)}$$
$$=\lim \inf_{x\to \alpha} \lim_{y\to \beta}\frac{\mathcal{G}(x,y)}{\mathcal{G}(x,e)\mathcal{G}(e,y)}$$
and let
\begin{equation}\label{Gromovgreen}
\rho^{\mathcal{G}}_{e}(\alpha,\beta)=\frac{1}{2}\log \Theta(\alpha,\beta)
\end{equation}

The quantity $\rho^{\mathcal{G}}_{e}(\alpha,\beta)$ can be thought of as an analogue of the Gromov product for the (asymmetric) Green metric (see section 11).

We will prove:
\begin{theorem} \label{invmeasfloyd}
There exists a $ G${-}invariant Radon measure on $\partial_{f} G\times \partial_{f} G\setminus \Delta(\partial_{f}G)$ in the measure class of $\hat{\nu}_{f} \times \nu_{f}$.
\end{theorem}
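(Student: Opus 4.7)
The plan is to first lift the problem to the Martin boundaries of $\mu$ and $\hat\mu$, where the Martin kernel provides an explicit Radon--Nikodym cocycle for the harmonic measure, build a $G$-invariant measure there by weighting $\hat\nu\times\nu$ by $\Theta$, and then push down via $\hat\pi\times\pi$ to $\partial_f G \times \partial_f G$.

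First I would establish two bounds on $\Theta(\alpha,\beta)$. Applying Theorem \ref{relancona}, if $\pi(\alpha) = \zeta$ and $\pi(\beta) = \eta$ are distinct points of $\partial_f G$, then for $x \to \alpha$ in $\hat{\partial}_{\mathcal M}G$ and $y \to \beta$ in $\partial_{\mathcal M}G$, continuity of $\hat\pi,\pi$ (Theorem \ref{martintofloyd}) yields $\delta^f_e(x,y) \to \delta^f_e(\zeta,\eta) > 0$, so for $x,y$ sufficiently far out
\[
\frac{\mathcal{G}(x,y)}{\mathcal{G}(x,e)\,\mathcal{G}(e,y)} \;\leq\; S\!\left(\tfrac{1}{2}\delta^f_e(\zeta,\eta)\right),
\]
giving $\Theta(\alpha,\beta) \leq S(\tfrac{1}{2}\delta^f_e(\zeta,\eta))$. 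In the opposite direction, restricting to trajectories $x\to y$ that visit $e$ yields the elementary submultiplicativity $\mathcal{G}(x,y) \geq \mathcal{G}(x,e)\mathcal{G}(e,y)/\mathcal{G}(e,e)$, hence $\Theta(\alpha,\beta) \geq 1/\mathcal{G}(e,e) > 0$ unconditionally.

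I would then verify the cocycle identity
\[
\Theta(g\alpha,g\beta) \;=\; \frac{\Theta(\alpha,\beta)}{\hat K(g^{-1},\alpha)\,K(g^{-1},\beta)}, \qquad g\in G,
\]
by substituting $x=gx'$, $y=gy'$ in the definition and using $G$-invariance $\mathcal{G}(gx,gy)=\mathcal{G}(x,y)$: the ratio splits as $\mathcal{G}(x',y')/(\mathcal{G}(x',e)\mathcal{G}(e,y'))$ times two correction factors $\mathcal{G}(x',e)/\mathcal{G}(x',g^{-1}e)$ and $\mathcal{G}(e,y')/\mathcal{G}(g^{-1}e,y')$ whose genuine limits are $1/\hat K(g^{-1},\alpha)$ and $1/K(g^{-1},\beta)$, so they pull outside the $\liminf$. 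Define
\[
d\tilde L_{\mathcal M}(\alpha,\beta) := \Theta(\alpha,\beta)\,d\hat\nu(\alpha)\,d\nu(\beta)
\]
on $\hat{\partial}_{\mathcal M}G \times \partial_{\mathcal M}G$. Using the Martin transformation rules $dg_*\hat\nu/d\hat\nu = \hat K(g,\cdot)$ and $dg_*\nu/d\nu = K(g,\cdot)$, the cocycle identity shows that the two kernel factors cancel the correction exactly, yielding $g_*\tilde L_{\mathcal M} = \tilde L_{\mathcal M}$.

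Setting $\tilde L := (\hat\pi\times\pi)_*\tilde L_{\mathcal M}$, equivariance of $\hat\pi,\pi$ transfers $G$-invariance to $\tilde L$. Positivity of $\Theta$ gives $\tilde L_{\mathcal M} \sim \hat\nu\times\nu$ and hence $\tilde L \sim \hat\nu_f \times \nu_f$; the diagonal $\Delta(\partial_f G)$ carries no mass since $\nu_f,\hat\nu_f$ are nonatomic by Theorem \ref{basicstationary}. For the Radon property on $\partial_f G \times \partial_f G \setminus \Delta(\partial_f G)$: any compact subset lies at Floyd distance $\geq d_0 > 0$ from $\Delta$, so the upper bound forces $\Theta \leq S(d_0/2)$ on its $(\hat\pi\times\pi)$-preimage, making $\tilde L$ finite on compacta. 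The main obstacle is the careful handling of the $\liminf$ defining $\Theta$: since the correction factors in the cocycle computation are genuine limits (Lemma \ref{Martinharmonic}), the $\liminf$ commutes with them and the invariance calculation goes through without needing to upgrade $\liminf$ to $\lim$; if desired one can further show, using the Ancona estimate together with Martin convergence, that $\Theta$ is in fact a genuine limit $\hat\nu\times\nu$-almost everywhere.
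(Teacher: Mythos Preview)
Your proof is correct and follows essentially the same approach as the paper: define the weighted measure $\Theta\, d\hat\nu\, d\nu$ on the product of Martin boundaries, verify $G$-invariance via the cocycle identity for $\Theta$ combined with the Martin kernel transformation rules for $\nu$ and $\hat\nu$, push forward by $\hat\pi\times\pi$, and use the Ancona-type inequality (Theorem \ref{relancona}) to get the uniform bound on $\Theta$ over preimages of disjoint closed sets, which yields the Radon property. You supply a few details the paper leaves implicit --- notably the lower bound $\Theta\geq 1/\mathcal{G}(e,e)$ needed for equivalence of measure classes, and the observation that the correction factors in the cocycle computation are genuine limits so the $\liminf$ defining $\Theta$ commutes with them --- but the architecture is the same.
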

\begin{proof}
Define a measure on $\hat{\partial}_{\mathcal M} G \times \partial_{\mathcal M} G$ by
$$dm(\alpha,\beta)=\Theta(\alpha,\beta)d\hat{\nu}(\alpha)d\nu(\beta)$$
First we show $m$ is $ G${-}invariant.

Indeed, it is easy to see that
\begin{equation}\label{Naimmult}
\Theta(g^{-1}\alpha,g^{-1}\beta)=
\frac{\Theta(\alpha,\beta)}{\hat{K}(g,\alpha)K(g,\beta)}
\end{equation}
On the other hand
$$\frac{dg\nu}{d\nu}(\beta)=K(g,\beta)$$

and
$$\frac{dg\hat{\nu}}{d\hat{\nu}}(\alpha)=\hat{K}(g,\alpha).$$

Thus, $m$ is $ G${-}invariant.

Let $m_{f}=(\hat{\pi} \times \pi)_{*}m$.

Since $\nu_{f}=\pi_{*}\nu$ we have that $m_{f}$ is a $ G${-}invariant measure on $\partial_{f} G\times \partial_{f} G$ in the measure class of $\hat{\nu}_{f}\times \nu_{f}$.

We need to show $m_{f}$ is locally finite on $\partial_{f} G\times \partial_{f} G\setminus \Delta(\partial_{f}G)$.

It suffices to show that for any disjoint closed subsets $A,B\subset \partial_{f} G$ we have $m_{f}(A\times B)<\infty$ or equivalently $m(\hat{\pi}^{-1}(A)\times \pi^{-1}(B))<\infty$.

For this it is enough to show that $\Theta(\alpha,\beta)$ is bounded over $(\alpha,\beta)\in \hat{\pi}^{-1}(A)\times \pi^{-1}(B)$.

Indeed, there is a $d>0$ and disjoint neighborhoods $U$ and $V$ of $A$ and $B$ in $G\cup \partial_{f} G$ such that    $\delta^{o}_{f}(a,b)>d$ for all $a\in A$, $b\in B$.

Thus, by  Theorem \ref{relancona}
$$\frac{\mathcal{G}(x,y)}{\mathcal{G}(o,x)\mathcal{G}(o,y)}\leq R(d)$$ for all $x\in U\cap G$, $y\in V\cap G$ and hence by taking limits,
$\Theta(\alpha,\beta)\leq R(d)$ for all $\alpha \in \pi^{-1}U$, $\beta \in \pi^{-1}V$ completing the proof.
\end{proof}

{\begin{remark}
Taking logarithms and rearranging in (\ref{Naimmult}) we obtain
\begin{equation}\label{Naimadd}
2\rho^{\mathcal{G}}_e(g^{-1}\alpha, g^{-1}\beta)-2\rho^{\mathcal{G}}_e(\alpha, \beta)=\hat{\Delta}(g,e,\alpha)+\Delta(g,e,\beta)
\end{equation} which is reminiscent of the usual relation between the Gromov product and Busemann cocycle. 
\end{remark}}

We will now consider an action of $G$ on a proper Gromov hyperbolic space $X$, such that the action $G\curvearrowright \partial X$ is nonelementary and geometrically finite.
{Recall that} {in this case we call the action $G\curvearrowright X$ geometrically finite as well.}

By work of Bowditch \cite{Bowditch}, the existence of such an action requires $G$ to be relatively hyperbolic.
	
{The action $G\act X$ is a  convergence action, hence by Corollary \ref{uniqueconvergence}, there is a unique $\mu$ stationary measure $\nu_{X}$ on the limit set $\Lambda(G)\subset \partial X$.}

We will denote by $\nu_{X}$ (resp. $\hat{\nu}_{X}$) the unique $\mu$ (resp. $\hat{\mu}$) stationary probability measure on  $\partial X$.

By Proposition \ref{stationaryergodic} $\hat{\nu}_{X}\times \nu_{X}$ is ergodic with respect to the $ G$ action, and by Theorem \ref{basicstationary} $\nu_{X}$ and $\hat{\nu}_{X}$  have no atoms. Thus,
$\hat{\nu}_{X}\times \nu_{X}$ gives zero weight to the diagonal.


Let $$F:\partial_{f} G\to \Lambda(G)\subset \partial X$$ be the continuous equivariant map on $G$ obtained by Gerasimov in \cite{Gerasimov}.

Let $\varphi=F\circ \pi:\partial_{\mathcal M} G\to \partial X$, and

$\hat{\varphi}=F\circ \hat{\pi}:\hat{\partial}_{\mathcal M} G\to \partial X$

By uniqueness of stationary measures on $\partial X$ and the fact that the pushforward $\pi_{*}\nu$ is stationary we have:  $$\nu_{X}=F_{\star}\nu_{f}=\varphi_{\star}\nu$$ and similarly

$$\hat{\nu}_{X}=F_{\star}\hat{\nu}_{f}=\varphi_{\star}\hat{\nu}$$







The following is a corollary of Proposition \ref{invmeasfloyd}
\begin{corollary} \label{invmeasure}
There exists a $ G${-}invariant Radon measure $m_X$ on $\partial X\times \partial X$ in the measure class of $\hat{\nu}_{X} \times \nu_{X}$.
\end{corollary}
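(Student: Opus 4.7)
The plan is to obtain $m_X$ by pushing forward the measure $m_f$ from Theorem \ref{invmeasfloyd} via the map $F\times F:\partial_f G\times\partial_f G\to \partial X\times \partial X$, where $F:\partial_f G\to\Lambda(G)\subset\partial X$ is the Gerasimov equivariant surjection already introduced just before the statement. Concretely, since $m_f$ is a Radon measure on the open set $\partial_f G\times\partial_f G\setminus \Delta(\partial_f G)$, I would restrict it to the (still open, $G$-invariant) set $\Omega=(F\times F)^{-1}\bigl((\partial X\times \partial X)\setminus\Delta(\partial X)\bigr)$ and define $m_X$ to be the pushforward $(F\times F)_{*}\bigl(m_f|_{\Omega}\bigr)$.

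The first step is $G$-invariance, which is immediate: $F$ is equivariant, the set $\Omega$ is $G$-invariant, and $m_f$ is $G$-invariant by Theorem \ref{invmeasfloyd}. Next I would verify that $m_X$ lies in the measure class of $\hat{\nu}_X\times \nu_X$. The point is that $\nu_X=F_{*}\nu_f$ and $\hat{\nu}_X=F_{*}\hat{\nu}_f$ (as recalled in the excerpt), so $\hat{\nu}_X\times \nu_X=(F\times F)_{*}(\hat{\nu}_f\times \nu_f)$. For disjoint closed $A,B\subset \partial X$, the preimages $F^{-1}(A),F^{-1}(B)\subset \partial_f G$ are disjoint closed sets, hence $F^{-1}(A)\times F^{-1}(B)$ is disjoint from the diagonal of $\partial_f G$; on this product the measure $m_f$ is in the class of $\hat{\nu}_f\times\nu_f$, so $m_X(A\times B)=0$ if and only if $(\hat{\nu}_X\times \nu_X)(A\times B)=0$.

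The main (and essentially only nontrivial) step is local finiteness of $m_X$ off the diagonal. Here the potential issue is that $F$ collapses each full preimage $F^{-1}(p)$ of a parabolic point $p\in\partial X$ to a single point, so the $(F\times F)$-preimage of a small neighborhood of a pair $(p,q)$ with $p\neq q$ may be large in the Floyd square. However, for any two disjoint compact sets $A,B\subset \partial X$ their preimages $F^{-1}(A)$, $F^{-1}(B)$ are disjoint compact subsets of $\partial_f G$, so they are separated by a positive Floyd distance and are contained in disjoint closed neighborhoods in $\overline{G}_f$. Theorem \ref{invmeasfloyd} then gives $m_f(F^{-1}(A)\times F^{-1}(B))<\infty$ (this is exactly the finiteness estimate already proved there via the Ancona-type bound on $\Theta$), and therefore $m_X(A\times B)<\infty$. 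Since $\partial X\times \partial X\setminus\Delta(\partial X)$ is exhausted by such product-shaped compacts, $m_X$ is a Radon measure, completing the proof.
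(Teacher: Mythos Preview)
Your proof is correct and follows essentially the same route as the paper: define $m_X$ as the pushforward $(F\times F)_{*}m_f$, deduce $G$-invariance from equivariance of $F$, and obtain local finiteness off the diagonal by pulling back disjoint closed sets $A,B\subset\partial X$ to disjoint closed sets $F^{-1}(A),F^{-1}(B)\subset\partial_f G$ and invoking the Radon property of $m_f$. Your additional care in restricting $m_f$ to $\Omega=(F\times F)^{-1}\bigl((\partial X)^2\setminus\Delta\bigr)$ before pushing forward is a minor technical refinement the paper omits, not a different argument.
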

\begin{proof}
Let $m_{f}$ be the measure constructed in Theorem \ref{invmeasfloyd}.

Let {$m_{X}=(F \times F)_{*}m_{f}.$}

By $ G${-}equivariance of ${\varphi}$, and since $\nu_{X}={\varphi*}\nu$ we have that $m_{X}$ is a $G${-}invariant measure on $\partial X\times \partial X$ in the measure class of $\hat{\nu}_{X}\times \nu_{X}$.
If $A,B\subset \partial X$ are disjoint closed sets then $F^{-1}(A)$ and $F^{-1}(B)$ are disjoint closed subsets of $\partial_{f} G$ so {$$m_{X}(A\times B)=m_{f}(F^{-1}(A)\times F^{-1}(B))<\infty.$$}

Thus $m_X$ is Radon.
\end{proof}

\noindent {The main application is the following.}
\begin{theorem} \label{finiteharmonicinvariant}
Assume $X$ is a proper $CAT(-1)$ space {and $G<Isom X$ is a non-elementary subgroup acting geometrically finitely on X. }
Let $\mu$ be a probability measure on $G$ whose finite support generates $G$.
Then the measure $m_{X}$ defined above is the geodesic current for a $G$ and geodesic flow invariant measure $\tilde{L}$ on the unit tangent bundle $T^{1}X=\partial^{2}X\times \mathbb{R}$ projecting to a finite ergodic geodesic flow invariant measure  $L$ on $T^{1}M/ G$.
\end{theorem}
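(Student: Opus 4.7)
The plan is to define $\tilde{L}$ as the product of $m_{X}$ with Lebesgue measure on the $\mathbb{R}$ factor of $T^{1}X=\partial^{2}X\times\mathbb{R}$, under the standard identification via Busemann cocycles based at a point $o\in X$. In these coordinates the geodesic flow acts by translation in the $\mathbb{R}$ coordinate, so geodesic flow invariance of $\tilde{L}$ is immediate. The $G$-action acts on $\partial^{2}X$ by the boundary action and on $\mathbb{R}$ by a Busemann cocycle; because $m_{X}$ is $G$-invariant on $\partial^{2}X$ (Corollary \ref{invmeasure}) and Lebesgue is translation invariant, a direct computation shows $\tilde{L}$ is $G$-invariant on $T^{1}X$. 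Thus $\tilde{L}$ descends to a geodesic flow invariant measure $L$ on $T^{1}X/G$.

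The main obstacle is finiteness of $L$. The strategy is to use Proposition \ref{quantrec} (quantitative recurrence) to show that a $\tilde{L}$-typical geodesic spends a uniformly positive proportion of its time in the compact set $\mathcal{K}=\{v\in T^{1}X:d(\pi(v),Go)\le R\}$, for $R$ large enough. More precisely, since $\nu_{X}=F_{\star}\nu_{f}$ and $m_{X}\in[\hat{\nu}_{X}\times\nu_{X}]$, Proposition \ref{quantrec} applies to $m_{X}$-almost every pair $(\alpha,\beta)\in\partial^{2}X$: for any $c<1$ there is $R>0$ such that for $m_{X}$-a.e.\ $(\alpha,\beta)$ and every unit speed geodesic $\gamma$ joining $\alpha$ to $\beta$,
\[
\limsup_{T\to\infty}\frac{|\{t\in[0,T]\colon d(\gamma(t),Go)>R\}|}{T}<c.
\]
Fubini on $\tilde{L}=m_{X}\otimes dt$ converts this uniform time-density bound into an estimate $\tilde{L}(\mathcal{K}\cap F)\ge (1-c)\cdot\tilde{L}(F)$ for any flow saturated Borel set $F\subset T^{1}X$. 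Applied to a $G$-fundamental region $\Delta$ of finite diameter over the (finitely many) thick part and cusp orbits of the geometrically finite action $G\curvearrowright X$, this forces $L(T^{1}X/G)\le(1-c)^{-1}L(\mathcal{K}/G)$. Since $\mathcal{K}$ is compact, $\mathcal{K}/G$ is compact in $T^{1}X/G$, and one verifies directly that $L(\mathcal{K}/G)<\infty$ by covering $\mathcal{K}$ with finitely many $G$-translates of a Borel set of finite $\tilde{L}$-measure. This yields finiteness of $L$.

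For ergodicity we invoke the standard Hopf argument. By Proposition \ref{stationaryergodic} the diagonal $G$-action on $\partial X\times\partial X$ is ergodic with respect to $\hat{\nu}_{X}\times\nu_{X}$, and since $m_{X}$ lies in the same measure class by Corollary \ref{invmeasure}, the $G$-action on $\partial^{2}X$ is ergodic with respect to $m_{X}$. Any $L$-measurable, geodesic flow invariant function on $T^{1}X/G$ lifts to a $G$-invariant, flow invariant function on $T^{1}X$, which (being constant along flow orbits) descends to a $G$-invariant measurable function on $\partial^{2}X$. By ergodicity of $m_{X}$ it is $m_{X}$-a.e.\ constant, so $L$ is ergodic.

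The delicate step is really the finiteness statement: one must translate the $\overline{P}$-a.e.\ recurrence of random walk trajectories (Proposition \ref{quantrec}) into a proportion-of-time statement valid on $m_{X}$-almost every $\partial^{2}X$-leaf of the geodesic current, and then push this estimate through the non-proper action $G\curvearrowright X$ in the presence of cusps. Geometric finiteness is used to reduce the non-compact part of the fundamental domain to a controlled union of cuspidal pieces, on which the Busemann cocycle contribution to the fundamental domain in the $\mathbb{R}$-direction can be bounded uniformly using the recurrence estimate above.
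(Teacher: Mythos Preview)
Your construction of $\tilde{L}=m_{X}\otimes dt$, the invariance checks, and the ergodicity argument all match the paper and are fine.

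The gap is in the finiteness step. Your claimed inequality $\tilde{L}(\mathcal{K}\cap F)\ge (1-c)\,\tilde{L}(F)$ for flow-saturated $F$ is vacuous: a flow-saturated set of positive $m_{X}$-mass has $\tilde{L}$-measure $+\infty$, and so does its intersection with the $G$-invariant set $\mathcal{K}=\{v:d(\pi(v),Go)\le R\}$, so both sides are infinite. What you really want is the quotient statement $L(T^{1}X/G)\le (1-c)^{-1}L(\mathcal{K}/G)$, but a time-density lower bound along orbits does not yield this by Fubini alone. To pass from ``almost every orbit spends proportion at least $1-c$ in $\mathcal{K}/G$'' to a comparison of masses one needs an ergodic theorem, and Birkhoff's pointwise theorem in the form $\lim \tfrac{1}{T}\int_{0}^{T}\mathbf{1}_{A}\circ g_{t}\,dt = L(A)/L(\text{total})$ is only available once $L$ is already known to be finite. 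Your sketch of a direct fundamental-domain estimate in the cusps (``Busemann cocycle contribution \ldots bounded uniformly'') is not an argument; making that rigorous would require shadow-lemma type control of $m_{X}$ near parabolic points, which you have not established.

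The paper avoids this circularity by arguing by contradiction via Hopf's infinite ratio ergodic theorem: if $L$ were infinite, then for \emph{every} set $A$ with $L(A)<\infty$ and $L$-a.e.\ $q$ one has $\tfrac{1}{T}|\{t\in[0,T]:g_{t}q\in A\}|\to 0$. Taking $A=\mathcal{K}/G$ (compact, hence of finite $L$-mass since $L$ is Radon) gives that the proportion of time a typical geodesic spends in $N_{R}(Go)$ tends to zero for every $R$, directly contradicting Proposition \ref{quantrec}. This is a one-line replacement for your entire Fubini/fundamental-domain discussion and is the missing ingredient in your proposal.
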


\begin{proof}
Let $M=X/G$ and $T^{1}M=T^{1}X/G$ {be} the unit tangent bundle of $M$. {Denote by}
$\Phi:T^{1}X\to T^{1}M$ and $p:X\to M$   the canonical projections.
Let $\tilde{g_{t}}$ and $g_{t}$ ($t\in \mathbb{R}$)  the geodesic flow on $X$ and $M$ respectively.
Define a measure $\tilde{L}$ on $T^{1}X$ by
$$d\tilde{L}(q)=dm_{X}(q^{-},q^{+})dt$$ where $dt$ is geodesic arclength. Since $m_{X}$ is locally finite, so is $\tilde{L}$. By definition, $\tilde{L}$ is $\tilde{g_{t}}${-}invariant. Furthermore, the $G${-}invariance of $m_{X}$ implies the $G$ invariance of $\tilde{L}$.
Thus, $\tilde{L}$ projects to a $g_{t}${-}invariant measure $L$ on $T^{1}M$.
Since $m_X$ is ergodic with respect to the action of $G$ on $\partial^{2}X=\mathbb{R} \ T^{1}X$, $L$ is ergodic with respect to the geodesic flow on $T^{1}M=T^{1}X/G$.

We want to show {that} $L$ is finite.
Indeed, suppose $L$ is infinite. Then Hopf's infinite ergodic theorem (see e.g. \cite{Hochman}) implies that for every  $A\subset T^{1}M$ with $L(A)<\infty$ and for $L$ almost every $q\in T^{1}M$:
$$\frac{|\{t\in [0,T]:g_{t}q\notin A\}|}{T}\to 1$$ as $t \to \infty$.

Equivalently: for any $C\subset T^{1}X$ with $L(\Phi(A))<\infty$, for $\hat{\nu}_{X}\times \nu_{X}$ almost every $(q^{+},q^{-})\in \partial^{2}X$, and for any $q\in T^{1}X$ with endpoints $q_{\pm}$, we have
$$\frac{|\{t\in [0,T]:\tilde{g_{t}}q\notin C\}|}{T}\to 1$$ as $T\to \infty$.

Taking $C=G\dot T^{1}B_{R}(o)\subset T^{1}X$ we see that $\Phi(C)=T^{1}B_{R}(p(o))$ is compact so $L(\Phi(C))<\infty$ as $L$ is locally finite.

Thus, for any $R>0$, for $\hat{\nu}_{X}\times \nu_{X}$-a.e. $(q^{+},q^{-})\in \partial^{2}X$, and for any unit speed geodesic $\gamma$ connecting $q_{\pm}$ we obtain $$\frac{|\{t\in [0,T]:d(\gamma(t),Go)>R\}|}{T}\to 1$$ as $T\to \infty$.
This contradicts Proposition \ref{quantrec}.

\end{proof}

\begin{remark}
The finite support assumption in Theorem \ref{finiteharmonicinvariant}
is needed only to apply Proposition \ref{quantrec}.
It seems likely that the latter holds under the weaker assumption of superexponential moment but we could not verify it.
\end{remark}

\begin{remark}
By a general result of Babillot \cite{Babillot} about product measures, the measure $L$ {in}   Theorem \ref{finiteharmonicinvariant} is mixing with respect to the geodesic flow on $T^{1}X$ unless the logarithms of the lengths of closed geodesics on $X/ G$ are contained in a discrete subgroup of $\mathbb{R}$. This mixing condition is satisfied for instance whenever $ G$ contains a parabolic, whenever $X$ is a surface, whenever $X$ is a rank 1 symmetric space, or when $X/ G$ has finite Riemannian volume \cite{Dalbo}.  
\end{remark}

\section{Application: Singularity of Measures}
Let $(X,d_{X})$ be a proper geodesic Gromov hyperbolic space.

For $x,y,z\in X$ and $a,b\in \partial X$ let
$$\beta^{X}_{z}(x,y)=d_{X}(x,z)-d_{X}({y},z)$$
$$\beta^{X}_{a}(x,y)=\lim \inf_{z\in G, z\to a}\beta^{X}_{z}(x,y)$$

$$\rho^{X}_{z}(x,y)=\frac{1}{2} d_{X}(z,x)+d_{X}(z,y)-d_{X}(y,x)$$

$$\rho^{X}_{z}(a,b)=\lim \inf_{x,y\in X,x\to \alpha, y\to \beta}\rho^{X}_{z}(x,y)$$

The quantities $\rho^{X}$ and $\beta^{X}$ are called the Gromov product and the Busemann cocycle for $(X,d_{X})$ respectively.

Fix a basepoint $o\in X$.

Suppose $G\curvearrowright X$ is a properly discontinuous isometric action by a countable group.

Note, this is necessarily a convergence action.

Let $\Lambda G\subset \partial X$ be the limit set of $G$, and $\Lambda_{c}G\subset \Lambda G$ the $G$ equivariant set of conical limit points.

\begin{definition}
A finite Borel measure $\kappa$ on $\partial X$ is called $ G${-}quasiconformal of dimension $s$ and quasiconformal constant $C$ (or simply $(G,s,C)$ quasiconformal) if the following hold:

a) $\kappa$ is supported on the limit set $\Lambda  G \subset \partial X$ of $ G$.

b) $ G$ preserves the measure class of $\kappa$ and for all $g\in  G$
$$C^{-1}e^{-s\beta^{X}_{\zeta}(go,o)}\leq \frac{dg\kappa}{d\kappa}(\zeta)\leq Ce^{-s\beta^{X}_{\zeta}(go,o)}.$$
\end{definition}
Let $$h=h_{X}( G)=\inf \{s:\sum_{g\in  G}e^{-s d_{X}(x,gx)}<\infty\}$$ be the critical exponent of the action.

Note the Poincar{\'e} series
$\sum_{g\in  G}e^{-s d_{X}(x,gx)}$ converges for $s>h$ and diverges for $s<h$.

{Coornaert proved in \cite[Theorem 5.4]{Coornaert} that whenever $G\curvearrowright X$ is a properly discontinuous isometric action on a proper Gromov hyperbolic space with $h(G)<\infty$ there is a $G$-quasiconformal measure on $X$ of dimension $h(G)$.}

The following is due in this generality to Matsuzaki-Yabuki-Jaerisch \cite[Theorem 4.1]{MYJ}.

\begin{proposition} \label{dichotomy}
Suppose $G\curvearrowright X$ is a nonelementary properly discontinuous isometric action on a proper geodesic Gromov hyperbolic space.
Let $\kappa$ be a $G$-quasiconformal measure which gives full measure to the set $\Lambda_{c}G$ of conical limit points in $\partial X$. Then $\kappa$ is ergodic with respect to the action $G \curvearrowright \partial X$.




\end{proposition}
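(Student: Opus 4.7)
The plan is to follow a classical Sullivan--Coornaert style argument. Suppose for contradiction that $A\subset\partial X$ is a Borel $G$-invariant set with $0<\kappa(A)<\kappa(\partial X)$, and derive a contradiction via a density argument at a conical limit point. The two ingredients I would first set up are a shadow lemma for $\kappa$ and the change-of-variables behavior of shadows under $G$.

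For $r$ larger than the hyperbolicity constant, define the shadow $O_r(g)\subset\partial X$ to be the set of $\zeta$ such that some geodesic ray from $o$ to $\zeta$ passes within $d_X$-distance $r$ of $go$. Hyperbolicity gives $|\beta^X_\zeta(go,o)+d_X(o,go)|\leq C(r)$ whenever $\zeta\in O_r(g)$, so the quasiconformality bound yields
\begin{equation*}
C_1(r)^{-1}e^{-h\,d_X(o,go)}\leq \kappa(O_r(g))\leq C_1(r)e^{-h\,d_X(o,go)}.
\end{equation*}
The upper bound is immediate from integrating the Radon--Nikodym derivative over $O_r(g)$; the lower bound follows because $g^{-1}O_r(g)$ contains, up to bounded error, a fixed-radius shadow centered near $e$ whose $\kappa$-measure is uniformly positive (nonelementarity forces the support of $\kappa$ to meet every such shadow).

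Next I exploit that $\kappa(A)>0$ and $\kappa(\Lambda_c G)=\kappa(\partial X)$ to pick a conical point $\zeta\in A$. By definition there is a sequence $g_n\in G$ with $g_no\to\zeta$ staying within bounded distance of a geodesic ray from $o$ to $\zeta$; by the shadow lemma the neighborhoods $O_r(g_n)$ shrink to $\zeta$ and form a Vitali-type basis. Replacing $A$ by a compact subset $K\subset A$ of $\kappa$-measure at least $\kappa(A)-\varepsilon$ and choosing $\zeta\in K$ to be a $\kappa$-density point along this basis, one may arrange along a subsequence
\begin{equation*}
\frac{\kappa(A\cap O_r(g_n))}{\kappa(O_r(g_n))}\longrightarrow 1.
\end{equation*}
On the other hand, the quasiconformal derivative is $\asymp e^{h\,d_X(o,g_no)}$ on $O_r(g_n)$ with oscillation controlled by $C(r)$, so by the $G$-invariance of $A$,
\begin{equation*}
\frac{\kappa(A\cap O_r(g_n))}{\kappa(O_r(g_n))}\;\asymp\;\frac{\kappa(A\cap g_n^{-1}O_r(g_n))}{\kappa(g_n^{-1}O_r(g_n))},
\end{equation*}
with multiplicative constants depending only on $r$ and the quasiconformality constant. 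But $g_n^{-1}O_r(g_n)$ is, up to bounded error, the complement in $\partial X$ of a shrinking shadow around $\zeta_-:=\lim g_n^{-1}o$, so it exhausts $\partial X\setminus\{\zeta_-\}$. The right-hand ratio therefore tends to $\kappa(A)/\kappa(\partial X)<1$, contradicting the left-hand limit $1$ and forcing $\kappa(A)\in\{0,\kappa(\partial X)\}$.

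The hard step will be making the density statement rigorous for the shadow basis $\{O_r(g_n)\}$, since the classical Lebesgue differentiation theorem is not directly available. I would handle this by proving a Besicovitch-type bounded-multiplicity property for shadows (a standard consequence of the shadow lemma combined with $\delta$-hyperbolicity: any family of shadows all of whose intersections contain a common point has uniformly bounded cardinality at each scale) and then invoking a Vitali covering argument for the Radon measure $\kappa$; alternatively one can avoid differentiation by a soft Borel--Cantelli computation showing that the limsup density of $K$ along $O_r(g_n)$ is at least $\kappa(K)/\kappa(\partial X)$ at $\kappa$-a.e.\ $\zeta\in K$, which is already enough to reach the above contradiction once $\varepsilon$ is chosen small.
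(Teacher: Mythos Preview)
The paper does not give its own proof of this proposition: it is quoted as \cite[Theorem 4.1]{MYJ} (Matsuzaki--Yabuki--Jaerisch) and used as a black box. So there is no in-paper argument to compare against; your proposal is an attempt to supply what the paper merely cites.

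That said, your outline is the correct classical approach, essentially the Sullivan--Coornaert ergodicity argument via the shadow lemma, and is in the same spirit as the cited reference. A couple of remarks. First, in your shadow lemma you write the exponent as $h$, but the proposition as stated allows $\kappa$ to be quasiconformal of arbitrary dimension $s$; nothing in the ergodicity argument needs $s=h$, so you should keep the dimension generic. Second, your sketch of the lower bound in the shadow lemma is a bit quick: the uniform positivity of $\kappa(g^{-1}O_r(g))$ for large $r$ relies on $\kappa$ having full support on $\Lambda G$, which you should justify from the hypothesis that $\kappa$ gives full mass to $\Lambda_c G$ together with the minimality of $\Lambda G$.

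You correctly flag the differentiation step as the delicate point. Your Vitali/Besicovitch plan is workable: the standard route is to prove that nested shadows along a geodesic ray have comparable $\kappa$-measure at successive scales (an immediate consequence of the shadow lemma), which gives a doubling-type condition allowing Vitali covering for the Radon measure $\kappa$. Your alternative ``soft Borel--Cantelli'' suggestion is vaguer and would need to be made precise before it carries weight; I would stick with the Vitali argument. Finally, in the last step you should pass to a subsequence to ensure $g_n^{-1}o$ converges in $\partial X$, and note that $\kappa$ has no atoms (which follows from quasiconformality plus the infinite $G$-orbit of any boundary point) so that the single point $\zeta_-$ has measure zero.
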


The goal of this section is to prove:
\begin{theorem}\label{singularityofmeasures}
Suppose $ G \curvearrowright \partial X$ is a nonelementary, geometrically finite action {with  parabolic elements}  on a proper geodesic Gromov hyperbolic space $X$. Let $\kappa$ be a $ G${-}quasiconformal measure on $\partial X$.

Let $\mu$ be a symmetric probability measure on $G$ with superexponential moment whose  support generates $G$.

Let $\nu_X$ be the $\mu$ stationary measure on $\partial X$.

Then $\nu_X$ and $\kappa$ are singular.
\end{theorem}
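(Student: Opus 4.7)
The plan is to adapt the Blach\`ere--Ha\"issinsky--Mathieu strategy \cite[Prop.~5.5]{BHM} to the geometrically finite setting, using the Ancona-type inequality (Theorem~\ref{Ancona}) and the surjection $\varphi=F\circ\pi:\partial_{\mathcal{M}}G\to\partial X$ as the key new ingredients replacing ambient hyperbolicity of the group.

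\emph{Reduction.} Since the action is geometrically finite, $\partial X=\Lambda_cG\sqcup\Lambda_pG$ with $\Lambda_pG$ countable. By Theorem~\ref{basicstationary}, $\nu_X$ has no atoms, so $\nu_X$ is concentrated on $\Lambda_cG$; hence $\nu_X$ is automatically singular to $\kappa|_{\Lambda_pG}$, and it suffices to show $\nu_X\perp\kappa|_{\Lambda_cG}$. If $\kappa(\Lambda_cG)=0$ we are done, so assume otherwise. Then $\kappa|_{\Lambda_cG}$ is $G$-quasiconformal and gives full measure to conical points, hence is $G$-ergodic by Proposition~\ref{dichotomy}; $\nu_X$ is $G$-ergodic by Proposition~\ref{stationaryergodic} via Theorem~\ref{convergencegroup}. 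Two ergodic measures that fail to be mutually singular are equivalent after normalization, so it suffices to rule out $\nu_X\sim\kappa|_{\Lambda_cG}$.

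\emph{Cocycle comparison.} Assume $\nu_X\sim\kappa$ on $\Lambda_cG$ and set $\Phi=d\nu_X/d\kappa$. By Corollary~\ref{conicalinjective} combined with the injectivity of $F:\partial_fG\to\partial X$ over conical points, $\varphi^{-1}(\zeta)=\{\alpha(\zeta)\}$ is a singleton for $\zeta\in\Lambda_cG$, hence $\frac{dg\nu_X}{d\nu_X}(\zeta)=K(g,\alpha(\zeta))$. The Radon-Nikodym chain rule then yields
\begin{equation*}
\log\Phi(g^{-1}\zeta)-\log\Phi(\zeta)=\log K(g,\alpha(\zeta))+h\,\beta^X_\zeta(go,o)+O(1).
\end{equation*}
Apply this along a sample path $g=\omega_n\to\omega_+=\zeta$. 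Corollary~\ref{Ancshortcut} gives $\log K(\omega_n,\alpha(\omega_+))=d_{\mathcal{G}}(e,\omega_n)+O(1)$, and geodesic tracking gives $\beta^X_{\omega_+}(\omega_no,o)=-d_X(o,\omega_no)+O(1)$. Since $\omega_n^{-1}\omega_+\to\omega_-$ almost surely, $\log\Phi(\omega_n^{-1}\omega_+)$ stays bounded on a positive-measure set of paths; dividing by $n$ and using Kingman's theorem ($d_{\mathcal{G}}(e,\omega_n)/n\to\ell_{\mathcal{G}}$, $d_X(o,\omega_no)/n\to\ell_X$) then forces $\ell_{\mathcal{G}}=h\,\ell_X$.

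\emph{The main obstacle} is deriving a contradiction from this equality using the presence of at least one parabolic subgroup. The intuition is that $\ell_{\mathcal{G}}\leq h\,\ell_X$ is a form of the fundamental inequality with strictness when the action is not convex cocompact: parabolic subgroups contribute to the critical exponent $h$ through sublinear radial growth in $X$, while the random walk escapes horoballs exponentially in the Green metric and so cannot ``see'' this contribution. Concretely, for a parabolic stabilizer $P<G$ fixing $p\in\Lambda_pG$, one would exhibit a sequence $p_k\in P$ with $d_{\mathcal{G}}(e,p_k)\geq(h+\delta)d_X(o,p_ko)$ for some $\delta>0$, using Ancona in a direction transverse to $P$ together with the disparity between word length and $X$-displacement inside $P$. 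Combining this with a quantitative recurrence estimate extending Proposition~\ref{quantrec} from the $CAT(-1)$ case to general Gromov hyperbolic $X$, sample paths would accumulate a positive rate of excess $d_{\mathcal{G}}-h\,d_X$ from parabolic excursions, yielding $\ell_{\mathcal{G}}>h\,\ell_X$ strictly. Making this recurrence estimate rigorous in the general Gromov hyperbolic setting is the bulk of the work.
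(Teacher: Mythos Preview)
Your reduction to the conical part of $\kappa$ and your ergodicity dichotomy are essentially what the paper does. The divergence comes after: you pass to drifts along sample paths and end up with the asymptotic relation $\ell_{\mathcal G}=h\,\ell_X$, which you then admit you cannot contradict without a recurrence estimate you do not prove. This is a genuine gap, and it is also an unnecessary detour.

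The paper avoids drifts entirely by upgrading the cocycle comparison to a \emph{uniform} metric comparison valid for every $g\in G$, not just along typical trajectories. The extra ingredient you are missing is a lemma (Lemma~\ref{singularvsbounded}) showing that if $\nu_X$ and $\kappa$ are not singular then the density $J=d\kappa/d\nu_X$ is $\nu_X$-essentially bounded above and below; this is proved using the double ergodicity of $\nu_X\times\nu_X$ (Proposition~\ref{stationaryergodic}) together with the Naim/Gromov-product form of the Ancona inequality. Once $J$ is bounded, comparing Radon--Nikodym cocycles as you do yields
\[
d_{\mathcal G}(g,e)\ \le\ h\,d_X(go,o)+D\qquad\text{for all }g\in G,
\]
not merely along sample paths. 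Since the orbit map is coarsely Lipschitz in the other direction and $d_{\mathcal G}$ is quasi-isometric to the word metric, this forces $g\mapsto go$ to be a quasi-isometric embedding $G\hookrightarrow X$, which is impossible when $G$ has parabolic elements \cite{Swenson}. No recurrence or drift estimate is needed.
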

We can decompose $\kappa$ as $\kappa_{1}+\kappa_{2}$ where 
$\kappa_{1}(A)=\kappa(A\cap \Lambda_{c}G$ is the restriction of $\kappa$ to the conical limit set and $\kappa_{2}(A)=\kappa(A\setminus \Lambda_{c}G$ the restriction to its complement in $\Lambda G$.

Then the $\kappa_{i}$ are both $G$-quasiconformal. To prove $\nu_X$ is singular to $\kappa$, it is enough to show it's singular to both $\kappa_1$ and $\kappa_2$.
Note, by definition $\kappa_2$ gives zero weight to conical limit points in   $\partial X$, while by Theorem \ref{conicalgeneric} $\nu_X$ gives full weight to the same set, so $\kappa_2$ is singular to $\nu_X$.

Therefore, we may assume $\kappa=\kappa_1$, i.e. $\kappa$ is supported on $\Lambda_{c}G$.
\begin{lemma}
\label{singularvsbounded}
Either $\kappa$ and $\nu_X$ are singular or there is a constant $C\geq 1$ such that
$$C^{-1}\kappa(A)\leq\nu_{X}(A)\leq C\kappa(A)$$ for all Borel $A\subset \partial X$.
\end{lemma}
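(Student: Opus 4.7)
The plan is to deduce the lemma from a standard ergodicity-plus-Lebesgue-decomposition dichotomy, combined with a cocycle comparison that leverages the specific forms of the Radon–Nikodym cocycles of $\kappa$ and $\nu_X$.

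First, I would establish that both $\nu_X$ and $\kappa$ are ergodic with respect to the $G$-action on $\partial X$. Ergodicity of $\kappa$ is given by Proposition \ref{dichotomy}, since by the reduction immediately preceding the lemma statement we may assume $\kappa$ is supported on the conical limit set $\Lambda_{c}G$. Ergodicity of $\nu_X$ follows from its uniqueness as a $\mu$-stationary probability measure on $\partial X$ (Corollary \ref{uniqueconvergence}): if $\nu_X$ had a non-trivial $G$-quasi-invariant decomposition, normalizing its pieces would produce two distinct $\mu$-stationary measures, contradicting uniqueness.

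Next, I would run the Lebesgue decomposition dichotomy. Assume $\nu_X$ and $\kappa$ are not singular, and write $\nu_X = \nu_X^{ac}+\nu_X^{s}$ with $\nu_X^{ac}\ll\kappa$ and $\nu_X^{s}\perp\kappa$. Since Lebesgue decomposition is unique and both $\nu_X$ and $\kappa$ are $G$-quasi-invariant, the two summands are themselves $G$-quasi-invariant. A set carrying $\nu_X^{s}$ but $\kappa$-null is $G$-invariant modulo a $\nu_X$-null set, so ergodicity of $\nu_X$ forces $\nu_X^{s}=0$; thus $\nu_X\ll\kappa$. Applying the symmetric argument with the roles reversed, and invoking ergodicity of $\kappa$, I get $\kappa\ll\nu_X$. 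Hence $\phi := d\nu_X/d\kappa$ is defined and strictly positive $\kappa$-a.e.

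The final and main step is to show that $\phi$ is essentially bounded above and below. The cocycle identity
\begin{equation*}
\frac{\phi(g^{-1}\zeta)}{\phi(\zeta)} \;=\; \frac{dg\nu_X/d\nu_X(\zeta)}{dg\kappa/d\kappa(\zeta)}
\end{equation*}
compares the two transformation cocycles: the denominator is, by $(G,s,C)$-quasiconformality, comparable up to a uniform multiplicative constant to $e^{-s\beta^X_\zeta(go,o)}$, while the numerator can be analyzed through $\nu_X=\varphi_{*}\nu$ (where $\nu$ is the harmonic measure on $\partial_{\mathcal M}G$ and $\varphi = F\circ\pi$), using that the Martin kernel transforms via $K(g,\alpha)$. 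The plan is then to argue, using conical convergence $g_n\to\zeta$ together with the Ancona-type inequality (Theorem \ref{relancona}) and a shadow-lemma style estimate for $\kappa$, that along such sequences both cocycles are comparable up to a single constant depending only on $(G,\mu,\kappa)$. Ergodicity of $\kappa$ then upgrades this almost-everywhere comparison into a genuine two-sided bound on $\phi$, via a $0$–$1$ argument on the super-level sets $\{\phi>\lambda\}$.

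The hard part will be the quantitative cocycle comparison in the final step: the Martin cocycle is a priori only a conditional expectation of $K(g,\cdot)$ along fibers of $\varphi$, and turning this into a pointwise estimate comparable to $e^{-s\beta^X_\zeta(go,o)}$ requires that one first localize to $\kappa$-generic conical points, then apply Theorem \ref{relancona} to relate Green-metric quantities to the geometry of $X$, and finally ensure the bounds are uniform in the approaching sequence. Once this is in place, the boundedness of $\phi$ and the stated inequality follow.
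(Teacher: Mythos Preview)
Your first two steps (ergodicity of both measures, and the singular-or-equivalent dichotomy via Lebesgue decomposition) are fine and match the paper. The gap is in your final step, where you aim to bound $\phi=d\nu_X/d\kappa$ by comparing the two cocycles pointwise and then invoking ergodicity on $\{\phi>\lambda\}$.

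Neither part of that plan can work as stated. First, the cocycle ratio
\[
\frac{\phi(g^{-1}\zeta)}{\phi(\zeta)}=\frac{dg\nu_X/d\nu_X(\zeta)}{dg\kappa/d\kappa(\zeta)}
\]
is governed by $K(g,\psi(\zeta))$ versus $e^{-s\beta^X_\zeta(go,o)}$, and there is no reason for these to be uniformly comparable: the Green metric and $d_X$ are not quasi-isometric in the presence of parabolics, so their Busemann-type cocycles genuinely differ. Indeed, the subsequent Theorem \ref{singularityofmeasures} exploits exactly this incomparability. So the ``quantitative cocycle comparison'' you flag as the hard part is not merely hard but false in general. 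Second, since $\phi$ is not $G$-invariant (its transformation law is precisely this non-trivial cocycle ratio), the super-level sets $\{\phi>\lambda\}$ are not $G$-invariant and ergodicity of $\kappa$ on $\partial X$ says nothing about them.

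The paper's proof avoids both obstacles by passing to the product $\partial X\times\partial X$: it sets
\[
\tilde{J}(a,b)=J(a)J(b)\exp\bigl(2h\rho^X_o(a,b)-2\rho^{\mathcal G}_e(\psi(a),\psi(b))\bigr),
\]
and uses the Gromov-product identities for $\rho^X$ and $\rho^{\mathcal G}$ so that the unbounded Busemann and Green cocycles cancel exactly, leaving $\tilde{J}$ quasi-invariant up to the fixed quasiconformal constant $E^2$. Ergodicity of $\nu_X\times\nu_X$ (Proposition \ref{stationaryergodic}) then bounds $\tilde{J}$, and since both Gromov products are bounded on products of disjoint closed sets (using Theorem \ref{relancona} for $\rho^{\mathcal G}$), one extracts essential boundedness of $J$. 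This doubling trick is the missing idea in your proposal.
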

\begin{proof}
The action $G\curvearrowright X$ preserves the measure class of $\nu_{X}$ by Theorem \ref{basicstationary} and that of $\kappa$
by the positivity of $\frac{dg\kappa}{d\kappa}$ in the definition of quasiconformality.
Furthermore, $G\curvearrowright X$ is ergodic with respect to $\nu_{X}$ by Proposition \ref{stationaryergodic} and ergodic with respect to $\kappa$ by Proposition \ref{dichotomy}.
Any two measures whose measure classes are preserved by an action $G\curvearrowright X$ and are ergodic with respect to this action are either singular or mutually absolutely continuous, so this is true for $\kappa$ and $\nu_X$. Suppose they are  mutually absolutely continuous. Then there is a positive Borel function $J$ with
$d\kappa=J d\nu_X$.
To prove Lemma \ref{singularvsbounded} it suffices to prove that $J$ is $\nu_{X}$-essentially bounded: that is, there exists a $K>0$ with
$J(a)\in [K^{-1},K]$ for $\nu_X$-a.e. $a\in \partial X.$
By the chain rule we have \begin{equation}\label{RNcomp}
J\circ g^{-1}=\frac{d g\kappa}{dg\nu_{X}}=\frac{d g \kappa}{d\kappa}\frac{d\kappa}{d\nu_{X}}
\frac{d\nu_{X}}{dg\nu_{X}}.
\end{equation}
$\nu_{X}$-a.e.

  {Let $\varphi:\partial_{\mathcal M} G\to\partial X$ be the equivariant continuous map having one-point preimage on every conical point of $\partial X$ (see sections 7, {9}). Let $\psi:\partial X\to \partial_{\mathcal M}G$ be a Borel map with $\varphi(\psi(a))=a$ for all conical $a\in \partial X$ (hence for $\nu_{X}$ a.e. $a\in partial X$, since conical points have full $\nu_{X}$ measure by Theorem \ref{conicalgeneric}).
The quasiconformality of $\kappa$ yields
\begin{equation}\label{quasic}
E^{-1}\exp({-h\beta^{X}_{a}(go,o)}){\leq}\frac{dg\kappa}{d\kappa}(a)\leq E\exp({-h\beta^{X}_{a}(go,o)})
\end{equation}
where $E$ is the quasiconformal constant for $\kappa$.
On the other hand, by (\ref{Martinkernel}) we know
\begin{equation}\label{hypkernel}
\frac{d\nu_{X}(a)}{dg\nu_{X}}=\frac{1}{K(g,\psi(a))}=\exp({\Delta(g,e,\psi(a))}),
\end{equation} for $\nu_{X}$-a.e. {$a\in \partial X$ and $\alpha=\psi(a)\in\partial_{\mathcal M}G.$}
Moreover, by definition {we have}
\begin{equation}\label{RN}
\frac{d\kappa}{d\nu_{X}}=J.
\end{equation}
Thus, by plugging in (\ref{quasic}),(\ref{hypkernel}), and (\ref{RN}) into (\ref{RNcomp}) we obtain
\begin{equation}\label{Jcomp}
E^{-1}J(a)\exp{(\Delta(g,e,\psi(a)))-h\beta^{X}_{a}(go,o)}\leq J(g^{-1}(a))
\end{equation}
$$\leq
E J(a)\exp{(h\beta^{X}_{a}(go,o)-\Delta(g,e,\psi(a)))}$$}
$\nu_{X}$-a.e. $a\in \partial X$.

Define the function $\tilde{J}:\partial^{2}X_c\to \mathbb{R}${, where $\partial^2 X_c=\{(a,b)\ :\ a, b\in \partial_cX\}$ and $\partial_cX$ is the subset of conical points in $\partial X,$ as follows:}
\begin{equation}\label{jconf}
\tilde{J}(a,b)=J(a)J(b)\exp{(2h\rho^{X}_{o}(a,b)-2\rho^{\mathcal G}_{e}(\psi(a),\psi(b)))},
\end{equation}
where $\rho^{\mathcal G}$ was defined in (\ref{Gromovgreen}) in section 10.

{We have},

\begin{equation}\label{Naim}
2\rho^{\mathcal{G}}_e(g^{-1}{\alpha}, g^{-1}{\beta})-2\rho^{\mathcal{G}}_e({\alpha}, {\beta})=\Delta(g,e,{\alpha})+\Delta(g,e,{\beta})
\end{equation}

and

\begin{equation}\label{busemann}
2\rho^{X}_o(g^{-1}a, g^{-1}b)-2\rho^{X}_o(a, b)=\beta^{X}_a(go,o)+\beta^{X}_b(go,o)
\end{equation}
Thus plugging (\ref{Jcomp}), (\ref{Naim}) and (\ref{busemann}) into (\ref{jconf}) we obtain for all $g\in G$
\begin{equation}
E^{-2}\leq \tilde{J}(g^{-1}a,g^{-1}b)\leq E^{2}
\end{equation}



{Choose} $C>0$ {to} be large enough so that {the set}
$$A=\{(a,b)\in \partial X\times \partial X: C^{-1}<\tilde{J}(a,b)<C\}$$
has positive $\nu_{X}\times \nu_{X}$ measure.
Since $\nu_{X}\times \nu_{X}$ is ergodic   {(by Proposition \ref{stationaryergodic})}, this implies {that the subset $$\Omega=G A$$} has full $\nu_{X}\times \nu_{X}$ measure.
{Then} for  every $(a,b)\in \Omega$ there is  $g\in  G$ with $g(a,b)\in A$.
{So}
\begin{equation}
C^{-1}D^{-1}\leq D^{-1} \tilde{J}(ga,gb)\leq \tilde{J}(a,b)\leq D \tilde{J}(ga,gb)\leq DC,
\end{equation} for {all} pairs $(a,b)\in \Omega$.
Thus, letting $K=CD$ we see that
\begin{equation}\label{boundontilde}
K^{-1}\leq \tilde{J}(a,b)\leq K,\forall {(a,b)\in \Omega.}
\end{equation}

We will now show that $J$ is $\nu_{X}$-essentially bounded.
Indeed, (\ref{boundontilde}) implies that
$$
K^{-1} \exp{(2h\rho^{X}_{o}({a},{b})-2\rho^{\mathcal{G}}_{e}(\alpha,\beta))} \leq J(a)J(b)$$ $$
\leq K \exp{(2h\rho^{X}_{o}({a},{b})-2\rho^{\mathcal{G}}_{e}(\alpha,\beta))}$$

\noindent for  all {$(\alpha,\beta) \in \varphi^{-1}(\Omega)$}  {and $a=\varphi(\alpha), b=\varphi(\beta).$}

Let $U,V \subset \partial X$ be disjoint closed sets with nonempty interiors. Then {by Theorem \ref{relancona}} $\rho^{\mathcal{G}}_{e}({\alpha,\beta})$ is bounded over all $(\alpha, \beta)\in \varphi^{-1} U\times \varphi^{-1}V$, while the function   $\rho^{X}_{o}(a , b)$ is  bounded over all $(a,b)\in U\times V$.
Hence, there is an $M>0$ with $M^{-1}\leq J(a)J(b)\leq M$ for all $(a,b)\in (U\times V)\cap \Omega$.

Let $b\in V$ be such that $(a,b)\in \Omega$ for $\nu_{X}$-almost all $a\in U$ (since $\Omega$ has full measure, such   $b$ exists by Fubini's theorem).

Then $M^{-1}\leq J(a)J(b)\leq M$ for $\nu_{X}$-almost all $a\in U$.

Thus $$M^{-2}\leq J(a)/J(a')\leq M^{2}$$ for $\nu_{X}$-almost all $a,a'\in U$.

Thus $$M^{-2}\leq J(a)/J(a')\leq M^{2}$$ for $\nu_{X}$-almost all $a,a'\in U$.

Now, let $W_{1}, W_{2}\subset \partial X$ be closed neighborhoods whose complements contain closed sets with nonempty interiors and such that $W_{1}\cup W_{2}=\partial X$. By the above argument there is an $M>0$ such that $M^{-2}\leq J(a)/J(a')\leq M^{2}$ for almost all $(a,a'){\in}(W_{1}\times W_{1})\cup (W_{2}\times W_{2})$.
This implies that $J$ is essentially bounded on each of $W_{1}$ and $W_{2}$, and thus $J$ is essentially bounded on $\partial X$.
\end{proof}
We now return to the proof of Theorem \ref{singularityofmeasures}\begin{proof}[Proof of Theorem \ref{singularityofmeasures}]

Let $E$ be the quasiconformal constant for $\kappa$. Then for $\kappa$-a.e. $\zeta \in \partial X$,
\begin{equation}\label{qc}  E^{-1} \exp(-h\beta^{X}_{\zeta}(go,o))\leq \frac{dg\kappa}{d\kappa}(\zeta)\leq E \exp(-h\beta^{X}_{\zeta}(go,o)).
\end{equation}
Moreover, by (\ref{hypkernel}) we have

\begin{equation}\label{harmonic}
\frac{dg\nu_X}{d\nu_X}(\zeta)=K(g,\psi(\zeta))
\end{equation}

for $\nu_X$-a.e. $\zeta \in \partial X$.

By Lemma \ref{singularvsbounded}, if $\nu_X$ and $\kappa$ are not singular, they are absolutely continuous and there is a constant $C>1$  with

$$C^{-1}\leq \frac{d\kappa}{d\nu_X}\leq C$$ $\nu_{X}$-a.e.
Thus, since
$$\frac{d\kappa}{d\nu_X}\circ g^{-1}=\frac{dg\kappa}{dg\nu_X}$$
we have for every $g\in G$
$$C^{-1}\leq \frac{dg\kappa}{dg\nu_X}\leq C$$ $\nu_{X}$-a.e.

Consequently for every $g\in G$
$$\frac{dg\kappa/d\kappa}{dg\nu_{X}/d\nu_{X}}=\frac{dg\kappa}{dg\nu_{X}}\frac{d\nu_{X}}{d\kappa}\in [C^{-2},C^{2}]$$ $\nu_{X}$-a.e. so taking logarithms we get
$$|\log \frac{dg\kappa}{d\kappa}-\log \frac{dg\nu_{X}}{d\nu_X}|<C'$$ $\nu_{X}$-a.e. for a uniform constant $C'$.

By (\ref{qc}), and (\ref{harmonic}) this implies that there is $C''>0$ and a set $\Upsilon\subset \partial X$ of full $\nu_{X}$ and $\kappa$ measure with
\begin{equation}\label{essential}
|\sup_{\alpha\in \varphi^{-1}\Upsilon}\Delta(g,e,\alpha)-h\cdot \sup_{a\in \Upsilon}\beta^{X}_{a}(go,o)|<C''
\end{equation}

(this is well defined since for each $g$, the quantities $\Delta(g,e,\alpha)=-\log K(g,\alpha)$ and $\beta^{X}_{\zeta}(go,o)$ are bounded over $\alpha \in \partial_{\mathcal M} G$ and $\zeta \in \partial X$ respectively).

Since $\Upsilon$ has full $\nu_{X}$ measure {the set} $U=F^{-1}\Upsilon\subset \partial_{f}G$ has full $\nu_{f}$ measure.
Furthermore, $\nu_{f}$ has full support on ${\partial_fG}$ by
Theorem \ref{basicstationary}   {so} $\overline{U}=\partial_{f}G$.

{Thus} by continuity of $\varphi$ we have
\begin{equation}\label{closuregreen}
\sup_{\alpha\in \varphi^{-1}\Upsilon}\Delta(g,e,\alpha)=\sup_{\alpha\in \pi^{-1}U}\Delta(g,e,\alpha)=\sup_{\alpha\in \overline{\pi^{-1}U}}\Delta(g,e,\alpha).
\end{equation}

Furthermore, since $\partial_{f} G=\pi(\overline{\pi^{-1}U})$ has more than two points, there is  $\delta>0$ such that for each $g\in  G$ there is an infinite sequence $z_n\in  G$ converging to some $\alpha \in \overline{\pi^{-1}U}$ with $\delta^{o}_{f}(g,z_n)> \delta$ for all $n$.

Consequently by Theorem \ref{relancona}
$$
d_{\mathcal G}(g,e)+d_{\mathcal G}(e,z_{n})-A(\delta) \leq d_{\mathcal G}(g,z_{n})
$$
and thus $$\Delta(g,e,\alpha)\geq d_{\mathcal G}(g,e)-A(\delta).$$

This implies
\begin{equation} \label{greenqr}
d_{\mathcal G}(g,e)-A(\delta)\leq \sup_{\alpha \in \overline{\pi^{-1}U}}\Delta(g,e,\alpha)\leq d_{\mathcal G}(g,e)
\end{equation} for all $g\in G$.

On the other hand, clearly
\begin{equation}\label{busbound}
\sup_{a\in \Upsilon}\beta^{X}_{a}(go,o)
\leq d_{X}(go,o)
\end{equation}

Putting together (\ref{essential}),  (\ref{closuregreen}), (\ref{greenqr}), and  (\ref{busbound})
we {obtain} a constant $D>0$ such that
$$d_{\mathcal G}(g,{e})-h\dot d_{X}(go,o)<D$$
for all $g\in G$.


Since $d_{\mathcal G}$ is quasi-isometric to the word metric $||.||$ this implies
\begin{equation}\label{fineq}
c'||g||-d_{X}(go,o)<D
\end{equation}
for all $g\in G$ and a constant $c''>0$.

On the other hand, since our word metric comes from a finite generating set it is clear that the orbit map is coarsely Lipschitz, i.e. there are constants $K_1$, $K_2$ with $$d_{X}(go,o)\leq K_{1}||g||+K_{2}$$
for all $g\in G$. 

This implies that the orbit map 
$G\to Go\subset X$ is a quasi-isometric embedding, which is impossible in the presence of parabolic elements \cite{Swenson}.

\end{proof}


\begin{thebibliography}{9}

\bibitem{Ancona} Ancona, A. (1988). Positive harmonic functions and hyperbolicity. In Potential Theory - Surveys and Problems (Prague, 1987). Lecture Notes in Math. 1344
1â€“23. Springer, Berlin.

\bibitem{AS} M. Anderson and R. Schoen. Positive harmonic functions on complete manifolds of negative curvature, Ann.
of Math. 121 (1985), no. 3, 429â€“461.

\bibitem{Babillot} M. Babillot. On the mixing property for hyperbolic systems. Israel J. Math. 129
(2002) 61â€“76.

\bibitem{BB} S. Blachere and S. Brofferio, Internal diffusion limited aggregation on discrete groups having exponential growth.
Probability Theory and Related Fields, 137 (2007), n. 3-4 323-343
\bibitem{BHM}
{S. Blachere, P. HaÃ¯ssinsky, and P. Mathieu,} Harmonic measures versus quasiconformal
measures for hyperbolic groups, Ann. Sci. Ã‰c. Norm. SupÃ©r. (4) 44 (2011), {683-721}.


\bibitem{Bowditch}
B. H. Bowditch, {\sl Relatively hyperbolic
groups,} Internat. J. Algebra Comput. 22 (2012), no. 3,
1250--1316.

\bibitem{Bo2} B. H. Bowditch, {\sl Convergence groups and configuration
spaces},  in ``Group theory down under'' (ed.\ J.Cossey,
C.F.Miller, W.D.Neumann, M.Shapiro), de Gruyter (1999) 23--54.

\bibitem{CDP}
M.Coornaert, T.Delzant, and A.Papadopoulos. Géométrie et théorie des groupes: les groupes hyperboliques de Gromov", Lecture Notes in Mathematics, vol. 1441,
Springer-Verlag, Berlin, 1990.

\bibitem{Coornaert}
M. Coornaert, Mesures de Patterson-Sullivan sur le bord dâ€™un espace hyperbolique au sens de Gromov,
Pacific J. Math. 159 (1993), {241-270}.

\bibitem{Dalbo} F. Dalâ€™bo, Topologie du feuilletage fortement stable. Ann. Inst. Fourier
(Grenoble) 50 no. 3, (2000) {981-993}.

\bibitem{Day}
 M. M. Day. Convolutions, means and spectra. Illinois J. Math. 8 (1964) 100-111 (3).

\bibitem{DGGP}
M. Dussaule, I. Gekhtman, V. Gerasimov and L. Potyagailo. Martin boundary of relatively hyperbolic groups with virtually abelian parabolic subgroups. In preparation.  

\bibitem{DKN}
Bertrand Deroin, Victor Kleptsyn, and Andres Navas, On the question of ergodicity for minimal group actions on the circle, Mosc. Math. J. 9 (2009), no. 2, 263-303.

\bibitem{Floyd} W. J. Floyd, Group completions and limit sets of Kleinian groups, Inventiones Math.
57, 1980, {205-218.}


\bibitem{Furman}
A. Furman, Random walks on groups and random transformations
Handbook of dynamical systems, Vol. 1A, 931 - 1014, North-Holland, Amsterdam, 2002.
\bibitem{Guivarch}Y. Guivarch, Sur la loi des grands nombres et le rayon spectral dâ€™une marche
aleatoire, Conference on Random Walks (Kleebach, 1979), Asterisque, vol. 74

\bibitem{Furstenberg}
 H. Furstenberg, Boundary theory and stochastic processes on homogeneous spaces, in
Proc. Sympos. Pure Math. 26, AMS, Providence R. I., 1973, {193-229.}

\bibitem{GMT}
V. Gadre, J. Maher, and G.Tiozzo. Word length statistics and Lyapunov exponents for Fuchsian groups with cusps, New York J. Math 21 (2015), 511-531 .

\bibitem{GM}
{F. W.} Gehring and G. J. Martin, Discrete convergence groups, Complex analysis, I (College Park, Md., 1985â€“86), Springer, Berlin, 1987, pp. {158-167.}

\bibitem{Gequid}
I. Gekhtman.
Equidistribution of closed geodesics along random walk trajectories with respect to the harmonic invariant measure. Preprint available at https://arxiv.org/abs/1711.04985


\bibitem{Ge1}
V. Gerasimov, {\sl Expansive Convergence Groups are Relatively
Hyperbolic}, GAFA {\bf 19} (2009) 137--169.

\bibitem{Gerasimov} V. Gerasimov. Floyd maps for relatively hyperbolic groups, Geom. Funct. Anal. (2012), no. 22, {1361-1399.}

\bibitem{GePoJEMS}  V. Gerasimov, L. Potyagailo, Quasi-isometric maps and Floyd boundaries of relatively hyperbolic groups, J. Eur. Math.
Soc. 15 (2013), no. 6, 2115-2137.


\bibitem{GePoGGD} V. Gerasimov, L. Potyagailo, Non-finitely generated relatively hyperbolic groups and Floyd quasiconvexity, Groups,
Geometry and Dynamics {9} (2015), {369-434}.
\bibitem{GePoCrelle}V. Gerasimov, L. Potyagailo, Quasiconvexity in the relatively hyperbolic groups, Journal fÂ¨ur die reine und angewandte
Mathematik (Crelle journal) 710 (2016), {95-135.}

\bibitem{G1} S. Gouezel, Local limit theorem for symmetric random walks in Gromov-hyperbolic groups, J. Amer.
Math. Soc., 27 (2014), {893-928}.
\bibitem{G2} S. Gouezel, Martin boundary of random walks with unbounded jumps in hyperbolic groups, Ann.
Proba., 43 (2015), {2374-2404}.
\bibitem{GL} S. Gouezel and S. Lalley, Random walks on co-compact Fuchsian groups, Ann. Sci. Ecole Norm. Â´
Sup. (4) 46 (2013), {129-173}.

\bibitem{Gromov} M. Gromov, {\sl Hyperbolic groups}, in: ``Essays in Group
Theory'' (ed. S.~M.~Gersten) M.S.R.I. Publications No.~8,
Springer-Verlag (1987) 75--263.

\bibitem{Hochman} Michael Hochman, A ratio ergodic theorem for multiparameter nonsingular
actions, J. Eur. Math. Soc. (JEMS) 12 (2010), no. 2, {365-383.}

\bibitem{Kaierg}
V. Kaimanovich, Ergodicity of harmonic invariant measures for the geodesic flow on hyperbolic
spaces, Journal fÃ¼r die reine und angewandte Mathematik (1994)
Volume: 455, page 57-104.

\bibitem{KaiStrip}
V. Kaimanovich, The Poisson formula for groups with hyperbolic properties,
Ann. of Math. (2) 152 (2000), no. 3, {659-692.}

\bibitem{Karl} A. Karlsson Free subgroups of groups with non-trivial Floyd boundary, Comm. Algebra, 31, (2003),
5361â€“5376.

\bibitem{Karlpoisson}
 A. Karlsson, Boundaries and random walks on finitely generated infinite groups, Arkiv f\"or Matematik, 41 (2003) {295-306.}

\bibitem{Kesten1}
H. Kesten. Symmetric random walks on groups. Trans. Amer. Math. Soc. 92 (1959) 336-354 (2, 3).

\bibitem{Kesten2}
 H. Kesten. Full Banach mean values on countable groups, Math. Scand. 7 (1959) 146-156 (3).

\bibitem{Maher-Tiozzo}
J. Maher and G. Tiozzo, Random walks on weakly hyperbolic groups, to appear in Journal fÃ¼r die reine und angewandte Mathematik. Preprint available at
https://arxiv.org/abs/1410.4173


\bibitem{MYJ}
K. Matsuzaki, Y. Yabuki, J. Jaerisch,
Normalizer, divergence type and Patterson measure for discrete groups of the Gromov hyperbolic space.
Preprint, https://arxiv.org/abs/1511.02664

\bibitem{PW} M. Picardello and W. Woess. Martin boundaries of Cartesian products of Markov chains. Nagoya Math. J.
Volume 128 (1992), 153-169.

\bibitem{Roblin}
 T. Roblin. Ergodicite et equidistribution en courbure negative. Memoire Soc. Math.
France, 95 (2003).

\bibitem{Sawyer}
S. Sawyer,  Martin boundaries and random walks.

http://math.wustl.edu/~sawyer/hmhandouts/martbrwf.pdf

\bibitem{Swenson}
E. Swenson, Quasi-convex groups of isometries of negatively curved spaces.
Topology and its Applications 110 (2001) 119–129.

\bibitem{Tiozzo}
G. Tiozzo, Sublinear deviation between geodesics and sample paths, Duke Math. J. 164 (2015), no. 3, 511-539.


\bibitem{Tu1}
  P. Tukia, {\sl Convergence groups and Gromov's metric
hyperbolic spaces} : New Zealand J.\ Math.\ \bf 23 \rm (1994)
157--187.

\bibitem{Varopoulos} N. Varopoulos. Theorie du potentiel sur des groupes et des varietes', C. R. Acad. Sci. Paris
Se'r. I 302 (1986) 203-205
\bibitem{Walters} P. Walters. An Introduction to Ergodic Theory. Springer 1982.

\bibitem{Woess} Woess, W. (2000). Random Walks on Infinite Graphs and Groups. Cambridge Tracts
in Mathematics 138. Cambridge Univ. Press, Cambridge.


\bibitem{Wo2} W. Woess, Lamplighters, Diestel-Leader graphs, random walks, and harmonic functions. Combin. Probab. Comput. 14 (2005), no. 3, {415-433.}

\bibitem{Ya}
A. Yaman, {\sl A topological characterisation of relatively
hyperbolic groups}, J.\ reine ang.\ Math. \bf 566 \rm(2004),
41--89.

\end{thebibliography}
\end{document}